\numberwithin{equation}{section}
\DeclareFontFamily{U}{BOONDOX-calo}{\skewchar\font=45 }
\DeclareFontShape{U}{BOONDOX-calo}{m}{n}{
  <-> s*[1.05] BOONDOX-r-calo}{}
\DeclareFontShape{U}{BOONDOX-calo}{b}{n}{
  <-> s*[1.05] BOONDOX-b-calo}{}
\DeclareMathAlphabet{\mathcalboondox}{U}{BOONDOX-calo}{m}{n}
\SetMathAlphabet{\mathcalboondox}{bold}{U}{BOONDOX-calo}{b}{n}
\DeclareMathAlphabet{\mathbcalboondox}{U}{BOONDOX-calo}{b}{n}
\newcommand{\mcb}[1]{{\mathcalboondox #1}}
\tikzset{
    place/.style={
        circle,
        thick,
        draw=black,
        fill=gray!50,
        minimum size=20mm,
    },
        state/.style={
        circle,
        thick,
        draw=blue!75,
        fill=blue!20,
        minimum size=20mm,
    },
}
\tikzset{
    cross/.pic = {
    \draw[rotate = 45] (-0.2,0) -- (0.2,0);
    \draw[rotate = 45] (0,-0.2) -- (0, 0.2);
    }
}
\newtheorem{thm}{Theorem}[section]
\theoremstyle{definition}
\newtheorem{prop}[thm]{Proposition}
\newtheorem{lem}[thm]{Lemma}
\newtheorem{definition}[thm]{Definition}
\newtheorem{rem}[thm]{Remark}
\newcommand\ve{\varepsilon}
\title[{Hydrodynamics of a $d$-dimensional long jumps symmetric exclusion with a slow barrier}
]{Hydrodynamics of a $d$-dimensional long jumps diffusive\\ symmetric exclusion with a slow barrier}
\author{Pedro Cardoso, Patr\'icia   Gon\c calves, Byron Jim\'enez-Oviedo}
\newcommand{\Addresses}{{
		\footnotesize
		Pedro Cardoso, \textsc{\noindent Institute for Applied Mathematics,  University of Bonn \\
	Endenicher Allee, no. 60, 53115 Bonn, Germany}\par\nopagebreak
		\textit{E-mail address}: \texttt{pgondimc@uni-bonn.de}
		
		\medskip
		
		Patr\'icia   Gon\c calves, \textsc{\noindent Center for Mathematical Analysis,  Geometry and Dynamical Systems \\
Instituto Superior T\'ecnico, Universidade de Lisboa\\
Av. Rovisco Pais, no. 1, 1049-001 Lisboa, Portugal}\par\nopagebreak
		\textit{E-mail address}: \texttt{pgoncalves@tecnico.ulisboa.pt}
		
		\medskip
		
		Byron Jim\'enez-Oviedo, \textsc{\noindent Escuela de Matem\'atica,  \\
Faculdad de Ciencias Exactas y Naturales, Universidad Nacional de Costa Rica\\
Heredia, Costa Rica}\par\nopagebreak
		\textit{E-mail address}: \texttt{byron.jimenez.oviedo@una.cr}
}}
\begin{document}
\subjclass[2010]{60K35, 35R11, 35S15}
\begin{abstract} 
We obtain the hydrodynamic limit of symmetric long-jumps exclusion in $\mathbb{Z}^d$ (for $d \geq 1$), where the jump rate is inversely proportional to a power of the jump's length with exponent $\gamma+1$, where $\gamma \geq 2$. Moreover, movements between $\mathbb{Z}^{d-1} \times \mathbb{Z}_{-}^{*}$ and $\mathbb{Z}^{d-1} \times \mathbb N$ are slowed down by a factor $\alpha n^{-\beta}$ (with $\alpha>0$ and $\beta\geq 0$). In the hydrodynamic limit we  obtain the heat equation in $\mathbb{R}^d$ without boundary conditions or with {homogeneous} Neumann boundary conditions at both sides of the hyperplane $\mathcal{H}:=\mathbb{R}^{d-1} \times \{0\}$, depending on the values of $\beta$ and $\gamma$. {The homogeneous Neumann boundary conditions correspond to a decoupling across $\mathcal{H}$.} The (rather restrictive) condition in \cite{casodif} (for $d=1$) about the initial distribution satisfying an  entropy bound with respect to a Bernoulli product measure with constant parameter is weakened or completely dropped.
\end{abstract}
\maketitle

\section{Introduction}

One of the most famous challenges in  {thermodynamics} is to describe the space/time evolution of a physical quantity of interest in a fluid, such as the density of a gas. However, since the number of molecules is extremely large (of the order of Avogadro's number), a purely deterministic approach applying Newton's laws is not feasible. Fortunately, it is possible to go around this issue by using concepts from  {statistical mechanics}, where the macroscopic behavior of a fluid is analyzed from the rules governing the microscopic movements of its molecules.

This was the motivation of Spitzer when he introduced  the Interacting Particle Systems (IPS) to the mathematical community in \cite{spitzer} as a possible  research direction. In many cases, one studies models with a huge number of particles, that move through the sites of a lattice and whose evolution is described by random rules. One remarkable example of an IPS is the exclusion process, where the \textit{exclusion rule} ensures that every site is occupied by at most one particle. Despite its {simplicity, it} is a model that has been extensively studied in the literature because it captures many interesting phenomena that {are} shared by many other more complicated dynamics.  

In this work, we study the exclusion process evolving on $\mathbb{Z}^d$, whose dynamics swaps the position of particles according to some transition {probability, therefore} the number of particles is the conserved quantity in the system. This motivates the investigation of the space/time evolution of the density of particles, since the total mass is conserved by the dynamics. This description is given by the derivation of a partial differential equation (PDE), known in the literature of IPSs as the \textit{hydrodynamic limit}.

In this article, we {combine} the main features of the models presented in \cite{casodif} and \cite{tertumariana}. More specifically, we consider a symmetric exclusion process where  particles move on a $d-$dimensional lattice and there exists a slow barrier hindering some of the (possibly long) jumps. This is quite an interesting feature, since multidimensional IPSs are much less common in the literature than the unidimensional ones. More precisely, particles move according to a {transition probability} $p: \mathbb{Z}^d \rightarrow [0,1]$, given by
\begin{equation} \label{prob}
p( \hat{x} ) =
\begin{dcases}
\frac{c_{\gamma} |\hat{x}|^{-\gamma-1}}{d}, & \quad \text{if} \; \; \hat{x} \neq \hat{0} \; \; \text{and} \; \; \exists j \in \{1, \ldots, d\}: \hat{x}= \pm |\hat{x}| \hat{e}_j; \\
0,  & \quad \text{otherwise}.
\end{dcases}
\end{equation}
Above, $\gamma \geq 2$ (the inclusion of the case $\gamma =2$ is an improvement on the conditions imposed upon the model in \cite{casodif}, where it was assumed $\gamma > 2$), $c_\gamma$ is a normalizing constant and $\{ \hat{e}_1, \ldots, \hat{e}_d \}$ is the canonical basis of $\mathbb{R}^d$. From \eqref{prob}, we see that \textit{diagonal jumps} are forbidden. For instance, when $d=2$ it is not possible for a particle to move from $(1,3)$ directly to $(4,8)$ with only one jump; it must move first to an intermediate site, such as $(4,3)$ or $(1,8)$. The reason for this choice of $p(\cdot)$ is due to the fact that in this work we want to obtain the heat equation, written in terms of the Laplacian operator in dimension $d$, which is given by $\Delta = \sum_{j=1}^d \partial_{\hat{e}_j} \partial_{\hat{e}_j}$. Indeed, for every $j$ fixed, the effect of only considering jumps in the direction determined by $\hat{e}_j$ leads to $\partial_t \rho= \partial_{\hat{e}_j} \partial_{\hat{e}_j} \rho$ (analogously to the results in \cite{casodif}), see \eqref{deftilknj} and Lemma \ref{lemconvprinc} for more details.  

Next, we add a barrier  hindering the jumps between $\mathbb{Z}^{d-1} \times \mathbb{Z}_{-}^{*}$ and $\mathbb{Z}^{d-1} \times \mathbb N$, where $\mathbb{N}:= \{0, 1, \ldots\}$ and $\mathbb{Z}_{-}^{*}:= \mathbb{Z} - \mathbb{N} = \{-1, -2, \ldots\}$. In the same way as it is done in \cite{casodif} and \cite{tertumariana}, the slowing factor is $\alpha n^{-\beta}$, with $n \geq 1$, $\alpha >0$ and $\beta \geq 0$. We stress that only (some of the) movements affecting the last coordinate are affected. For instance, for $d=2$, the jump rate from $(-2,-2)$ to $(-2,2)$ is multiplied by $\alpha n^{-\beta}$; on the other hand jumps from $(-2,-3)$ to $(-2,-4)$, from $(-1,-3)$ to $(2,-3)$ and from $(2,1)$ to $(2,-1)$ are not realized through slow bonds. This is illustrated in Figure \ref{fig:dynamics} below. The slow barrier leads to a decoupling across the hyperplane $\mathbb{R}^{d-1} \times \{0\}$, preventing the flow of mass between $\mathbb{R}^{d-1} \times (-\infty,0)$ and $\mathbb{R}^{d-1} \times [0, \infty)$ when $\beta$ is large enough. This is represented by homogeneous Neumann boundary conditions at both sides of $\mathbb{R}^{d-1} \times \{0\}$.
\begin{figure}[h]
    \centering
    \includegraphics{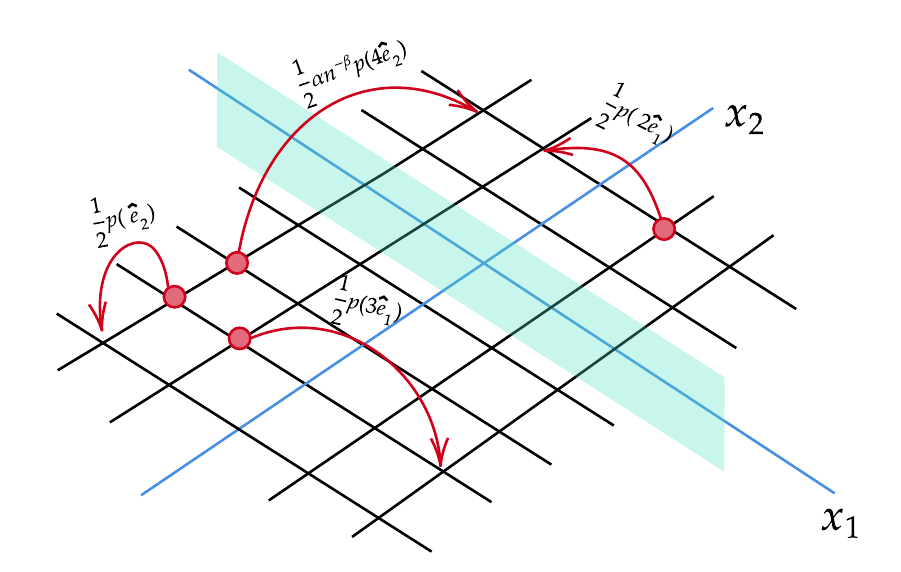}\caption{Dynamics of the long-jumps symmetric exclusion  with slow bonds in $\mathbb{Z}^2$. The factor $1/2$ comes from $d=2$. Recall from \eqref{prob} that $p(2 \hat{e}_1) = p(-2 \hat{e}_1)$ and $p(\hat{e}_2) = p(- \hat{e}_2)$.}\label{fig:dynamics}
\end{figure}

We observe that this work is a non-trivial generalization of the results in \cite{casodif, tertumariana}. Indeed, in \cite{casodif, tertumariana}, the initial distribution needs to satisfy an entropy bound with respect to a reference measure, the Bernoulli  product measure of constant parameter  (this is analogous to the case where $h$ in Definition \ref{defberprod} is taken as being constant). This assumption simplified the proofs and  {it was fundamental in \cite{casodif}} to derive \textit{global} energy estimates, since the spatial domain  {in \cite{casodif} was} \textit{unbounded}.  

In previous works (see \cite{casodif, superdif}), those global energy estimates could not be dispensed, since they were crucial to prove the uniqueness of weak solutions according to Oleinik's procedure; this uniqueness is, in turn, essential to obtain the hydrodynamic limit by using the tools of the entropy method introduced in \cite{GPV}. In this work, by dropping the assumption that $h$ in Definition \ref{defberprod} is constant, we can treat a much larger class of initial profiles in our system, therefore Theorem \ref{hydlim1} is much more general than the main result in \cite{casodif} for most of the values for $(\beta,\gamma)$. Furthermore, for a slightly smaller range of values of $(\beta,\gamma)$, we can avoid any kind of entropy bounds, see Theorem \ref{hydlim0}.

Here, the setting is also substantially different from \cite{tertumariana}, since our spatial domain is unbounded, in opposition to {the one in \cite{tertumariana}. A bounded} spatial domain  {(such as the one in \cite{tertumariana, MSV}) is} particularly simpler in a multidimensional context, since it allows us to define the density at the boundary in the sense given by the Trace Theorem (Theorem 1 in Section 5.5 of \cite{evans1998partial}). However, we do not have this possibility in this work and we only present some definition of the density at the boundary for the particular case when $d=1$. This can be done since \textit{local} energy estimates ensure that the density has a continuous representative in the neighborhood of the origin, see Proposition \ref{holderrepdif} for more details.  

The only drawback of this work, in comparison with \cite{casodif}, is to treat the very particular case where $d=1$, $\gamma > 2$ and $\beta = 1$. In order to apply the entropy method in this case, we would have to prove the uniqueness of weak solutions with a space of test functions satisfying Robin boundary conditions, and unfortunately we were not able to obtain this particular result. So far, we were only able to treat this case by assuming an entropy bound with respect to the product measure given by Definition \ref{defberprod} when $h$ is constant. Since this assumption is exactly the same as in \cite{casodif}, we  did not treat this case here. {We also note that the case where the transition probability $p(\cdot)$ is asymmetric and allowing  long-jumps has been explored in \cite{SS18} and the case of nearest-neighbor jumps has been treated in \cite{B12,X22a, X22b}.}

Next we describe the outline of this article. In Section \ref{sec:model} we give the details of our model, we define the notions of weak solutions that are deduced and we state the hydrodynamic limit. In Section \ref{secheur} we motivate the definitions of weak solutions presented in the previous section.
In Section \ref{sectight} we present the tightness of the sequence of empirical measures. In Section \ref{seccharac} we characterize the limiting point by showing that it is concentrated on a Dirac measure of a trajectory of measures which are absolutely continuous with respect to the Lebesgue measure, whose density is a weak solution of the corresponding hydrodynamic equation. Section \ref{secheurwithout} is devoted to the proof of some estimates which are applied in the proofs of the results of the previous section. Finally in Section \ref{secenerest} we treat the case $d=1$ and $\gamma >2$ in order to be able to define the density at the origin. Sections \ref{secheurwithout} and \ref{secenerest} both make use of Proposition \ref{bound}, whose proof is postponed to Appendix \ref{estdirfor}.

We complement the article with {five} appendices: Appendix \ref{secuniq} is devoted to the proof of uniqueness of weak solutions for our hydrodynamic equations, Appendix \ref{secdiscconv} is focused on the convergence from the discrete operators to the continuous ones,  Appendix \ref{estdirfor} presents a variety of bounds which were useful to show Proposition \ref{bound} and Appendix \ref{prooftight} is dedicated to the proof of the tightness criterium that we use since the process is evolving in infinite volume. Finally, in Appendix \ref{miscdynk} we obtain Proposition \ref{dynkform}, which extends Dynkin's formula as written in Appendix 1 of  \cite{kipnis1998scaling}, since our test functions are only of class $C^1$ regarding the temporal variable, instead of class $C^2$. {Up to the best of our knowledge, this extension has always been taken for granted in the literature, but was never proved rigorously.}

\section{Statement of results} \label{sec:model}

\subsection{The model} 

For every $k \geq 1$ and $u_1, \ldots u_k \in \mathbb{R}$, we will denote the \textit{maximum} norm of a vector $(u_1, \ldots, u_k) \in \mathbb{R}^k$ by
\begin{align*}
| (u_1, \ldots, u_k) |: = \max \{ |u_1|, \ldots, |u_k| \}.
\end{align*}
 Hereinafter, we fix a dimension $d \in \mathbb{Z}^*_{+}=\{1, 2,3, \ldots,   \}$ and we represent elements of $\mathbb{R}^d$ with a hat, e.g., $\hat{u}=(u_1, \ldots, u_d)$, unless we assume that $d=1$. We also make use of the canonical basis $\{\hat{e}_1, \ldots, \hat{e}_d\}$ of $\mathbb{R}^d$. Usually we denote elements of $\mathbb{R}^d$ by $\hat{u}$ and $\hat{v}$.

 Our goal is to study the space-time evolution of the density of particles in the symmetric exclusion process with long jumps evolving on the lattice $\mathbb{Z}^d$. This is a stochastic interacting particle system which allows at most one particle per site, therefore our space state is $\Omega:=\{0,1\}^{\mathbb{Z}^d}$. The elements of the lattice $\mathbb{Z}^d$ are called \textit{sites} and will be denoted by Latin letters, such as $\hat{x}$, $\hat{y}$ and $\hat{z}$. The elements of $\Omega$ are called \textit{configurations} and will be denoted by Greek letters, such as $\eta$. Moreover, we denote the number of particles at a site $\hat{x}$ on a configuration $\eta$ by $\eta(\hat{x})$; this means that the site $\hat{x}$ is \textit{empty} (resp. \textit{occupied}) if $\eta(\hat{x})=0$ (resp.  $\eta(\hat{x})=1$). 

The transition probability is denoted by $p: \mathbb{Z}^d \rightarrow [0,1]$ and it is given by \eqref{prob}.
Since we want to observe a diffusive behavior, $ \gamma \in [2, \infty)$ and $c_{\gamma}$ is a constant that turns $p(\cdot)$ into a probability. For $d=1$ and $\gamma > 2$, we recover the long-jumps form of the probability distribution given in \cite{casodif}. This motivates us to the denote (for the case $\gamma >2$) $\sigma^2:= d \sum_{r} r^2 p(r \hat{e}_d) = {2} \sum_{r =1 }^{\infty} c_{\gamma} r^{1-\gamma} < \infty$.

We observe that our dynamics \textit{does not allow diagonal jumps}: therefore, after a direction determined by $\hat{e}_j$ is chosen, a particle can only jump from $\hat{x}=(x_1, \ldots, x_{j-1},x_j,x_{j+1}, \ldots, x_d)$ to $\hat{y}=(x_1, \ldots, x_{j-1},y_j,x_{j+1}, \ldots, x_d)$, for some $y_j \neq x_j$. This motivates us to write $\hat{u}=(\hat{u}_{\star,j}, u_j)$, where $\hat{u}_{\star,j}$ is the element of $\mathbb{R}^{d-1}$ constructed by removing the $j$-th coordinate of $\hat{u}$. In the particular case $j=d$, we use the notation $\hat{u}_{\star}:=\hat{u}_{\star,d}$.

According to the \textit{exclusion rule}, given two sites $\hat{x},\hat{y}$, a particle only goes from $\hat{x}$ to $\hat{y}$ if the later one is empty and the former one is occupied, i.e. $\eta(\hat{y})=0$ and $\eta(\hat{x})=1$. This means that a movement between $\hat{x}$ and $\hat{y}$ only is possible if $\eta(\hat{x})[1-\eta(\hat{y})]+ \eta(\hat{y})[1-\eta(\hat{x})]=1$. This operation acts on an initial configuration $\eta \in \Omega$ and transforms it into $\eta^{\hat{x},\hat{y}} \in \Omega$, that is defined by
\begin{equation*}
\eta^{\hat{x},\hat{y}}(\hat{z})=
\begin{cases}
\eta(\hat{y}), & \quad \hat{z}= \hat{x}; \\
\eta(\hat{x}), & \quad \hat{z}= \hat{y}; \\
\eta(\hat{z}), & \quad \hat{z} \notin \{\hat{x}, \hat{y}\}.
\end{cases}
\end{equation*}
The elements of $\mcb {B}:=\big\{ \{\hat{x}, \hat{y} \}: \hat{x} \neq \hat{y} \in \mathbb{Z}^d \big\} $ are called \textit{bonds}. We denote $\mathbb{N}:=\{0, 1, 2, \ldots \}$ $\mathbb{Z}_{-}^{*}:= \mathbb{Z} - \mathbb{N} = \{-1, -2, \ldots \}$, and $\mcb{S}:=\big\{ \{\hat{x}, \hat{y} \} \in \mcb B: \hat{x} \in \mathbb{Z}^{d-1} \times \mathbb{Z}_{-}^{*}, \, \hat{y} \in \mathbb{Z}^{d-1} \times \mathbb{N}  \big\}$. Moreover, we denote $\mcb{F}:= \mcb{B} - \mcb{S}$. 

Next we introduce a slow barrier in the system. Here we fix $\alpha > 0$ and $\beta \geq 0$. Starting from a configuration $\eta$, a particle jumps from $\hat{x}$ to $\hat{y}$ at rate  $\eta(\hat{x}) [ 1 - \eta (\hat{y}) ] r_{\hat{x},\hat{y}}^n {p( \hat{y} - \hat{x} )}$, where 
\begin{equation}
r_{\hat{x},\hat{y}}^n:=
\begin{cases}
\alpha n^{-\beta}, & \quad \{\hat{x}, \hat{y}\} \in \mcb{S}; \\
1, & \quad \{\hat{x}, \hat{y}\} \in \mcb{F}.
\end{cases}
\end{equation}
When $\beta >0$, we have $\lim_{n \rightarrow \infty} r_{\hat{x},\hat{y}}^n=0$ for every $\{\hat{x},\hat{y}\}$ in $\mcb{S}$, creating a physical barrier between $\mathbb{Z}^{d-1} \times \mathbb{Z}_{-}^{*}$ and $\mathbb{Z}^{d-1} \times \mathbb{N}$. Then $\mcb{S}$ and $\mcb {F}$ are the sets of slow and fast bonds, respectively.  

We say that a function $f: \Omega \rightarrow \mathbb{R}$ is  \textit{local} if there exists a finite set $\Lambda \subset \mathbb{Z}^d$ such that
\begin{align*}
\forall \hat{x} \in \Lambda, \quad \eta_1(\hat{x}) = \eta_2(\hat{x})  \Rightarrow f(\eta_1) = f(\eta_2).
\end{align*}
Finally, the exclusion process with slow bonds is defined by the infinitesimal generator $ \mcb{L}_n$, given by
\begin{align*}
( \mcb{L}_n f ) (\eta) := \sum_{\hat{x},\hat{y}} p( \hat{y} - \hat{x}) \eta(\hat{x}) [1 - \eta (\hat{y}) ] r_{\hat{x},\hat{y}}^n \nabla_{\hat{x}, \hat{y}} f(\eta) =  \frac{1}{2} \sum_{\hat{x},\hat{y}} p( \hat{y} - \hat{x})  r_{\hat{x},\hat{y}}^n \nabla_{\hat{x}, \hat{y}} f(\eta),
\end{align*}
where $\nabla_{\hat{x}, \hat{y}} f(\eta) :=[ f ( \eta^{\hat{x},\hat{y}} ) - f (\eta ) ]$ {and} $f$ is a local function. In {the} last line and in what follows, unless it is stated differently, we will assume that our discrete variables in a summation always range over $\mathbb{Z}^d$. 

\subsection{Notation}

In this subsection, we begin by presenting the notation for some spaces of test functions.  For every $r \in \mathbb{N}$, we denote by $C_c^r(\mathbb{R}^d)$ the space of functions {$G: \mathbb{R}^d \rightarrow \mathbb{R}$} which are in $C^r(\mathbb{R}^d)$ and have a compact support; this means that there exists $b_1>0$ such that $G(\hat{u})=0$ for $|\hat{u}| \geq b_1$. We also denote the space of functions $G: [0, \infty) \times \mathbb{R}^d \rightarrow \mathbb{R}$ with compact support $K \subset [0, \infty) \times {\mathbb{R}^d}$ which are of class $r_1$ regarding the temporal variable and of class $r_2$ regarding the spatial variable by $C_c^{r_1,r_2} ( [0, \infty) \times \mathbb{R}^d )$. 

For every $j \in \{1, \ldots, d\}$, we denote by $\partial_{\hat{e}_j} G$ the partial derivative of $G$ in the direction of $\hat{e}_j$, i.e.
\begin{align*}
\partial_{\hat{e}_j} G (\hat{u}):= \lim_{h \rightarrow 0} \frac{G(\hat{u}+h \hat{e}_j) - G(\hat{u})}{h}
\end{align*}
when the above limit exists. We are aware of the more usual notation $\partial_{x_j} G$, but we will not use the later notation in order to avoid confusion with $x_j$, the $j$-th coordinate of the site $\hat{x} \in \mathbb{Z}^d$. Similarly, we denote the second derivative of $G$ in the direction determined by $\hat{e}_j$ by $\partial_{\hat{e}_j \hat{e}_j} G$.  Naturally, the Laplacian of $G$ is $\Delta G:= \sum_{j=1}^d \partial_{\hat{e}_j \hat{e}_j} G$. In the particular case $d=1$, we denote $\partial_{\hat{e}_d}$ by $\nabla$. 

When $\beta \in [0,1)$, the slow bonds are not strong enough to have a macroscopic effect in our system, and this motivates the choice of $\mcb S_{\textrm{Dif}}:=C_c^{1,2} ( [0, \infty) \times \mathbb{R}^d )$ as the space of test functions. On the other hand, when $\beta \geq 1$, the slow bonds will hinder the flow of mass between $\mathbb{R}_{-}^{d*}:=\mathbb{R}^{d-1} \times (-\infty,0)$ and $\mathbb{R}_{+}^{d*}:=\mathbb{R}^{d-1} \times [0,\infty)$, leading to functions which \textit{may be} discontinuous at $\mathcal{H}:= \{\hat{u} \in \mathbb{R}^d: u_d = 0\}$. This motivates the following definition:
	\begin{align*}
		\mcb{S}_{\textrm {Disc}}:=\{G: [0, \infty) \times \mathbb{R}^d  \rightarrow \mathbb{R}: \exists G^{-}, G^{+} \in \mcb S_{\textrm{Dif}} \; \; \text{s.t.} \; \; G=\mathbbm{1}_{\{ [0, \infty)  \times \mathbb{R}_{-}^{d*} \} } G^{-} + \mathbbm{1}_{ \{[0, \infty) \times { \mathbb{R}_{+}^{d*} } \}} G^{+} \}.
	\end{align*} 
In particular, by choosing $G^{-} = G^{+}$, we get $\mcb S_{\textrm{Dif}} \subsetneq \mcb{S}_{\textrm {Disc}}$. 
\begin{rem} \label{remdisc}
\textit{	
Let $j \in \{1, \ldots, d-1\}$. Then, $\partial_{\hat{e}_j \hat{e}_j} G$ is well-defined on $[0, \infty) \times \mathbb{R}^d$, for every} $G \in \mcb{S}_{\textrm {Disc}}$\textit{, with
\begin{align*}
\partial_{\hat{e}_j \hat{e}_j} G(s, u_1, \ldots, u_d)=
\begin{cases}
\partial_{\hat{e}_j \hat{e}_j} G^{-}(s, u_1, \ldots, u_d), \quad u_d <0, \; u_1, \ldots u_{d-1} \in \mathbb{R}, \; s \geq 0; \\
\partial_{\hat{e}_j \hat{e}_j} G^{+}(s, u_1, \ldots, u_d), \quad u_d \geq 0, \; u_1, \ldots u_{d-1} \in \mathbb{R}, \; s \geq 0.
\end{cases}
\end{align*}
On the other hand, for} $G \in \mcb{S}_{\textrm {Disc}}$ \textit{, $\partial_{\hat{e}_d \hat{e}_d} G$ may not be defined on $[0, \infty) \times \mathcal{H}$. Nevertheless, on $[0, \infty) \times (\mathbb{R}^d-\mathcal{H})$, it holds
\begin{align*}
	\partial_{\hat{e}_d \hat{e}_d} G(s, u_1, \ldots, u_d)=
	\begin{cases}
		\partial_{\hat{e}_d \hat{e}_d} G^{-}(s, u_1, \ldots, u_d), \quad u_d <0, \; u_1, \ldots u_{d-1} \in \mathbb{R}, \; s \geq 0; \\
		\partial_{\hat{e}_d \hat{e}_d} G^{+}(s, u_1, \ldots, u_d), \quad u_d > 0, \; u_1, \ldots u_{d-1} \in \mathbb{R}, \; s \geq 0.
	\end{cases}
\end{align*}
In particular, $\Delta G$ is well-defined on $[0, \infty) \times (\mathbb{R}^d-\mathcal{H})$, but may not be defined on $[0, \infty) \times  \mathcal{H}$, when} $G \in \mcb{S}_{\textrm {Disc}}$\textit{.}
\end{rem}
As it was illustrated in Remark \ref{remdisc}, the operators $\partial_{\hat{e}_d \hat{e}_d}$ and $\Delta$ are inadequate on $G \in \mcb{S}_{\textrm {Disc}}$, due to the possible discontinuities of $G(s, \cdot)$ at $\mathcal{H}$ for any $s \geq 0$ . Keeping this in mind, we define two extra operators $\mathbb{L}_d$ and $\mathbb{L}_{\Delta}$ in order to avoid those issues. More exactly, for any $G \in \mcb{S}_{\textrm {Disc}}$, $\mathbb{L}_d G$ and $\mathbb{L}_{\Delta} G$ are given by
\begin{equation} \label{defLd}
\mathbb{L}_d G(s, u_1, \ldots, u_d)=
	\begin{cases}
		\partial_{\hat{e}_d \hat{e}_d} G^{-}(s, u_1, \ldots, u_d), \quad u_d <0, \; u_1, \ldots u_{d-1} \in \mathbb{R}, \; s \geq 0; \\
		\partial_{\hat{e}_d \hat{e}_d} G^{+}(s, u_1, \ldots, u_d), \quad u_d \geq 0, \; u_1, \ldots u_{d-1} \in \mathbb{R}, \; s \geq 0,
	\end{cases}
\end{equation}
and $\mathbb{L}_{\Delta} G:=\mathbb{L}_{d} G + \sum_{j=1}^{d-1} \partial_{\hat{e}_j \hat{e}_j} G$. Thus, $\mathbb{L}_d G \equiv \partial_{\hat{e}_d \hat{e}_d} G$ and $\mathbb{L}_{\Delta} \equiv \Delta G$ on $[0, \infty) \times \mathbb{R}^d$, for any $G \in \mcb S_{\textrm{Dif}}$. Furthermore, for every $G \in \mcb{S}_{\textrm {Disc}}$, $\mathbb{L}_d G \equiv \partial_{\hat{e}_d \hat{e}_d} G$ and $\mathbb{L}_{\Delta} \equiv \Delta G$ on $[0, \infty) \times (\mathbb{R}^d - \mathcal{H})$. Moreover, we observe that $\mathbb{L}_d G$ and $\mathbb{L}_{\Delta}$ are well-defined on $[0, \infty) \times \mathbb{R}^d$, for any $G \in \mcb{S}_{\textrm {Disc}}$.

Given a measure space $(X, \mu)$, for every $p \in [1, \infty]$ let $L^p(X):=L^p(X, \mu)$. In this work, $\mu$ is the Lebesgue measure on $X$. We also denote the norm in $L^p(X)$ by $\| \cdot \|_{p,X}$ for $p< \infty$ and by $\| \cdot \|_{\infty}$ for $p = \infty$.  

\begin{rem} \label{remL1disc}
\textit{
Let} $G \in \mcb{S}_{\textrm {Disc}}$ \textit{and fix} $G^{-}, G^{+} \in \mcb S_{\textrm{Dif}}$ \textit{such that $G=\mathbbm{1}_{\{ [0, \infty)  \times \mathbb{R}_{-}^{d*} \} } G^{-} + \mathbbm{1}_{ \{[0, \infty) \times \mathbb{R}_{+}^{d*} \}} G^{+}$. Then
\begin{align*}
\sup_{s \geq 0} \| G (s, \cdot)  \|_{1, \mathbb{R}^{d}} \leq \sup_{s \geq 0} \| G^{-}(s, \cdot) \|_{1, \mathbb{R}^{d}} + \sup_{s \geq 0} \| G^{+}(s, \cdot) \|_{1, \mathbb{R}^{d}} < \infty.
\end{align*}
Reasoning exactly in the same way, we conclude that $\sup_{s \geq 0} \| \partial_sG(s, \cdot) \|_{1, \mathbb{R}^{d}}$ and $\sup_{s \geq 0} \| \mathbb{L}_{\Delta} G(s, \cdot) \|_{1, \mathbb{R}^{d}}$ are also finite. Furthermore, for $d=1$, it holds
\begin{align*}
	\sup_{s \geq 0} \big\{ |\nabla G(s, 0^{-})| + |\nabla G(s, 0^{+})| \} = \sup_{s \geq 0} \big\{ |\nabla G^{-}(s, 0)| + |\nabla G^{+}(s, 0)| \} \leq \|  \nabla G^{-} \|_{\infty} + \|  \nabla G^{+} \|_{\infty} < \infty.
\end{align*}
}
\end{rem}

In order to observe a macroscopic influence of the slow bonds in our system, our Markov process will be accelerated in time by a factor of $\Theta(n)$, given by
\begin{equation} \label{timescale}
\Theta(n):=
\begin{cases}
n^2, & \quad \gamma > 2; \\
n^2 / \log(n), & \quad \gamma = 2.
\end{cases}
\end{equation}
We make this choice in order to obtain a convergence (as $n \rightarrow \infty$) of a discrete operator to the Laplacian operator in $L^1(\mathbb{R}^d)$, see Proposition \ref{convprincdif} for more details. We study the process $\eta_t^n(\cdot):=\eta_{t \Theta(n)}(\cdot)$, whose infinitesimal generator is given by $\Theta(n) \mcb {L}_n$. We fix $T>0$, which leads to a finite time horizon $[0,T]$. We always assume that we start from a configuration in $\Omega$, therefore the exclusion rule ensures that  $(\eta^n_t)_{t \in [0,T]} \in \mcb{D}([0,T], \Omega)$,  the space of (right-continuous and with left limits) trajectories in $\Omega$. 
\begin{definition} \label{radon}
\textit{	
We denote the space of non-negative Radon measures on $\mathbb{R}^d$ endowed with the vague topology by $\mcb {M}^{+}$. }
\end{definition}
The \textit{empirical measure}, denoted by $\pi^n$, is defined by
\begin{equation}
\pi^n(\eta, \, d \hat{u}) := \frac{1}{n^d} \sum_{\hat{x}} \eta(\hat{x}) \delta_{\frac{\hat{x}}{n}}(d \hat{u}),
\end{equation}
where $\delta_{\hat{v}}$ is the Dirac measure on $\hat{v} \in \mathbb{R}^d$.  Denoting $\pi_t^n:= \pi^n (\eta_t^n, \, d \hat{u})$, we produce a Markov process $(\pi^n_t)_{t \in [0,T]} \in \mcb {D} ([0,T],  \mcb{M}^{+} )$, the space of the c\`adl\`ag (right-continuous and with left limits) trajectories on $\mcb{M}^{+}$. 
For any $G \in C^0_c(\mathbb{R}^d)$, we define
$
\int G d \pi^n(\eta) =  \langle \pi^n(\eta), G \rangle : = \frac{1}{n^d} \sum_{\hat{x}} \eta(\hat{x}) G (\tfrac{\hat{x}}{n} ). 
$
Here, we observe that the elements of $\mcb {M}^{+}$ typically act in $C^0_c(\mathbb{R}^d)$. However, keeping Remark \ref{remL1disc} in mind, we will be interested in applying the empirical measure to  functions that are in the larger space $L^1(\mathbb{R}^d)$. In order to do so, given $G$ in $L^1(\mathbb{R}^d)$ and $(G_j)_{j \geq 1} \subset C^0_c(\mathbb{R}^d)$ such that $(G_j)_{j \geq 1}$ converges to $G$ in $L^1(\mathbb{R}^d)$, we get
\begin{align*} 
\forall \varepsilon >0, \exists \; J(\varepsilon) \geq 1: \forall \eta \in \Omega, \forall j_1, j_2 > J(\varepsilon), \quad \big| \langle \pi^n(\eta), G_{j_1} \rangle - \langle \pi^n(\eta), G_{j_2} \rangle \big| < \varepsilon. 
\end{align*}
In {the} last line, we used the fact that $| \eta(\hat{x})| \leq 1$, for any $\eta \in \Omega$ and any $\hat{x} \in \mathbb{Z}^d$. From the last display, we get that for any fixed $\eta \in \Omega$, $\big( \langle \pi^n(\eta), G_{j} \rangle \big)_{j \geq 1}$ is a Cauchy sequence in $\mathbb{R}$, and in particular it is convergent. Therefore, for any $\eta \in \Omega$, we define
\begin{align} \label{intmedemp}
\forall G \in L^1(\mathbb{R}^d), \quad \langle \pi^n(\eta), G \rangle : =  \lim_{j \rightarrow \infty} \langle \pi^n(\eta), G_{j} \rangle.
\end{align}
The limit in \eqref{intmedemp} does not depend on the sequence $(G_j)_{j \geq 1} \subset C^0_c(\mathbb{R}^d)$ which approximates $G$ in $L^1(\mathbb{R}^d)$, therefore it is well-defined.

\begin{definition}
\textit{	
We say that the sequence $(\mu_n)_{n \geq 1}$ of probability measures on $\Omega$ is \textit{ associated } to a measurable profile $g: \mathbb{R}^d \rightarrow [0,1]$ if
\begin{equation} \label{assoc}
	\lim_{n \rightarrow \infty} \mu_n \big( \eta \in \Omega: \Big| \langle \pi^n, G \rangle - { \int_{\mathbb{R}^{d}} g(\hat{u}) G(\hat{u}) \; d \hat{u} }  \Big| > \delta    \big)=0, 
\end{equation}
for every $\delta>0$ and every $G \in C_c^0(\mathbb{R}^d)$. 
This means that the sequence of random empirical measures $\pi^n (\eta, d \hat{u})$ converges weakly to the deterministic measure $g(\hat{u}) d \hat{u}$ in $\mcb{M}^{+}$, which can be interpreted as a weak Law of Large Numbers. }
\end{definition}
We denote by $( \mathbb{P}_{\mu_n} )_{n \geq 1}$ (resp. $ (\mathbb{Q}_n)_{n \geq 1} $) the sequence of probability measures on $\mcb{D} ([0,T], \Omega)$ (resp. $ \mcb{D} ([0,T], \mcb{M}^{+})$) induced by a given sequence $(\mu_n)_{n \geq 1}$ of probability measures on $\Omega$. We also denote the expectation with respect to $\mathbb{P}_{\mu_n}$ by $\mathbb{E}_{\mu_n}$.

In this work we fix a measurable profile $g: \mathbb{R}^d \rightarrow [0,1]$ and a sequence $(\mu_n)_{n \geq 1}$ of probability measures on $\Omega$  associated  to $g$. This means that \eqref{assoc} holds for $t=0$ and {we} want to prove that $\pi^n (\eta_t^n, d\hat{u})$ converges weakly to a deterministic measure $\rho(t,\hat{u}) d\hat{u}$, in some sense, for every $t \in [0,T]$. We will also prove that $\rho:[0,T] \times \mathbb{R}^d \rightarrow [0,1]$ is a weak solution of a partial differential equation (PDE), the \textit{hydrodynamic equation}. In this work, the PDE will be the heat equation, since we want to observe a diffusive behavior, and for this reason we made the choice $\gamma \geq 2$. 

\subsubsection{Product measures}

Next we introduce a class of product measures defined on the state space $\Omega$ which will be relevant later on. {In order to do so, we define $\textrm{Ref}$, a space of reference functions.}
\begin{definition} \label{defrefprof}
\textit{	
We say that} $h: \mathbb{R}^d \rightarrow [0,1] \in \textrm{Ref}$ \textit{if it satisfies the following conditions:
\begin{enumerate}
\item
$h$ is bounded away from the boundary of $[0,1]$, i.e., there exist $a_h < b_h$ in $(0,1)$ such that $h(\hat{u}) \in [a_h, b_h]$, for every $\hat{u} \in \mathbb{R}^d$;
\item
$h$ is Lipschitz, i.e., there exists $L_h>0$ such that $|h(\hat{v}) - h(\hat{u})| \leq L_h |\hat{v}-\hat{u}|$, for every $\hat{u}, \hat{v} \in \mathbb{R}^d$;
\item
$h$ is constant far from the origin, i.e., there exist $A_h \in [a_h, b_h]$ and $K_h > 0$ such that $h(\hat{u})=A_h$ when $|\hat{u}| \geq K_h$.
\end{enumerate}
}
\end{definition}  
An element of $\textrm{Ref}$ is analogous to the function $h$ introduced in \cite{byrondif, byronsdif}. Moreover (in the same way as it was done in \cite{byrondif, byronsdif}), for every $h \in \textrm{Ref}$ and $n \geq 1$ we define a probability measure $\nu_h^n$ on $\Omega$.
\begin{definition} \label{defberprod}
\textit{Let} $h \in \textrm{Ref}$ \textit{and $n \geq 1$. We denote by $\nu_h^n$ the Bernoulli product measure on $\Omega$ with marginals given by 
\begin{align*}
\forall \hat{x} \in  \mathbb{Z}^d, \quad   1 - \nu_h^n \big( \eta(\hat{x}) = 0 \big) = \nu_h^n \big( \eta(\hat{x}) = 1 \big) = h ( \tfrac{\hat{x}}{n} ).
\end{align*}
}
\end{definition}
In order to use a procedure analogous to the one in \cite{tertuaihp, franco2015phase, byrondif, stefano_patricia, byronsdif, stefano}, in many cases we will require the following assumption:
\begin{equation}  \label{1entbound}
\exists h \in \textrm{Ref}: \quad H ( \mu_n | \nu_h^n ) \lesssim n^d.
\end{equation} 
Above the notation $f(n) \lesssim g(n)$ means that there exists $C >0$ satisfying $0 \leq f(n) \leq C g(n)$, $n \geq 1$.

\subsubsection{Sobolev spaces}

In this subsubsection, we assume $d=1$. Moreover, in this work we always assume that $I \subset \mathbb{R}$ is an open interval and $\bar{I}$ is the closure of $I$. Next we introduce the notation for Sobolev spaces, following Chapter 8 of \cite{brezis2010functional}. We say that $G \in \mcb{H}^1(I)$ if $G \in L^2(I)$ and there exists $g \in L^2(I)$ such that
	\begin{align*}
		\forall \phi \in C_c^{\infty}(I), \quad \int_{\mathbb{R}} G(u) {\nabla} \phi(u) \, du = - \int_{\mathbb{R}} g(u) \phi(u) \, du.  
	\end{align*}
We denote $g$ by $\partial_u G$, and it is the \textit{weak derivative} of $G$. We remark that $\mcb{H}^1(I)$ is a Hilbert space with norm given by $ \| G \|^2_{\mcb{H}^1(I)}  :=  \| G \|^2_{2,I}   +  \| \partial_u G \|^2_{2,I}  $. 
Next we present a classical result stated in \cite{brezis2010functional}, whose proof comes from a direct application of H\"older's inequality.  
\begin{prop} \label{holderrepdif}
\textit{	
For any $f \in \mcb H^{1}(I)$, there exists $\tilde{f}: \bar{I} \rightarrow \mathbb{R}$ such that $f = \tilde{f}$ almost everywhere on $I$ and $| \tilde{f}(v_2) - \tilde{f}(v_1) | \leq \| f \|_{\mcb H^{1}(I)} |v_2-v_1|^{ 1/2 }$, for {every} ${v_1}, v_2 \in \bar{I}$.
}
\end{prop}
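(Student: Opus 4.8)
The plan is to rely on the fundamental-theorem-of-calculus characterization of one-dimensional Sobolev functions (Theorem 8.2 in \cite{brezis2010functional}) and then extract the H\"older bound through a single application of the Cauchy--Schwarz inequality. First I would invoke the fact that every $f \in \mcb{H}^1(I)$ admits a representative $\tilde{f}$, absolutely continuous on $I$, such that for all $v_1, v_2 \in I$ one has $\tilde{f}(v_2) - \tilde{f}(v_1) = \int_{v_1}^{v_2} \partial_u f(t)\, dt$, where $\partial_u f$ is the weak derivative of $f$. This is the only non-elementary input, and it is exactly the step that identifies the abstract $L^2$-class $f$ with a genuine continuous function modulo a null set.

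Granting this representation, I would fix $v_1 < v_2$ in $I$ and estimate
\begin{align*}
|\tilde{f}(v_2) - \tilde{f}(v_1)| = \Big| \int_{v_1}^{v_2} \partial_u f(t)\, dt \Big| \leq \int_{v_1}^{v_2} |\partial_u f(t)|\, dt \leq \Big( \int_{v_1}^{v_2} |\partial_u f(t)|^2\, dt \Big)^{1/2} |v_2 - v_1|^{1/2},
\end{align*}
where the final inequality is Cauchy--Schwarz applied to $|\partial_u f|$ and the constant function $1$ on $(v_1, v_2)$. Since $(v_1, v_2) \subset I$, the first factor is dominated by $\|\partial_u f\|_{2,I}$, and in turn $\|\partial_u f\|_{2,I} \leq \|f\|_{\mcb{H}^1(I)}$ because $\|f\|_{\mcb{H}^1(I)}^2 = \|f\|_{2,I}^2 + \|\partial_u f\|_{2,I}^2$. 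This already yields the claimed bound for every pair $v_1, v_2 \in I$.

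It then remains to pass from $I$ to its closure $\bar{I}$. The H\"older estimate just obtained shows that $\tilde{f}$ is uniformly continuous on $I$ (the modulus of continuity $\|f\|_{\mcb{H}^1(I)}\,|v_2-v_1|^{1/2}$ is independent of the points), so $\tilde{f}$ extends uniquely to a continuous function on $\bar{I}$, and the inequality persists on $\bar{I}$ by taking limits of both sides. I expect no genuine obstacle here: the analytic content lies entirely in the absolutely-continuous representative from \cite{brezis2010functional}, which I take as given, while the remainder is the textbook Cauchy--Schwarz computation; the only point requiring a word of care is the extension to finite endpoints of $\bar{I}$, which is handled by the uniform continuity furnished by the estimate itself.
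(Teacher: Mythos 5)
Your proof is correct and follows exactly the route the paper indicates: the paper does not write out a proof but states that the result is classical (Brezis, Chapter 8) and "comes from a direct application of H\"older's inequality," which is precisely your combination of the absolutely continuous representative from Theorem 8.2 of \cite{brezis2010functional} with the Cauchy--Schwarz estimate $|\int_{v_1}^{v_2}\partial_u f| \leq \|\partial_u f\|_{2,I}|v_2-v_1|^{1/2} \leq \|f\|_{\mcb H^1(I)}|v_2-v_1|^{1/2}$. The extension to $\bar I$ by uniform continuity is also the standard (and correct) final step.
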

We say $f \in \mcb H_{loc}^{1}(\mathbb{R}^{*})$ if there exist $a_1, a_2 >0$ and two intervals $I_1=(-a_1,0)$ and $I_2=(0, a_2)$ such that $f|_{I_j} \in \mcb{H}^1(I_j)$, $j=1,2$. {Due to} Proposition \ref{holderrepdif}, every $f \in \mcb{H}^1(I)$ has an unique continuous representative $\tilde{f}$ defined on $\bar{I}$, the closure of $I$, in the sense that $\tilde{f}(u)=f(u)$, almost everywhere in $I$. In particular, if $f \in \mcb H_{loc}^{1}(\mathbb{R}^{*})$, then $f$ has unique continuous representatives in $[-a_1,0]$ and $[0,a_2]$, and the limits
\begin{align*}
f(0^{-}):= \lim_{\varepsilon \rightarrow 0^{+}} \int_{-\varepsilon}^{0} f(u) \, du  ;  \quad f(0^{+}):=\lim_{\varepsilon \rightarrow 0^{+}} \int_{0}^{\varepsilon} f(u) \, du; 
\end{align*}   
are well-defined. In {the} next subsection, we describe the hydrodynamic equations that can we obtain; they depend on whether we are assuming \eqref{1entbound} or not, and  on the values of $\alpha$, $\beta$ and $\gamma$. 

\subsection{Hydrodynamic equations} \label{sechydeq}

We begin with the hydrodynamic equation obtained when $(\alpha, \beta)=(1,0)$ (i.e., $\alpha=1$ and $\beta=0$). Hereinafter, for every time-dependent function $f$ we denote its temporal derivative by {$\partial_r f$, $\partial_s f$ or} $\partial_t f$. Moreover, for every $s \in [0,T]$, we denote $f_s:=f(s, \cdot): \mathbb{R}^d \rightarrow \mathbb{R}$. 

{In this work, the hydrodynamic equations will be defined in terms of a weak formulation, i.e., its solutions must satisfy some integral equation. Keeping this in mind, for every $t \geq 0$, every $\rho \in L^{\infty}([0, T] \times \mathbb{R}^d)$,  every $G \in \mcb{S}_{\textrm {Disc}}$ and every $\kappa \in \mathbb{R}$, we define  
	\begin{align*}
		F_{\textrm{Dif}}(t, \rho,G, g, \kappa):= & \int_{\mathbb{R}^d} \rho_t(\hat{u}) G_t(\hat{u}) \, d \hat{u} - \int_{\mathbb{R}^d}  g(\hat{u}) G_0( \hat{u}) \, d \hat{u} 
		-  \int_0^t \int_{\mathbb{R}^d} \rho_s( \hat{u}) [ \kappa \mathbb{L}_{\Delta} + \partial_s ] G_s(\hat{u}) \, d \hat{u} \, ds.
	\end{align*}
Combining Remark \ref{remL1disc} with H\"older's inequality, we conclude that $F_{\textrm{Dif}}(t, \rho,G, g, \kappa)$ is well-defined.
}
 \begin{definition} 
 \textit{	
Let $\kappa > 0$ and $g: \mathbb{R}^d \rightarrow [0,1]$ be a measurable function. Then $\rho :[0,T] \times \mathbb{R}^d \rightarrow [0,1]$ is a weak solution of the heat equation with initial condition $g$
\begin{equation} \label{ehd}
\begin{cases}
\partial_t \rho_t ( \hat{u}) = \kappa \Delta \rho_t ( \hat{u}), &\quad (t,\hat{u}) \in [0,T] \times \mathbb{R}^d, \\
\rho_0(\hat{u}) = g(\hat{u}), &\quad \hat{u} \in \mathbb{R}^d, 
\end{cases}
\end{equation}
if for every $t \in [0,T]$ and for every} $G \in \mcb{S}_{\textrm{Dif}} \subsetneq  \mcb{S}_{\textrm{Disc}}$\textit{, we have} $F_{\textrm{Dif}}(t, \rho,G, g,\kappa)=0$\textit{.}
\end{definition}
Next we present the heat equation with Neumann boundary conditions; we observe that the elements of the space of test functions satisfy \textit{Neumann boundary conditions} themselves. Observe that $\partial_{\hat{e}_d}$ is the partial derivative in the direction determined by $\hat{e}_d$, which is normal to the hyperplane $\mathcal{H}$.
\begin{definition} \label{defehdn}
\textit{	
Let $\kappa>0$ and $g: \mathbb{R}^d \rightarrow [0,1]$ be a measurable function. Then $\rho:[0,T] \times \mathbb{R}^d \rightarrow [0,1]$ is a weak solution of the heat equation with Neumann boundary conditions and initial condition $g$ 
	\begin{equation} \label{ehdn}
		\begin{cases}
			\partial_t \rho_t( \hat{u}) = \kappa \Delta \rho_t( \hat{u}), &\quad (t, \hat{u}) \in [0,T] \times { \mathbb{R}^d - \mathcal{H}}, \\
			\partial_{\hat{e}_d} \rho_t(\hat{u}_{\star}, 0^+) = \partial_{\hat{e}_d} \rho_t(\hat{u}_{\star}, 0^-)= 0, &\quad t \in (0,T], \; \hat{u}_{\star} \in {\mathbb{R}^{d-1}}, \\
			\rho_0( \hat{u}) =  g(\hat{u}), &\quad \hat{u} \in \mathbb{R}^d, 
		\end{cases}
	\end{equation}
if for every} $G \in \mcb{S}_{\textrm {Neu}}:=\{ G \in \mcb{S}_{\textrm {Disc}}: \forall s \in [0,T], \forall \hat{u}_{\star} \in {\mathbb{R}^{d-1}}, \quad \partial_{\hat{e}_d} G^{-}_s(\hat{u}_{\star},0)=\partial_{\hat{e}_d} G^{+}_s(\hat{u}_{\star},0)=0  \}$ \textit{and every $t \in [0,T]$, we have} $F_{\textrm{Dif}}(t, \rho,G, g,\kappa)=0$\textit{.}
\end{definition}
\begin{rem} \label{remwelldef1}
\textit{We observe that if $\rho$ is a weak solution of \eqref{ehd} or \eqref{ehdn}, then $0 \leq \rho \leq 1$ on $[0, T] \times \mathbb{R}^{d}$. Combining this with Remark \ref{remL1disc}, we conclude that} $F_{\textrm{Dif}}(t, \rho,G, g, \kappa)$ \textit{is well-defined for every $t \in [0, T]$, every} $G \in \mcb{S}_{\textrm {Disc}}$\textit{, every measurable $g: \mathbb{R}^d \rightarrow [0,1]$ and every $\kappa >0$. }
\end{rem}

When \eqref{1entbound} holds, some technical results (known in the literature as \text{Replacement Lemmas}) can be derived. Under that assumption, for $d=1$ and $\gamma >2$ we can treat a wider space of test functions, such as $\mcb{S}_{\textrm {Disc}}$ instead of $\mcb{S}_{\textrm {Neu}}$. This wider space leads to a new integral equation, given by $F_{\textrm{Neu}}$ below. 
\begin{definition} \label{defehdnlee}
\textit{Let $\kappa>0$ and $g: \mathbb{R} \rightarrow [0,1]$ be a measurable function. Then $\rho:[0,T] \times \mathbb{R} \rightarrow [0,1]$ is a weak solution of the heat equation with Neumann boundary conditions and initial condition $g$
\begin{equation} \label{ehdnlee}
\begin{cases}
\partial_t \rho_t(u) = \kappa \Delta \rho_t(u), &\quad (t,u) \in [0,T] \times \mathbb{R}^{*},  \\
\nabla \rho_t(0^+) = \nabla \rho_t(0^-)= 0, &\quad t \in (0,T], \\
\rho_0(u) =  g(u), &\quad u \in \mathbb{R}, 
\end{cases}
\end{equation}
if the following two conditions hold:}
\begin{enumerate}
\item
 \textit{for every} $G \in \mcb{S}_{\textrm {Disc}}$ \textit{and every $t \in [0,T]$,  we have} $F_{\textrm{Neu}}(t, \rho,G, g, \kappa)=0$\textit{, where}
 	\begin{align*}
		F_{\textrm{Neu}}(t, \rho,G, g,\kappa):=& \int_{\mathbb{R}} \rho_t(u) G_t(u)\, du - \int_{\mathbb{R}} g(u) G_0(u)\, du - \int_0^t \int_{\mathbb{R}} \rho_s(u) [ \kappa { \mathbb{L}_{\Delta}} + \partial_s ] G_s(u)\, du \, ds \\
		+ & \kappa \int_0^t [ \nabla G_s(0^{-}) \rho_s(0^{-}) - \nabla G_s(0^{+}) \rho_s(0^{+}) ]\, ds;
	\end{align*}´
\item
\textit{$\rho_s \in \mcb H_{loc}^1(\mathbb{R}^{*})$, for a.e. $s \in [0,T]$.}
\end{enumerate}
\end{definition}
	\begin{rem} \label{remwelldef2}
		\textit{
		We observe that if $\rho$ is a weak solution of \eqref{ehdnlee}, then $0 \leq \rho \leq 1$ on $[0, T] \times \mathbb{R}$. Combining this with Remark \ref{remL1disc}, we conclude that} $F_{\textrm{Neu}}(t, \rho,G, g, \kappa)$ \textit{is well-defined for every $t \in [0, T]$, every} $G \in \mcb{S}_{\textrm {Disc}}$\textit{, every measurable $g: \mathbb{R} \rightarrow [0,1]$ and every $\kappa >0$. }
	\end{rem}

Due to $(2)$ in Definition \ref{defehdnlee}, $\rho_s(0^{-})$ and $\rho_s(0^{+})$ are well-defined for a.e. $s \in [0,T]$. Combining this with some Replacement Lemmas (which require \eqref{1entbound}), we are allowed to deal with $\mcb{S}_{\textrm {Disc}}$ {as the space} of test functions, which is significantly larger than $\mcb{S}_{\textrm {Neu}}$, the space of test functions of \eqref{ehdn}. Since all weak solutions of \eqref{ehdnlee} are also weak solutions of \eqref{ehdn}, the uniqueness of weak solutions of \eqref{ehdnlee} is a direct consequence of the uniqueness of weak solutions of \eqref{ehdn}. 

The following result is proved in Appendix \ref{secuniq}. {We stress that the boundedness of weak solutions for \eqref{ehd}, \eqref{ehdn} and \eqref{ehdnlee} is crucial to obtain next lemma.}
\begin{lem} \label{lemuniq}
\textit{	
There exists at most one weak solution of \eqref{ehd}, \eqref{ehdn} and \eqref{ehdnlee}.
}
\end{lem}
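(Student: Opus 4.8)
The plan is to prove uniqueness for each of the three equations \eqref{ehd}, \eqref{ehdn} and \eqref{ehdnlee} by the standard energy (or $L^2$) method adapted to the weak formulations encoded in $F_{\textrm{Dif}}$ and $F_{\textrm{Neu}}$. Suppose $\rho^1$ and $\rho^2$ are two bounded weak solutions (with the same initial profile $g$) of one of these equations, and set $w:=\rho^1-\rho^2$. Subtracting the two copies of the identity $F(t,\rho^i,G,g,\kappa)=0$, the initial term $\int g G_0$ cancels, and $w$ satisfies
\begin{align*}
\int_{\mathbb{R}^d} w_t(\hat{u}) G_t(\hat{u})\, d\hat{u} = \int_0^t \int_{\mathbb{R}^d} w_s(\hat{u})\,[\kappa \mathbb{L}_{\Delta}+\partial_s] G_s(\hat{u})\, d\hat{u}\, ds
\end{align*}
for every admissible test function $G$ (plus, in the $F_{\textrm{Neu}}$ case, the boundary terms involving $\nabla G_s(0^\pm)$, which vanish because those cancel between the two solutions once one also uses that the traces $\rho^i_s(0^\pm)$ subtract). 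The key point I would exploit is that $w$ carries zero initial data and that the right-hand side only sees $w$ tested against $[\kappa \mathbb{L}_{\Delta}+\partial_s]G$.

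The heart of the argument is to run an approximation/duality scheme: fix $0<\tau\le T$, and for a given smooth compactly supported spatial function one solves the \emph{backward} heat equation $\partial_s G_s = -\kappa \mathbb{L}_{\Delta} G_s$ on $[0,\tau]$ with terminal data $G_\tau$ prescribed, so that the bracket $[\kappa\mathbb{L}_{\Delta}+\partial_s]G_s$ vanishes identically and the right-hand side collapses to zero. This forces $\int w_\tau G_\tau=0$ for a dense enough family of terminal data $G_\tau$, whence $w_\tau\equiv 0$ for a.e.\ $\tau$. For \eqref{ehd} the backward solutions live in $\mcb{S}_{\textrm{Dif}}$ and this is immediate from standard heat-kernel theory on $\mathbb{R}^d$ (I would truncate to keep compact support and control the tails using that $w$ is bounded and the kernel decays, so that the $L^1$-pairings in Remark \ref{remL1disc} are legitimate). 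For \eqref{ehdn} the domain decouples across $\mathcal{H}$ into the two half-spaces $\mathbb{R}^{d-1}\times(-\infty,0)$ and $\mathbb{R}^{d-1}\times[0,\infty)$; there one uses the Neumann heat semigroup on each half-space separately, whose generator is exactly $\kappa\mathbb{L}_{\Delta}$ restricted to functions satisfying $\partial_{\hat{e}_d}G^\pm_s(\hat{u}_\star,0)=0$, so the backward solutions land precisely in $\mcb{S}_{\textrm{Neu}}$ and no boundary flux is created. The case \eqref{ehdnlee} reduces to \eqref{ehdn}, as the excerpt already notes that every weak solution of \eqref{ehdnlee} is a weak solution of \eqref{ehdn}; so it suffices to prove uniqueness for \eqref{ehd} and \eqref{ehdn}.

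Alternatively, and perhaps more robustly given the unbounded domain, I would run the genuine energy estimate: choosing $G=w$ as a test function (after a mollification in time and a spatial truncation, since $w$ itself is not a legitimate smooth compactly supported test function), one obtains $\tfrac{d}{ds}\tfrac12\|w_s\|_{2}^2 = -\kappa\|\nabla w_s\|_2^2 \le 0$ on each side of $\mathcal{H}$, where the boundary terms from integration by parts vanish — identically for \eqref{ehd}, and by the homogeneous Neumann conditions $\partial_{\hat{e}_d}\rho_s(\hat{u}_\star,0^\pm)=0$ for \eqref{ehdn}, \eqref{ehdnlee}. Since $w_0=0$ this gives $\|w_t\|_2^2\le 0$, hence $w\equiv 0$. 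The main obstacle is exactly the unboundedness of the spatial domain, which is flagged throughout the introduction as the essential novelty: neither $w$ nor $G=w$ has compact support, so I must carefully introduce cutoffs $\chi_R$, justify that the commutator terms $[\mathbb{L}_{\Delta},\chi_R]w$ are $o(1)$ as $R\to\infty$ (using boundedness of $w$ together with the $L^1$-control of $\mathbb{L}_{\Delta}G$ from Remark \ref{remL1disc} and integrability of $w$ against the decaying test functions), and control the time-mollification so that $\partial_s$ and the bracket are handled simultaneously. Reconciling these truncations with the possibly discontinuous structure of elements of $\mcb{S}_{\textrm{Disc}}$ across $\mathcal{H}$ — i.e.\ keeping all integrations by parts confined to $\mathbb{R}^d-\mathcal{H}$ and tracking the half-space boundary contributions — is the delicate bookkeeping that the full proof in Appendix \ref{secuniq} must carry out.
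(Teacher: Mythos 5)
Your primary (duality) argument is sound, but it is a genuinely different route from the paper's. Appendix \ref{secuniq} does not construct backward-in-time test functions: for \eqref{ehd} (Proposition \ref{propuniqdif}) it extends $\rho$ to all of $\mathbb{R}\times\mathbb{R}^d$, mollifies in time and space, verifies that the mollified function is a bounded \emph{classical} $C^{1,2}$ solution of the heat equation with zero initial datum, and then invokes the classical uniqueness theorem for bounded solutions of the Cauchy problem from \cite{evans1998partial}; for \eqref{ehdn} (Proposition \ref{propuniqneu}) it reduces to \eqref{ehd} by an even reflection $\rho^{+}$ of the solution across $\mathcal{H}$, paired with the symmetrization $G^{*}$ of an arbitrary $G\in\mcb S_{\textrm{Dif}}$, which automatically lands in $\mcb{S}_{\textrm {Neu}}$; and \eqref{ehdnlee} is reduced to \eqref{ehdn} exactly as you say. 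Your backward-heat-semigroup scheme is self-contained but forces you to truncate the non-compactly-supported backward solutions and control the cutoff commutator (doable, thanks to the uniform Gaussian tails of $P_{\tau-s}G_\tau$ and the boundedness of $w$); the paper's scheme offloads the hard analysis onto a cited classical theorem and replaces your half-space Neumann semigroups by the reflection trick, which is the same object viewed at the level of test functions. Two caveats on your write-up. First, in the $F_{\textrm{Neu}}$ case the boundary terms do \emph{not} cancel between the two solutions, since the traces $\rho^1_s(0^{\pm})$ and $\rho^2_s(0^{\pm})$ need not agree a priori; what actually saves you is the reduction you state immediately afterwards, namely restricting to $G\in\mcb{S}_{\textrm {Neu}}$ so that $\nabla G_s(0^{\pm})=0$ and the boundary term is simply absent. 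Second, your ``alternative'' energy route is not viable as written: on the unbounded domain a function that is merely bounded need not lie in $L^2(\mathbb{R}^d)$, so $\|w_s\|_{2}$ may be infinite and the inequality $\tfrac{d}{ds}\tfrac12\|w_s\|_2^2\le 0$ carries no information; one would need a Gaussian-weighted energy or, as in your main line and in the paper, an argument tailored to bounded solutions. Since your principal argument is the duality one, the proof goes through.
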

Now we state the main results of this work. We do not present the results for the case $\beta=1$ and $\gamma > 2$, because this case was already treated in Theorem 2.8 of \cite{casodif}.

\subsection{Main results}

We present two theorems, depending on whether we assume \eqref{1entbound} or not. We recall that $c_{2}$ is the normalizing constant $c_{\gamma}$ in \eqref{prob} for $\gamma=2$. The following notation will be useful to state the main results in a clean way. Let
\begin{equation} \label{defkdifgamma}
R_0:= \big( [1,\infty) \times\{2\} \big) \cup \big((1, \infty) \times (2, \infty) \big); \quad \kappa_{\gamma}:=
\begin{cases}
c_2/d, & \quad  \gamma=2; \\
\sigma^2 / 2d, & \quad  \gamma > 2.
\end{cases}
\end{equation}
 \begin{thm} \textbf{(Hydrodynamic Limit without an entropy bound)} \label{hydlim0}
Let $g: \mathbb{R}^d \rightarrow [0,1]$ be a measurable function and $(\mu_n)_{n \geq 1}$ be a sequence of probability measures on $\Omega$ associated to the initial profile $g$. Then, for every $t \in [0,T]$, every $G \in C_c^0(\mathbb{R}^d)$ and every $\delta >0$,
\begin{align*}
\lim_{n \rightarrow \infty} \mathbb{P}_{\mu_{n}} \Big( \eta^n_\cdot \in  \mcb D([0,T],\Omega) : \Big| \langle \pi^n_t, G \rangle - \int_{\mathbb{R}^d} G( \hat{u} ) \rho_t(\hat{u}) \, d \hat{u} \Big| > \delta \Big) =0, 
\end{align*}
where $\rho$ is the unique weak solution of
\begin{align*}
\begin{cases}
\eqref{ehd}, \quad  \text{with} \quad \kappa=\kappa_{\gamma}, &\quad  \text{if} \quad (\alpha, \beta)=(1,0)  \quad \text{and} \quad \gamma \geq 2; \\
\eqref{ehdn}, \quad \text{with} \quad \kappa=\kappa_{\gamma}, &\quad \text{if} \quad \alpha >0  \quad \text{and} \quad  (\beta,\gamma) \in R_0.
 \end{cases}
\end{align*}
\end{thm}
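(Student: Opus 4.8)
The plan is to run the classical entropy method of \cite{GPV}, exploiting the fact that for the symmetric exclusion dynamics the density field evolves \emph{linearly}: since $\mcb{L}_n\eta(\hat{x})$ is linear in $\eta$, no replacement lemma for nonlinear local functions is ever needed, and this is exactly what allows us to dispense with the entropy bound \eqref{1entbound} in the regimes covered here. The argument has the three standard ingredients: tightness of $(\mathbb{Q}_n)_{n\geq 1}$, identification of every limit point as a weak solution, and uniqueness (Lemma \ref{lemuniq}). Throughout, the test functions are taken in $\mcb{S}_{\textrm{Dif}}$ in the case $(\alpha,\beta)=(1,0)$, and in $\mcb{S}_{\textrm{Neu}}$ in the case $(\beta,\gamma)\in R_0$; the latter choice is the crucial one, because the normal derivative of such $G$ vanishes at $\mathcal{H}$, so no surviving boundary term of the form $\nabla G(0^{\pm})\rho_s(0^{\pm})$ appears and no boundary replacement (which is where \eqref{1entbound} would otherwise be used) is required.

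\textbf{Martingale decomposition and operator convergence.} For a fixed admissible $G$, I would apply the extended Dynkin formula of Proposition \ref{dynkform} (valid for test functions only $C^1$ in time) to obtain that
\begin{equation*}
M^n_t(G):=\langle \pi^n_t,G_t\rangle-\langle \pi^n_0,G_0\rangle-\int_0^t\big(\partial_s+\Theta(n)\mcb{L}_n\big)\langle \pi^n_s,G_s\rangle\,ds
\end{equation*}
is a martingale. Using linearity of $\mcb{L}_n$ together with the symmetries $p(\hat{y}-\hat{x})=p(\hat{x}-\hat{y})$ and $r^n_{\hat{x},\hat{y}}=r^n_{\hat{y},\hat{x}}$, a summation by parts transfers the discrete operator onto the test function,
\begin{equation*}
\Theta(n)\mcb{L}_n\langle \pi^n_s,G_s\rangle=\frac{1}{n^d}\sum_{\hat{x}}\eta^n_s(\hat{x})\,(K_nG_s)\big(\tfrac{\hat{x}}{n}\big),
\end{equation*}
where $K_n$ is the discrete operator obtained by acting with $\Theta(n)\mcb{L}_n$ on $G(\tfrac{\cdot}{n})$; this is the operator studied in Lemma \ref{lemconvprinc} and Proposition \ref{convprincdif}. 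By Proposition \ref{convprincdif}, $K_nG_s\to \kappa_\gamma\,\mathbb{L}_\Delta G_s$ in $L^1(\mathbb{R}^d)$, uniformly in $s$; combined with the trivial bound $|\eta^n_s(\hat{x})|\leq 1$ and the definition \eqref{intmedemp}, this lets me replace $\int_0^t\langle\pi^n_s,K_nG_s\rangle\,ds$ by $\kappa_\gamma\int_0^t\langle\pi^n_s,\mathbb{L}_\Delta G_s\rangle\,ds$ up to an error vanishing as $n\to\infty$, with no appeal to any entropy inequality. Here the slow bonds enter through $\Theta(n)$ in \eqref{timescale}: for $(\beta,\gamma)\in R_0$ (so $\beta\geq 1$) the cross-barrier contributions carry the factor $\alpha n^{-\beta}$ and drop out, which is precisely the mechanism producing the decoupling across $\mathcal{H}$ and the homogeneous Neumann conditions. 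The uniform control of the error relies on Proposition \ref{bound}.

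\textbf{Vanishing martingale, tightness, identification.} I would compute the predictable quadratic variation $\langle M^n(G)\rangle_t$ explicitly; each bond $\{\hat{x},\hat{y}\}$ contributes a term of order $\Theta(n)\,n^{-2d}\,p(\hat{y}-\hat{x})\,r^n_{\hat{x},\hat{y}}\big[G(\tfrac{\hat{x}}{n})-G(\tfrac{\hat{y}}{n})\big]^2$, and summing these using the regularity of $G$ yields $\mathbb{E}_{\mu_n}[\langle M^n(G)\rangle_t]\lesssim n^{-d}\to 0$, so $M^n_t(G)\to 0$ in $L^2(\mathbb{P}_{\mu_n})$. Tightness of $(\mathbb{Q}_n)$ on $\mcb{D}([0,T],\mcb{M}^{+})$ is provided by Section \ref{sectight} and Appendix \ref{prooftight}. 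Along any subsequence converging to a limit $\mathbb{Q}$, the bound $|\eta|\leq 1$ forces $\mathbb{Q}$ to be concentrated on trajectories $\pi_t=\rho_t(\hat{u})\,d\hat{u}$ with $0\leq\rho\leq 1$ absolutely continuous with respect to Lebesgue measure. Passing to the limit in the martingale decomposition, using the previous step and $\langle\pi^n_0,G_0\rangle\to\int_{\mathbb{R}^d} g\,G_0$ from the association hypothesis \eqref{assoc}, shows that $\mathbb{Q}$-almost surely $F_{\textrm{Dif}}(t,\rho,G,g,\kappa_\gamma)=0$ for all admissible $G$ and all $t$, i.e. $\rho$ is a weak solution of \eqref{ehd} in the first case and of \eqref{ehdn} in the second. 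By Lemma \ref{lemuniq} this weak solution is unique, so every subsequential limit coincides; hence $\mathbb{Q}_n$ converges to the law of the deterministic trajectory $t\mapsto\rho_t(\hat{u})\,d\hat{u}$, and because the limit is deterministic this upgrades to the stated convergence in probability at each fixed $t$.

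\textbf{Main obstacle.} The delicate step is the operator convergence: showing $K_nG\to\kappa_\gamma\,\mathbb{L}_\Delta G$ in $L^1$ so that the slow-bond contributions produce \emph{exactly} the (vanishing) boundary term appropriate to $G\in\mcb{S}_{\textrm{Neu}}$, and that the resulting error is absorbed uniformly in $n$ and $s$ using only $|\eta|\leq 1$ and Proposition \ref{bound}, rather than an entropy inequality. The long-range nature of $p(\cdot)$ makes this subtle, especially at the borderline exponent $\gamma=2$ where the extra $\log n$ in $\Theta(n)$ is needed; verifying that no boundary contribution survives across $\mathcal{H}$ throughout the regime $(\beta,\gamma)\in R_0$ is the crux of the whole argument.
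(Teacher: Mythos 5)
Your proposal follows essentially the same route as the paper: Dynkin's formula (Proposition \ref{dynkform}), summation by parts onto the test function, deterministic $L^1$ convergence of the discrete operators combined with $0\leq\eta\leq 1$, vanishing quadratic variation, tightness, Portmanteau, and uniqueness via Lemma \ref{lemuniq}; and you correctly identify that in the regime $(\beta,\gamma)\in R_0$ it is the Neumann condition on the test functions that kills the boundary term whose treatment would otherwise require \eqref{1entbound}, while for $(\alpha,\beta)=(1,0)$ the slow-bond term vanishes identically.

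Two attributions are off, however, and one of them reflects a misunderstanding of where the entropy-freeness comes from. First, Proposition \ref{bound} plays no role here: it is a Dirichlet-form estimate stated for densities with respect to $\nu_h^n$ and is invoked only inside the Feynman--Kac/entropy machinery of Section \ref{secheurwithout} (i.e.\ precisely in the replacement lemmas that need \eqref{1entbound}). In Theorem \ref{hydlim0} the residual slow-bond contribution $\frac{\alpha}{n^{\beta}}\Theta(n)\mcb{K}_{n,\mcb S}G_s$ is controlled by the purely deterministic estimate \eqref{claim1largebeta}, which bounds a sup over configurations by $\Theta(n)n^{-1-\beta}\to 0$; no variational or Dirichlet-form input is used. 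Citing Proposition \ref{bound} for "uniform control of the error" would, if taken literally, reintroduce the reference measure you are trying to avoid. Second, for $(\beta,\gamma)\in R_0$ the test functions lie in $\mcb{S}_{\textrm{Neu}}\subset\mcb{S}_{\textrm{Disc}}$ and may be discontinuous across $\mathcal{H}$, so Proposition \ref{convprincdif} (which requires $G\in\mcb{S}_{\textrm{Dif}}$) does not apply in the normal direction; the paper instead works with the corrected operator $\mcb{K}_n^{*}=\tilde{\mcb{K}}_{n,d}+\sum_{j=1}^{d-1}\mcb{K}_{n,j}$, obtained by restricting to same-side pairs and subtracting the remainder $\mcb{R}_{n,d}$ of \eqref{op_Rn}, and uses Proposition \ref{convprincdisc} (Lemmas \ref{lemconvprinc} and \ref{lemconvprincdisc}); the Neumann condition then makes $\mcb{R}_{n,d}G\equiv 0$. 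Relatedly, the linearity of $\mcb{L}_n\langle\pi^n,G\rangle$ in $\eta$ is not what dispenses with the entropy bound (it holds equally in the regimes of Theorem \ref{hydlim1}); what does is that in these two regimes every term not converging to the weak formulation vanishes deterministically. Finally, your claimed rate $n^{-d}$ for the quadratic variation is too optimistic at $\gamma=2$, where the paper only gets $1/(n^{d-1}\log n)$, but the conclusion that it vanishes is unaffected.
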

When we assume \eqref{1entbound}, we are able to obtain a result for a wider range of values of $(\alpha,\beta,\gamma)$. 
\begin{thm} \textbf{(Hydrodynamic Limit with an entropy bound)} \label{hydlim1}
Let $g: \mathbb{R}^d \rightarrow [0,1]$ be a measurable function and $(\mu_n)_{n \geq 1}$ be a sequence of probability measures on $\Omega$ associated to the initial profile $g$ satisfying \eqref{1entbound}. Then, for every $t \in [0,T]$, every $G \in C_c^0(\mathbb{R}^d)$ and every $\delta >0$,
\begin{align*}
\lim_{n \rightarrow \infty} \mathbb{P}_{\mu_{n}} \Big( \eta^n_\cdot \in  \mcb D([0,T],\Omega) : \Big| \langle \pi^n_t, G \rangle - \int_{\mathbb{R}^d} G(\hat{u}) \rho_t(\hat{u}) \, d \hat{u} \Big| > \delta \Big) =0, 
\end{align*}
where $\rho$ is the unique weak solution of
\begin{align*}
\begin{cases}
\eqref{ehd},\quad \text{with} \quad \kappa=\kappa_{\gamma},  &\quad  \text{if} \quad (\alpha,\beta,\gamma) \in (0, \infty) \times [0,1) \times [2, \infty) \quad \text{and} \quad d \geq 1; \\
\eqref{ehdn}, \quad \text{with} \quad \kappa=\kappa_{\gamma},  &\quad  \text{if} \quad \alpha >0 \quad \text{and} \quad (\beta,\gamma) \in R_0 \quad \text{and} \quad d \geq 2; \\
 \eqref{ehdn}, \quad \text{with} \quad \kappa=\kappa_{\gamma},  &\quad  \text{if} \quad (\alpha,\beta,\gamma) \in (0, \infty) \times [1,\infty) \times \{2\} \quad \text{and} \quad d = 1; \\
 \eqref{ehdnlee}, \quad \text{with} \quad \kappa=\kappa_{\gamma},  &\quad  \text{if} \quad (\alpha,\beta,\gamma) \in (0, \infty) \times (1,\infty) \times (2, \infty) \quad \text{and} \quad d = 1. 
\end{cases}
\end{align*}
\end{thm}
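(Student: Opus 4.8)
The plan is to establish the convergence via the entropy method of \cite{GPV}, organized in the usual three stages: tightness of the empirical measures, characterization of the limit points as weak solutions, and uniqueness. First I would prove tightness of $(\mathbb{Q}_n)_{n\geq 1}$ on $\mcb D([0,T],\mcb{M}^{+})$ (Section \ref{sectight}), using the infinite-volume tightness criterion of Appendix \ref{prooftight}. By Prohorov's theorem it then suffices to show that every limit point $\mathbb{Q}$ is concentrated on trajectories of the form $\pi_t = \rho_t(\hat u)\,d\hat u$ with $\rho$ a weak solution of the equation dictated by the regime of $(\alpha,\beta,\gamma,d)$; since each of \eqref{ehd}, \eqref{ehdn}, \eqref{ehdnlee} admits at most one weak solution (Lemma \ref{lemuniq}), identifying any limit point forces convergence of the whole sequence.

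For the characterization, fix a test function $G$ in the space appropriate to the regime ($\mcb{S}_{\mathrm{Dif}}$ for \eqref{ehd}, $\mcb{S}_{\mathrm{Neu}}$ for \eqref{ehdn}, or $\mcb{S}_{\mathrm{Disc}}$ for \eqref{ehdnlee}). Applying the extended Dynkin formula (Proposition \ref{dynkform}, which is needed because $G$ is only $C^1$ in time) yields the martingale
$$M_t^n(G) = \langle \pi_t^n, G_t\rangle - \langle \pi_0^n, G_0\rangle - \int_0^t \langle \pi_s^n, \partial_s G_s\rangle\, ds - \int_0^t \Theta(n)\,\mcb{L}_n \langle \pi_s^n, G_s\rangle\, ds.$$
A standard computation of its quadratic variation, exploiting the prefactor $n^{-d}$ and the time scale $\Theta(n)$ in \eqref{timescale}, gives $\mathbb{E}_{\mu_n}[(M_t^n(G))^2]\to 0$, so the martingale term drops in the limit. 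The core is the integral term: summation by parts (using the symmetry of $p$ and of $r^n_{\hat x,\hat y}$) rewrites
$$\Theta(n)\,\mcb{L}_n \langle \pi_s^n, G_s\rangle = \frac{1}{n^d}\sum_{\hat x}\eta_s^n(\hat x)\,\big(\Theta(n)\,\mathcal{K}_n G_s\big)\!\left(\tfrac{\hat x}{n}\right)$$
for a discrete operator $\mathcal{K}_n$ acting on $G_s$. Proposition \ref{convprincdif} shows that $\Theta(n)\,\mathcal{K}_n G_s \to \kappa_\gamma\,\mathbb{L}_{\Delta}G_s$ in $L^1(\mathbb{R}^d)$; the choices of $\Theta(n)$ and of $\kappa_\gamma$ in \eqref{defkdifgamma} are precisely what turn the limit into the Laplacian. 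When $\beta<1$ the slow bonds contribute nothing macroscopically and one lands on \eqref{ehd}; when $\beta\geq 1$ (with $\gamma$ in the relevant range) the bonds crossing $\mathcal{H}$ produce additional boundary contributions, which I would isolate and show either force the Neumann decoupling (yielding \eqref{ehdn} with test functions in $\mcb{S}_{\mathrm{Neu}}$) or, in the sharpest case $d=1$ and $(\beta,\gamma)\in(1,\infty)\times(2,\infty)$, reproduce exactly the explicit boundary integral $\kappa\int_0^t[\nabla G_s(0^-)\rho_s(0^-)-\nabla G_s(0^+)\rho_s(0^+)]\,ds$ of $F_{\mathrm{Neu}}$.

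To close the equation I would invoke the Replacement Lemmas, which replace the microscopic currents $\eta_s^n(\hat x)[1-\eta_s^n(\hat y)]$ by functions of the empirical density averaged over small boxes; this is exactly the step that uses the entropy assumption \eqref{1entbound}, via the relative-entropy bound $H(\mu_n\,|\,\nu_h^n)\lesssim n^d$ and the associated Dirichlet-form estimates (Proposition \ref{bound}). For the regimes yielding \eqref{ehd} and \eqref{ehdn} this already identifies $\rho$ as the unique weak solution. For the $d=1$ regime yielding \eqref{ehdnlee}, I would additionally establish local energy estimates (Section \ref{secenerest}) proving $\rho_s\in\mcb H^1_{loc}(\mathbb{R}^{*})$ for a.e.\ $s$, so that the traces $\rho_s(0^{\pm})$ are well defined through Proposition \ref{holderrepdif} and condition (2) of Definition \ref{defehdnlee} is met.

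The main obstacle I anticipate is the interaction between the unbounded domain and the boundary analysis. Because we are in infinite volume, only \emph{local} energy estimates are available, so controlling the boundary contributions near $\mathcal{H}$ — and in the \eqref{ehdnlee} regime extracting the $\mcb H^1_{loc}$ regularity required to make sense of $\rho_s(0^{\pm})$ — is delicate and rests squarely on the quantitative bounds of Proposition \ref{bound}. A secondary difficulty is the bookkeeping across the several $(\alpha,\beta,\gamma,d)$ regimes, in which the surviving boundary terms differ; here the uniform $L^1$-convergence of Proposition \ref{convprincdif} is what lets a single characterization argument cover all cases once the boundary terms have been correctly identified.
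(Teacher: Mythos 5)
Your proposal follows essentially the same route as the paper: tightness via the infinite-volume criterion, the extended Dynkin formula, rewriting the integral term through discrete operators converging in $L^1$ to $\kappa_\gamma\mathbb{L}_{\Delta}$ (with the boundary corrections isolated for discontinuous test functions), Replacement Lemmas resting on the entropy bound and the Dirichlet-form estimate of Proposition \ref{bound}, the local energy estimates giving $\rho_s\in\mcb H^1_{loc}(\mathbb{R}^{*})$ in the $d=1$, $\gamma>2$ regime, and uniqueness of weak solutions to conclude. The only imprecision is attributing the convergence for discontinuous $G$ to Proposition \ref{convprincdif} rather than to the corrected operator $\mcb{K}_n^{*}$ of Proposition \ref{convprincdisc}, but you correctly flag that the crossing-bond contributions must be separated out, which is exactly what that correction accomplishes.
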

The rest of this work is devoted to the proof of the two previous theorems. In Section \ref{sectight}, we prove that the sequence $(\mathbb{Q}_n)_{n \geq 1}$ is tight with respect to the Skorohod topology of $\mcb D ( [0,T], \mcb{M}^+)$ and therefore it has at least one limiting point $\mathbb{Q}$. We prove that any limit point $\mathbb{Q}$ is concentrated on trajectories that satisfy the first (resp. second) condition of weak solutions of the corresponding hydrodynamic equations from the results of Sections \ref{seccharac} and \ref{secheurwithout} (resp. Section \ref{secenerest}). The necessary Replacement Lemmas are proved in Section \ref{secheurwithout} and the uniqueness of weak solutions to the hydrodynamic equations is derived in Appendix \ref{secuniq}. Finally, we present some auxiliary results {in Appendices \ref{secdiscconv}, \ref{estdirfor}, \ref{prooftight} and \ref{miscdynk}. We stress} that the uniqueness of weak solutions to the corresponding hydrodynamic equation implies the uniqueness of the limit point $\mathbb{Q}$ and the convergence of the sequence $(\mathbb{Q}_n)_{n \geq 1}$, leading to the desired result.

\section{Heuristic argument to deduce the hydrodynamic equations} \label{secheur}

Observe that {$\mcb S_{\textrm{Dif}} \subsetneq \mcb{S}_{\textrm {Disc}}$ and $ \mcb{S}_{\textrm {Neu}} \subsetneq \mcb{S}_{\textrm {Disc}}$}, therefore all test functions  in Theorems \ref{hydlim0} and \ref{hydlim1} belong to $\mcb{S}_{\textrm {Disc}}$. {Before we proceed, we state a result  which is known in the literature as Dynkin's formula, and we postpone its proof to Appendix \ref{miscdynk}.} 
\begin{prop} \label{dynkform}
\textit{
Let $n \geq 1$, $\gamma \geq 2$, $d \geq 1$ and} $G \in \mcb{S}_{\textrm {Disc}}$\textit{. Then the process $( \mcb M_{t}^{n}(G) )_{0 \leq t \leq T}$ defined by
	\begin{equation} \label{defMnt}
		\mcb M_{t}^{n}(G) := \langle \pi_{t}^{n},G_t \rangle - \langle \pi_{0}^{n},G_0  \rangle  -  \int_0^t   \langle  \pi_{s}^{n},\partial_s G_s  \rangle ds - \int_0^t  \Theta(n) \mcb {L}_n ( \langle  \pi_{s}^{n},G_s  \rangle ) ds
	\end{equation}
is a martingale with respect to the natural filtration $\{ \mcb F_t \}_{t \geq 0}$, where $\mcb F_t:=\sigma ( \{ \eta_s^n \}_{s \in [0,t] } )$ for every $t  \in [0, T]$. Moreover, for every $t \in [0, T]$ it holds
	\begin{equation} \label{defNnt}
		\mathbb{E} \Big[ \big( \mcb M_{t}^{n}(G) \big)^2 \big] =  \mathbb{E} \Big[  \int_0^t \Theta(n) [ \mcb {L}_n (  \langle \pi_{t}^{n},G_t \rangle^2 ) - 2 \langle \pi_{t}^{n},G_t \rangle  \mcb {L}_n (  \langle \pi_{t}^{n},G_t \rangle )  ] ds \Big].
	\end{equation}
}
\end{prop}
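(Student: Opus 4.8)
The plan is to build the time-dependent martingale $\mcb M^n_\cdot(G)$ out of the classical time-homogeneous Dynkin formula by a telescoping argument over a partition of $[0,t]$, arranged so that the only regularity of $G$ in time that enters is $C^1$, via the fundamental theorem of calculus rather than a second-order Taylor expansion. First I would record the structural facts about $F^n_s(\eta):=\langle \pi^n(\eta),G_s\rangle$. Since each $G_s$ has compact support uniformly in $s\in[0,T]$, the sum $\tfrac{1}{n^d}\sum_{\hat x}\eta(\hat x)G_s(\hat x/n)$ ranges over finitely many sites, so $F^n_s$ is a bounded local function of $\eta$ with $\sup_{s,\eta}|F^n_s(\eta)|<\infty$. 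Writing out $\nabla_{\hat x,\hat y}$ one obtains
\[
\mcb L_n F^n_s(\eta)=\frac{1}{2n^d}\sum_{\hat x,\hat y}p(\hat y-\hat x)\,r_{\hat x,\hat y}^n\,[\eta(\hat y)-\eta(\hat x)]\,[G_s(\tfrac{\hat x}{n})-G_s(\tfrac{\hat y}{n})],
\]
and the first genuine task is the uniform bound $\sup_{s\in[0,T],\,\eta}\Theta(n)|\mcb L_n F^n_s(\eta)|\le C_n<\infty$. This uses compact support (only terms with $\hat x$ or $\hat y$ in the support survive), the piecewise-Lipschitz estimate $|G_s(\tfrac{\hat x}{n})-G_s(\tfrac{\hat y}{n})|\le\min(2\|G\|_\infty,\,L|\hat x-\hat y|/n)$ away from $\mathcal{H}$ together with the crude bound $2\|G\|_\infty$ for bonds crossing $\mathcal{H}$, and the summability $\sum_r r^{-\gamma-1}\min(1,r/n)<\infty$ guaranteed by $\gamma\ge 2$. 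This finiteness places $F^n_s$ in the domain of the infinite-volume generator and legitimizes all the manipulations below.

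Next, for each frozen time-slice $s_0$, $F^n_{s_0}$ is a bounded local function in the domain of $\mcb L_n$, so the time-homogeneous Dynkin formula for interacting particle systems (Appendix 1 of \cite{kipnis1998scaling}) gives that $t\mapsto F^n_{s_0}(\eta^n_t)-F^n_{s_0}(\eta^n_0)-\int_0^t\Theta(n)\mcb L_n F^n_{s_0}(\eta^n_u)\,du$ is an $\{\mcb F_t\}$-martingale. To incorporate the time dependence I would fix a partition $0=t_0<\cdots<t_m=t$ and telescope $\langle\pi^n_t,G_t\rangle-\langle\pi^n_0,G_0\rangle=\sum_i(A_i+B_i)$, where $B_i=F^n_{t_i}(\eta^n_{t_{i+1}})-F^n_{t_i}(\eta^n_{t_i})$ is governed by the homogeneous martingale above, while $A_i=\langle\pi^n_{t_{i+1}},G_{t_{i+1}}-G_{t_i}\rangle=\int_{t_i}^{t_{i+1}}\langle\pi^n_{t_{i+1}},\partial_s G_s\rangle\,ds$ by the fundamental theorem of calculus. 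This is precisely the step where only $C^1$ in time is required: I never expand $G_{t_{i+1}}-G_{t_i}$ to second order, so no control on $\partial_s^2 G$ appears, in contrast with the usual Taylor-remainder argument that forces $C^2$. Letting the mesh tend to zero, $\sum_i A_i\to\int_0^t\langle\pi^n_s,\partial_s G_s\rangle\,ds$ and $\sum_i\int_{t_i}^{t_{i+1}}\Theta(n)\mcb L_n F^n_{t_i}(\eta^n_u)\,du\to\int_0^t\Theta(n)\mcb L_n\langle\pi^n_u,G_u\rangle\,du$, using the c\`adl\`ag regularity of $s\mapsto\pi^n_s$, the uniform continuity of $s\mapsto\partial_s G_s$ and of $s\mapsto\mcb L_n F^n_s$, and the uniform bound from the first step together with dominated convergence; the limit of the sum of the martingale increments $B_i$ remains an $\{\mcb F_t\}$-martingale, and this limit is exactly $\mcb M^n_\cdot(G)$.

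Finally, for the second-moment identity \eqref{defNnt} I would invoke that the predictable quadratic variation of a jump-process Dynkin martingale is given by the carr\'e du champ: the compensator $\int_0^t\langle\pi^n_s,\partial_s G_s\rangle\,ds$ that I added is a continuous adapted process of finite variation and hence contributes nothing to the bracket, so the quadratic variation of $\mcb M^n(G)$ coincides with that of its purely jump part, namely $\int_0^t\Theta(n)\big[\mcb L_n(F^n_s)^2-2F^n_s\,\mcb L_n F^n_s\big](\eta^n_s)\,ds$; taking expectations and using the martingale property yields \eqref{defNnt}. The main obstacle throughout is analytic bookkeeping rather than algebra: establishing the uniform-in-$(s,\eta)$ absolute convergence of $\mcb L_n F^n_s$ (the step that genuinely exploits $\gamma\ge 2$ and compact support, and which also ensures a.s.\ finitely many jumps affecting the support on $[0,t]$), and rigorously interchanging the partition limit with the expectation. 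It is exactly this justification that makes the passage from $C^2$ to $C^1$ in the temporal variable legitimate, and it is the content this proposition adds relative to what has been taken for granted in the literature.
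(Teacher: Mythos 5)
Your proposal is correct, but it takes a genuinely different route from the paper. The paper's proof (Appendix E) is a patch of the classical argument: it follows Lemma 5.1 in Appendix 1.5 of \cite{kipnis1998scaling} essentially verbatim, observes that the $C^2$-in-time hypothesis is used there at exactly one point (to show that $\lim_{h\to 0^+}\frac1h\int_t^{t+h}E_x[F_r'(X_{t-s+h})-F_t'(X_{t-s+h})]\,dr=0$, first line of p.~331), and replaces that step by Lemma \ref{lemdynk}, which establishes the needed limit using only the uniform continuity of $\partial_s G^{\pm}$ — a consequence of $C^1$ regularity plus compact support. You instead rebuild the martingale from scratch: freeze $G$ at the left endpoints of a partition, apply the time-homogeneous Dynkin formula on each subinterval, absorb the time increment of $G$ by the fundamental theorem of calculus (so no Taylor remainder, hence no $\partial_s^2 G$, ever appears), and pass to the mesh-zero limit using the c\`adl\`ag regularity of the path, the uniform bound on $\Theta(n)\mcb L_n F^n_s$, and dominated convergence; the quadratic variation then follows from the carr\'e du champ since the added compensator is continuous and of finite variation. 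Your approach is longer and demands more care in the limit interchanges (which you correctly identify as the real work), but it is self-contained and makes structurally transparent \emph{why} $C^1$ in time suffices; the paper's approach is much shorter but requires the reader to trace through the reference and verify that the $C^2$ hypothesis enters nowhere else. Both arguments are valid, and the analytic ingredients you flag — absolute convergence of $\mcb L_n F^n_s$ uniformly in $(s,\eta)$ via compact support and $\gamma\ge2$, and the bounded-convergence passage to the limit of the partition martingales — are exactly the right ones to check.
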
 
\begin{rem}
	\textit{
When $G$ is of class $C^2$ regarding the time, the last result is a direct consequence of Lemma 5.1 in Appendix 1.5 of \cite{kipnis1998scaling}. This extra regularity was assumed in previous works, see \cite{casodif, superdif} e.g. However, in this work we \textit{do not} impose this extra regularity for our test functions. Therefore, the arguments in Appendix 1.5 of  \cite{kipnis1998scaling} need to be tweaked in order to obtain Proposition \ref{dynkform}. This is done in Appendix \ref{miscdynk}.  
}
\end{rem}
The last term on the right-hand side of \eqref{defMnt} is known in the literature as the \textit{integral term} and it will determine which hydrodynamic equation we will obtain, depending on the values of $\alpha$, $\beta$, $\gamma$ and $d$. Next, we perform some (heuristic) computations in order to study the $L^1(\mathbb{P}_{\mu_{n}})$-convergence of that integral term. According to this, we define some operators which act on $G \in \mcb{S}_{\textrm {Disc}}$.

For $\mcb C \in \{\mcb B, \mcb S\}$, we define the operator $\mcb{K}_{n, \mcb C}$ by 
\begin{equation}\label{op_Kn}
\mcb{K}_{n, \mcb C} G_s ( \tfrac{ \hat{x} }{n} ): = \sum_{  \hat{y} : \, \{ \hat{x},  \hat{y} \} \in \mcb C} [ G_s( \tfrac{ \hat{y}}{n}) -G_s( \tfrac{ \hat{x}}{n}) ] p(\hat{y}-\hat{x}).
\end{equation}
Next we treat two different cases: $\beta \in [0,1)$ and $(\beta,\gamma) \in R_0$.

\textbf{I.:} Case $\beta \in [0,1)$. In this case the space of test functions is $\mcb S_{\textrm{Dif}}$ and we make use of the next two results. Proposition \ref{convprincdif} (resp. Proposition \ref{replemsmallbeta}) is proved in Appendix \ref{secdiscconv} (resp. Section \ref{secheurwithout}).   
\begin{prop} \label{convprincdif}
\textit{
For every} $G \in \mcb S_{\textrm{Dif}}$ \textit{we have
\begin{align*} 
\lim_{n \rightarrow \infty} \frac{1}{n^d} \sum_{ \hat{x} } \; \sup_{s \in [0,T]} | \Theta(n)  \mcb{K}_{n, \mcb B} G_s (\tfrac{\hat{x}}{n} )  - \kappa_{\gamma}  \Delta G_s  (\tfrac{\hat{x}}{n} ) | =0. 
\end{align*}
}
\end{prop}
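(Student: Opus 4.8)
The plan is to exploit the axial structure of $p(\cdot)$. Since diagonal jumps are forbidden, every bond $\{\hat x,\hat y\}\in\mcb B$ has $\hat y=\hat x+r\hat e_j$ for some direction $j\in\{1,\dots,d\}$ and some $r\neq 0$, with $p(r\hat e_j)=c_\gamma|r|^{-\gamma-1}/d$. Thus $\mcb K_{n,\mcb B}$ splits additively over the $d$ coordinate directions, each acting on a single coordinate, and using the symmetry $p(r\hat e_j)=p(-r\hat e_j)$ to pass to symmetric second differences,
\[
\Theta(n)\mcb K_{n,\mcb B}G_s(\tfrac{\hat x}{n})=\sum_{j=1}^d\frac{c_\gamma}{d}\,\Theta(n)\sum_{r\geq 1}\Big[G_s(\tfrac{\hat x}{n}+\tfrac{r}{n}\hat e_j)+G_s(\tfrac{\hat x}{n}-\tfrac{r}{n}\hat e_j)-2G_s(\tfrac{\hat x}{n})\Big]r^{-\gamma-1}.
\]
Because the target $\kappa_\gamma\Delta G=\kappa_\gamma\sum_{j}\partial_{\hat e_j\hat e_j}G$ has the same additive structure, it suffices to prove the one-dimensional statement for each fixed $j$ and then sum over $j$; this reduces the matter to the one-dimensional computation already carried out in \cite{casodif}, now with the $j$-th coordinate playing the role of the one-dimensional variable and the remaining $d-1$ coordinates held fixed.

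For each fixed $j$ I would split the rate sum at jump length $r\sim n$. For short jumps $1\leq r\leq\varepsilon n$, a second-order Taylor expansion of $G_s$ in the direction $\hat e_j$ (legitimate since $G\in\mcb S_{\textrm{Dif}}=C_c^{1,2}$) writes the symmetric second difference as $(r/n)^2\partial_{\hat e_j\hat e_j}G_s(\tfrac{\hat x}{n})$ plus a remainder controlled by the modulus of continuity of $\partial_{\hat e_j\hat e_j}G_s$. The leading term contributes
\[
\frac{c_\gamma}{d}\,\frac{\Theta(n)}{n^2}\,\partial_{\hat e_j\hat e_j}G_s(\tfrac{\hat x}{n})\sum_{r\geq 1}r^{1-\gamma},
\]
and here the two regimes of \eqref{timescale} separate. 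For $\gamma>2$ the series $\sum_r r^{1-\gamma}$ converges and $\Theta(n)=n^2$, so the prefactor tends to $\tfrac{c_\gamma}{d}\sum_{r\geq 1}r^{1-\gamma}=\sigma^2/(2d)=\kappa_\gamma$. For $\gamma=2$ the series diverges logarithmically, but the effective cutoff at $r\sim n$ gives $\sum_{r=1}^{\sim n}r^{-1}\sim\log n$, which cancels exactly against $\Theta(n)=n^2/\log n$ to produce $c_2/d=\kappa_2$. The long jumps $r>\varepsilon n$ are handled by the crude bound $|G_s(\tfrac{\hat x}{n}\pm\tfrac{r}{n}\hat e_j)|+|G_s(\tfrac{\hat x}{n})|\leq 3\|G\|_\infty$, whose total contribution is of order $\Theta(n)\sum_{r>\varepsilon n}r^{-\gamma-1}\lesssim\Theta(n)(\varepsilon n)^{-\gamma}$, negligible for $\gamma\geq 2$ after the $n^{-d}$ averaging.

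Two further points require care. First, the assertion is an $L^1$-type convergence $\tfrac{1}{n^d}\sum_{\hat x}\sup_{s}|\cdots|$, so all Taylor remainders and tail bounds must be uniform in $\hat x$ and $s$; this is exactly where the compact support of $G$ in $\hat u$ and its joint regularity in $(s,\hat u)$ enter, turning the $\hat x$-sum into a bounded Riemann sum over a fixed compact set and forcing the modulus-of-continuity remainder to vanish as $n\to\infty$. Second, although $G$ has compact support, long jumps make $\mcb K_{n,\mcb B}G_s(\tfrac{\hat x}{n})$ nonzero even for $\hat x$ far from $\mathrm{supp}\,G$; however, since no diagonal jumps are allowed, a nonzero far-field value forces all but one coordinate of $\tfrac{\hat x}{n}$ to lie in the support, so these contributions live in $d$ axial tubes emanating from $\mathrm{supp}\,G$, and within each tube the tail is controlled exactly as in the one-dimensional far-field estimate, giving a contribution of order $\Theta(n)\,n^{-\gamma}$ after summation, which vanishes for $\gamma\geq 2$.

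I expect the main obstacle to be the borderline case $\gamma=2$. There the rate series diverges, so one cannot exchange limit and summation, and the whole argument hinges on matching the logarithmic cutoff at scale $r\sim n$ against the logarithmic time rescaling in $\Theta(n)=n^2/\log n$, \emph{uniformly} over the compact range of relevant $\hat x$. Choosing the short-jump/long-jump split at a length that simultaneously controls the Taylor remainder and recovers the correct constant $c_2/d$ is the delicate quantitative step.
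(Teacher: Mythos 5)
Your proposal is correct and follows essentially the same route as the paper: the paper reduces to the directional operators $\mcb{K}_{n,j}$ (Lemma \ref{lemconvprinc} in Appendix \ref{secdiscconv}), handles the far-field axial contributions and the long jumps $|r|\geq\varepsilon n$ by the crude $\|G\|_\infty$ bound yielding $O(\Theta(n)n^{-\gamma})$, and for short jumps uses the symmetry of $p(\cdot)$ plus a second-order Taylor expansion, with the constant $\kappa_\gamma$ recovered via the limit \eqref{convkappagamma}, which is exactly where the $\log n$ cancellation for $\gamma=2$ that you flag is carried out. No gaps.
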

\begin{prop} \label{replemsmallbeta}
\textit{
Assume \eqref{1entbound}, $\beta \in [0,1)$ and $t \in [0,T]$. For every} $G \in \mcb S_{\textrm{Dif}}$ \textit{we have
\begin{equation*} 
 \lim_{n \rightarrow \infty} \mathbb{E}_{\mu_n} \Big[  \Big| \int_{0}^{t} \sum _{\hat{x} }  \frac{\Theta(n)}{n^d} \mcb {K}_{n,\mcb S} G_s \left( \tfrac{ \hat{x} }{n} \right)  \eta_{s}^{n}(\hat{x}) \, ds \Big| \Big] =0.
\end{equation*}
}
\end{prop}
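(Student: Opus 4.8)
The plan is to prove this replacement-type estimate via the entropy inequality combined with the Feynman--Kac formula, reducing it to a static variational problem controlled by the Dirichlet form of the accelerated generator. A purely deterministic bound does not suffice: since $0\le \eta^n_s(\hat x)\le 1$, estimating the absolute value by $\int_0^t\frac{\Theta(n)}{n^d}\sum_{\hat x}|\mcb{K}_{n,\mcb{S}}G_s(\tfrac{\hat x}{n})|\,ds$ and using the Lipschitz bound $|G_s(\tfrac{\hat y}{n})-G_s(\tfrac{\hat x}{n})|\le C_G|\hat y-\hat x|/n$ produces a quantity of order $\Theta(n)/n^2$, which is $O(1)$ when $\gamma>2$ and thus does not vanish. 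One must therefore exploit cancellations in the occupation variables, and this is where the hypothesis \eqref{1entbound} enters.

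First I would abbreviate $V^n_s(\eta):=\frac{\Theta(n)}{n^d}\sum_{\hat x}\mcb{K}_{n,\mcb{S}}G_s(\tfrac{\hat x}{n})\eta(\hat x)$ and apply the entropy inequality: for every $B>0$,
\[
\mathbb{E}_{\mu_n}\Big[\Big|\int_0^t V^n_s(\eta^n_s)\,ds\Big|\Big]\le \frac{H(\mu_n\,|\,\nu_h^n)+\log 2}{Bn^d}+\frac{1}{Bn^d}\max_{\pm}\log \mathbb{E}_{\nu_h^n}\Big[\exp\Big(\pm Bn^d\int_0^t V^n_s(\eta^n_s)\,ds\Big)\Big],
\]
where $h\in\textrm{Ref}$ is the reference profile furnished by \eqref{1entbound}, so that the first term is $\lesssim 1/B$. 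By the Feynman--Kac formula and the variational formula for the top eigenvalue (taking the supremum over $s\in[0,t]$ to absorb the time dependence of $G_s$), the exponential term is bounded by
\[
\frac{t}{Bn^d}\sup_{s\in[0,t]}\sup_{f}\Big\{\pm Bn^d\langle V^n_s,f\rangle_{\nu_h^n}-\Theta(n)\langle \sqrt f,(-\mcb{L}_n)\sqrt f\rangle_{\nu_h^n}\Big\},
\]
the inner supremum ranging over densities $f$ with respect to $\nu_h^n$.

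The heart of the argument is the static estimate. Using the symmetry of $p$ and of the bond set $\mcb{S}$, a summation by parts rewrites $\langle V^n_s,f\rangle_{\nu_h^n}$ as a sum over slow bonds of $p(\hat y-\hat x)[G_s(\tfrac{\hat y}{n})-G_s(\tfrac{\hat x}{n})]\langle \eta(\hat x)-\eta(\hat y),f\rangle_{\nu_h^n}$. Since $\nu_h^n$ is only approximately reversible for $\Theta(n)\mcb{L}_n$, I would first split $\langle \sqrt f,(-\mcb{L}_n)\sqrt f\rangle_{\nu_h^n}$ into its symmetric Dirichlet form $D_n(\sqrt f)$ (a weighted sum of the $\int(\sqrt{f(\eta^{\hat x,\hat y})}-\sqrt{f(\eta)})^2\,d\nu_h^n$ with weights $\Theta(n)p(\hat y-\hat x)r_{\hat x,\hat y}^n$) plus a drift correction; the Lipschitz property of $h$ (Definition \ref{defrefprof}) makes the Radon--Nikodym factors differ from $1$ by $O(L_h|\hat y-\hat x|/n)$, so this correction is absorbed into lower-order terms. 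Rather than pairing each long slow bond directly against the slow-bond part of $D_n$ --- which would introduce a prohibitive factor $n^{\beta}$ and fail for $\gamma$ close to $2$ --- I would decompose each current $\eta(\hat x)-\eta(\hat y)$ across a slow bond into a telescoping sum of nearest-neighbour gradients in the $\hat{e}_d$-direction. Only the single nearest-neighbour bond straddling $\mathcal{H}$ is slow; all the others are fast. A Cauchy--Schwarz/Young step then bounds $\pm Bn^d\langle V^n_s,f\rangle_{\nu_h^n}$ by $\Theta(n)D_n(\sqrt f)$ (which is cancelled by the Dirichlet form term) plus an explicit deterministic remainder of order $B^2\Theta(n)(n^{\beta}\Sigma^{\mathrm{slow}}_n+\Sigma^{\mathrm{fast}}_n)$, where $\Sigma^{\mathrm{slow}}_n,\Sigma^{\mathrm{fast}}_n$ are sums of the squared rerouting coefficients over the slow and fast nearest-neighbour bonds respectively.

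Inserting this into the entropy bound (multiplying by $t/(Bn^d)$) leaves a term of order $\frac{B\,\Theta(n)}{n^d}\big(n^{\beta}\Sigma^{\mathrm{slow}}_n+\Sigma^{\mathrm{fast}}_n\big)$, and it remains to show it vanishes. The unique slow crossing bond in each transverse column carries the factor $n^{\beta}$ but has a coefficient of order $1/n$, so $\Sigma^{\mathrm{slow}}_n$ is of order $n^{d-3}$ and its contribution is of order $B\,\Theta(n)\,n^{\beta-3}\sim B\,n^{\beta-1}$, which tends to $0$ precisely because $\beta<1$: this is the only place the hypothesis $\beta\in[0,1)$ is used. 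The fast-bond sum $\Sigma^{\mathrm{fast}}_n$ is estimated through Proposition \ref{bound}, whose summation bounds on the tails of $p(\cdot)$ control the long-range coefficients; combined with the choice of time scale $\Theta(n)$ in \eqref{timescale} (the logarithmic correction at $\gamma=2$ being essential), this yields a contribution tending to $0$ for every $\gamma\ge 2$. Letting $n\to\infty$ and then $B\to\infty$ gives the claim. I expect the main obstacle to be exactly this static variational analysis for $\gamma$ near $2$: one must reroute the long jumps through nearest-neighbour bonds so that the dominant penalty is paid against the fast Dirichlet form of order $\Theta(n)$ rather than the slow one of order $\Theta(n)n^{-\beta}$, and then control the resulting long-range sums sharply via Proposition \ref{bound}.
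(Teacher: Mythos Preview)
Your approach is correct and shares the same backbone as the paper's: entropy/Feynman--Kac reduction to a static variational problem, summation by parts to produce gradients $\eta(\hat x)-\eta(\hat y)$ over slow bonds, telescoping into nearest-neighbour steps in the $\hat e_d$-direction, and the observation that only the single step $\{(\hat x_\star,-1),(\hat x_\star,0)\}$ in each column is slow, which is exactly where $\beta<1$ enters.

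The paper organizes this differently. Before invoking entropy/Feynman--Kac it performs two Taylor expansions on $G_s(\hat x_\star/n,\cdot)$: after an easy deterministic truncation of long bonds, the second-order expansion around $x_d/n$ and then a first-order expansion of $\partial_{\hat e_d}G_s(\hat x_\star/n,\cdot)$ around $0$ leave a single leading term with coefficient $\partial_{\hat e_d}G_s(\hat x_\star/n,0)\,(x_d-y_d)^{-\gamma}$; the other pieces vanish deterministically. Only then does the paper apply replacement lemmas, and it adds one more localization trick: it first shows deterministically that the contribution of $x_d\ge \ell=\varepsilon\Theta(n)/n$ is negligible, so the telescoping involves at most $\ell$ fast steps per column, which makes the bookkeeping of your $\Sigma_n^{\mathrm{fast}}$ essentially trivial. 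Your direct-telescoping route avoids the Taylor step but pays for it with a more delicate estimate of $\Sigma_n^{\mathrm{fast}}=\sum_{\hat x_\star}\sum_{k\neq-1}|c_k|^2$; a straightforward computation using $|G_s(y_d/n)-G_s(x_d/n)|\lesssim (x_d-y_d)/n$ and the compact support of $G$ gives, per column, $\sum_k|c_k|^2\lesssim n^{3-2\gamma}$ for $2<\gamma<5/2$, $\lesssim n^{-2}$ for $\gamma\ge 5/2$, and $\lesssim n^{-1}$ for $\gamma=2$, which after multiplication by $B\Theta(n)/n$ indeed tends to $0$ in all cases.

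One correction: Proposition~\ref{bound} is the Dirichlet-form comparison $\langle \mcb L_n\sqrt f,\sqrt f\rangle_{\nu_h^n}\le -\tfrac14 D_n(\sqrt f)+M_h$, precisely the ``drift correction'' you already invoked earlier; it contains no summation bounds on the tails of $p(\cdot)$ and is not the right reference for estimating $\Sigma_n^{\mathrm{fast}}$. That estimate is just the elementary calculation above.
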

From the symmetry of $p(\cdot)$, the integral term $\int_0^t  \Theta(n) \mcb {L}_n  \langle  \pi_{s}^{n},G_s \rangle ds$ can be rewritten as
\begin{align}
  \int_{0}^{t}  \sum _{\hat{x} }  \frac{\Theta(n)}{n^d} \mcb {K}_{n,\mcb B} G_s \left( \tfrac{ \hat{x} }{n} \right)  \eta_{s}^{n}(\hat{x}) \,  {ds -} \Big( 1 - \frac{\alpha}{n^{\beta}} \Big) \int_{0}^{t}  \sum _{\hat{x} }  \frac{\Theta(n)}{n^d} \mcb {K}_{n,\mcb S} G_s \left( \tfrac{ \hat{x} }{n} \right)  \eta_{s}^{n}(\hat{x})\, ds \label{gensmallbeta}.
\end{align}
Since our Markov process is the exclusion process, it holds
\begin{equation} \label{exc1part}
\forall n \geq 1, \forall s \in [0,T], \forall \hat{x} \in \mathbb{Z}^d, \quad 0 \leq  \eta_{s}^{n}(\hat{x}) \leq 1.
\end{equation}
Therefore, by applying Proposition \ref{convprincdif} the first term of \eqref{gensmallbeta} converges, as $n \rightarrow \infty$, to
\begin{align*}
 \kappa_{\gamma} \int_0^t \int_{\mathbb{R}^d} \rho_s( \hat{u} ) \Delta G_s( \hat{u} ) \, d \hat{u} \, ds, 
\end{align*}
in $L^1(\mathbb{P}_{\mu_n})$. Moreover, if $\alpha=1$ and $\beta=0$, the second term of \eqref{gensmallbeta} is equal to zero. Otherwise, assuming \eqref{1entbound} this term converges, as $n \rightarrow \infty$, to zero in $L^1(\mathbb{P}_{\mu_n})$, due to Proposition \ref{replemsmallbeta}. Therefore for $\beta \in [0,1)$ we get in the limit (as $n \rightarrow \infty$) the integral equation in \eqref{ehd}. 

\textbf{II.:} Case $(\beta,\gamma) \in R_0$. In this case the space of test functions is $\mcb S_{\gamma,d}$, given by
\begin{equation} \label{defsgammad}
\mcb S_{\gamma,d}:=
\begin{cases}
\mcb{S}_{\textrm {Neu}}, \quad \text{if} \quad \gamma = 2 \quad \text{or} \quad d \geq 2; \\
\mcb{S}_{\textrm {Disc}}, \quad \text{if} \quad \gamma > 2 \quad \text{and} \quad d = 1.
\end{cases}
\end{equation}
Since the test functions can be discontinuous at $\mathcal H$, the operator $\Theta(n)  \mcb{K}_{n, \mcb B} G_s$ is not assured to converge to $\kappa_{\gamma} \Delta G_s$ as $n \rightarrow \infty$  in $L^1( {\mathbb{R}}^d)$, as it was the case in Proposition \ref{convprincdif}. In order to address this issue, some adjustments are required. From \eqref{prob}, once that the direction of a jump (determined by $\hat{e}_j$, for some $j \in \{1, \ldots, d\}$) is chosen, a particle can only move from $(\hat{x}_{\star,j},x_j)$ to some $(\hat{x}_{\star,j},y_j)$, for $y_j \neq x_j$. Therefore  $\mcb{K}_{n, \mcb B}$ can be rewritten as $\sum_{j=1}^d \mcb{K}_{n, j}$, where $\mcb{K}_{n, j}$ is given by
\begin{equation} \label{deftilknj}
\mcb{K}_{n, j} G_s ( \tfrac{\hat{x}}{n} ) = \mcb{K}_{n, j} G_s  ( \tfrac{ \hat{x}_{\star, j}}{n} , \tfrac{x_j}{n}) : = \sum_{y_j \in \mathbb{Z} } [ G_s ( \tfrac{ \hat{x}_{\star, j}}{n} , \tfrac{y_j}{n} ) -G_s ( \tfrac{ \hat{x}_{\star, j}}{n} , \tfrac{x_j}{n}) ] p\big((y_j-x_j) \hat{e}_j \big).
\end{equation}
From Lemma \ref{lemconvprinc}, $\Theta(n) \mcb{K}_{n, j} G_s$ converges to $\kappa_{\gamma} \partial_{\hat{e}_j \hat{e}_j} G_s$ in $L^1( {\mathbb{R}}^d)$, as $n \rightarrow \infty$, when \textit{$G$ is of class $C^2$ along the direction determined by $\hat{e}_j$}; in particular, Proposition \ref{convprincdif} is a direct consequence of Lemma \ref{lemconvprinc}. However, for $G \in \mcb{S}_{\textrm {Disc}}$, $G$ may be discontinuous along the direction determined by $\hat{e}_d$, hence {it is not true that $\Theta(n) \mcb{K}_{n, d} G_s$ converges to $\kappa_{\gamma} \partial_{\hat{e}_d \hat{e}_d} G_s$ in $L^1(\mathbb{R}^d)$, as $n \rightarrow \infty$, since $\partial_{\hat{e}_d \hat{e}_d} G_s$ may not be defined on $\mathcal{H}$.}

{Alternatively, observe that $\mathbb{L}_d G_s$ is well-defined on $\mathbb{R}^d$ for any $G \in \mcb{S}_{\textrm {Disc}}$. Thus, Lemma \ref{lemconvprinc} motivates us to define some operator $\tilde{\mcb{K}}_{n, d}$ such that $\Theta(n) \tilde{\mcb{K}}_{n, d} G_s$ converges to $\kappa_{\gamma} \mathbb{L}_d G_s$ in $L^1(\mathbb{R}^d)$, as $n \rightarrow \infty$, for any $G \in \mcb{S}_{\textrm {Disc}}$. Keeping this in mind, observe that the term independent of $\alpha n^{-\beta}$ in \eqref{gensmallbeta} can be rewritten as
\begin{align*}
 \frac{\Theta(n)}{n^d}  \int_{0}^{t} \sum_{ \hat{x}_{\star} \in \mathbb{Z}^{d-1} } \Big\{ \sum_{j=1}^{d-1}   \sum _{x_d \in \mathbb{Z} } \mcb{K}_{n, j} G_s ( \tfrac{\hat{x}_{\star}}{n}, \tfrac{x_d}{n}  )     +   \Big[ \sum_{ x_d=0}^{\infty} \sum_{y_d=0}^{\infty} A^{n,\hat{x}_{\star}}_{x_d, y_d}(G_s^{+}) +  \sum_{x_d=-\infty}^{-1} \sum_{y_d=-\infty}^{-1} A^{n,\hat{x}_{\star}}_{x_d, y_d}(G_s^{-}) \Big]  \Big\} \eta_{s}^{n}(\hat{x}_{\star} , x_d )
  ds.
\end{align*}
In the last line, for any $H:\mathbb{R}^d \rightarrow \mathbb{R}$, any $\hat{x}_{\star} \in \mathbb{Z}^{d-1}$, any $x_d,y_d \in \mathbb{Z}$ and $n \geq 1$, $A^{n,\hat{x}_{\star}}_{x_d, y_d}(H)$ is given by
\begin{align} \label{defAnG}
 A^{n,\hat{x}_{\star}}_{x_d, y_d}(H):= [ H( \tfrac{ \hat{x}_{\star} }{n}, \tfrac{y_d}{n} ) - H( \tfrac{ \hat{x}_{\star} }{n}, \tfrac{x_d}{n} ) ] p \big( (y_d-x_d) \hat{e}_d \big).
\end{align}
The last display is the shorthand notation for the contribution of the fast bonds $\{ (\hat{x}_{\star}, x_d), \; (\hat{x}_{\star}, y_d)   \}$, that are aligned along the direction determined by $\hat{e}_d$. Since the bonds corresponding to either $x_d \geq 0, y_d <0$, or $x_d < 0, y_d \geq 0$ are slow, we are reduced to deal with sums either on $x_d \geq 0, y_d \geq 0$, or on $x_d < 0, y_d < 0$. For any fixed value of $(  \hat{x}_{\star}, x_d)$, we will subtract a remainder term $- \mcb{R}_{n,d} G_s \left( \tfrac{ \hat{x}_{\star} }{n}, \tfrac{x_d}{n} \right)$, in order to obtain the operator $\tilde{\mcb{K}}_{n, d}$ such that $\Theta(n) \tilde{\mcb{K}}_{n, d} G_s$ converges to $\kappa_{\gamma} \mathbb{L}_d G_s$ in $L^1(\mathbb{R}^d)$, as $n \rightarrow \infty$. More exactly, we define the operators $\mcb{R}_{n,d}$ and $\tilde{\mcb{K}}_{n,d}$ by}
\begin{equation}\label{op_Rn}
\mcb{R}_{n,d} G_s \left( \tfrac{ \hat{x} }{n} \right)= \mcb{R}_{n,d} G_s \left( \tfrac{ \hat{x}_{\star} }{n}, \tfrac{x_d}{n} \right) :=
\begin{dcases}
\frac{1}{n} \partial_{\hat{e}_d} G_s^{+}(\tfrac{\hat{x}_{\star}}{n},0) \sum_{y_d=0}^{\infty}   (y_d-x_d)  p \big( (y_d-x_d) \hat{e}_d \big), &   x_d \geq 0; \\
\frac{1}{n}  \partial_{\hat{e}_d} G_s^{-}(\tfrac{\hat{x}_{\star}}{n},0) \sum_{y_d=-\infty}^{-1}  (y_d-x_d)  p \big( (y_d-x_d) \hat{e}_d \big), & x_d \leq -1.
\end{dcases}
\end{equation}
and
\begin{equation}\label{op_Knbd}
\tilde{\mcb{K}}_{n,d} G_s \left( \tfrac{ \hat{x}_{\star} }{n}, \tfrac{x_d}{n} \right) :=
\begin{dcases}
- \mcb{R}_{n,d} G_s \left( \tfrac{ \hat{x}_{\star} }{n}, \tfrac{x_d}{n} \right)  + \sum_{y_d=0}^{\infty} A^{n,\hat{x}_{\star}}_{x_d, y_d}(G_s^{+}), &  x_d \geq 0; \\
- \mcb{R}_{n,d} G_s \left( \tfrac{ \hat{x}_{\star} }{n}, \tfrac{x_d}{n} \right)  +  \sum_{y_d=-\infty}^{-1} A^{n,\hat{x}_{\star}}_{x_d, y_d}(G_s^{-}), & x_d \leq -1.
\end{dcases}
\end{equation}
From Lemma \ref{lemconvprincdisc}, $\Theta(n) \tilde{\mcb{K}}_{n, d} G_s$ converges {to $\kappa_{\gamma} \mathbb{L}_d G_s$} {in $L^1(\mathbb{R}^d)$}, as $n \rightarrow \infty$, {when either $\gamma =2$ and $G \in \mcb{S}_{\textrm {Neu}}$; or $\gamma > 2$ and $G \in \mcb{S}_{\textrm {Disc}}$.} This motivates us to rewrite the integral term $\int_0^t  \Theta(n) \mcb {L}_n  \langle  \pi_{s}^{n},G_s \rangle \, ds$ as 
	\begin{align}
		&  \int_{0}^{t}  \sum _{ \hat{x} } \frac{\Theta(n)}{n^d}  \mcb {K}_{n}^{*} G_s \left( \tfrac{ \hat{x} }{n} \right)  \eta_{s}^{n}(\hat{x}) \, ds   + \alpha    \int_{0}^{t}  \sum _{ \hat{x} } \frac{\Theta(n)}{n^{d+\beta}} \mcb{K}_{n, \mcb S} G_s( \tfrac{ \hat{x} }{n} ) \eta_{s}^{n}(\hat{x})\, ds \label{extranb1} \\
		+ & \int_{0}^{t}  \sum _{ \hat{x} } \frac{\Theta(n)}{n^d} {\mcb{R}_{n,d}} G_s \left( \tfrac{ \hat{x} }{n} \right)  \eta_{s}^{n}(\hat{x}) \, ds \label{extranb2}, 
	\end{align}
where $\mcb{K}_{n}^{*}:= \tilde{\mcb{K}}_{n,d} +  \sum_{j=1}^{d-1} \mcb{K}_{n, j}$. {We treat} the first term of \eqref{extranb1} with Proposition \ref{convprincdisc} below. This proposition is a direct consequence of Lemmas \ref{lemconvprinc} and \ref{lemconvprincdisc}, proved in Appendix \ref{secdiscconv}.
\begin{prop} \label{convprincdisc}
\textit{	
	Assume that either $\gamma =2$ and} $G \in  \mcb{S}_{\textrm {Neu}}$\textit{; or $\gamma > 2$ and} $G \in \mcb{S}_{\textrm {Disc}}$\textit{. Then
	\begin{align*} 
		\lim_{n \rightarrow \infty} \frac{1}{n^d} \sum_{ \hat{x} } \sup_{s \in [0,T]} | \Theta(n)  \mcb{K}_{n}^{*} G_s (\tfrac{ \hat{x}}{n} )  - \kappa_{\gamma} {\mathbb{L}_{\Delta}} G_s  (\tfrac{\hat{x}}{n} ) | =0. 
	\end{align*}
}
\end{prop}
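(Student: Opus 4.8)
The plan is to reduce the statement to the two convergence lemmas by a triangle inequality, exploiting the additive structure of both the discrete operator $\mcb{K}_{n}^{*}$ and the continuous operator $\mathbb{L}_{\Delta}$. Recalling that $\mcb{K}_{n}^{*} = \tilde{\mcb{K}}_{n,d} + \sum_{j=1}^{d-1}\mcb{K}_{n, j}$ and that $\mathbb{L}_{\Delta} G = \mathbb{L}_d G + \sum_{j=1}^{d-1}\partial_{\hat{e}_j \hat{e}_j} G$, I would first write, for each $\hat{x}$ and each $s \in [0,T]$,
\begin{align*}
\Theta(n)\mcb{K}_{n}^{*} G_s(\tfrac{\hat{x}}{n}) - \kappa_{\gamma} \mathbb{L}_{\Delta} G_s(\tfrac{\hat{x}}{n})
= {} & \big[\Theta(n)\tilde{\mcb{K}}_{n,d} G_s(\tfrac{\hat{x}}{n}) - \kappa_{\gamma} \mathbb{L}_d G_s(\tfrac{\hat{x}}{n})\big] \\
& + \sum_{j=1}^{d-1}\big[\Theta(n)\mcb{K}_{n, j} G_s(\tfrac{\hat{x}}{n}) - \kappa_{\gamma} \partial_{\hat{e}_j \hat{e}_j} G_s(\tfrac{\hat{x}}{n})\big].
\end{align*}
Taking $\sup_{s\in[0,T]}$ and then $\frac{1}{n^d}\sum_{\hat{x}}$, and using that the supremum of a sum is at most the sum of the suprema, I would bound the quantity of interest by $d$ terms: one associated with the direction $\hat{e}_d$ and one with each transversal direction $\hat{e}_j$, $j \in \{1,\ldots,d-1\}$. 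It then suffices to show that each of these $d$ terms vanishes as $n \to \infty$.

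The $\hat{e}_d$-direction term, namely $\frac{1}{n^d}\sum_{\hat{x}}\sup_{s}|\Theta(n)\tilde{\mcb{K}}_{n,d}G_s(\tfrac{\hat{x}}{n}) - \kappa_{\gamma} \mathbb{L}_d G_s(\tfrac{\hat{x}}{n})|$, is precisely the content of Lemma \ref{lemconvprincdisc} in both admissible regimes ($\gamma = 2$ with $G \in \mcb{S}_{\textrm{Neu}}$, and $\gamma > 2$ with $G \in \mcb{S}_{\textrm{Disc}}$), so it tends to zero by hypothesis. For each transversal direction $j \in \{1,\ldots,d-1\}$, the key observation is that $\mcb{K}_{n, j}$ only displaces the $j$-th coordinate and leaves $x_d$ (hence the sign of $x_d$) untouched, so the operator never probes the discontinuity of $G$ across $\mathcal{H}$. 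Splitting $\frac{1}{n^d}\sum_{\hat{x}}$ according to $x_d \geq 0$ and $x_d \leq -1$, on the first piece $G$ coincides with $G^{+} \in \mcb{S}_{\textrm{Dif}}$ along each $\hat{e}_j$-line and $\partial_{\hat{e}_j \hat{e}_j} G = \partial_{\hat{e}_j \hat{e}_j} G^{+}$ (by Remark \ref{remdisc}), while on the second piece the same holds with $G^{-}$. Since $G^{+}, G^{-}$ are of class $C^2$, Lemma \ref{lemconvprinc} applies to each and shows that both pieces vanish; summing them gives convergence to zero of the $j$-th term. Adding up the $d$ contributions concludes the proof.

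The main (and essentially only) obstacle is the bookkeeping in the last step: one must justify that Lemma \ref{lemconvprinc}, stated for a function that is $C^2$ along $\hat{e}_j$, is legitimately applicable to a merely piecewise-smooth $G \in \mcb{S}_{\textrm{Disc}}$. This is exactly where the diagonal-jump-free structure of $p(\cdot)$ in \eqref{prob} is decisive: because a jump in a direction $\hat{e}_j$ with $j \neq d$ cannot cross $\mathcal{H}$, the restriction of $G$ to any line parallel to $\hat{e}_j$ is smooth and equal to either $G^{+}$ or $G^{-}$, so no discontinuity is ever seen and the lemma applies verbatim on each half-lattice. I would therefore emphasize this splitting, and note that the uniformity in $s$ is preserved throughout since all estimates are taken after passing to $\sup_{s\in[0,T]}$.
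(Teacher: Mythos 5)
Your proposal is correct and follows essentially the same route as the paper: the paper simply declares Proposition \ref{convprincdisc} to be a direct consequence of Lemmas \ref{lemconvprinc} and \ref{lemconvprincdisc}, via exactly the decomposition $\mcb{K}_{n}^{*}=\tilde{\mcb{K}}_{n,d}+\sum_{j=1}^{d-1}\mcb{K}_{n,j}$, $\mathbb{L}_{\Delta}=\mathbb{L}_{d}+\sum_{j=1}^{d-1}\partial_{\hat{e}_j\hat{e}_j}$ and the triangle inequality that you use. The only cosmetic difference is that for the transversal directions you re-derive the $\mcb{S}_{\textrm{Disc}}$ case by splitting the lattice at $\mathcal{H}$ and applying the smooth version of Lemma \ref{lemconvprinc} to $G^{+}$ and $G^{-}$, whereas the paper's Lemma \ref{lemconvprinc} already covers $G\in\mcb{S}_{\textrm{Disc}}$ with $j\in\{1,\ldots,d-1\}$ directly.
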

Combining the last result with \eqref{exc1part}, we get
\begin{align*}
 \lim_{n \rightarrow \infty} \mathbb{E}_{\mu_n} \Big[  \Big| \int_{0}^{t}  \sum _{ \hat{x} } \Big\{ \frac{\Theta(n)}{n^d}  \mcb {K}_{n}^{*} G_s \left( \tfrac{ \hat{x} }{n} \right) - \kappa_{\gamma}  \mathbb{L}_{\Delta}  G_s  (\tfrac{\hat{x}}{n} ) \Big\} \eta_{s}^{n}(\hat{x}) \, ds \Big| \Big] =0. 
\end{align*}
Therefore, from Remark \ref{remL1disc} and \eqref{intmedemp},  the first term of \eqref{extranb1} converges, as $n \rightarrow \infty$, to
\begin{align*}
	\kappa_{\gamma} \int_0^t \int_{\mathbb{R}^d} \rho_s( \hat{u}) \mathbb{L}_{\Delta} G_s(\hat{u}) \, d \hat{u} \, ds, 
\end{align*}
in $L^1(\mathbb{P}_{\mu_n})$. In order to treat the second term of \eqref{extranb1}, it is enough to prove that it holds
\begin{equation} \label{claim1largebeta}
\forall G \in \mcb{S}_{\textrm {Disc}}, \; \forall (\beta,\gamma) \in R_0, \quad \lim_{n \rightarrow \infty}  \sum_{ \hat{x}_{\star} \in { \mathbb{Z}^{d-1} } } \; \sum_{x_d \in \mathbb{Z}} \; \sup_{s \in [0,T]} \Big| \frac{\Theta(n)}{n^{d+\beta}} \mcb{K}_{n, \mcb S} G_s ( \tfrac{ \hat{x}_{\star} }{n}, \tfrac{x_d}{n} ) \Big|=0.
\end{equation}
From \eqref{prob} and the definition of $\mcb S$, for every $G \in \mcb{S}_{\textrm {Disc}}$ a particle can only jump between $\hat{x}$ and $\hat{y}$ for $\{\hat{x}, \hat{y} \} \in \mcb S$ when there exist $x_d \geq 0, y_d <0$ or $x_d < 0, y_d \geq0$ and $\hat{x}_{\star} \in \mathbb{Z}^{d-1}$ such that $\hat{x}=(\hat{x}_{\star}, x_d)$ and $\hat{y} =(\hat{y}_{\star}, y_d) =(\hat{x}_{\star}, y_d)$ (i.e., $\hat{x}_{\star}=\hat{y}_{\star}$). Performing simple algebraic manipulations, we get
\begin{equation} \label{rewritekns}
\mcb{K}_{n, \mcb S} G_s ( \tfrac{ \hat{x}_{\star} }{n}, \tfrac{x_d}{n} ):= 
\begin{dcases}
\sum_{y_d=-\infty}^{-1}  A^{n,\hat{x}_{\star}}_{x_d, y_d}(G_s), \quad & x_d \geq 0; \\
\sum_{y_d=0}^{\infty}  A^{n,\hat{x}_{\star}}_{x_d, y_d}(G_s), \quad & x_d <0.
\end{dcases}
\end{equation}
Since $G$ has compact support uniformly in time, this leads to
\begin{equation} \label{eqivmcbSG}
\sum_{ \hat{x}_{\star} \in \mathbb{Z}^{d-1} } \; \sum_{x_d \in \mathbb{Z}}  \mcb{K}_{n, \mcb S} G_s ( \tfrac{ \hat{x}_{\star} }{n}, \tfrac{x_d}{n} ) = \sum_{| \hat{x}_{\star}| < b_G n } \Big\{  \sum_{x_d=0}^{\infty} \; \sum_{y_d=-\infty}^{-1}  A^{n,\hat{x}_{\star}}_{x_d, y_d}(G_s) +    \sum_{x_d=-\infty}^{-1} \; \sum_{y_d=0}^{\infty}  A^{n,\hat{x}_{\star}}_{x_d, y_d}(G_s) \Big\},
\end{equation}
where $b_G$ is given by
\begin{equation} \label{defbg}
b_G:=\inf\{ b_1>0: | \hat{u} | \geq b_1 \Rightarrow \sup_{s \in [0,T]}|G_s(\hat{u})|=0\}.
\end{equation}
Indeed, for $| \hat{x}_{\star}| \geq b_G n$, we have $G_s( \tfrac{ \hat{x}_{\star} }{n}, \tfrac{y_d}{n} )=G_s( \tfrac{ \hat{x}_{\star} }{n}, \tfrac{x_d}{n} )= A^{n,\hat{x}_{\star}}_{x_d, y_d}(G_s)=0$, for every $x_d, y_d \in \mathbb{Z}$. Thus, from \eqref{eqivmcbSG} and the triangle inequality, the display inside the limit in \eqref{claim1largebeta} is bounded from above by
\begin{align*}
&  \frac{\Theta(n) 2\| G \|_{\infty} c_{\gamma}}{d n^{d+\beta}} \sum_{| \hat{x}_{\star}| < b_G n } \Big\{  \sum_{x_d=0}^{\infty} \; \sum_{y_d=-\infty}^{-1} |y_d-x_d|^{-1-\gamma} +    \sum_{x_d=-\infty}^{-1} \; \sum_{y_d=0}^{\infty} |y_d-x_d|^{-1-\gamma} \Big\} \\
& \leq \frac{\Theta(n) 4\| G \|_{\infty} (2 b_G n)^{d-1} c_{\gamma}}{d n^{d+\beta}}  \sum_{x_d=0}^{\infty} \; \sum_{y_d=-\infty}^{-1} (x_d-y_d)^{-1-\gamma} \lesssim \frac{\Theta(n) }{ n^{1+\beta}}  \sum_{z_d=1}^{\infty} (z_d)^{-\gamma} \lesssim \frac{\Theta(n)   }{ n^{1+\beta}},
\end{align*}
that goes to zero as $n \rightarrow \infty$, due to \eqref{timescale} and the fact that $(\beta,\gamma) \in R_0$. Above we used the fact that the sum over $z_d$ is convergent, since $\gamma \geq 2 >1$. Therefore we have proved \eqref{claim1largebeta}. By combining \eqref{exc1part} and \eqref{claim1largebeta}, the second term of \eqref{extranb1} converges, as $n \rightarrow \infty$, to zero in $L^1(\mathbb{P}_{\mu_n})$. It remains to treat the term in \eqref{extranb2}. When $\gamma = 2$ or $d \geq 2$, $G \in \mcb{S}_{\textrm {Neu}}$, and from \eqref{op_Rn} we get $\partial_{\hat{e}_d} G^{+}_s(\tfrac{\hat{x}_{\star}}{n},0) = \partial_{\hat{e}_d} G^{+}_s(\tfrac{\hat{x}_{\star}}{n},0) = {\mcb{R}_{n,d}} G_s \left( \tfrac{ \hat{x} }{n} \right) = 0$ for every $\hat{x} \in \mathbb{Z}^{d-1}$ and every $s \in [0,T]$. Therefore, we get the integral equation in \eqref{ehdn}. It remains to analyse $\gamma > 2$, $d=1$ and $G \in \mcb{S}_{\textrm {Disc}}$. Next for $\ell \geq 1$, we define the empirical averages on a box of size $\ell$ around $0$ as 
\begin{equation} \label{medbox1}
\overrightarrow{\eta}^{\ell}(0):=\frac{1}{\ell} \sum_{y_d=1}^{\ell} \eta(y_d)   \; \; \text{and} \; \; \overleftarrow{\eta}^{\ell}(0):=\frac{1}{\ell} \sum_{y_d=-\ell}^{-1} \eta(y_d).
\end{equation}
Thus, the term in \eqref{extranb2} can be treated with {the} next result, which is proved in Section \ref{secheurwithout}.   
\begin{prop} \label{replemlargebeta}
\textit{
Assume \eqref{1entbound}, $\gamma >2$, $t \in [0,T]$ and $d=1$. For every} $G \in \mcb{S}_{\textrm {Disc}}$ \textit{we have
	\begin{align*}
		& { \varlimsup_{\varepsilon \rightarrow 0^{+}} \varlimsup_{n \rightarrow \infty} } \mathbb{E}_{\mu_n} \Big[  \Big| \int_{0}^{t} \Big\{ \frac{\Theta(n)}{n^d} \sum_{x_d=0}^{\infty} { \mcb{R}_{n,d} } G_s ( \tfrac{x_d}{n} )  \eta_s^n(x_d) - \kappa_{\gamma}  \nabla G_s^{+}(0)   \overrightarrow{\eta}^{\varepsilon n }_s(0)  \Big\} \, ds \Big| \Big] =0, \\
		& { \varlimsup_{\varepsilon \rightarrow 0^{+}} \varlimsup_{n \rightarrow \infty} } \mathbb{E}_{\mu_n} \Big[  \Big| \int_{0}^{t} \Big\{ \frac{\Theta(n)}{n^d} \sum_{x_d=-\infty}^{-1} {  \mcb{R}_{n,d} } G_s ( \tfrac{x_d}{n} )  \eta_s^n(x_d) + \kappa_{\gamma}  \nabla G_s^{-}(0)   \overleftarrow{\eta}^{\varepsilon n }_s(0)   \Big\} \, ds \Big| \Big] =0.
	\end{align*}
}
\end{prop}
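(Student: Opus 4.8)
The plan is to prove the first identity in detail; the second follows from the reflection $x_d\mapsto -1-x_d$ together with the symmetry of $p(\cdot)$, which interchanges the roles of $G^{+}$ and $G^{-}$ and produces the opposite sign, matching the $+\kappa_\gamma\nabla G_s^{-}(0)$ in the statement. Throughout I take $d=1$, so $\Theta(n)=n^2$ and $\Theta(n)/n^{d}=n$, and I write $x$ for $x_d$; recall from Remark \ref{remL1disc} that $\sup_{s}|\nabla G_s^{+}(0)|<\infty$, so this factor is pulled out of every estimate. First I would make the weight explicit. Using $\sum_{z\in\mathbb Z}z\,p(z\hat e_d)=0$ (absolutely convergent since $\gamma>2$), for $x\ge 0$,
\begin{align*}
\frac{\Theta(n)}{n^{d}}\,\mcb{R}_{n,d}G_s\Big(\tfrac{x}{n}\Big)=\nabla G_s^{+}(0)\,\Phi(x),\qquad
\Phi(x):=\sum_{y=0}^{\infty}(y-x)\,p\big((y-x)\hat{e}_d\big)=c_{\gamma}\sum_{z=x+1}^{\infty}z^{-\gamma}.
\end{align*}
A Fubini computation yields the two facts that drive the argument: $\sum_{x=0}^{\infty}\Phi(x)=c_{\gamma}\sum_{z\ge 1}z^{1-\gamma}=\kappa_{\gamma}$, and the tail bound $\sum_{x>L}\Phi(x)\lesssim L^{2-\gamma}\to 0$ as $L\to\infty$, which holds \emph{precisely because} $\gamma>2$.

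Since $\sum_{x\ge0}\Phi(x)=\kappa_\gamma$, the expression inside the first limit equals $\nabla G_s^{+}(0)$ times
\begin{align*}
\sum_{x=0}^{\infty}\Phi(x)\,\eta_s^n(x)-\kappa_{\gamma}\,\overrightarrow{\eta}^{\varepsilon n}_s(0)
=\sum_{x=0}^{\infty}\Phi(x)\big[\eta_s^n(x)-\overrightarrow{\eta}^{\varepsilon n}_s(0)\big],
\end{align*}
an absolutely convergent sum (bounded by $\kappa_\gamma$ since $0\le\eta,\overrightarrow\eta\le 1$). I would then split the sum at an auxiliary cutoff $M\ge 1$. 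The tail $\sum_{x>M}\Phi(x)[\eta_s^n(x)-\overrightarrow{\eta}^{\varepsilon n}_s(0)]$ is bounded in absolute value by $\sum_{x>M}\Phi(x)\lesssim M^{2-\gamma}$, a \emph{deterministic} bound uniform in $n,\varepsilon,s$, to be sent to $0$ at the very end. Hence, after integrating in time and taking expectations, it remains to show, for each of the finitely many fixed sites $x\le M$, the one-site boundary replacement
\begin{align*}
\varlimsup_{\varepsilon\to0^{+}}\varlimsup_{n\to\infty}\mathbb{E}_{\mu_n}\Big[\Big|\int_0^t\big(\eta_s^n(x)-\overrightarrow{\eta}^{\varepsilon n}_s(0)\big)\,ds\Big|\Big]=0 .
\end{align*}
Establishing this for each $x\le M$, summing the finitely many contributions, and then letting $M\to\infty$ so that $M^{2-\gamma}\to0$ completes the proof.

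This last estimate is the main obstacle, and it is where the entropy bound \eqref{1entbound} and Proposition \ref{bound} enter. I would write $\eta_s^n(x)-\overrightarrow{\eta}^{\varepsilon n}_s(0)=\tfrac{1}{\varepsilon n}\sum_{y=1}^{\varepsilon n}(\eta_s^n(x)-\eta_s^n(y))$ and run the standard entropy-method scheme: apply the entropy inequality to pass from $\mathbb{P}_{\mu_n}$ to the reference measure $\nu_h^n$ (where $H(\mu_n|\nu_h^n)\lesssim n^{d}=n$ divides out against the acceleration, leaving a term of order $1/A$ with $A\to\infty$ chosen last), then Feynman--Kac to reduce the exponential moment to the variational expression $\sup_{f}\{\langle \eta(x)-\overrightarrow{\eta}^{\varepsilon n}(0),f\rangle_{\nu_h^n}-\tfrac{\Theta(n)}{An}D_n(\sqrt f)\}$, where $D_n$ is the Dirichlet form of $\Theta(n)\mcb{L}_n$. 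The delicate point is that $x$ and $y$ may be macroscopically far apart, so $\langle\eta(x)-\eta(y),f\rangle_{\nu_h^n}$ cannot be controlled by a single long bond; instead I would telescope the discrepancy along the nearest-neighbour bonds joining $x$ to $y$, each of which has rate bounded below by $p(\hat e_d)=c_\gamma>0$, and bound each step by Cauchy--Schwarz against the corresponding Dirichlet piece (with the $O(L_h/n)$ Radon--Nikodym error absorbed using that $h$ is Lipschitz). Since all sites lie on the positive half-line, only fast bonds ($r^n_{x,y}=1$) are involved, so the slow barrier plays no role. Choosing the Young parameter of order $n$ matches the path length $\lesssim\varepsilon n$ against the diffusive coefficient $\Theta(n)/(An)=n/A$, so the variational expression is $O(\varepsilon A)$; taking $\varlimsup_{n}$, then $\varlimsup_{\varepsilon}$, then $A\to\infty$ yields $0$, as required.
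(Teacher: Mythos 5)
Your argument is correct and follows the paper's route: the identity $\tfrac{\Theta(n)}{n^d}\,\mcb{R}_{n,d}G_s(\tfrac{x}{n})=\nabla G_s^{+}(0)\,\Phi(x)$ with $\Phi\in L^1(\mathbb{Z})$ and $\sum_{x\ge 0}\Phi(x)=\kappa_\gamma$ (and the reflected, sign-flipped analogue on the negative half-line) is exactly the paper's reduction of Proposition \ref{replemlargebeta} to Lemma \ref{obe} with $F_1=\nabla G^{+}(0)$, $F_2=\nabla G^{-}(0)$ and $\lambda=\lambda_0$. The cutoff-at-$M$ plus entropy/Feynman--Kac/telescoping scheme you sketch for the remaining boundary replacement is the standard proof of that lemma, which the paper omits with a citation to Lemma 6.2 of \cite{casodif} and Lemma 5.7 of \cite{byrondif}.
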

From {the} last result, \eqref{extranb2} converges, as $n \rightarrow \infty$, to
\begin{align*}
\kappa_{\gamma} \int_0^t [ \nabla G_s(0^{+}) \rho_s(0^{+}) - \nabla G_s(0^{-}) \rho_s(0^{-}) ] \, ds 
\end{align*}
in $L^1(\mathbb{P}_{\mu_n})$, and we get the integral equation in \eqref{ehdnlee}.

\section{Tightness} \label{sectight}

This section is devoted to the proof of tightness of the sequence $(\mathbb{Q}_{n})_{ n \geq 1 }$. We will make use of the following definition, which is motivated by the time horizon $[0, T]$.
\begin{definition} \label{deftaut}
We denote by $\mathcal{T}_T$ the set of stopping times $\tau$ such that $0 \leq \tau \leq T$. 
\end{definition}
In many articles dealing with exclusion processes evolving in a bounded spatial domain, to prove tightness it is enough to invoke  Propositions 1.6 and 1.7 in Chapter 4 of \cite{kipnis1998scaling}, see for example \cite{tertuaihp, franco2015phase, byrondif, stefano_patricia, byronsdif, stefano}. We observe that the later proposition  is  explicitly stated for processes evolving in bounded spatial domains, therefore rigorously it cannot be directly  applied for processes evolving in infinite volume, such as it is our context. To clarify the fact that we can actually use the same results as in the setting of bounded domains, we prove  in Appendix \ref{prooftight}  the following technical lemma. 
\begin{lem} \label{lemtight}
\textit{	
Assume that for every $\varepsilon >0$,
\begin{equation} \label{T1sdif}
\displaystyle \lim _{\lambda \rightarrow 0^+} \varlimsup_{n \rightarrow\infty} \sup_{\tau  \in \mathcal{T}_{T}, \;{  0 \leq } t \leq \lambda} {\mathbb{P}}_{\mu _{n}}\Big[\eta_{\cdot}^{n}\in \mcb D ( [0,T], \Omega) :\left\vert \langle\pi^{n}_{{ T \wedge( \tau+ t)}},G\rangle-\langle\pi^{n}_{\tau},G\rangle\right\vert > \ve \Big]  =0, 
\end{equation}
for any function $G\in C_c^2(\mathbb{R}^d)$. Then the sequence $(\mathbb{Q}_{n})_{ n \geq 1 }$ is tight.
}
\end{lem}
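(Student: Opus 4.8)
The plan is to reduce the tightness of the measure-valued family $(\mathbb{Q}_n)_{n \geq 1}$ on $\mcb D([0,T], \mcb M^{+})$ to the tightness of the real-valued coordinate processes $(\langle \pi^n_\cdot, G\rangle)_{n \geq 1}$ on $\mcb D([0,T], \mathbb{R})$, and then to establish the latter via Aldous' criterion, of which the hypothesis \eqref{T1sdif} is precisely the oscillation part. First I would recall that $\mcb M^{+}$ endowed with the vague topology is a Polish space whose topology is generated by the family of continuous maps $\Phi_G : \mu \mapsto \langle \mu, G\rangle$, for $G \in C_c^0(\mathbb{R}^d)$, and that this family separates points of $\mcb M^{+}$.

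Since the vague topology does not detect mass escaping to infinity, the compact containment condition here is deterministic rather than probabilistic. A subset $\mathcal K \subset \mcb M^{+}$ is vaguely relatively compact if and only if $\sup_{\mu \in \mathcal K} \mu(B_R) < \infty$ for every $R$, where $B_R := \{\hat u \in \mathbb{R}^d : |\hat u| \le R\}$; and for the empirical measure the exclusion rule yields the uniform bound
\[
\pi^n_t(B_R) = \frac{1}{n^d} \#\{\hat x \in \mathbb{Z}^d : |\hat x| \le Rn, \ \eta^n_t(\hat x) = 1\} \le \frac{(2\lfloor Rn\rfloor + 1)^d}{n^d} \le (2R+1)^d,
\]
valid for every $n \geq 1$, every $t \in [0,T]$ and every realization of the process. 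Hence, by the above characterization, all trajectories of $\pi^n_\cdot$ take values in a single vaguely relatively compact subset of $\mcb M^{+}$, so that the compact containment condition holds trivially.

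With compact containment in hand, I would invoke Jakubowski's tightness criterion for c\`adl\`ag processes with values in a Polish space (the infinite-volume analogue of Propositions 1.6 and 1.7 in Chapter 4 of \cite{kipnis1998scaling}): since $\{\Phi_G : G \in C_c^2(\mathbb{R}^d)\}$ is a family of continuous functions that is closed under addition (because $\Phi_{G_1} + \Phi_{G_2} = \Phi_{G_1+G_2}$ and $C_c^2(\mathbb{R}^d)$ is a vector space) and separates points of $\mcb M^{+}$ (as $C_c^2(\mathbb{R}^d)$ is dense in $C_c^0(\mathbb{R}^d)$, so that $\langle \mu, G\rangle = \langle \nu, G\rangle$ for all $G \in C_c^2(\mathbb{R}^d)$ forces $\mu = \nu$), it suffices to prove that for each fixed $G \in C_c^2(\mathbb{R}^d)$ the sequence $(\langle \pi^n_\cdot, G\rangle)_{n \ge 1}$ is tight in $\mcb D([0,T],\mathbb{R})$. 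This reduction, together with the verification of the local mass bound above, is the content carried out in Appendix \ref{prooftight}.

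Finally, for each fixed $G \in C_c^2(\mathbb{R}^d)$ I would establish tightness of $(\langle \pi^n_\cdot, G\rangle)_{n \ge 1}$ via Aldous' criterion, which needs two ingredients. The first is control of the one-time marginals: from $0 \le \eta^n_t(\hat x) \le 1$ and the compact support of $G$ one has $|\langle \pi^n_t, G\rangle| \le \|G\|_{\infty}\, \pi^n_t(\operatorname{supp} G) \le C_G$ deterministically, uniformly in $t \in [0,T]$ and $n \ge 1$, so the marginals are confined to a fixed compact interval and are automatically tight. The second is the Aldous oscillation estimate, namely that for every $\varepsilon > 0$,
\[
\lim_{\lambda \to 0^+}\varlimsup_{n \to \infty}\ \sup_{\tau \in \mathcal T_T,\ 0 \le t \le \lambda} \mathbb{P}_{\mu_n}\big(|\langle \pi^n_{T \wedge (\tau + t)}, G\rangle - \langle \pi^n_\tau, G\rangle| > \varepsilon\big) = 0,
\]
which is exactly hypothesis \eqref{T1sdif}. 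Combining the two ingredients gives tightness of each coordinate process, and the reduction of the previous paragraph then delivers the tightness of $(\mathbb{Q}_n)_{n \ge 1}$. The main obstacle, and the reason this lemma is isolated, is precisely the passage from the bounded-domain statements of \cite{kipnis1998scaling} to the infinite-volume, vague-topology setting: the real work is to check that compact containment holds and that $\{\Phi_G\}$ is an admissible separating family closed under addition, after which the classical Aldous argument applies verbatim.
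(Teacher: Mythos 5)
Your proof is correct, and the two non-trivial ingredients --- the deterministic compact containment bound coming from the exclusion rule, and the reduction of tightness of the measure-valued processes to tightness of the real-valued coordinates $\langle \pi^n_\cdot, G\rangle$ followed by Aldous' criterion --- are exactly the ones the paper uses. The difference lies in how the reduction to coordinates is justified. You invoke Jakubowski's criterion: since $\{\Phi_G : G \in C_c^2(\mathbb{R}^d)\}$ is closed under addition and separates points of $\mcb M^{+}$, tightness of each coordinate process for \emph{smooth} $G$ already suffices, so hypothesis \eqref{T1sdif} can be used as stated. The paper instead works directly with Theorem 1.3 in Chapter 4 of \cite{kipnis1998scaling}: it fixes a countable family $D \subset C_c^0(\mathbb{R}^d)$ generating the vague topology, builds an explicit metric $\delta$ on $\mcb M^{+}$ as a weighted sum over $D$, bounds the modulus of continuity $\omega_{\pi^n}(\lambda)$ by a finite weighted sum of the moduli $\omega_{\langle \pi^n, f_k\rangle}(\lambda)$, and controls each of these via Proposition 1.7 of Chapter 4 of \cite{kipnis1998scaling}. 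Because the $f_k$ are only in $C_c^0(\mathbb{R}^d)$, the paper must first upgrade \eqref{T1sdif} from $C_c^2$ to $C_c^0$ test functions by an $L^1$-approximation argument using \eqref{intmedemp} and \eqref{exc1part}; your route sidesteps this approximation step entirely, at the cost of citing a stronger off-the-shelf criterion rather than the elementary metric computation. Both arguments are complete; yours is shorter where the paper is more self-contained. (Your closing remark that Appendix \ref{prooftight} carries out the Jakubowski reduction is a slight misattribution --- the appendix follows the metric route --- but this does not affect the validity of your proof.)
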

The following remark will be useful in the remainder of Section \ref{sectight}.
\begin{rem} \label{remsubset}
	\textit{
In the same way as constant functions are particular cases of polynomials, the elements of $C_{c}^{2}(\mathbb{R}^d)$ can be viewed as elements of} $\mcb S_{\textrm{Dif}}=C_c^{1,2} ( [0, \infty) \times \mathbb{R}^d )$\textit{. We denote this by} $C_{c}^{2}(\mathbb{R}^d) \subsetneq \mcb S_{\textrm{Dif}}$\textit{.}
\end{rem}
Due to \eqref{defMnt}, \eqref{T1sdif} is a direct consequence of  the next result combined with  Markov's and Chebychev's inequalities.
\begin{prop} 
\textit{
For $G \in C_{c}^2(\mathbb{R}^d)$, it holds
\begin{align} 
& \lim_{\lambda \rightarrow 0^+} \varlimsup_{n \rightarrow \infty} \sup_{\tau \in \mathcal{T}_T, \; { 0 \leq } t \leq \lambda} \mathbb{E}_{\mu_n} \left[ \Big| \int_{\tau}^{{ T \wedge( \tau+ t)}} \Theta(n)  \mcb L_{n}\langle \pi_{s}^{n},G\rangle \, ds \Big| \right] = 0, \label{condger1pr1} \\
& \lim_{\lambda \rightarrow 0^+} \varlimsup_{n \rightarrow \infty} \sup_{\tau \in \mathcal{T}_T, \; { 0 \leq t } \leq \lambda} \mathbb{E}_{\mu_n} \left[ \left( \mcb M_{\tau}^{n}(G) -  \mcb M_{{ T \wedge(\tau+t)}}^{n}(G) \right)^2 \right] = 0.  \label{condger1pr2}
\end{align}
}
\end{prop}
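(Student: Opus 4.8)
The goal is to prove the two tightness estimates \eqref{condger1pr1} and \eqref{condger1pr2}, both uniform over stopping times $\tau \in \mathcal{T}_T$ and time increments $0 \leq t \leq \lambda$. These are standard Aldous-type bounds, and the key observation is that for $G \in C_c^2(\mathbb{R}^d) \subsetneq \mcb S_{\textrm{Dif}}$ the relevant discrete operators converge nicely (Proposition \ref{convprincdif}), so both quantities can be controlled by elementary $L^\infty$ bounds on the configuration via \eqref{exc1part}.

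\textbf{Plan for \eqref{condger1pr1} (the drift/integral term).} I would start from the identity, valid by the symmetry of $p(\cdot)$, that rewrites $\Theta(n) \mcb L_n \langle \pi_s^n, G\rangle$ as the sum of the two terms in \eqref{gensmallbeta}, namely a term involving $\mcb K_{n,\mcb B}$ and a slow-bond correction involving $\mcb K_{n,\mcb S}$ with prefactor $(1 - \alpha n^{-\beta})$. For the $\mcb K_{n,\mcb B}$ piece, Proposition \ref{convprincdif} gives
\begin{align*}
\frac{1}{n^d} \sum_{\hat{x}} \sup_{s} | \Theta(n) \mcb K_{n,\mcb B} G_s(\tfrac{\hat{x}}{n}) | \lesssim \frac{1}{n^d}\sum_{\hat{x}} \sup_s |\kappa_\gamma \Delta G_s(\tfrac{\hat{x}}{n})| + o(1),
\end{align*}
and the right-hand side is a Riemann sum converging to $\kappa_\gamma \|\Delta G\|_{1,\mathbb{R}^d} < \infty$ (here $\partial_s G = 0$ since $G$ is time-independent). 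Combined with $0 \leq \eta_s^n(\hat{x}) \leq 1$ from \eqref{exc1part}, the integrand $|\Theta(n)\mcb L_n\langle \pi_s^n, G\rangle|$ is bounded by a constant $C_G$ uniformly in $s$ and in the configuration, for all $n$ large. Hence the integral over $[\tau, T\wedge(\tau+t)]$ is bounded in absolute value by $C_G \cdot t \leq C_G \lambda$, deterministically; taking $\mathbb{E}_{\mu_n}$, then $\sup_{\tau,t\leq\lambda}$, then $\varlimsup_n$, and finally $\lambda \to 0^+$ yields the claim. The slow-bond term contributes a similar bound (its coefficient $(1-\alpha n^{-\beta})$ is bounded, and $\frac{1}{n^d}\sum \sup_s |\Theta(n)\mcb K_{n,\mcb S}G_s|$ is controlled by the same kind of estimate already carried out in \eqref{claim1largebeta}), so it too is absorbed into $C_G \lambda$.

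\textbf{Plan for \eqref{condger1pr2} (the martingale term).} Using the optional stopping / martingale-increment identity together with the quadratic variation formula \eqref{defNnt}, I would write
\begin{align*}
\mathbb{E}_{\mu_n}\big[ (\mcb M^n_{T\wedge(\tau+t)}(G) - \mcb M^n_\tau(G))^2 \big] = \mathbb{E}_{\mu_n}\Big[ \int_\tau^{T\wedge(\tau+t)} \Theta(n)\big( \mcb L_n \langle \pi_s^n, G\rangle^2 - 2\langle \pi_s^n, G\rangle \mcb L_n \langle \pi_s^n, G\rangle\big)\, ds \Big].
\end{align*}
The carré-du-champ integrand expands, by the standard exclusion computation, into
\begin{align*}
\frac{\Theta(n)}{2 n^{2d}} \sum_{\hat{x},\hat{y}} p(\hat{y}-\hat{x}) r^n_{\hat{x},\hat{y}} \big[ G(\tfrac{\hat{y}}{n}) - G(\tfrac{\hat{x}}{n}) \big]^2 \eta_s^n(\hat{x})[1-\eta_s^n(\hat{y})],
\end{align*}
which is non-negative and, dropping the factors $r^n_{\hat{x},\hat{y}} \leq 1$ and $\eta(1-\eta) \leq 1$, is bounded above by $\frac{\Theta(n)}{2n^{2d}}\sum_{\hat{x},\hat{y}} p(\hat{y}-\hat{x})[G(\tfrac{\hat{y}}{n}) - G(\tfrac{\hat{x}}{n})]^2$. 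The crucial point is the extra factor $n^{-d}$ relative to \eqref{condger1pr1}: a Taylor expansion gives $[G(\tfrac{\hat{y}}{n}) - G(\tfrac{\hat{x}}{n})]^2 \lesssim \|\nabla G\|_\infty^2\, |\hat{y}-\hat{x}|^2 n^{-2}$ for the short (nearest-direction) jumps, and for long jumps one uses $\|G\|_\infty$ together with the summability of $|r|^2 p(r\hat{e}_j)$ (finite since $\gamma \geq 2$, this is exactly $\sigma^2/d < \infty$ when $\gamma > 2$, and for $\gamma = 2$ the logarithmic time-scale $\Theta(n) = n^2/\log n$ in \eqref{timescale} compensates). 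Summing over $\hat{x}$ produces a factor $n^d$ (support of $G$), so the integrand is $O(\Theta(n) n^{-2} n^{-d}) = O(n^{-d})$, which vanishes uniformly in $s$; thus the whole expectation is bounded by $C_G\, n^{-d}\, t \leq C_G\, \lambda\, n^{-d}$, and letting $n\to\infty$ already kills it before sending $\lambda \to 0$.

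\textbf{Main obstacle.} The only delicate point is making the $L^\infty$-in-configuration bound on the integrand genuinely uniform over \emph{all} stopping times $\tau \in \mathcal{T}_T$ and increments $t \leq \lambda$. Because both estimates reduce to deterministic pointwise bounds on the integrand $\Theta(n)\mcb L_n \langle \pi^n_s, G\rangle$ (resp. its carré-du-champ), independent of $\omega$ and of $s$, the supremum over $\tau$ and the stopping-time structure cause no real difficulty: one simply bounds $|\int_\tau^{T\wedge(\tau+t)} (\cdots) ds| \leq (\text{integrand bound}) \cdot t$. The genuine analytic content is entirely contained in verifying the uniform convergence/summability estimates of Proposition \ref{convprincdif} and the summability $\sum_r |r|^2 p(r\hat{e}_j) < \infty$ (with the $\log n$ correction at $\gamma = 2$), both of which are available from the earlier parts of the paper; so the hard part is bookkeeping the jump-length sums correctly in the quadratic-variation estimate rather than any conceptual obstruction.
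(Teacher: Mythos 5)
Your overall architecture is the paper's: reduce \eqref{condger1pr1} to a deterministic, configuration-uniform $O(1)$ bound on the integrand so that the time integral is $O(\lambda)$, and reduce \eqref{condger1pr2} to a carr\'e-du-champ estimate of order $o(1)$ in $n$. Your treatment of \eqref{condger1pr2} is essentially the content of the paper's Lemma \ref{lemtight2} (claim \eqref{claimtight2}) and your bookkeeping there is correct, in fact slightly sharper ($O(n^{-d})$ versus the paper's $O(n^{1-d}/\log n)$, both sufficient). However, there is a genuine gap in your argument for \eqref{condger1pr1}, in the slow-bond term.

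The second term of \eqref{gensmallbeta} carries the prefactor $(1-\alpha n^{-\beta})$, which is merely bounded, so you must show $\frac{\Theta(n)}{n^{d}}\sum_{\hat{x}}\big|\mcb{K}_{n,\mcb{S}}G(\tfrac{\hat{x}}{n})\big|\lesssim 1$ \emph{without} any help from a factor $n^{-\beta}$ (tightness must hold for all $\beta\geq 0$, including $\beta=0$). The estimate you invoke, \eqref{claim1largebeta}, only bounds $\frac{\Theta(n)}{n^{d+\beta}}\sum_{\hat{x}}|\mcb{K}_{n,\mcb{S}}G_s(\tfrac{\hat{x}}{n})|$ and its proof uses only $\|G\|_{\infty}$, yielding $\frac{\Theta(n)}{n^{1+\beta}}$; stripped of the $n^{-\beta}$ this is $\Theta(n)/n$, i.e.\ of order $n$ (or $n/\log n$ for $\gamma=2$), which diverges. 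The missing idea — and the reason the proposition is stated for $G\in C_c^{2}(\mathbb{R}^d)$ rather than for general elements of $\mcb{S}_{\textrm{Disc}}$ — is that $G$ is smooth \emph{across} the hyperplane $\mathcal{H}$, so for the pairs with $\max\{|x_d|,|y_d|\}<n$ the Mean Value Theorem gives $|G(\tfrac{\hat{x}_{\star}}{n},\tfrac{y_d}{n})-G(\tfrac{\hat{x}_{\star}}{n},\tfrac{x_d}{n})|\leq\|\partial_{\hat{e}_d}G\|_{\infty}\,|y_d-x_d|/n$; this converts $|y_d-x_d|^{-1-\gamma}$ into $n^{-1}|y_d-x_d|^{-\gamma}$, whose double sum over $x_d\geq 0>y_d$ is $O(1)$ for $\gamma>2$ and $O(\log n)$ for $\gamma=2$, and in either case $\frac{\Theta(n)}{n^{2}}$ times it is $O(1)$ by \eqref{timescale}. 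The remaining pairs with $\max\{|x_d|,|y_d|\}\geq n$ are handled with $\|G\|_{\infty}$ and contribute $O(\Theta(n)/n^{\gamma})=o(1)$. This is exactly the paper's claim \eqref{claimtight1} inside Lemma \ref{lemtight1}; without it your constant $C_G$ in the bound $C_G\lambda$ is not finite uniformly in $n$.
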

We observe that \eqref{condger1pr1} is a direct consequence of the following lemma.
\begin{lem} \label{lemtight1}
\textit{	
If $G \in C_{c}^{2}(\mathbb{R}^d)$, then $\sup_{s \in [0,T]} | \Theta(n)  \mcb L_{n}\langle \pi_{s}^{n},G\rangle| \lesssim 1$.
}
\end{lem}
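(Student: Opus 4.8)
The plan is to compute the action of the generator on the linear statistic explicitly and then bound it termwise, exploiting the exclusion bound $0\le\eta_s^n(\hat x)\le1$. Since $G\in C_c^2(\mathbb R^d)\subsetneq\mcb S_{\textrm{Dif}}$ (Remark \ref{remsubset}) does not depend on time, a standard summation by parts using the symmetry of $p(\cdot)$ and of $r_{\hat x,\hat y}^n$ yields
\begin{align*}
\Theta(n)\mcb L_n\langle\pi_s^n,G\rangle=\frac{\Theta(n)}{n^d}\sum_{\hat x}\eta_s^n(\hat x)\sum_{\hat y}p(\hat y-\hat x)\,r_{\hat x,\hat y}^n\,\big[G(\tfrac{\hat y}{n})-G(\tfrac{\hat x}{n})\big].
\end{align*}
Because $0\le\eta_s^n(\hat x)\le1$ for every $\hat x$ and every $s$ (see \eqref{exc1part}), the supremum over $s\in[0,T]$ is dominated by the configuration-free quantity $\frac{\Theta(n)}{n^d}\sum_{\hat x}\big|\sum_{\hat y}p(\hat y-\hat x)r_{\hat x,\hat y}^n[G(\tfrac{\hat y}{n})-G(\tfrac{\hat x}{n})]\big|$. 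Rewriting the inner rate as a full-bond contribution minus a slow correction, exactly as in \eqref{gensmallbeta}, the triangle inequality bounds this by $\frac{\Theta(n)}{n^d}\sum_{\hat x}|\mcb K_{n,\mcb B}G(\tfrac{\hat x}{n})|+|1-\alpha n^{-\beta}|\,\frac{\Theta(n)}{n^d}\sum_{\hat x}|\mcb K_{n,\mcb S}G(\tfrac{\hat x}{n})|$, and it suffices to bound each summand by a constant independent of $n$.

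For the full-bond term I would invoke Proposition \ref{convprincdif}: since $G$ is time independent, the triangle inequality gives $\frac{\Theta(n)}{n^d}\sum_{\hat x}|\mcb K_{n,\mcb B}G(\tfrac{\hat x}{n})|\le\frac1{n^d}\sum_{\hat x}|\Theta(n)\mcb K_{n,\mcb B}G(\tfrac{\hat x}{n})-\kappa_\gamma\Delta G(\tfrac{\hat x}{n})|+\frac{\kappa_\gamma}{n^d}\sum_{\hat x}|\Delta G(\tfrac{\hat x}{n})|$. The first summand tends to $0$ by Proposition \ref{convprincdif}, while the second is a Riemann sum converging to $\kappa_\gamma\|\Delta G\|_{1,\mathbb R^d}<\infty$, since $\Delta G$ is continuous with compact support and hence integrable. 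Thus the full-bond term is $\lesssim1$.

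The delicate term is the slow correction $\frac{\Theta(n)}{n^d}\sum_{\hat x}|\mcb K_{n,\mcb S}G(\tfrac{\hat x}{n})|$ (the factor $|1-\alpha n^{-\beta}|$ is bounded by $1+\alpha$). The crucial observation is that $G\in C_c^2(\mathbb R^d)$ is genuinely $C^1$ across $\mathcal H$, so for a slow bond $\{(\hat x_\star,x_d),(\hat x_\star,y_d)\}$ the increment obeys the Lipschitz estimate $|G(\tfrac{\hat x_\star}{n},\tfrac{y_d}{n})-G(\tfrac{\hat x_\star}{n},\tfrac{x_d}{n})|\le\|\partial_{\hat e_d}G\|_\infty|y_d-x_d|/n$, which is sharper than the crude bound $2\|G\|_\infty$ and gains a factor $n^{-1}$. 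Combining this with the explicit decay $p((y_d-x_d)\hat e_d)=\tfrac{c_\gamma}{d}|y_d-x_d|^{-\gamma-1}$ from \eqref{prob}, with the transverse restriction $|\hat x_\star|<b_G n$ coming from the compact support (see \eqref{defbg}) which contributes a factor $\lesssim n^{d-1}$, and with the change of variables $z=x_d-y_d\ge1$ (for which there are at most $\min(z,2b_Gn)$ admissible pairs), I would obtain $\frac{\Theta(n)}{n^d}\sum_{\hat x}|\mcb K_{n,\mcb S}G(\tfrac{\hat x}{n})|\lesssim\frac{\Theta(n)}{n^2}\sum_{z\ge1}\min(z,2b_Gn)\,z^{-\gamma}$.

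The main obstacle is the case $\gamma=2$. For $\gamma>2$ the series $\sum_z z^{1-\gamma}$ converges and the choice $\Theta(n)=n^2$ leaves a constant; this is where the $C^1$ regularity is used to avoid a divergent factor $\Theta(n)/n$. For $\gamma=2$, the contribution of $z\le2b_Gn$ is a truncated harmonic series of order $\log n$, while the tail $z>2b_Gn$ contributes $\mathcal O(1)$, so the whole sum is of order $\log n$; this logarithmic divergence is cancelled precisely by the time scale $\Theta(n)=n^2/\log(n)$ from \eqref{timescale}, again producing a constant. In both cases the slow correction is $\lesssim1$, and together with the full-bond estimate this yields $\sup_{s\in[0,T]}|\Theta(n)\mcb L_n\langle\pi_s^n,G\rangle|\lesssim1$, as claimed.
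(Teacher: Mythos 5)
Your proposal is correct and follows essentially the same route as the paper: the same decomposition into the full-bond term (handled via Proposition \ref{convprincdif} plus a Riemann-sum bound on $\|\Delta G\|_{1,\mathbb{R}^d}$) and the slow correction, with the same key gain of a factor $n^{-1}$ from the $C^1$ regularity of $G$ across $\mathcal{H}$ and the same $\gamma=2$ versus $\gamma>2$ case analysis in which $\Theta(n)$ absorbs the $\log n$. The only (cosmetic) difference is in the slow term: the paper splits the pairs into $\max\{|x_d|,|y_d|\}<n$ (Mean Value Theorem) and the remainder (crude sup-norm bound, giving $\Theta(n)/n^{\gamma}$), whereas you apply the Lipschitz bound uniformly and compensate with the counting cap $\min(z,2b_Gn)$ on contributing pairs; both yield the same estimate.
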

\begin{proof}
We begin by claiming that
\begin{equation} \label{claimtight1}
\forall G \in C_{c}^{2}(\mathbb{R}^d){  \subsetneq } \mcb S_{\textrm{Dif}}, \quad \frac{\Theta(n)}{n^d} \sum _{\hat{x} } \big|  \mcb {K}_{n,\mcb S}{  G } \left( \tfrac{ \hat{x} }{n} \right) \big| \lesssim 1.  
\end{equation}
Indeed, from \eqref{defAnG} and \eqref{eqivmcbSG}, the sum in {the} last line is bounded from above by
\begin{align}
& \frac{c_{\gamma}\Theta(n)}{d n^{d+1}} \sum_{| \hat{x}_{\star}| < b_G n } \Big\{ {  \sum_{x_d=0}^{n-1} \; \sum_{y_d=-n+1}^{-1}  \| \partial_{\hat{e}_d} G\|_{\infty} |y_d-x_d|^{-\gamma}  +    \sum_{x_d=-n+1}^{-1} \; \sum_{y_d=0}^{n-1}   \| \partial_{\hat{e}_d} G \|_{\infty}  |y_d-x_d|^{-\gamma} } \Big\} \label{boundlemtight1a} \\
+ & \frac{c_{\gamma}\Theta(n)}{d n^{d}} \sum_{| \hat{x}_{\star}| < b_G n } \Big\{ 2 \sum_{x_d=0}^{n-1} \; \sum_{y_d=n}^{\infty} 2 \|  G\|_{\infty} (x_d+y_d)^{-1-\gamma}  +  2  \sum_{x_d=n}^{\infty} \; \sum_{y_d=0}^{\infty}  2 \|  G\|_{\infty} (x_d+y_d)^{-1-\gamma} \Big\}. \label{boundlemtight1b}
\end{align}
In the first line of {the} last display, we applied the Mean Value Theorem {for the terms corresponding to the pairs $(x_d,y_d)$ such that $\max\{ |x_d|, |y_d| \} < n$}. Next, we claim that the expression in \eqref{boundlemtight1a} is of order $1$; indeed, it is bounded from above by a constant, times
\begin{align*}
\frac{\Theta(n)}{ n^{d+1}} \sum_{| \hat{x}_{\star}| < b_G n }  \sum_{x_d=0}^{n-1} \; \sum_{y_d=1}^{n-1} (x_d+y_d)^{-\gamma} \lesssim  \frac{\Theta(n)}{ n^{2}} \sum_{z_d=1}^{2n-1} (z_d)^{1-\gamma}.
\end{align*}
For $\gamma  > 2$, the sum over $z_d$ is bounded for a constant independent of $n$ and {the} last display is of order $1$. Moreover, for $\gamma = 2$, the sum over $z_d$ is of order $\log(n)$ and {the} last display is of order $1$. 

It remains to treat the expression in \eqref{boundlemtight1b}; note that it is bounded from above by a constant, times
\begin{align*}
\frac{\Theta(n)}{ n^{d}} \sum_{| \hat{x}_{\star}| < b_G n } \; \sum_{y_d=0}^{\infty} \; \sum_{x_d=n}^{\infty}  (x_d+y_d)^{-1-\gamma} \lesssim  \frac{\Theta(n)}{ n^{\gamma}} \frac{1}{n^2}   \sum_{y_d=0}^{\infty} \; \sum_{x_d=n}^{\infty} \Big( \frac{x_d+y_d}{n} \Big)^{-1-\gamma} \lesssim \frac{\Theta(n)}{ n^{\gamma}} ,
\end{align*} 
which goes to zero as $n \rightarrow \infty$, due to \eqref{timescale}. Therefore, \eqref{claimtight1} is proved. Plugging \eqref{gensmallbeta} with \eqref{exc1part} and the fact that $|1-\alpha n^{-\beta}| \lesssim 1$, then 
\begin{align*}
& \sup_{s \in [0,T]} | \Theta(n)  \mcb L_{n}\langle \pi_{s}^{n},G\rangle| \\
\lesssim & \frac{1}{n^d}   \sum_{\hat{x}}  | \Theta(n) \mcb K_{n, \mcb B} G( \tfrac{\hat{x}}{n} )  - \kappa_{\gamma} \Delta    G  (\tfrac{\hat{x}}{n} )    |  + \kappa_{\gamma}  \frac{1}{n^d}   \sum_{\hat{x}} | \Delta    G  (\tfrac{\hat{x}}{n} )| +  \frac{\Theta(n)}{n^d} \sum _{\hat{x} } |  \mcb {K}_{n,\mcb S} { G } \left( \tfrac{ \hat{x} }{n} \right) |   \lesssim 1.
\end{align*}
In {the} last display we used Proposition \ref{convprincdif} (resp. \eqref{claimtight1}), to bound the first term (resp. third term), due to the fact that $G \in C_{c}^{2}(\mathbb{R}^d) { \subsetneq } \mcb S_{\textrm{Dif}}$.
\end{proof}
In order to prove \eqref{condger1pr2}, which leads to the tightness of $(\mathbb{Q}_n)_{n \geq 1}$, it is only necessary to consider $G \in C_{c}^{2}(\mathbb{R}^d)$. Nevertheless, Propositions \ref{caraclimsementbnd} and \ref{caraclimsementbnd2} below require us to deal with test functions which depend also on time. This motivates us to state Lemma \ref{lemtight2} below, since the expression in \eqref{condger2pr3} is essential to obtain the results in Section \ref{seccharac}. Afterwards, we obtain \eqref{condger1pr2} from Lemma \ref{lemtight2}, which is proved subsequently.  
\begin{lem} \label{lemtight2}
\textit{	
Define $\mcb S_{\beta}$ by} $\mcb S_{\beta}:=\mcb S_{\textrm{Dif}}$ \textit{if $\beta=0$ and} $\mcb S_{\beta}:=\mcb{S}_{\textrm {Disc}}$ \textit{if $\beta>0$.  Let $G \in \mcb S_{\beta}$. It holds 
\begin{equation} \label{condger2pr1}
 \sup_{ {t \in} [0,T]} \big| \Theta(n) [ \mcb {L}_n (  \langle \pi_{t}^{n},G_t \rangle^2 ) - 2 \langle \pi_{t}^{n},G_t \rangle  \mcb {L}_n (  \langle \pi_{t}^{n},G_t \rangle )  ] \big| \lesssim \frac{1}{\log(n)}.
\end{equation}
In particular, it holds
\begin{align} 
& \forall \delta_1 > 0, \quad \lim_{n \rightarrow \infty} \mathbb{P}_{\mu_n} \Big( \sup_{t \in [0,T]} | \mcb M_{t}^{n}(G) | > \delta_1 \Big) =0. \label{condger2pr3}
\end{align}
}
\end{lem}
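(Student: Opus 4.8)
The plan is to reduce the maximal probability bound \eqref{condger2pr3} to the deterministic estimate \eqref{condger2pr1}, and then to establish \eqref{condger2pr1} by an explicit computation of the carr\'e du champ. Since $G \in \mcb S_{\beta} \subseteq \mcb{S}_{\textrm {Disc}}$, Proposition \ref{dynkform} applies and $(\mcb M_t^n(G))_{t \in [0,T]}$ is a martingale; hence $(\mcb M_t^n(G))^2$ is a nonnegative submartingale and Doob's maximal inequality yields
\begin{align*}
\mathbb{P}_{\mu_n}\Big(\sup_{t \in [0,T]}|\mcb M_t^n(G)| > \delta_1\Big) \leq \frac{1}{\delta_1^2}\,\mathbb{E}\big[(\mcb M_T^n(G))^2\big].
\end{align*}
By \eqref{defNnt}, the right-hand side equals $\delta_1^{-2}\,\mathbb{E}\big[\int_0^T \Theta(n)[\mcb L_n(\langle \pi_s^n,G_s\rangle^2) - 2\langle \pi_s^n,G_s\rangle\mcb L_n(\langle \pi_s^n,G_s\rangle)]\,ds\big]$, which by \eqref{condger2pr1} is bounded by a constant times $T/(\delta_1^2\log n)$ and thus vanishes as $n \to \infty$. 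So it suffices to prove \eqref{condger2pr1}.

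For fixed $s$ put $f_s(\eta):=\langle \pi^n(\eta),G_s\rangle = \tfrac{1}{n^d}\sum_{\hat{z}}\eta(\hat{z})G_s(\tfrac{\hat{z}}{n})$. The standard carr\'e du champ identity for $\mcb L_n$ reads
\begin{align*}
\mcb L_n(f_s^2)(\eta) - 2f_s(\eta)\,\mcb L_n f_s(\eta) = \frac{1}{2}\sum_{\hat{x},\hat{y}}p(\hat{y}-\hat{x})\,r_{\hat{x},\hat{y}}^n\,\big(\nabla_{\hat{x},\hat{y}}f_s(\eta)\big)^2,
\end{align*}
and swapping occupation variables gives $\nabla_{\hat{x},\hat{y}}f_s(\eta) = \tfrac{1}{n^d}(\eta(\hat{x})-\eta(\hat{y}))(G_s(\tfrac{\hat{y}}{n})-G_s(\tfrac{\hat{x}}{n}))$. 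Using $(\eta(\hat{x})-\eta(\hat{y}))^2 \leq 1$ on $\Omega$ (a bound uniform in the configuration), the quantity inside the supremum in \eqref{condger2pr1} is dominated, for every $\eta$, by
\begin{align*}
\frac{\Theta(n)}{2n^{2d}}\sup_{s \in [0,T]}\sum_{\hat{x},\hat{y}}p(\hat{y}-\hat{x})\,r_{\hat{x},\hat{y}}^n\,\big(G_s(\tfrac{\hat{y}}{n})-G_s(\tfrac{\hat{x}}{n})\big)^2,
\end{align*}
so it remains to show this deterministic sum is $\lesssim 1/\log n$.

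Since \eqref{prob} only permits jumps along the coordinate axes, the sum splits as $\sum_{j=1}^d$ over the directions $\hat{e}_j$. For the transverse directions $j \in \{1,\dots,d-1\}$ and for the $\hat{e}_d$-direction restricted to non-crossing bonds (both endpoints on the same side of $\mathcal{H}$, which lie in $\mcb F$), the relevant branch of $G_s$ is smooth ($G^-$ or $G^+$), and I would split each directional sum by jump length $r$: for $1 \leq |r| \leq n$ the mean value theorem gives $(G_s(\cdot+\tfrac{r}{n}\hat{e}_j)-G_s(\cdot))^2 \lesssim r^2/n^2$, while for $|r|>n$ the crude bound $4\|G\|_\infty^2$ suffices. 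Using the compact support of $G$ (so $\lesssim n^d$ sites contribute) together with $p(r\hat{e}_j)=\tfrac{c_\gamma}{d}|r|^{-\gamma-1}$, these contributions are controlled by a constant times $\tfrac{\Theta(n)}{n^{2d+2}}\,n^d\sum_{1\leq |r|\leq n}|r|^{1-\gamma}$ plus a smaller tail; for $\gamma=2$ the harmonic sum $\sum_{|r|\leq n}|r|^{-1}\sim \log n$ is exactly cancelled by the $\log n$ in $\Theta(n)$ (see \eqref{timescale}), and in all cases this is $\lesssim n^{-d} \lesssim 1/\log n$.

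The remaining, and genuinely binding, contribution comes from the crossing $\hat{e}_d$-bonds (one endpoint in each half-space), which are precisely the slow bonds in $\mcb S$ carrying the factor $r_{\hat{x},\hat{y}}^n=\alpha n^{-\beta}$; here $G_s$ may jump at $\mathcal{H}$, so only the crude bound $(G_s(\tfrac{\hat{y}}{n})-G_s(\tfrac{\hat{x}}{n}))^2\leq 4\|G\|_\infty^2$ is available. Estimating the crossing sum exactly as in \eqref{eqivmcbSG}--\eqref{claim1largebeta} (the double sum over $x_d\geq 0,\,y_d<0$ collapses to $\sum_{m\geq 1}m^{-\gamma}<\infty$, and the transverse sites number $\lesssim n^{d-1}$ by \eqref{defbg}) gives a bound of order $\tfrac{\Theta(n)\,n^{-\beta}}{n^{d+1}}$, i.e.\ $n^{1-d-\beta}$ when $\gamma>2$ and $n^{1-d-\beta}/\log n$ when $\gamma=2$; since $d\geq 1$ and $\beta\geq 0$ this is $\lesssim n^{-\beta}\lesssim 1/\log n$. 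Collecting all pieces yields \eqref{condger2pr1}. I expect the main subtlety to be the bookkeeping across the parameter regimes rather than any single hard estimate: one must verify simultaneously that the logarithm in $\Theta(n)$ neutralizes the harmonic sum for $\gamma=2$, and that the slow factor $n^{-\beta}$ always tames the crossing bonds where $G$ is discontinuous, the latter being exactly what forces the stated $1/\log n$ rate.
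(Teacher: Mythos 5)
Your overall route is the same as the paper's: reduce \eqref{condger2pr3} to \eqref{condger2pr1} via Doob's maximal inequality together with \eqref{defNnt}, identify the quantity in \eqref{condger2pr1} as the carr\'e du champ $\frac{1}{2}\sum_{\hat{x},\hat{y}}p(\hat{y}-\hat{x})r^n_{\hat{x},\hat{y}}(\nabla_{\hat{x},\hat{y}}f_s)^2$ with $\nabla_{\hat{x},\hat{y}}f_s=n^{-d}(\eta(\hat{x})-\eta(\hat{y}))(G_s(\tfrac{\hat{y}}{n})-G_s(\tfrac{\hat{x}}{n}))$, and then control the resulting deterministic sum by the mean value theorem for short jumps and the crude $\|G\|_\infty$ bound for long jumps, exactly as in the paper's claim \eqref{claimtight2} and the subsequent splitting into fast and slow bonds.

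There is, however, one concrete flaw: your treatment of the crossing $\hat{e}_d$-bonds uses only the crude bound $(G_s(\tfrac{\hat{y}}{n})-G_s(\tfrac{\hat{x}}{n}))^2\leq 4\|G\|_\infty^2$ and concludes with ``$\lesssim n^{1-d-\beta}\lesssim n^{-\beta}\lesssim 1/\log n$ since $d\geq 1$ and $\beta\geq 0$''. The last inequality is false when $\beta=0$: for $\beta=0$, $d=1$, $\gamma>2$ your crossing-bond estimate gives $\Theta(n)n^{-d-1-\beta}=n^{1-d-\beta}=1$, which is not $O(1/\log n)$. The repair is immediate and is exactly how the paper organizes its case split: when $\beta=0$ one has $G\in\mcb S_{\textrm{Dif}}$, so $G_s$ is globally $C^2$ and does \emph{not} jump at $\mathcal{H}$; the mean value theorem therefore applies to the crossing bonds as well, and they contribute the same $O(n^{-d})$ order as the non-crossing ones (this is the paper's \eqref{claimtight2} applied to $H=G$ over \emph{all} pairs). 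The crude bound plus the factor $n^{-\beta}$ is only needed, and only suffices, when $\beta>0$, which is precisely when $G$ may be discontinuous at $\mathcal{H}$. With that case distinction inserted, your argument is complete and coincides with the paper's proof.
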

Keeping Remark \ref{remsubset} in mind, we observe that $C_{c}^{2}(\mathbb{R}^d) \subsetneq \mcb S_{\textrm{Dif}} \subsetneq \mcb{S}_{\textrm {Disc}}$. Thus, since $( \mcb M_{t}^{n}(G) )_{0 \leq t \leq T}$ is a martingale, from \eqref{defNnt} we get that the expectation in \eqref{condger1pr2} is bounded from above by
\begin{align*}
  \mathbb{E}_{\mu_n} \Big[ \int_{0}^{T} \sup_{s \in [0,T]} \big| \Theta(n) \left(\mcb L_{n}[\langle \pi_{s}^{n},G_s \rangle]^{2}-2\langle \pi_{s}^{n},G_s \rangle \mcb L_{n} \langle \pi_{s}^{n},G_s  \rangle\right) \, ds \big| \Big]  \lesssim  \frac{T}{\log(n)},
\end{align*}
that vanishes as $n \rightarrow \infty$, leading to \eqref{condger1pr2}. In {the} last line we applied \eqref{condger2pr1}.

{Finally, we end this section by proving Lemma \ref{lemtight2}.}

\begin{proof}[Proof of Lemma \ref{lemtight2}]

{By assuming \eqref{condger2pr1}, Doob's inequality leads to \eqref{condger2pr3}.} It remains to prove \eqref{condger2pr1}. In order to do so, we claim that
\begin{equation} \label{claimtight2}
\forall H \in  \mcb S_{\textrm{Dif}}, \quad \sum _{\hat{x}, \hat{y} } \frac{\Theta(n)}{n^{2d} } \sup_{s \in [0,T]} \big[ H_s( \tfrac{\hat{y}}{n}) - H_s( \tfrac{\hat{x}}{n}) \big]^2  p(\hat{y}-\hat{x}) \lesssim \frac{1}{ n^{d-1} \log(n) }.  
\end{equation}
Indeed, from \eqref{prob}, the sum in {the} last display can be rewritten as
\begin{align} \label{boundtight2}
\frac{c_{\gamma}\Theta(n)}{d n^{2d} }  \sum_{j=1}^d  \sum_{ | \hat{x}_{\star,j}| \leq b_{{H}} n } \;   \sum_{x_j  \in \mathbb{Z}} \;  \sum_{y_j \neq x_j \in \mathbb{Z}} \;  \sup_{s \in [0,T]} \big[ {H}_s ( \hat{x}_{\star,j},\tfrac{y_j}{n} ) - {H}_s (\hat{x}_{\star,j}, \tfrac{x_j}{n} ) \big]^2  |y_j - x_j|^{-1-\gamma},
\end{align}
where $b_{{H}}$ is defined in \eqref{defbg} {for} ${G=H}$. We treat the cases $\gamma =2$ and $\gamma > 2$ separately. For $\gamma=2$, from the Mean Value {Theorem, the} double sum over $x_j \neq y_j \in \mathbb{Z}$ in \eqref{boundtight2} is bounded from above by
\begin{align*}
 &   \sum_{|x_j| \leq b_{{H}} n} \;  \sum_{|y_j| > 2 b_{{H}} n} { (2 \| H \|_{\infty})^2 }  |y_j - x_j|^{-1-\gamma} + \sum_{|y_j| \leq 2 b_{{H}} n} \;  \sum_{|x_j| > 3 b_{{H}} n}  {  (2 \| H \|_{\infty})^2 } |y_j - x_j|^{-1-\gamma}  \\
 +& \sum_{|y_j| \leq 2 b_{{H}} n} \;  \sum_{|x_j| \leq 3 b_{{H}} n}  \frac{\| \partial_{\hat{e}_j} {H}  \|_{\infty}^2}{n^2} (y_j-x_j)^2  |y_j - x_j|^{-1-\gamma} \mathbbm{1}_{ \{ y_j \neq x_j \} }  \\
 \lesssim & n^{-1} \frac{1}{n^2} \Bigg[ \sum_{|x_j| \leq b_{{H}} n} \;  \sum_{y_j > 2 b_{{H}} n} \Big( \frac{y_j-x_j}{n} \Big)^{-3} + \sum_{|y_j| \leq 2 b_{{H}} n} \;  \sum_{x_j > 3 b_{{H}} n} \Big( \frac{x_j-y_j}{n} \Big)^{-3} \Bigg] +   \sum_{|y_j| \leq 2 b_{{H}} n} \;  \sum_{|x_j| \leq 3 b_{{H}} n} \frac{1}{n^2}   \\
  \lesssim & n^{-1} + n^{-1} + 1. 
\end{align*}
In order to obtain the first line of the last display, we applied the triangle inequality, which leads to the upper bound $\sup_{s \in [0,T]} \big[ H_s ( \hat{x}_{\star,j},\tfrac{y_j}{n} ) - H_s (\hat{x}_{\star,j}, \tfrac{x_j}{n} ) \big]^2 \leq (2 \| H \|_{\infty})^2$. Moreover, in the third line, we used the fact that $\gamma =2 >1$ to get $(y_j-x_j)^2  |y_j - x_j|^{-1-\gamma} \mathbbm{1}_{ \{ y_j \neq x_j \} } \leq 1$. Therefore, the expression in \eqref{boundtight2} is bounded from above, by a constant times
\begin{align*}
\frac{c_{\gamma}\Theta(n)}{d n^{2d} } \sum_{j=1}^d \; \sum_{ | \hat{x}_{\star,j}| \leq b_{{H}} n } 1 \lesssim \frac{\Theta(n)}{ n^{d+1} } = \frac{n^2}{ n^{d+1} \log(n) } = \frac{1}{ n^{d-1} \log(n) }, 
\end{align*}
leading to \eqref{claimtight2}. On the other hand, for $\gamma>2$, by applying the Mean Value Theorem, the double sum over $x_j \neq y_j \in \mathbb{Z}$ in \eqref{boundtight2} is bounded from above by
\begin{align*}
  & \| \partial_{\hat{e}_j}  \|_{\infty}^2 \Big[ \sum_{|y_j| > 2 b_{{H}} n}  \; \sum_{ |x_j| \leq b_{{H}} n } \Big( \frac{y_j-x_j}{n} \Big)^2  |y_j - x_j|^{-1-\gamma} +  \sum_{|y_j| \leq 2 b_{{H}} n} \;  \sum_{ x_j \neq y_j } \Big( \frac{y_j-x_j}{n}  \Big)^2  |y_j - x_j|^{-1-\gamma}  \Big] \\
\lesssim & \frac{1}{n^2}    \sum_{ |x_j| \leq 2 b_{{H}} n } \;  \sum_{z_j=1 }^{\infty}  (z_j)^{1-\gamma} \lesssim \frac{1}{n}. 
\end{align*}
Therefore, \eqref{boundtight2} is bounded from above, by a constant times
\begin{align*}
\frac{c_{\gamma}\Theta(n)}{d n^{2d} } \sum_{j=1}^d \; \sum_{ | \hat{x}_{\star,j}| \leq b_{{H}} n } \; \frac{1}{n} \lesssim \frac{\Theta(n)}{ n^{d+2} } = \frac{n^2}{ n^{d+2} } = \frac{1}{ n^{d}  } \lesssim \frac{1}{ n^{d-1} \log(n) }, 
\end{align*}
leading again to \eqref{claimtight2}. 

 Next we prove \eqref{condger2pr1} from \eqref{claimtight2}. Performing simple algebraic manipulations, we get for $G\in \mcb{S}_{\textrm {Disc}}$  (which includes the case $G\in \mcb S_{\textrm{Dif}}$), that the left-hand side of \eqref{condger2pr1} is bounded from above by
\begin{equation} \label{tightcond2eq1}
 \frac{\Theta(n)}{2n^{2d}} \Big[  \sum_{\{\hat{x}, \hat{y} \} \in \mcb F } \;  \sup_{s \in [0,T]} \big[ G_s( \tfrac{\hat{y}}{n}) - G_s( \tfrac{\hat{x}}{n}) \big]^2  p(\hat{y}-\hat{x}) +  \sum_{\{\hat{x}, \hat{y} \} \in \mcb S } \frac{\alpha}{n^{\beta}} \sup_{s \in [0,T]} \big[ G_s( \tfrac{\hat{y}}{n}) - G_s( \tfrac{\hat{x}}{n}) \big]^2  p(\hat{y}-\hat{x}) \Big]. 
\end{equation}
In {the} last display we used \eqref{exc1part}. We treat the cases $\beta=0$ and $\beta >0$ separately. For $\beta=0$, we have $\mcb S_{\beta} =  \mcb S_{\textrm{Dif}}$ and the expression in \eqref{tightcond2eq1} is bounded from above by a constant times
\begin{align*}
 \sum _{\hat{x}, \hat{y} } \frac{\Theta(n)}{n^{2d} } \sup_{s \in [0,T]}  \big[ G_s( \tfrac{\hat{y}}{n}) - G_s( \tfrac{\hat{x}}{n}) \big]^2  p(\hat{y}-\hat{x}),
\end{align*}
hence an application of \eqref{claimtight2} {for} ${H=G}$ leads to the desired result. On the other hand, for $\beta >0$ we have $\mcb S_{\beta} =  \mcb{S}_{\textrm {Disc}}$ and the sum over $\{\hat{x}, \hat{y} \} \in \mcb F $ in \eqref{tightcond2eq1} is bounded from above by
\begin{align*}
\sum _{\hat{x}, \hat{y} } \frac{\Theta(n)}{2n^{2d} } \Big\{ \sup_{s \in [0,T]} \big[ G_s^{+} ( \tfrac{\hat{y}}{n}) - G_s^{+}( \tfrac{\hat{x}}{n}) \big]^2  + \sup_{s \in [0,T]} \big[ G_s^{-}( \tfrac{\hat{y}}{n}) - G_s^{-} ( \tfrac{\hat{x}}{n}) \big]^2 \Big\}  p(\hat{y}-\hat{x}) \lesssim \frac{1}{ n^{d-1} \log(n) },
\end{align*} 
that vanishes when $n \rightarrow \infty$. Above we applied \eqref{claimtight2} {for} ${H=G^{+}}$ {and for} ${H=G^{-}}$. Finally, due to \eqref{prob}, the sum over $\{\hat{x}, \hat{y} \} \in \mcb S$ in \eqref{tightcond2eq1} is bounded from above by
\begin{align*}
& \frac{\alpha c_{\gamma} \Theta(n)}{d  n^{2d+\beta} }  \| G \|_{\infty}   \sum_{ | \hat{x}_{\star}| \leq b_G n } \Big( \sum_{x_d=0}^{\infty} \; \sum_{y_d=-\infty}^{-1} (x_d-y_d)^{-1-\gamma}   + \sum_{x_d=-\infty}^{-1} \; \sum_{y_d=0}^{\infty}  (y_d-x_d)^{-1-\gamma} \Big) \\
\lesssim & \frac{\Theta(n)}{ n^{2d+\beta} }   \sum_{ | \hat{x}_{\star}| \leq b_G n } \; \sum_{z_d=1}^{\infty}  (z_d)^{-\gamma} \lesssim \frac{\Theta(n)}{ n^{2d+\beta} }   \sum_{ | \hat{x}_{\star}| \leq b_G n } \lesssim \frac{\Theta(n)}{ n^{d+1+\beta} } = \frac{\Theta(n)}{n^2} \frac{1}{ n^{d-1+\beta} } \lesssim \frac{1}{ n^{d-1+\beta} } \lesssim \frac{1}{ n^{\beta} },
\end{align*}
that vanishes when $n \rightarrow \infty$ due to $\beta >0$, and the proof ends. 
\end{proof}

\section{Characterization of limit points} \label{seccharac}
In this section we characterize the limit point $\mathbb Q$ of the sequence $(\mathbb{Q}_{n})_{ n \geq 1 }$, whose existence is a consequence  of  the results  of {the} last section. We first observe that since we deal with an exclusion process, according to the proof of Theorem 2.1, Chapter 4, of \cite{kipnis1998scaling}, $\mathbb{Q}$ is concentrated on trajectories $\pi_t(d \hat{u})$ which are absolutely continuous with respect to the Lebesgue measure, that is $\pi_t(d \hat{u})= \rho_t(\hat{u}) \, d \hat{u}$. In particular, $\rho_t(\hat{u}) \in [0,1]$ for every $(t,\hat{u}) \in [0,T] \times \mathbb{R}^d$.

Now we want to show that $\mathbb{Q}$ is concentrated on trajectories ${\rho}$ satisfying the corresponding integral equation of our hydrodynamic equations.
\begin{prop} \label{caraclimsementbnd}
\textit{
Every limit point $\mathbb{Q}$ of $(\mathbb{Q}_n)_{n \geq 1}$ satisfies}
\begin{align*}
\mathbb{Q} \Big(\pi_{ \cdot} \in \mcb D([0,T], \mcb{M}^+): \forall t \in [0,T], \forall G \in  \mcb S_{\alpha, \beta, \gamma}, \quad F_{\textrm{Dif}}(t,\rho,G, g, \kappa_{\gamma}) = 0  \Big) = 1.
\end{align*}
\textit{The space of test functions $\mcb S_{\alpha, \beta, \gamma}$ depends on whether \eqref{1entbound} holds or not. If \eqref{1entbound} \textit{does not hold}, $\mcb S_{\alpha, \beta, \gamma}$ is given by}
\begin{align*}
\mcb S_{\alpha, \beta, \gamma}:= 
\begin{cases}
\mcb{S}_{\textrm {Neu}}, & \quad \text{if} \quad  (\beta,\gamma) \in R_0 \quad \text{and} \quad \alpha > 0; \\
\mcb S_{\textrm{Dif}}, & \quad \text{if} \quad (\alpha,\beta) = (1,0)  \quad \text{and} \quad \gamma \geq 2.
\end{cases}
\end{align*}
\textit{On the other hand, if \eqref{1entbound} \textit{holds}, $\mcb S_{\alpha, \beta, \gamma}$ is given by}
\begin{align*}
\mcb S_{\alpha, \beta, \gamma}:= 
\begin{cases}
\mcb{S}_{\textrm {Neu}}, & \quad \text{if} \quad  (\beta,\gamma) \in R_0 \quad \text{and} \quad \alpha { > } 0; \\
\mcb S_{\textrm{Dif}}, & \quad \text{if} \quad \beta \in [0,1)  \quad \text{and} \quad (\alpha,\gamma) \in (0, \infty) \times [2, \infty).
\end{cases}
\end{align*}
\end{prop}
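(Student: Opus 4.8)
The plan is to run the standard characterization-of-limit-points scheme, feeding in the $L^1(\mathbb{P}_{\mu_n})$-estimates assembled in Section \ref{secheur}. First I would fix $G \in \mcb S_{\alpha,\beta,\gamma}$ and $t \in [0,T]$; since $\mcb S_{\alpha,\beta,\gamma} \subset \mcb{S}_{\textrm {Disc}}$, Dynkin's formula (Proposition \ref{dynkform}) provides the martingale \eqref{defMnt}. Because $G$ lies in $\mcb S_{\beta}$ in every case under consideration ($\mcb S_{\textrm{Dif}}$ when $\beta=0$, and $\mcb{S}_{\textrm {Neu}} \subset \mcb{S}_{\textrm {Disc}}$ when $\beta \geq 1$), Lemma \ref{lemtight2}, and in particular \eqref{condger2pr3}, shows that $\sup_{t \in [0,T]} |\mcb M_t^n(G)|$ vanishes in $\mathbb{P}_{\mu_n}$-probability. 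Thus, modulo a term negligible in probability, the integral term $\int_0^t \Theta(n) \mcb L_n \langle \pi_s^n, G_s \rangle \, ds$ coincides with $\langle \pi_t^n, G_t \rangle - \langle \pi_0^n, G_0 \rangle - \int_0^t \langle \pi_s^n, \partial_s G_s \rangle \, ds$.

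Next I would identify the $L^1(\mathbb{P}_{\mu_n})$-limit of the integral term by re-using the two-case analysis of Section \ref{secheur}. When $G \in \mcb S_{\textrm{Dif}}$ and $\beta \in [0,1)$, I rewrite it as in \eqref{gensmallbeta}: Proposition \ref{convprincdif} turns the first summand into $\kappa_{\gamma}\int_0^t \langle \pi_s^n, \Delta G_s \rangle \, ds$ up to an $L^1$-error, while the second summand is identically zero when $(\alpha,\beta)=(1,0)$ and otherwise vanishes in $L^1(\mathbb{P}_{\mu_n})$ by Proposition \ref{replemsmallbeta} (here \eqref{1entbound} is used). When $G \in \mcb{S}_{\textrm {Neu}}$ and $(\beta,\gamma) \in R_0$, I use the decomposition \eqref{extranb1}--\eqref{extranb2}: Proposition \ref{convprincdisc} produces $\kappa_{\gamma}\int_0^t \langle \pi_s^n, \mathbb{L}_{\Delta} G_s \rangle \, ds$ up to error, the estimate \eqref{claim1largebeta} kills the slow-bond contribution, and \eqref{extranb2} disappears because $\mcb{R}_{n,d} G \equiv 0$ for Neumann test functions. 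In either case one obtains
\[
\int_0^t \Theta(n) \mcb L_n \langle \pi_s^n, G_s \rangle \, ds - \kappa_{\gamma}\int_0^t \langle \pi_s^n, \mathbb{L}_{\Delta} G_s \rangle \, ds \longrightarrow 0
\]
in $L^1(\mathbb{P}_{\mu_n})$, all pairings being the $L^1$-extension \eqref{intmedemp}, which is finite by Remark \ref{remL1disc}.

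Combining the two steps, the functional
\[
\Psi_t(\pi) := \langle \pi_t, G_t \rangle - \langle \pi_0, G_0 \rangle - \int_0^t \langle \pi_s, (\partial_s + \kappa_{\gamma}\mathbb{L}_{\Delta}) G_s \rangle \, ds
\]
satisfies $\Psi_t(\pi^n) \to 0$ in $\mathbb{P}_{\mu_n}$-probability. To transfer this to a limit point $\mathbb{Q}$ I would recall, as noted at the start of this section, that $\mathbb{Q}$ is concentrated on absolutely continuous trajectories $\pi_s = \rho_s \, d\hat{u}$ with $\rho_s \in [0,1]$; in particular $\pi_s(\mathcal{H}) = 0$ for every $s$. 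Since the only discontinuities of $G_s$ sit on $\mathcal{H}$ and $G$ is compactly supported, $\pi \mapsto \Psi_t(\pi)$ is bounded and continuous at $\mathbb{Q}$-almost every trajectory (restricting to the full-measure set of trajectories whose time-marginals have density bounded by $1$). A continuous-mapping / portmanteau argument then upgrades the convergence in probability to $\mathbb{Q}(|\Psi_t| > \delta) = 0$ for all $\delta > 0$. Finally, I would use the association \eqref{assoc} to replace $\langle \pi_0, G_0 \rangle$ by $\int_{\mathbb{R}^d} g G_0 \, d\hat{u}$ --- approximating the possibly discontinuous $G_0$ by continuous compactly supported functions in $L^1$, using $|\eta(\hat{x})| \leq 1$ and that $\mathcal{H}$ is Lebesgue-null --- so that $\Psi_t$ becomes exactly $F_{\textrm{Dif}}(t,\rho,G,g,\kappa_{\gamma})$. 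Running this over a countable dense family of times and of test functions, and using continuity in time of the limiting trajectories, yields the full-probability statement for all $t$ and all $G \in \mcb S_{\alpha,\beta,\gamma}$.

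The main obstacle is this last transfer step: the Neumann test functions are genuinely discontinuous across $\mathcal{H}$ and the spatial domain is unbounded, so $\Psi_t$ is not continuous on all of $\mcb D([0,T],\mcb{M}^{+})$ in the vague--Skorohod topology, and the passage from convergence in probability to the limit point cannot be invoked naively. The resolution hinges on the absolute continuity of the limiting measures (so that $\mathcal{H}$ carries no mass), combined with the compact support of $G$ and the $L^1$-control of Remark \ref{remL1disc}; one must also check that the $L^1$-extended pairing \eqref{intmedemp} is compatible with weak convergence of measures when tested against these discontinuous functions.
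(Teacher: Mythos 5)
Your proposal is correct and follows essentially the same route as the paper: the same martingale decomposition via \eqref{defMnt}, control of $\mcb M^n_t(G)$ through \eqref{condger2pr3}, and the same case split using Propositions \ref{convprincdif}, \ref{convprincdisc}, \ref{replemsmallbeta}, the bound \eqref{claim1largebeta}, and the vanishing of $\mcb{R}_{n,d}G$ for Neumann test functions. The only organizational difference is that the paper applies Portmanteau directly to the open event containing $\sup_{t\in[0,T]}|F_{\textrm{Dif}}|$ rather than proving fixed-$t$ convergence and then invoking density in $t$ and in $G$; your explicit discussion of why the functional is $\mathbb{Q}$-a.s.\ continuous despite the discontinuity of $G$ across $\mathcal{H}$ (absolute continuity of the limiting trajectories) makes precise a point the paper leaves implicit.
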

\begin{proof}
The proof ends as long as we show, for any $\delta_1 >0$ fixed and $G \in \mcb S_{\alpha, \beta, \gamma }$, that
\begin{align*} 
\mathbb{Q} \Big(\pi_{ \cdot} \in \mcb D([0,T], \mcb{M}^+): \sup_{t \in [0,T]} |F_{\textrm{Dif}}(t,\rho,G, g, \kappa_{\gamma})| > \delta_1  \Big) = 0.
\end{align*}
To simplify notation hereinafter we erase $\pi_{\cdot}$ from the sets where we look at. We can bound {the} last probability from above by the sum of the following terms:
\begin{align}
& \mathbb Q\Big(  \sup_{t \in [0,T]} \Big| \int_{\mathbb{R}^d}[ \rho_t(\hat{u}) G_t(\hat{u}) - \rho_0(\hat{u}) G_0(\hat{u})] \, d \hat{u}  
-  \int_0^t \int_{\mathbb{R}^d} \rho_s(\hat{u}) [ \kappa_{\gamma} {  \mathbb{L}_{\Delta} }+ \partial_s ] G_s(\hat{u})\, d \hat{u} \, ds  \Big| > \dfrac{\delta_1}{2} \Big), \label{f1term1dif} \\
+& \mathbb{Q} \Big(  \Big| \int_{\mathbb{R}^d} \left[ \rho_0(\hat{u}) - g(\hat{u}) \right] G_0(\hat{u}) \, d \hat{u} \Big| > \dfrac{\delta_1}{2}  \Big). \nonumber
\end{align}
{The last} probability is equal to zero, since {$\mathbb{Q}$ is a limit point of $(\mathbb{Q}_n)_{n \geq 1 }$, which is induced by $(\mathbb{P}_{\mu_n})_{n  \geq 1}$, $(\mu_n)_{n  \geq 1}$; and $(\mu_n)_{n  \geq 1}$ is a sequence associated to the profile $g$.}   Now from Portmanteau's Theorem, the probability in \eqref{f1term1dif} is bounded from above by
\begin{equation} \label{portman}
 \varliminf_{n \rightarrow \infty} \mathbb{Q}_{n} \Big(  \sup_{t \in [0,T]} \Big| \int_{\mathbb{R}^d}[ \rho_t(\hat{u}) G_t(\hat{u}) - \rho_0(\hat{u}) G_0(\hat{u})] \, d \hat{u}   -   \int_0^t \int_{\mathbb{R}^d} \rho_s(\hat{u}) [ \kappa_{\gamma}  {  \mathbb{L}_{\Delta} } + \partial_s ] G_s(\hat{u}) \, d\hat{u} \, ds  \Big| > \dfrac{\delta_1}{2} \Big).
\end{equation}
Next we treat two cases: $(\beta,\gamma) \in R_0$ and $\beta \in [0,1)$.

\textbf{I.:} Case $(\beta,\gamma) \in R_0$. Combining the definition of $\mcb M_{t}^{n}(G)$ in \eqref{defMnt} with \eqref{extranb1} and \eqref{extranb2}, the display \eqref{portman} is bounded from above by the sum of the next terms
\begin{align}
& \varliminf_{n \rightarrow \infty} \mathbb{P}_{\mu_n}  \Big(   \sup_{t \in [0,T]} |\mcb M_{t}^{n}(G)| > \dfrac{\delta_1}{6} \Big) \label{f1term1bdif}\\
+&  \varliminf_{n \rightarrow \infty}  \mathbb{P}_{\mu_n} \Big(   \sup_{t \in [0,T]}  \Big|  \int_{0}^{t} \frac{1}{n^d} \sum_{ \hat{x} } [ \Theta(n)   \mcb{K}_{n}^{*} G_s( \tfrac{\hat{x}}{n}) -  \kappa_{\gamma} {  \mathbb{L}_{\Delta} } G_s( \tfrac{\hat{x}}{n})] \eta_s^n(\hat{x})  \, ds  \Big| > \dfrac{\delta_1}{6} \Big) \label{f1term1cdif} \\
+& \varliminf_{n \rightarrow \infty}  \mathbb{P}_{\mu_n} \Big(   \sup_{t \in [0,T]}  \Big|     \int_{0}^{t}  \sum _{ \hat{x} } \frac{\Theta(n)}{n^{d}}  \Big[  \frac{\alpha}{n^{\beta}}   \mcb{K}_{n, \mcb S} G_s( \tfrac{ \hat{x} }{n} ) + \mcb{R}_{n,d} G_s\left( \tfrac{ \hat{x} }{n} \right) \Big]  \eta_{s}^{n}(\hat{x}) \, ds    \Big| > \dfrac{\delta_1}{6} \Big). \label{f1term1ddif}
\end{align}
Since $(\beta,\gamma) \in R_0$, then $G \in \mcb{S}_{\textrm {Neu}}$ and we can apply Proposition \ref{convprincdisc}. Hence, from \eqref{condger2pr3} (resp. Proposition \ref{convprincdisc}) the limit in \eqref{f1term1bdif} (resp. \eqref{f1term1cdif})  is equal to zero. It remains to treat  \eqref{f1term1ddif}. From the fact that $G \in \mcb{S}_{\textrm {Neu}}$ and \eqref{op_Rn} we get $\mcb{R}_{n,d} G_s \left( \tfrac{ \hat{x} }{n} \right) = 0$ for every $\hat{x} \in \mathbb{Z}^d$ and every $s \in [0,T]$. Combining this with \eqref{claim1largebeta}, the limit in \eqref{f1term1ddif} is also equal to zero.

\textbf{II.:} Case $\beta \in [0,1)$. Combining the definition of $\mcb M_{t}^{n}(G)$ in \eqref{defMnt} with \eqref{gensmallbeta}, the display in \eqref{portman} is bounded from above by the sum of the next terms
\begin{align}
& \varliminf_{n \rightarrow \infty} \mathbb{P}_{\mu_n}  \Big(   \sup_{t \in [0,T]} |\mcb M_{t}^{n}(G)| > \dfrac{\delta_1}{6} \Big) \label{f1term1edif} \\
+& \varliminf_{n \rightarrow \infty}  \mathbb{P}_{\mu_n} \Big(   \sup_{t \in [0,T]}  \Big|  \int_{0}^{t} \frac{1}{n^d} \sum_{ \hat{x} } [ \Theta(n)   \mcb K_{n, \mcb B} G_s( \tfrac{\hat{x}}{n}) -  \kappa_{\gamma} {  \mathbb{L}_{\Delta} } G_s( \tfrac{\hat{x}}{n})] \eta_s^n(\hat{x}) \, ds  \Big| > \dfrac{\delta_1}{6} \Big) \label{f1term1fdif} \\
+&  \varliminf_{n \rightarrow \infty}  \mathbb{P}_{\mu_n} \Big(   \sup_{t \in [0,T]}  \Big| (1 - \alpha n^{-\beta}) \int_{0}^{t} \frac{\Theta(n)}{n^d} \sum_{ \hat{x} }     \mcb K_{n, \mcb S} G_s( \tfrac{\hat{x}}{n})  \eta_{s}^{n}(\hat{x}) \, ds  \Big| > \dfrac{\delta_1}{6} \Big). \label{f1term1gdif}
\end{align}
Since $(\beta,\gamma) \notin R_0$, then $G \in \mcb S_{\textrm{Dif}}$, ${\mathbb{L}_{\Delta} G = \Delta G}$ and {we} can apply Proposition \ref{convprincdif}. Hence, from \eqref{condger2pr3} (resp. Proposition \ref{convprincdif}) the limit in \eqref{f1term1edif} (resp. \eqref{f1term1fdif})  is equal to zero. It remains to treat  \eqref{f1term1gdif}. If $(\alpha,\beta)=(1,0)$, the expression inside the absolute value is equal to zero and we are done. Otherwise \eqref{1entbound} holds, and applying Proposition \ref{replemsmallbeta} we conclude that \eqref{f1term1gdif} is equal to zero, ending the proof.
\end{proof}
In the particular case $\beta>1$, $\gamma > 2$ and $d=1$, the integral equation in \eqref{ehdnlee} can be obtained when \eqref{1entbound} is assumed. Next result can be proved exactly in the same way as Proposition 4.2 of \cite{casodif}, therefore we omit its proof; it is obtained by combining \eqref{extranb2} and \eqref{claim1largebeta} with Propositions \ref{convprincdisc} and \ref{replemlargebeta}. 
\begin{prop} \label{caraclimsementbnd2}
\textit{
Assume $\eqref{1entbound}$, $\beta>1$, $\gamma > 2$ and $d=1$. Then every limit point $\mathbb{Q}$ of $(\mathbb{Q}_n)_{n \geq 1}$ satisfies}
\begin{align*}
\mathbb{Q} \Big(\pi_{ \cdot} \in \mcb D([0,T], \mcb{M}^+):  \forall t \in [0,T], \forall G \in \mcb{S}_{\textrm {Disc}}, \quad F_{\textrm{Neu}}(t,\rho,G, g, \kappa_{\gamma}) = 0 \Big) = 1.
\end{align*}
\end{prop}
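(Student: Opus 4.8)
The plan is to follow the blueprint of the proof of Proposition \ref{caraclimsementbnd}, modifying it to track the boundary contribution now present in $F_{\textrm{Neu}}$. Fix $\delta_1 > 0$ and $G \in \mcb{S}_{\textrm {Disc}}$; as there, it suffices to show that
\begin{align*}
\mathbb{Q}\Big(\pi_{\cdot} \in \mcb D([0,T],\mcb{M}^{+}): \sup_{t \in [0,T]} |F_{\textrm{Neu}}(t,\rho,G,g,\kappa_{\gamma})| > \delta_1\Big) = 0.
\end{align*}
Peeling off the initial-condition term $\int_{\mathbb{R}}[\rho_0 - g]G_0\,du$, which vanishes because $(\mu_n)_{n \geq 1}$ is associated to $g$, and applying Portmanteau's Theorem to the remainder exactly as in \eqref{portman}, the problem reduces to estimating a $\varliminf_{n\to\infty}$ of $\mathbb{Q}_n$-probabilities. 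Here one must work with the $\varepsilon$-regularized boundary functional in which the traces $\rho_s(0^{\pm})$ are replaced by the spatial means $\tfrac1\varepsilon\int_0^\varepsilon\rho_s(u)\,du$ and $\tfrac1\varepsilon\int_{-\varepsilon}^0\rho_s(u)\,du$, and one should let $\varepsilon\to 0^{+}$ only at the very end.

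Next I would apply Dynkin's formula \eqref{defMnt} to rewrite the empirical functional as the martingale $\mcb M_{t}^{n}(G)$ plus the integral term $\int_0^t \Theta(n)\mcb{L}_n\langle\pi_s^n,G_s\rangle\,ds$, and then expand the latter via the decomposition \eqref{extranb1}--\eqref{extranb2}, which is legitimate since $\gamma > 2$ and $d=1$ force $G \in \mcb{S}_{\textrm {Disc}} = \mcb S_{\gamma,d}$. Splitting the event by the triangle inequality into a martingale piece, the $\mcb{K}_n^{*}$ piece (compared against $\kappa_{\gamma}\mathbb{L}_{\Delta}G$), the slow-bond piece $\alpha n^{-\beta}\mcb{K}_{n,\mcb{S}}$, and the remainder piece $\mcb{R}_{n,d}$ of \eqref{extranb2}, I would dispatch the first three exactly as in Proposition \ref{caraclimsementbnd}: the martingale piece vanishes by \eqref{condger2pr3}; the $\mcb{K}_n^{*}$ piece vanishes by \eqref{exc1part} and Proposition \ref{convprincdisc}, leaving the bulk term $\kappa_{\gamma}\int_0^t\int_{\mathbb{R}}\rho_s\mathbb{L}_{\Delta}G_s\,du\,ds$; and the slow-bond piece vanishes by \eqref{claim1largebeta}, as $(\beta,\gamma)\in R_0$.

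The genuinely new step is the remainder piece \eqref{extranb2}. Invoking Proposition \ref{replemlargebeta} --- which is where the entropy bound \eqref{1entbound} enters --- I would replace, in the iterated limit $\varlimsup_{\varepsilon\to 0^{+}}\varlimsup_{n\to\infty}$, the contribution of $\tfrac{\Theta(n)}{n^d}\mcb{R}_{n,d}G_s$ paired with $\eta_s^n$ by $\kappa_{\gamma}\nabla G_s^{+}(0)\,\overrightarrow{\eta}^{\varepsilon n}_s(0) - \kappa_{\gamma}\nabla G_s^{-}(0)\,\overleftarrow{\eta}^{\varepsilon n}_s(0)$. The main obstacle --- and the reason the $\varepsilon$-limit cannot be dispensed with --- is the identification of these empirical boundary averages with the one-sided traces of the density: since $\mathbb{Q}$ concentrates on $\pi_t(d\hat u)=\rho_t(\hat u)\,d\hat u$, the average $\overrightarrow{\eta}^{\varepsilon n}_s(0)$ converges as $n\to\infty$ to $\tfrac1\varepsilon\int_0^\varepsilon\rho_s(u)\,du$, which tends to $\rho_s(0^{+})$ as $\varepsilon\to 0^{+}$; this last passage relies on $\rho_s$ possessing a continuous representative near the origin, i.e. on $\rho_s\in\mcb H_{loc}^1(\mathbb{R}^{*})$ through Proposition \ref{holderrepdif}, a fact supplied $\mathbb{Q}$-almost surely by the local energy estimates of Section \ref{secenerest}. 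Because the boundary term of $F_{\textrm{Neu}}$ is precisely the negative of the resulting limit $\kappa_{\gamma}\int_0^t[\nabla G_s(0^{+})\rho_s(0^{+}) - \nabla G_s(0^{-})\rho_s(0^{-})]\,ds$ of \eqref{extranb2}, assembling the pieces yields $F_{\textrm{Neu}}(t,\rho,G,g,\kappa_{\gamma})=0$ $\mathbb{Q}$-almost surely, as claimed.
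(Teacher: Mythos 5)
Your proposal is correct and follows essentially the same route as the paper, which omits the detailed argument but states that the result "is obtained by combining \eqref{extranb2} and \eqref{claim1largebeta} with Propositions \ref{convprincdisc} and \ref{replemlargebeta}" along the lines of Proposition \ref{caraclimsementbnd} (and of Proposition 4.2 of \cite{casodif}) --- exactly the decomposition into martingale, $\mcb{K}_n^{*}$, slow-bond and $\mcb{R}_{n,d}$ pieces that you describe. Your additional remarks on the $\varepsilon$-regularization of the traces and the role of $\rho_s \in \mcb H^{1}_{loc}(\mathbb{R}^{*})$ from Section \ref{secenerest} correctly identify where the boundary terms of $F_{\textrm{Neu}}$ come from.
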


\section{Useful $L^1(\mathbb{P}_{\mu_n})$ estimates} \label{secheurwithout}

The goal of this section is to prove Propositions \ref{replemsmallbeta} and \ref{replemlargebeta}. We begin with the former one.
\begin{proof}[Proof of Proposition \ref{replemsmallbeta}]
We begin by rewriting $\int_{0}^{t} \sum _{\hat{x} }  \frac{\Theta(n)}{n^d} \mcb {K}_{n,\mcb S} G_s \left( \tfrac{ \hat{x} }{n} \right)  \eta_{s}^{n}(\hat{x}) \, ds$ as $\sum_{j=1}^3 \mcb A_{n,j}(G)$, where 
\begin{align}
& \mcb A_{n,1}(G) := \int_{0}^{t} \frac{\Theta(n)}{n^d}  \sum_{| \hat{x}_{\star}| < b_G n } \Big\{ \sum_{x_d=0}^{n-1} \; \sum_{y_d=-\infty}^{-n}  A^{n,\hat{x}_{\star}}_{x_d, y_d}(G_s)
+    \sum_{x_d=-n+1}^{-1} \; \sum_{y_d=n}^{\infty} A^{n,\hat{x}_{\star}}_{x_d, y_d}(G_s) \Big\} \eta_s^n(\hat{x}_{\star}, x_d) \, ds,  \nonumber \\
& \mcb A_{n,2}(G) := \int_{0}^{t} \frac{\Theta(n)}{n^d}  \sum_{| \hat{x}_{\star}| < b_G n } \Big\{ \sum_{x_d=n}^{\infty} \; \sum_{y_d=-\infty}^{-1}  A^{n,\hat{x}_{\star}}_{x_d, y_d}(G_s)
+    \sum_{x_d=-\infty}^{-n} \; \sum_{y_d=0}^{\infty} A^{n,\hat{x}_{\star}}_{x_d, y_d}(G_s) \Big\} \eta_s^n(\hat{x}_{\star},  x_d) \, ds, \nonumber  \\
& \mcb A_{n,3}(G) := \int_{0}^{t} \frac{\Theta(n)}{n^d}  \sum_{| \hat{x}_{\star}| < b_G n } \Big\{ \sum_{x_d=0}^{n-1} \; \sum_{y_d=-n+1}^{-1} A^{n,\hat{x}_{\star}}_{x_d, y_d}(G_s)  
+    \sum_{x_d=-n+1}^{-1} \; \sum_{y_d=0}^{n-1} A^{n,\hat{x}_{\star}}_{x_d, y_d}(G_s) \Big\} \eta_s^n(\hat{x}_{\star}, x_d)\, ds. \label{extranear0}
\end{align}
Above we used \eqref{eqivmcbSG} and the definition of $A^{n,\hat{x}_{\star}}_{x_d, y_d}(G_s)$ given in \eqref{defAnG}. Next we claim that $ \mathbb{E}_{\mu_n} [  | \mcb A_{n,1}(G) + \mcb A_{n,2}(G) | ]$ goes to zero as $n \rightarrow \infty$. Indeed, combining \eqref{exc1part} and \eqref{defAnG}, it holds
\begin{align*}
|\mcb A_{n,1}(G)| + |\mcb A_{n,2}(G)| \leq&  \int_{0}^{T} \frac{8\Theta(n) \| G \|_{\infty} c_{\gamma} }{d n^d} \sum_{| \hat{x}_{\star}| < b_G n } \; \sum_{x_d=n}^{\infty} \; \sum_{y_d=0}^{\infty} (x_d+y_d)^{-1-\gamma} \, ds \\
 \lesssim & \frac{\Theta(n) }{n^{\gamma}} \frac{1}{n^2} \sum_{x_d=n}^{\infty} \;  \sum_{y_d=0}^{\infty}  \Big( \frac{x_d+y_d}{n} \Big)^{-1-\gamma} \lesssim \frac{\Theta(n) }{n^{\gamma}},
\end{align*}
which vanishes as $n \rightarrow \infty$, due to \eqref{timescale}. Above we used the fact that $\gamma \geq 2 >1$. It remains to treat $\mcb A_{n,3}(G)$. Exchanging the variables $x_d$ and $y_d$, and from the symmetry of $p(\cdot)$, \eqref{extranear0} can be rewritten as
\begin{align*}
-\int_0^t \frac{\Theta(n)c_{\gamma}}{dn^{d}} \sum_{ | \hat{x}_{\star} | < b_G n } \; \sum_{x_d=0}^{ n -1} \; \sum_{y_d=- n +1}^{-1} \big[ G_s( \tfrac{ \hat{x}_{\star} }{n}, \tfrac{y_d}{n} ) - G_s( \tfrac{ \hat{x}_{\star} }{n}, \tfrac{x_d}{n} ) \big] (x_d - y_d)^{-\gamma-1}  [\eta_s^n(\hat{x}_{\star}, y_d) - \eta_s^n( \hat{x}_{\star}, x_d ) ] \, ds.
\end{align*}
Since $G \in \mcb S_{\textrm{Dif}}$, we can perform a second order Taylor expansion on $G_s( \tfrac{ \hat{x}_{\star} }{n},\cdot)$ around $x_d/n$ and afterwards a first order Taylor expansion on $\partial_{\hat{e}_d} G_s(\tfrac{ \hat{x}_{\star} }{n},\cdot)$ around $0$, and both of the expansions being along the direction determined by $\hat{e}_d$. In this way, {the} last display can be rewritten as $c_{\gamma} \sum_{j=4}^6 \mcb A_{n,j}(G)$, where
\begin{align*}
& \mcb A_{n,4}(G) := - \int_0^t \frac{\Theta(n) }{dn^{d}} \sum_{ | \hat{x}_{\star} | < b_G n } \; \sum_{x_d=0}^{ n -1} \; \sum_{y_d=- n +1}^{-1}  \frac{(x_d-y_d)^{1-\gamma}}{2 n^2} \partial_{\hat{e}_d \hat{e}_d} G_s( \tfrac{ \hat{x}_{\star} }{n}, \xi^n_{x_d,y_d} )   \big[\eta_s^n(\hat{x}_{\star}, y_d) - \eta_s^n( \hat{x}_{\star}, x_d ) \big] \, ds \nonumber \\
& \mcb A_{n,5}(G) := \int_0^t \frac{\Theta(n)}{dn^{d}} \sum_{ | \hat{x}_{\star} | < b_G n } \; \sum_{x_d=0}^{ n -1} \; \sum_{y_d=- n +1}^{-1}  \frac{x_d(x_d-y_d)^{-\gamma}}{ n^2}  \partial_{\hat{e}_d \hat{e}_d} G_s( \tfrac{ \hat{x}_{\star} }{n}, \tilde{\xi}^n_{x_d} )   \big[\eta_s^n(\hat{x}_{\star}, y_d) - \eta_s^n( \hat{x}_{\star}, x_d ) \big] \, ds \nonumber \\
& \mcb A_{n,6}(G) := \int_0^t \frac{\Theta(n) }{dn^{d+1}} \sum_{ | \hat{x}_{\star} | < b_G n } \partial_{\hat{e}_d} G_s( \tfrac{ \hat{x}_{\star} }{n}, 0 )   \sum_{x_d=0}^{ n -1} \; \sum_{y_d=- n +1}^{-1}  (x_d - y_d)^{-\gamma}  \big[\eta_s^n(\hat{x}_{\star}, y_d) - \eta_s^n( \hat{x}_{\star}, x_d ) \big] \, ds, \nonumber 
\end{align*}
for some $\xi^n_{x_d,y_d} \in (\tfrac{y_d}{n}, \tfrac{x_d}{n})$ and some $\tilde{\xi}^n_{x_d} \in (0, \tfrac{x_d}{n})$. We claim that $\mathbb{E}_{\mu_n} [  | \mcb A_{n,4}(G) + \mcb A_{n,5}(G) | ]$ goes to zero as $n \rightarrow \infty$. Indeed, due to \eqref{exc1part}, $|\mcb A_{n,4}(G)|+|\mcb A_{n,5}(G)|$ is bounded from above by
\begin{align*}
\int_0^T \frac{\Theta(n) }{dn^{d+2}} \sum_{ | \hat{x}_{\star} | < b_G n } \; \sum_{x_d=0}^{ n -1} \; \sum_{y_d=- n +1}^{-1} \| \partial_{\hat{e}_d \hat{e}_d} G \|_{\infty} (x_d - y_d)^{1-\gamma} \, ds \lesssim  \frac{\Theta(n)}{n^{3}} \sum_{z_d=0}^{2n -1}  (z_d)^{2-\gamma}.  
\end{align*}
If $\gamma > 3$ the sum over $z_d$ is of order $1$ and {the} last display is of order $\Theta(n) n^{-3} = n^{-1}$. If $\gamma = 3$ the sum over $z_d$ is of order $\log(n)$ and {the} last display is of order $\log(n) / n$. If $\gamma \in (2,3)$ the sum over $z_d$ is of order $n^{3-\gamma}$ and {the} last display is of order $n^{2-\gamma}$. Finally, if $\gamma \in (2,3)$ the sum over $z_d$ is of order $n$ and {the} last display is of order $1 / \log(n)$. Hence, {the} last display goes to zero as $n \rightarrow \infty$ and $\lim_{n \rightarrow \infty} \mathbb{E}_{\mu_n} [  | \mcb A_{n,4}(G) + \mcb A_{n,5}(G) | ]=0$. 

It remains to treat $\mcb A_{n,6}(G)$. It is bounded from above by
\begin{align*}
\int_0^t \frac{\Theta(n)}{dn^{d+1}} \sum_{ | \hat{x}_{\star} | < b_G n } \| \partial_{\hat{e}_d} G \|_{\infty}   \sum_{x_d=0}^{ n -1} \; \sum_{y_d=- n +1}^{-1} p (x_d - y_d)^{-\gamma}  \big[\eta_s^n(\hat{x}_{\star}, y_d) - \eta_s^n( \hat{x}_{\star}, x_d ) \big] \, ds.
\end{align*}
Therefore, the claim that $\lim_{n \rightarrow \infty} \mathbb{E}_{\mu_n} [  | \mcb A_{n,6}(G)| ]=0$ is a direct consequence of Lemmas \ref{replemma2smallbet} and \ref{replemma1smallbet}, which are stated and proved below. Lemma \ref{replemma2smallbet} deals with exchanges of particles through \textit{fast} bonds, therefore there are no restrictions on the value of $\beta$. On the other hand, Lemma \ref{replemma1smallbet} makes use of exchanges of particles through \textit{slow} bonds, hence a restriction on the value of $\beta$ is necessary.

Heuristically, from Lemma \ref{replemma2smallbet} the occupation variable $\eta_s^n(\hat{x}_{\star}, x_d)$ (resp., $\eta_s^n(\hat{x}_{\star}, y_d)$) can be replaced by $\eta_s^n(\hat{x}_{\star}, 0)$ when $0 \leq x_d < n$ (resp., can be replaced by $\eta_s^n(\hat{x}_{\star}, -1)$ when $-n < y_d \leq -1$). Similarly, from Lemma \ref{replemma1smallbet} the occupation variable $\eta_s^n(\hat{x}_{\star}, 0)$ can be replaced by $\eta_s^n(\hat{x}_{\star}, -1)$ when $ \beta \in [0,1)$.
\end{proof}
\begin{lem}   \label{replemma2smallbet}
\textit{
Assume \eqref{1entbound}. For $t \in [0,T]$ and $C>0$, it holds 
\begin{align}
& \lim_{n \rightarrow \infty} \mathbb{E}_{\mu_n} \Big[  \Big| \int_0^t \frac{\Theta(n)}{n^{d+1}} \sum_{ | \hat{x}_{\star} | < C n }  \; \sum_{x_d=0}^{ n -1} \; \sum_{y_d=- n +1}^{-1}  (x_d-y_d)^{-\gamma}  \big[\eta_s^n(\hat{x}_{\star}, x_d) - \eta_s^n( \hat{x}_{\star}, 0 ) \big] \, ds \Big|  \Big] = 0,  \label{rlright1} \\
& \lim_{n \rightarrow \infty} \mathbb{E}_{\mu_n} \Big[  \Big| \int_0^t \frac{\Theta(n)}{n^{d+1}} \sum_{ | \hat{x}_{\star} | < C n }  \; \sum_{x_d=0}^{ n -1} \; \sum_{y_d=- n +1}^{-1}  (x_d-y_d)^{-\gamma}  \big[\eta_s^n(\hat{x}_{\star}, -1) - \eta_s^n( \hat{x}_{\star}, y_d ) \big] \, ds \Big|  \Big] = 0. \label{rlleft1}
\end{align}
}
\end{lem}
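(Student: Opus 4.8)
The plan is to prove \eqref{rlright1} and to deduce \eqref{rlleft1} by the analogous argument (formally, by the reflection $x_d\mapsto -1-x_d$ across the barrier, which swaps the two half-lines and interchanges the roles of the reference sites $0$ and $-1$ while leaving the weight $|x_d-y_d|^{-\gamma}$ invariant). Crucially, both replacements are carried out only through \emph{fast} nearest-neighbor bonds in the $\hat{e}_d$ direction, so the argument imposes no constraint on $\beta$. Writing
\[
V(\eta):=\frac{1}{n^{d+1}}\sum_{|\hat{x}_{\star}|<Cn}\;\sum_{x_d=0}^{n-1}\;\sum_{y_d=-n+1}^{-1}(x_d-y_d)^{-\gamma}\big[\eta(\hat{x}_{\star},x_d)-\eta(\hat{x}_{\star},0)\big],
\]
I would first invoke the entropy inequality against the reference measure $\nu_h^n$ supplied by \eqref{1entbound}: for every $B>0$,
\begin{align*}
& \mathbb{E}_{\mu_n}\Big[\Big|\int_0^t \Theta(n)V(\eta_s^n)\,ds\Big|\Big] \\
& \qquad \le \frac{H(\mu_n|\nu_h^n)}{Bn^d}+\frac{1}{Bn^d}\log\mathbb{E}_{\nu_h^n}\Big[\exp\Big(Bn^d\Big|\int_0^t\Theta(n)V(\eta_s^n)\,ds\Big|\Big)\Big].
\end{align*}
Using $e^{|x|}\le e^{x}+e^{-x}$ to remove the absolute value (treating $\pm V$), the Feynman--Kac formula bounds the second term, for each fixed $B$, by $t$ times
\[
\sup_f\Big\{\big\langle \Theta(n)V,f\big\rangle_{\nu_h^n}-\frac{\Theta(n)}{Bn^d}\,\mathfrak{D}_n(\sqrt f)\Big\},
\]
the supremum over densities $f$ with respect to $\nu_h^n$ and $\mathfrak{D}_n(\sqrt f)=-\langle\sqrt f,\mcb{L}_n\sqrt f\rangle_{\nu_h^n}$. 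Since $H(\mu_n|\nu_h^n)\lesssim n^d$, the first term is $\lesssim 1/B$, so it suffices to show the displayed supremum tends to $0$ as $n\to\infty$ for each fixed $B$, and then let $B\to\infty$.

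Next I would reduce the long-range increments to nearest-neighbor ones. Summing the weight over $y_d$ gives $w_n(x_d):=\sum_{y_d=-n+1}^{-1}(x_d-y_d)^{-\gamma}\lesssim(1+x_d)^{1-\gamma}$, and telescoping $\eta(\hat{x}_{\star},x_d)-\eta(\hat{x}_{\star},0)=\sum_{k=0}^{x_d-1}[\eta(\hat{x}_{\star},k+1)-\eta(\hat{x}_{\star},k)]$ followed by interchanging the order of summation recasts $V$ as $\tfrac{1}{n^{d+1}}\sum_{|\hat{x}_{\star}|<Cn}\sum_{k=0}^{n-2}W_n(k)[\eta(\hat{x}_{\star},k+1)-\eta(\hat{x}_{\star},k)]$, where $W_n(k):=\sum_{x_d=k+1}^{n-1}w_n(x_d)$. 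Each bond $b=\{(\hat{x}_{\star},k),(\hat{x}_{\star},k+1)\}$ with $k\ge0$ lies in $\mcb{F}$, hence carries an order-one rate $p(\hat{e}_d)=c_\gamma/d$ and contributes to $\mathfrak{D}_n$. A direct estimate yields $W_n(k)\lesssim(1+k)^{2-\gamma}$ for $\gamma>2$ and $W_n(k)\lesssim\log\!\big(n/(1+k)\big)$ for $\gamma=2$.

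For a single fast bond $b$, the change of variables $\eta\mapsto\theta_b\eta:=\eta^{b}$ produces, with $R_b:=d\nu_h^n(\cdot^{\,b})/d\nu_h^n$,
\begin{align*}
2\big\langle\eta(\hat{x}_{\star},k+1)-\eta(\hat{x}_{\star},k),\,f\big\rangle_{\nu_h^n}
={} & \big\langle\eta(\hat{x}_{\star},k+1)-\eta(\hat{x}_{\star},k),\,f-f\circ\theta_b\big\rangle_{\nu_h^n} \\
& +\int\big(\eta(\hat{x}_{\star},k+1)-\eta(\hat{x}_{\star},k)\big)\,f(\eta^{b})\,[1-R_b(\eta)]\,d\nu_h^n.
\end{align*}
Since $h\in[a_h,b_h]\subset(0,1)$ and $h$ is Lipschitz (Definition \ref{defrefprof}), $R_b$ is bounded and $|1-R_b|\lesssim L_h/n$ on this nearest-neighbor bond, so the last integral is $O(1/n)$. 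Factoring $f-f\circ\theta_b=(\sqrt f-\sqrt{f\circ\theta_b})(\sqrt f+\sqrt{f\circ\theta_b})$ and applying Young's inequality with a parameter $A_k>0$ bounds the first term by $\tfrac{A_k}{4}\int(\sqrt{f\circ\theta_b}-\sqrt f)^2 d\nu_h^n+\tfrac{C}{A_k}$. Choosing $A_k\asymp n/(B\,W_n(k))$, the carr\'e-du-champ pieces, once multiplied by $\tfrac{\Theta(n)}{n^{d+1}}W_n(k)$ and summed over $b$, are dominated by $\tfrac{\Theta(n)}{Bn^d}\mathfrak{D}_n(\sqrt f)$ (using $\mathfrak{D}_n(\sqrt f)\ge c\sum_b p(\hat{e}_d)\int(\sqrt{f\circ\theta_b}-\sqrt f)^2 d\nu_h^n$ up to the same $O(1/n)$ Lipschitz correction), and thus are absorbed by the negative term in the variational problem. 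What remains is the residual
\begin{align*}
& \frac{\Theta(n)}{n^{d+1}}\sum_{|\hat{x}_{\star}|<Cn}\sum_{k=0}^{n-2}W_n(k)\Big(\frac{C}{A_k}+\frac{C}{n}\Big) \\
& \qquad \lesssim \frac{B\,\Theta(n)}{n^{3}}\sum_{k=0}^{n-2}W_n(k)^2+\frac{\Theta(n)}{n^{3}}\sum_{k=0}^{n-2}W_n(k),
\end{align*}
which, using the bounds on $W_n$ and the definition \eqref{timescale} of $\Theta(n)$, vanishes as $n\to\infty$ for every fixed $B$ and every $\gamma\ge2$; in the borderline case $\gamma=2$ one has $\Theta(n)=n^2/\log n$ with $\sum_k W_n(k)^2\lesssim n$ and $\sum_k W_n(k)\lesssim n$, so the residual is $\lesssim 1/\log n$.

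The main difficulty is the interlocking bookkeeping of the three scales --- the weights $W_n(k)$, the Young parameter $A_k$, and the time scale $\Theta(n)$ --- which must be balanced so that the total carr\'e-du-champ contribution is exactly soaked up by $\tfrac{\Theta(n)}{Bn^d}\mathfrak{D}_n$ while the leftovers still tend to $0$ uniformly in the fixed $B$; the exponent $\gamma=2$, where the logarithm in $\Theta(n)$ and the logarithmic growth of $W_n(k)$ conspire, is the tightest point. A secondary technical point is that $\nu_h^n$ is neither reversible nor invariant for $\mcb{L}_n$, so $\mathfrak{D}_n$ is the Dirichlet form of the symmetric part of $\mcb{L}_n$; controlling its difference from the naive carr\'e du champ is precisely where the Lipschitz hypothesis on $h$ enters, yielding the harmless $O(1/n)$ corrections used throughout.
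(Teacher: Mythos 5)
Your proof is correct, and it takes a genuinely different route from the paper's at the key step. The paper first splits off the range $x_d\geq \ell$ with $\ell=\varepsilon\Theta(n)/n$ and disposes of it deterministically via the crude bound $|\eta-\eta|\leq 1$ (this is \eqref{claim1smabet}); on the remaining range it bounds the telescoped sum $\sum_{w=0}^{x_d-1}$ uniformly by $\sum_{w=0}^{\ell-1}$, so every nearest-neighbor bond receives the \emph{total} weight $M_1$ of \eqref{defM1}, and a single Young parameter $A$ is used for all bonds. The price is a residual of order $B\varepsilon+\varepsilon$, which forces the double limit $n\to\infty$ then $\varepsilon\to 0^+$ together with the coupling $B=\varepsilon^{-1/2}$. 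You instead keep the full range, interchange the order of summation to extract the exact per-bond weight $W_n(k)=\sum_{x_d>k}w_n(x_d)\lesssim(1+k)^{2-\gamma}$ (resp.\ $\log(n/(1+k))$ for $\gamma=2$), and tune a $k$-dependent Young parameter $A_k\asymp n/(B\,W_n(k))$ so that the carr\'e-du-champ pieces are still absorbed by $\tfrac{\Theta(n)}{Bn^d}D_n$ while the residual $\tfrac{B\Theta(n)}{n^3}\sum_k W_n(k)^2+\tfrac{\Theta(n)}{n^3}\sum_k W_n(k)$ vanishes for each \emph{fixed} $B$ (I checked the exponent counting in both regimes, including the borderline $\gamma=2$ where it is $O(B/\log n)$); letting $B\to\infty$ then kills the entropy term $C_h/B$. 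Your version is sharper in that it eliminates the $\varepsilon$-cutoff and the double limit entirely; what it costs is the more delicate bookkeeping of the weights $W_n(k)$, which the paper's uniform bound deliberately avoids. Your remaining ingredients — entropy inequality against $\nu_h^n$, Feynman--Kac, the change-of-variables identity with the Radon--Nikodym factor $R_b$ controlled by the Lipschitz property of $h$ (this is exactly the content of Lemma \ref{lemaux1}, Lemma \ref{lemaux2} and Proposition \ref{bound}), and the observation that all bonds involved are fast so no constraint on $\beta$ arises — coincide with the paper's, as does the reduction of \eqref{rlleft1} to the mirror-image argument.
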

\begin{lem}   \label{replemma1smallbet}
\textit{
Assume \eqref{1entbound}. For $t \in [0,T]$ and $C>0$, it holds
\begin{equation}  \label{rlleft10}
\lim_{n \rightarrow \infty} \mathbb{E}_{\mu_n} \Big[  \Big| \int_0^t \frac{\Theta(n)}{n^{d+1}} \sum_{ | \hat{x}_{\star} | < C n }  \; \sum_{x_d=0}^{ n -1} \; \sum_{y_d=- n +1}^{-1}  (x_d-y_d)^{-\gamma}  \big[\eta_s^n(\hat{x}_{\star}, 0) - \eta_s^n( \hat{x}_{\star}, -1 ) \big] \, ds \Big|  \Big] = 0.
\end{equation}
}
\end{lem}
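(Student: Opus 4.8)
The plan is to prove \eqref{rlleft10} by the entropy method, exploiting that the bond joining $(\hat x_\star,-1)$ and $(\hat x_\star,0)$ is a \emph{slow} bond and that $\beta<1$ (the regime in which Lemma \ref{replemma1smallbet} is invoked). First I would factor out the purely combinatorial coefficient
\[
c_n:=\sum_{x_d=0}^{n-1}\sum_{y_d=-n+1}^{-1}(x_d-y_d)^{-\gamma},
\]
so that the integrand becomes $W^n(\eta_s^n)$ with $W^n(\eta):=\tfrac{\Theta(n)c_n}{n^{d+1}}\sum_{|\hat x_\star|<Cn}[\eta(\hat x_\star,0)-\eta(\hat x_\star,-1)]$. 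A short estimate (setting $z=x_d-y_d$ and counting pairs) gives $c_n\lesssim 1$ for $\gamma>2$ and $c_n\lesssim\log n$ for $\gamma=2$, so by \eqref{timescale} the prefactor satisfies $\Theta(n)c_n/n^{d+1}\asymp n^{1-d}$; since there are $\asymp n^{d-1}$ values of $\hat x_\star$, the total coefficient is $O(1)$ and a crude bound using \eqref{exc1part} does \emph{not} suffice. Hence a genuine replacement is required.

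Next I would apply the entropy inequality with parameter $Bn^d$ together with \eqref{1entbound}, reducing to
\[
\mathbb{E}_{\mu_n}\Big[\Big|\int_0^tW^n(\eta_s^n)\,ds\Big|\Big]\le\frac{C}{B}+\frac{1}{Bn^d}\log\mathbb{E}_{\nu_h^n}\Big[\exp\Big(Bn^d\Big|\int_0^tW^n\,ds\Big|\Big)\Big],
\]
and, after $e^{|x|}\le e^x+e^{-x}$ to remove the absolute value, the Feynman--Kac formula bounds the logarithmic term by
\[
t\,\sup_{f}\Big\{\int W^n f\,d\nu_h^n-\frac{\Theta(n)}{Bn^d}\langle\sqrt f,-\mcb L_n\sqrt f\rangle_{\nu_h^n}\Big\},
\]
the supremum running over $\nu_h^n$-densities $f$. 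Since $W^n$ is a sum over $\hat x_\star$ of the single-bond gradients $\eta(\hat x_\star,0)-\eta(\hat x_\star,-1)$, for the bond $b=b_{\hat x_\star}:=\{(\hat x_\star,-1),(\hat x_\star,0)\}$ I would use the manipulation $\int g f\,d\nu_h^n=\tfrac12\int g(f-f\circ b)\,d\nu_h^n+\tfrac12\int g f(1-\psi_b)\,d\nu_h^n$ with $g=\eta(\hat x_\star,0)-\eta(\hat x_\star,-1)$ and $\psi_b=d(\nu_h^n\circ b)/d\nu_h^n$. Because $h$ is Lipschitz and bounded away from $\{0,1\}$ (conditions (1)--(2) of Definition \ref{defrefprof}), one has $|\psi_b-1|\lesssim n^{-1}$, so the second term is $O(n^{-1})$; writing $f-f\circ b=(\sqrt f-\sqrt{f\circ b})(\sqrt f+\sqrt{f\circ b})$ and applying Young's inequality with a free parameter $A_n$ yields
\[
\Big|\int g f\,d\nu_h^n\Big|\le\frac{A_n}{4}\,I_b(\sqrt f)+\frac{C}{A_n}+\frac{C}{n},\qquad I_b(\sqrt f):=\int\big(\sqrt{f(\eta^b)}-\sqrt{f(\eta)}\big)^2\,d\nu_h^n.
\]

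Then I would balance against the Dirichlet form. The bond $b$ is slow with rate $\Theta(n)\alpha n^{-\beta}p(\hat e_d)$, so the Dirichlet-form estimates of Proposition \ref{bound} bound $\langle\sqrt f,-\mcb L_n\sqrt f\rangle_{\nu_h^n}$ from below (up to lower-order errors coming from the non-invariance of $\nu_h^n$) by a constant times $\sum_{\hat x_\star}\alpha n^{-\beta}p(\hat e_d)\,I_{b_{\hat x_\star}}(\sqrt f)$. Choosing $A_n\asymp n^{1-\beta}/(Bc_n)$ makes the $I_b$-terms from Young's inequality exactly absorbed into $\tfrac{\Theta(n)}{Bn^d}\langle\sqrt f,-\mcb L_n\sqrt f\rangle_{\nu_h^n}$, and the residual error is $\tfrac{\Theta(n)c_n}{n^{d+1}}\cdot n^{d-1}\cdot(C/A_n+C/n)\asymp B\,n^{\beta-1}$ (times a $\log n$ when $\gamma=2$), which tends to $0$ as $n\to\infty$ \emph{precisely because} $\beta<1$. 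Letting $n\to\infty$ leaves $C/B$, and then $B\to\infty$ gives \eqref{rlleft10}. The main obstacle is the control of the Dirichlet form with respect to the \emph{non-invariant} reference measure $\nu_h^n$: since $h$ is not constant, $\mcb L_n$ is not self-adjoint in $L^2(\nu_h^n)$, and the change of variables $\eta\mapsto\eta^b$ generates both the Radon--Nikodym factor $\psi_b$ and the antisymmetric part of the generator. Keeping these corrections uniformly at order $n^{-1}$, which is exactly what the Lipschitz and boundedness hypotheses on $h$ and Proposition \ref{bound} provide, is the crux that lets the slow-bond Dirichlet form dominate, with $\beta<1$ ensuring the rate $n^{-\beta}$ is large enough for the absorption to succeed.
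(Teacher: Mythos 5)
Your proposal is correct and follows essentially the same route as the paper's proof: entropy inequality with parameter $Bn^d$ under \eqref{1entbound}, Feynman--Kac, the one-bond estimate \eqref{young} with a free parameter $A$, and absorption of the $I_{(\hat{x}_{\star},0),(\hat{x}_{\star},-1)}$ terms into the slow-bond part of the Dirichlet form via Proposition \ref{bound}, with the choice $A\asymp \Theta(n)/(Bn^{1+\beta})$ producing a residual of order $Bn^{1+\beta}/\Theta(n)$ that vanishes precisely because $\beta<1$. The only cosmetic differences are that the paper keeps the triple sum bounded by the constant $M_1$ of \eqref{defM1} rather than factoring out $c_n$, and takes $B=\log(n)$ instead of sending $n\to\infty$ before $B\to\infty$.
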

Recall the definition of a element of $\textrm{Ref}$ (resp. of $\nu_h^n$) in Definition \ref{defrefprof} (resp. Definition \ref{defberprod}). In order to show Lemmas \ref{replemma2smallbet} and \ref{replemma1smallbet}, we will use the \textit{Dirichlet form}, defined by $\langle \sqrt{f},- \mcb L_n \sqrt{f} \rangle_{\nu_h^n}$, where $f: \Omega \rightarrow \mathbb{R}$ is a density with respect to $\nu_h^n$. For functions $g_1, g_2: \Omega \rightarrow \mathbb{R}$, $\langle g_1, g_2 \rangle_{\nu_h^n}$ denotes the scalar product in $L^{2} (\Omega, \nu_h^n )$. For every $ \hat{x}, \hat{y} \in \mathbb{Z}^d$ we define
\begin{align*} 
I_{\hat{x}, \hat{y}}  (\sqrt{f}, \nu_h^n ) := \int_{\Omega}    \big[ \sqrt{f \left( \eta^{\hat{x}, \hat{y}} \right) } - \sqrt{f \left( \eta \right) } \big]^2 \, d \nu_h^n.
\end{align*}
{The next} result is analogous to Lemma 5.5 of \cite{byrondif}, therefore we omit its proof. 
\begin{prop}
\textit{
For every} $h \in \textrm{Ref}$ \textit{and every density $f$ with respect to $\nu_h^n$, there exists a constant $E_h>0$ depending only on $h$ such that for every $\hat{x}, \hat{y} \in \mathbb{Z}^d$ and every $A_{\hat{x}, \hat{y}}>0$,
\begin{align} \label{young}
\Big| \int [\eta(\hat{x}) - \eta(\hat{y})] f(\eta) d \nu_h^n \Big| \leq E_h \Big[  \frac{ I_{\hat{x}, \hat{y}}  (\sqrt{f}, \nu_h^n )}{A_{\hat{x}, \hat{y}}} +  A_{\hat{x},\hat{y}} + \frac{|\hat{x}-\hat{y}|}{n} \Big].
\end{align}
}
\end{prop}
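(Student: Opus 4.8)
The plan is to exploit the involution $\eta \mapsto \eta^{\hat{x},\hat{y}}$ together with Young's inequality, in the standard spirit of the entropy method. First I would perform the change of variables $\eta \mapsto \eta^{\hat{x},\hat{y}}$ in the integral on the left. Since this map is an involution and $\eta^{\hat{x},\hat{y}}(\hat{x}) - \eta^{\hat{x},\hat{y}}(\hat{y}) = -[\eta(\hat{x}) - \eta(\hat{y})]$, this yields the identity
\[
\int [\eta(\hat{x}) - \eta(\hat{y})] f(\eta)\, d\nu_h^n = -\int [\eta(\hat{x}) - \eta(\hat{y})]\, f(\eta^{\hat{x},\hat{y}})\, \frac{\nu_h^n(\eta^{\hat{x},\hat{y}})}{\nu_h^n(\eta)}\, d\nu_h^n.
\]
Averaging this against the original expression lets me write $2I$ (where $I$ denotes the left-hand side) as the sum of a genuine discrete-gradient term $\int [\eta(\hat{x})-\eta(\hat{y})]\,[f(\eta) - f(\eta^{\hat{x},\hat{y}})]\, d\nu_h^n$ and a correction term built from $r_{\hat{x},\hat{y}}(\eta) := \nu_h^n(\eta^{\hat{x},\hat{y}})/\nu_h^n(\eta) - 1$.

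The central computation is the Radon--Nikodym ratio. Because $\nu_h^n$ is a product measure, this ratio equals $1$ whenever $\eta(\hat{x}) = \eta(\hat{y})$, and otherwise is an explicit elementary function of $h(\hat{x}/n)$ and $h(\hat{y}/n)$ alone. Using that $h \in \textrm{Ref}$ takes values in $[a_h, b_h] \subset (0,1)$, the ratio is bounded above by a constant $M_h$ depending only on $h$; using in addition that $h$ is Lipschitz, so that $|h(\hat{x}/n) - h(\hat{y}/n)| \leq L_h |\hat{x}-\hat{y}|/n$, I obtain $|r_{\hat{x},\hat{y}}(\eta)| \leq C_h\, |\hat{x}-\hat{y}|/n$ with $C_h = L_h / (a_h(1-b_h))$. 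The same boundedness gives $\int f(\eta^{\hat{x},\hat{y}})\, d\nu_h^n \leq M_h$ after one more change of variables (recall $\int f\, d\nu_h^n = 1$), which controls the correction term by a constant times $|\hat{x}-\hat{y}|/n$.

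For the gradient term I would factor $f(\eta) - f(\eta^{\hat{x},\hat{y}}) = [\sqrt{f(\eta)} - \sqrt{f(\eta^{\hat{x},\hat{y}})}]\,[\sqrt{f(\eta)} + \sqrt{f(\eta^{\hat{x},\hat{y}})}]$, bound $|\eta(\hat{x})-\eta(\hat{y})| \leq 1$, and apply Young's inequality $ab \leq \tfrac{1}{2A_{\hat{x},\hat{y}}}a^2 + \tfrac{A_{\hat{x},\hat{y}}}{2}b^2$. The square-difference piece integrates precisely to $\tfrac{1}{2A_{\hat{x},\hat{y}}}\, I_{\hat{x},\hat{y}}(\sqrt{f},\nu_h^n)$, while the square-sum piece is handled via $(a+b)^2 \leq 2(a^2+b^2)$ together with $\int f\, d\nu_h^n = 1$ and $\int f(\eta^{\hat{x},\hat{y}})\, d\nu_h^n \leq M_h$, producing a term of order $A_{\hat{x},\hat{y}}$. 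Collecting the three contributions and absorbing all $h$-dependent constants into a single $E_h$ then gives \eqref{young}. The only genuinely delicate point is the estimate on the Radon--Nikodym ratio, where the three defining properties of $\textrm{Ref}$ (boundedness away from $\{0,1\}$, Lipschitz continuity) enter; everything else is a routine application of Young's inequality, which is exactly why the authors may safely defer to Lemma 5.5 of \cite{byrondif}.
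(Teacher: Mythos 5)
Your argument is correct and is precisely the standard proof that the paper defers to by citing Lemma 5.5 of \cite{byrondif}: the involution $\eta\mapsto\eta^{\hat{x},\hat{y}}$, the Radon--Nikodym ratio controlled via the bounds $h\in[a_h,b_h]$ and the Lipschitz property of $h$, and Young's inequality on the factored gradient term. All three contributions land exactly on the three terms of \eqref{young}, so there is nothing to add.
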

Next we define $D_n (\sqrt{f}, \nu_h^n ) := D^{\mcb F} (\sqrt{f}, \nu_h^n ) + \alpha n^{-\beta}  D^{\mcb S} (\sqrt{f}, \nu_h^n )$,
where for $\mcb K \in \{\mcb F, \mcb S\}$, 
\begin{align*}
D^{\mcb K}  (\sqrt{f}, \nu_h^n) := \frac{1}{2} \sum_{\{ \hat{x}, y \} \in \mcb K} p(\hat{y}-\hat{x}) I_{\hat{x},\hat{y}}  (\sqrt{f}, \nu_h^n ). 
\end{align*}
Next we state a result which is useful to estimate the Dirichlet form; we postpone its proof to Appendix \ref{estdirfor}.
\begin{prop} \label{bound}
\textit{
For every} $h \in \textrm{Ref}$ \textit{and every density $f$ with respect to $\nu_h^n$, there exists a constant $M_h>0$ depending only on $h$ such that for every $n \geq 1$,
\begin{align} \label{boundlip}
 \frac{\Theta(n)}{n^d} \langle \mcb{L}_{n} \sqrt{f} , \sqrt{f} \rangle_{\nu_h^n} \leq - \frac{\Theta(n)}{4n^d}  D_n (\sqrt{f}, \nu_h^n ) + M_h.
\end{align}
}
\end{prop}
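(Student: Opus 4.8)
The plan is to reduce \eqref{boundlip} to a sharp control of the ``non-reversibility error'' arising because $\nu_h^n$ fails to be reversible for $\mcb L_n$ when $h$ is non-constant. First I would symmetrize the Dirichlet form. Writing $g=\sqrt f$ and using the second (symmetric) form of the generator, for each ordered pair $(\hat x,\hat y)$ the involution $\eta\mapsto\eta^{\hat x,\hat y}$ carries the Radon--Nikodym factor $c_{\hat x,\hat y}(\eta):=\nu_h^n(\eta^{\hat x,\hat y})/\nu_h^n(\eta)$, so that $\int \Phi(\eta^{\hat x,\hat y})\,d\nu_h^n=\int \Phi(\eta)\,c_{\hat x,\hat y}(\eta)\,d\nu_h^n$. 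Applying this to $\int g(\eta)g(\eta^{\hat x,\hat y})\,d\nu_h^n$ yields the exact identity
$$\langle \mcb L_n\sqrt f,\sqrt f\rangle_{\nu_h^n}=-D_n(\sqrt f,\nu_h^n)+E_n,\qquad E_n:=\tfrac14\sum_{\hat x,\hat y}p(\hat y-\hat x)\,r^n_{\hat x,\hat y}\int f\,[c_{\hat x,\hat y}-1]\,d\nu_h^n,$$
where the diagonal part reproduces exactly $-D_n$ since $p$, $r^n$ and $I_{\hat x,\hat y}$ are symmetric in the bond. Hence it suffices to establish $\tfrac{\Theta(n)}{n^d}E_n\le \tfrac34\tfrac{\Theta(n)}{n^d}D_n+M_h$; the surviving fraction $\tfrac34 D_n$ leaves precisely the $-\tfrac14 D_n$ of \eqref{boundlip}.

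Next I would exploit the explicit form of $\theta_{\hat x,\hat y}:=c_{\hat x,\hat y}-1$, which depends only on $(\eta(\hat x),\eta(\hat y))$ through $a:=h(\hat x/n)$, $b:=h(\hat y/n)$ and vanishes unless $\eta(\hat x)\neq\eta(\hat y)$. With $\theta^{+}=\tfrac{b-a}{a(1-b)}$ and $\theta^{-}=\tfrac{a-b}{(1-a)b}$ for the two admissible configurations, one has $\theta_{\hat x,\hat y}=\tfrac{\theta^{+}-\theta^{-}}{2}[\eta(\hat x)-\eta(\hat y)]+\tfrac{\theta^{+}+\theta^{-}}{2}[\eta(\hat x)-\eta(\hat y)]^2$, and the identity $\theta^{+}+\theta^{-}=\tfrac{(a-b)^2}{a(1-a)b(1-b)}$ shows that the symmetric coefficient is second order in $a-b$ while the antisymmetric one is only first order. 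All three hypotheses defining $h\in\textrm{Ref}$ enter here: $a_h\le h\le b_h$ keeps every denominator bounded away from $0$; the Lipschitz bound gives $|a-b|\le L_h|\hat x-\hat y|/n$; and $h\equiv A_h$ for $|\hat u|\ge K_h$ forces $\theta_{\hat x,\hat y}=0$ unless $|\hat x|<K_hn$ or $|\hat y|<K_hn$, which localizes the entire error near the origin.

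For the antisymmetric part I would apply the Young-type inequality \eqref{young} to $\int f[\eta(\hat x)-\eta(\hat y)]\,d\nu_h^n$ with $A_{\hat x,\hat y}$ chosen proportional to $|a-b|$ (the term being simply discarded when $a=b$). Because of the first-order prefactor $\lesssim_h|a-b|$, the $I_{\hat x,\hat y}/A_{\hat x,\hat y}$ contribution becomes $\lesssim_h I_{\hat x,\hat y}$ with a coefficient that, after weighting by $\tfrac{\Theta(n)}{n^d}p\,r^n$ and using $\tfrac14\sum p\,r^n I_{\hat x,\hat y}=D_n$, is driven below $\tfrac34\tfrac{\Theta(n)}{n^d}D_n$ by taking the proportionality constant large enough. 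The leftover $A_{\hat x,\hat y}$ and $|\hat x-\hat y|/n$ terms carry an extra $|a-b|$ factor, and the symmetric part (bounded directly via $\int f[\eta(\hat x)-\eta(\hat y)]^2\,d\nu_h^n\le 1$) carries a genuine $(a-b)^2$; both then reduce to localized sums of the type $\tfrac{\Theta(n)}{n^{d}}\sum p(\hat y-\hat x)(|\hat x-\hat y|/n)^2\,\mathbbm 1[\{|\hat x|<K_hn\}\cup\{|\hat y|<K_hn\}]$. These are exactly the moment and geometric-series estimates collected in Appendix \ref{estdirfor}, and for $\gamma>2$ they yield a bounded constant $M_h$ since the second moment of $p$ is finite and $\Theta(n)=n^2$.

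The main obstacle is the borderline case $\gamma=2$, where $\sum_{\hat z}p(\hat z)|\hat z|^2$ diverges logarithmically and the naive bound on the leftover sums breaks down. There one must retain the logarithmic time scale $\Theta(n)=n^2/\log n$ and split each jump according to whether $|\hat x-\hat y|\le n$ or $>n$: for short jumps the Lipschitz bound $|a-b|\lesssim|\hat x-\hat y|/n$ converts the divergent second moment into a harmless factor of order $\log n$, which is absorbed by $\Theta(n)$, whereas for long jumps one replaces $|a-b|$ by the crude bound $b_h-a_h$ and uses the convergence of $\sum_{\hat z}p(\hat z)$ together with the near-origin localization. Simultaneously balancing these two regimes while keeping the antisymmetric Dirichlet contribution uniformly below $\tfrac34 D_n$ is the delicate point; the explicit choice of $A_{\hat x,\hat y}$ and these split estimates are precisely the ``variety of bounds'' deferred to Appendix \ref{estdirfor}.
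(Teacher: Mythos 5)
Your proposal is correct, but it is organized differently from the paper's argument, and the comparison is worth recording. The paper proves Proposition \ref{bound} through the per-bond estimate of Lemma \ref{lemaux2}: after the change of variables $\eta\mapsto\eta^{\hat x,\hat y}$ one writes
\begin{align*}
\int \big[\sqrt{f(\eta^{\hat x,\hat y})}-\sqrt{f(\eta)}\big]\sqrt{f(\eta)}\,d\nu_h^n
= -\tfrac12 I_{\hat x,\hat y}(\sqrt f,\nu_h^n)
+\tfrac14\int \big[\sqrt{f(\eta^{\hat x,\hat y})}-\sqrt{f(\eta)}\big]\big[\sqrt{f(\eta^{\hat x,\hat y})}+\sqrt{f(\eta)}\big]\Big[1-\tfrac{\nu_h^n(\eta^{\hat x,\hat y})}{\nu_h^n(\eta)}\Big]d\nu_h^n,
\end{align*}
and a single application of Young's inequality to this product returns $+\tfrac14 I_{\hat x,\hat y}$ plus an error that is \emph{automatically quadratic} in $1-\nu_h^n(\eta^{\hat x,\hat y})/\nu_h^n(\eta)$, hence in $h(\tfrac{\hat y}{n})-h(\tfrac{\hat x}{n})$ by Lemma \ref{lemaux1}; summing over bonds and invoking Lemma \ref{lemaux0} finishes the proof. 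You instead keep the exact global identity $\langle\mcb L_n\sqrt f,\sqrt f\rangle_{\nu_h^n}=-D_n+E_n$ and make the error quadratic ``by hand'': you split $c_{\hat x,\hat y}-1$ into a part antisymmetric in $[\eta(\hat x)-\eta(\hat y)]$ with a first-order coefficient and a symmetric part whose coefficient $\theta^{+}+\theta^{-}=(a-b)^2/[a(1-a)b(1-b)]$ is already second order, and you control the first-order part with the integration-by-parts inequality \eqref{young} and a choice $A_{\hat x,\hat y}\propto|a-b|$ tuned so that the $I_{\hat x,\hat y}/A_{\hat x,\hat y}$ contribution stays below a fixed fraction of $D_n$. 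Both routes are sound and rest on the same three properties of $h\in\textrm{Ref}$ and the same short-jump/long-jump splitting at the borderline $\gamma=2$ (your treatment of that case matches Lemma \ref{lemaux0}). What the paper's route buys is economy: the error is quadratic without any decomposition of $c_{\hat x,\hat y}-1$, no tuning of $A_{\hat x,\hat y}$ is needed, and \eqref{young} is not used at all in this proof (the paper reserves it for the replacement lemmas). What your route buys is a cleaner conceptual separation between the Dirichlet form and the non-reversibility error $E_n$, at the price of the extra algebraic identity for $\theta^{+}+\theta^{-}$, the extra leftover term $|a-b|\,|\hat x-\hat y|/n$ coming from \eqref{young} (which you do handle correctly via the localization near the origin), and a slightly more delicate bookkeeping of constants.
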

Due to \eqref{1entbound}, we fix here and in what follows a constant $C_h > 0$ such that
\begin{equation} \label{defCh}
\forall n \geq 1, \quad H( \mu_n | \nu_h^n) \leq  C_h n^d.
\end{equation}
{The next} result will be useful in what follows.
\begin{lem} \label{entjen}
\textit{
Assume \eqref{1entbound}. For every $n \geq 1$, let $(\tilde{Z}_{n,j})_{j \geq 1}$ be a sequence of arbitrary random applications $\mcb D ( [0,T], \Omega ) \rightarrow \mathbb{R}$. For every $j \geq 1$, define $Z_{n,j}$ by $Z_{n,j}: = \max_{1 \leq i \leq j} \{ \tilde{Z}_{n,i} \}$. Then 
\begin{align*}
\forall n, j \geq 1, \, \, \forall B >0, \quad \quad \mathbb{E}_{\mu_n} [ Z_{n,j} ] \leq \frac{C_h}{B} +  \frac{j}{Bn^d} + \frac{1}{Bn^d} \log \Big( \max_{1 \leq i \leq j} \mathbb{E}_{\nu_h^n} \big[ \exp  (Bn^d \tilde{Z}_{n,i})  \big]  \Big).
\end{align*}
}
\end{lem}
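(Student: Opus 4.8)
The plan is to lift the state-space relative-entropy bound \eqref{defCh} to the path space $\mcb D([0,T],\Omega)$ and then to combine the classical entropy inequality with the elementary fact that the exponential of a maximum is controlled by a sum of exponentials. I will work throughout with the reference law $\mathbb{P}_{\nu_h^n}$ on $\mcb D([0,T],\Omega)$, that is, the law of the accelerated process with generator $\Theta(n)\mcb L_n$ started from $\nu_h^n$, so that $\mathbb{E}_{\nu_h^n}$ denotes expectation with respect to $\mathbb{P}_{\nu_h^n}$.

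First I would record the identity
\[
H\big(\mathbb{P}_{\mu_n}\,\big|\,\mathbb{P}_{\nu_h^n}\big)=H\big(\mu_n\,\big|\,\nu_h^n\big)\leq C_h\, n^d.
\]
The equality holds because $\mathbb{P}_{\mu_n}$ and $\mathbb{P}_{\nu_h^n}$ are the laws of the \emph{same} Markov dynamics, differing only in their initial distribution. Consequently the Radon--Nikodym derivative $d\mathbb{P}_{\mu_n}/d\mathbb{P}_{\nu_h^n}$ depends on a trajectory only through its initial configuration and equals $(d\mu_n/d\nu_h^n)(\eta_0)$; integrating its logarithm against $\mathbb{P}_{\mu_n}$ then reproduces exactly $H(\mu_n\,|\,\nu_h^n)$, which is finite since $\nu_h^n$ is a Bernoulli product measure with parameters in $[a_h,b_h]\subset(0,1)$. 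The final bound is precisely \eqref{defCh}.

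Next I would invoke the entropy inequality: for probability measures $P\ll Q$ on a common measurable space, any measurable $V$, and any $\lambda>0$,
\[
\int V\,dP\leq \frac{1}{\lambda}\Big(H(P\,|\,Q)+\log\int e^{\lambda V}\,dQ\Big).
\]
Applying this with $P=\mathbb{P}_{\mu_n}$, $Q=\mathbb{P}_{\nu_h^n}$, $\lambda=Bn^d$ and $V=Z_{n,j}$, and inserting the entropy bound from the previous step, yields
\[
\mathbb{E}_{\mu_n}[Z_{n,j}]\leq \frac{C_h}{B}+\frac{1}{Bn^d}\log\mathbb{E}_{\nu_h^n}\big[\exp(Bn^d Z_{n,j})\big].
\]

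Finally, since $Z_{n,j}=\max_{1\leq i\leq j}\tilde{Z}_{n,i}$, monotonicity of the exponential gives $\exp(Bn^d Z_{n,j})=\max_{1\leq i\leq j}\exp(Bn^d\tilde{Z}_{n,i})\leq \sum_{i=1}^{j}\exp(Bn^d\tilde{Z}_{n,i})$, so that
\[
\mathbb{E}_{\nu_h^n}\big[\exp(Bn^d Z_{n,j})\big]\leq j\,\max_{1\leq i\leq j}\mathbb{E}_{\nu_h^n}\big[\exp(Bn^d\tilde{Z}_{n,i})\big].
\]
Taking logarithms and using $\log j\leq j$ for $j\geq 1$ produces the claimed estimate. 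The only genuinely delicate point is the path-space identity of the first step, which requires that the two path measures share the same transition mechanism; the remaining manipulations are routine.
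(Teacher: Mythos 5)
Your proposal is correct. The key claim you flag — that $d\mathbb{P}_{\mu_n}/d\mathbb{P}_{\nu_h^n}(\eta_\cdot)=(d\mu_n/d\nu_h^n)(\eta_0)$, hence $H(\mathbb{P}_{\mu_n}\,|\,\mathbb{P}_{\nu_h^n})=H(\mu_n\,|\,\nu_h^n)\leq C_h n^d$ — is valid, since both path measures are disintegrated over the same Markov kernels, $\mathbb{P}_{\mu}=\int\mathbb{P}_{\eta_\cdot}\,d\mu$, and the finiteness of $H(\mu_n|\nu_h^n)$ guarantees $\mu_n\ll\nu_h^n$; the rest (entropy inequality with $\lambda=Bn^d$, $\exp$ of a maximum bounded by the sum of exponentials, $\log j\leq j$) matches the paper's final manipulations exactly.

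Your route differs from the paper's in the middle step, though. The paper never introduces the path-space relative entropy: it disintegrates $\mathbb{E}_{\mu_n}[Z_{n,j}]$ via Fubini as $\int_\Omega\big(\int Z_{n,j}\,d\mathbb{P}_{\eta_\cdot}\big)\,d\mu_n$, applies the entropy inequality on the state space $\Omega$ to the function $\eta\mapsto Bn^d\int Z_{n,j}\,d\mathbb{P}_{\eta_\cdot}$, and then uses Jensen's inequality to push the exponential inside the conditional expectation, recovering $\mathbb{E}_{\nu_h^n}\big[\exp(Bn^dZ_{n,j})\big]$ by a second application of Fubini. You instead lift the entropy bound to $\mcb D([0,T],\Omega)$ and apply the entropy inequality there directly, which eliminates the Jensen step. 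The two arguments are essentially dual: your path-space identity is proved by exactly the disintegration the paper uses, so quantitatively they give the same bound. What the paper's version buys is that it stays entirely at the level of the state-space entropy bound \eqref{defCh} and elementary convexity, with no need to justify the form of a Radon--Nikodym derivative on path space; what your version buys is brevity and a cleaner conceptual statement, at the cost of having to argue (as you do, correctly) that the two path laws share the same transition mechanism so that the density depends only on $\eta_0$.
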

\begin{proof}
For every $n \geq 1$, from the {fact} that $\mathbb{P}_{\mu_n}$ is induced by $\mu_n$ and Fubini's Theorem, we get
\begin{align*}
 \mathbb{E}_{\mu_n} [ Z_{n,j} ] = \int_{ \mcb D ( [0,T], \Omega )}  Z_{n,j} \, d \mathbb{P}_{\mu_n} = \int_{\Omega} \Big( \int Z_{n,j} d \mathbb{P}_{\eta_{\cdot}} \Big) \, d \mu_n = \frac{1}{B n^d}  \int_{\Omega} \Big( \int B n^d   Z_{n,j} d \mathbb{P}_{\eta_{\cdot}} \Big) \, d \mu_n.
\end{align*}
Applying the entropy's inequality, {the} last display is bounded from above by
\begin{equation} \label{fubstat2}
	\begin{split}
		& \frac{H(\mu_n | \nu_h^n)}{B n^d}  + \frac{1}{B n^d} \log \Big( \int_{\Omega} \exp \Big\{B n^d \int    Z_{n,j}  d \mathbb{P}_{\eta_{\cdot}} \Big\} \, d \nu_h^n \Big). 
	\end{split}
\end{equation}
Since for every $n \geq 1$ $\lambda \mapsto e^{B n^d \lambda}$ is a convex function, from Jensen's inequality, we conclude that
\begin{align*}
	\exp \Big\{B n^d \int    Z_{n,j} \, d \mathbb{P}_{\eta_{\cdot}} \Big\} \leq \int e^{B n^d Z_{n,j}} \, d \mathbb{P}_{\eta_{\cdot}}.
\end{align*}
Combining this with Fubini's Theorem, \eqref{fubstat2} is bounded from above by
\begin{align*}
	& \frac{H(\mu_n | \nu_h^n)}{B n^d}  + \frac{1}{B n^d} \log \Big\{ \int_{\Omega} \Big( \int  e^{B n^d Z_{n,j}   } \, d \mathbb{P}_{\eta_{\cdot}} \Big) \, d \nu_h^n \Big\} = \frac{C_h}{B}  + \frac{1}{B n^d} \log \big(  \mathbb{E}_{\nu_h^n} \big[ e^{B n^d Z_{n,j}}  \big]  \big).
\end{align*}
In the equality above we used \eqref{1entbound}. Recall the definition of $Z_{n,j}$. Then the second term in {the} last display is bounded from above by
\begin{align*}
	& \frac{1}{B n^d} \log \Big( \mathbb{E}_{\nu_h^n} \big[ \max_{1 \leq i \leq j} \exp (B n^d \tilde{Z}_{n,i} ) \big] \Big) \leq  \frac{1}{B n^d}\log \Big( \mathbb{E}_{\nu_h^n} \Big[ \sum_{1 \leq i \leq j} \exp ( B n^d \tilde{Z}_{n,i} ) \Big] \Big) \\
	\leq& \frac{1}{B n^d} \log \Big( j  \max_{1 \leq i \leq j} \mathbb{E}_{\nu_h^n} [  \exp (  n^d \tilde{Z}_{n,i} ) ] \Big) = \frac{j}{B n^d} + \frac{1}{B n^d} \log \Big(   \max_{1 \leq i \leq j} \mathbb{E}_{\nu_h^n} [  \exp ( B n^d \tilde{Z}_{n,i} ) ] \Big),
\end{align*}
and the proof ends.
\end{proof}
\begin{proof}[Proof of Lemma \ref{replemma2smallbet}]
We present here only the proof of \eqref{rlright1}, but we observe that the proof of \eqref{rlleft1} is analogous. Recall the definition of $\Theta(n)$ in \eqref{timescale}. For  every $\varepsilon >0$, define $\ell = \ell(\varepsilon,n)$ by
\begin{equation} \label{defellmic}
\ell { (\varepsilon,n) : }= \frac{\varepsilon \Theta(n)}{n}=
\begin{cases}
\varepsilon n, \quad & \gamma > 2; \\
\varepsilon n / \log(n), \quad & \gamma = 2.
\end{cases}.
\end{equation}
Next we claim that
\begin{equation}  \label{claim1smabet}
\varlimsup_{\varepsilon \rightarrow 0^+} \varlimsup_{n \rightarrow \infty} \mathbb{E}_{\mu_n} \Big[  \Big| \int_0^t \Theta(n) \sum_{ | \hat{x}_{\star} | < C n } \;    \sum_{x_d=\ell}^{ n -1} \;  \sum_{y_d=- n +1}^{-1} \frac{(x_d-y_d)^{-\gamma}}{n^{d+1}}    \big[\eta_s^n(\hat{x}_{\star}, x_d) - \eta_s^n( \hat{x}_{\star}, 0 ) \big] \, ds \Big|  \Big] = 0.
\end{equation}
Indeed, due to \eqref{exc1part}, {the inside of the expectation in \eqref{claim1smabet}} is bounded from above by 
\begin{align*}
&   \Big| \int_0^t \frac{\Theta(n)}{n^{d+1}} \sum_{ | \hat{x}_{\star} | < C n }   \; \sum_{x_d=\ell}^{ n -1} \;  \sum_{y_d=- n +1}^{-1}  (x_d-y_d)^{-\gamma}  \big[\eta_s^n(\hat{x}_{\star}, x_d) - \eta_s^n( \hat{x}_{\star}, 0 ) \big] \, ds \Big|   \\
\leq &  T  (2 C n)^{d-1}  \frac{\Theta(n)}{n^{d+1}} \sum_{x_d= \ell }^{ n -1} \;  \sum_{y=- n +1}^{-1}  (x_d-y_d)^{-\gamma}    \lesssim \frac{\Theta(n)}{n^2} \sum_{x_d= \ell }^{ { 2 }n -1} x_{{d}}^{1-\gamma},
\end{align*}
which vanishes as $n \rightarrow \infty$, for every $\varepsilon >0$ fixed. Indeed, if  $\gamma > 2$, the last expression is equal to
\begin{align*}
\sum_{x_d= \varepsilon n }^{{2} n -1} x_{{ d } }^{1-\gamma} \leq (\varepsilon n)^{1-\gamma} + \int_{\varepsilon n}^{{2} n} u^{1-\gamma} \, du \lesssim (\varepsilon n)^{2-\gamma}, 
\end{align*}
that goes to zero as $n \rightarrow \infty$, for any $\varepsilon >0$; while for $\gamma=2$, it is equal to
\begin{align*}
\frac{1}{\log(n)} \sum_{x_d= \varepsilon n / \log(n)   }^{{ 2 }n -1} x_{{ d}}^{-1} \leq \frac{1}{\log(n)} \Big[  \Big(\frac{\varepsilon n}{\log(n)} \Big)^{-1} + \int_{\varepsilon n / \log(n)}^{{2}n} \, u^{-1} du \Big] \lesssim \frac{1}{\varepsilon n} +\frac{1}{\log(n)} \log \Big( \frac{ { 2 }\log(n)}{\varepsilon} \Big), 
\end{align*}
and it also goes to zero as $n \rightarrow \infty$, for every $\varepsilon >0$. 

 It remains to prove that
	\begin{equation}  \label{exprpl1} 
		\varlimsup_{\varepsilon \rightarrow 0^+} \varlimsup_{n \rightarrow \infty} \mathbb{E}_{\mu_n} \Big[  \Big| \int_0^t \frac{\Theta(n)}{n^{d+1}} \sum_{ | \hat{x}_{\star} | < C n } \;    \sum_{x_d={0}}^{ \ell -1} \;  \sum_{y_d=- n +1}^{-1}  (x_d-y_d)^{-\gamma}  \big[\eta_s^n(\hat{x}_{\star}, x_d) - \eta_s^n( \hat{x}_{\star}, 0 ) \big] \, ds \Big|  \Big] = 0.
	\end{equation}
Defining the random application $Z_{n}^{\varepsilon}: \mcb D ( [0,T], \Omega ) \rightarrow \mathbb{R}$ by
	\begin{align*}
		Z_{n}^{\varepsilon}:= \int_0^t \frac{\Theta(n)}{n^{d+1}} \sum_{ | \hat{x}_{\star} | < C n } \;    \sum_{x_d={0}}^{ \ell -1} \;  \sum_{y_d=- n +1}^{-1}  (x_d-y_d)^{-\gamma}  \big[\eta_s^n(\hat{x}_{\star}, x_d) - \eta_s^n( \hat{x}_{\star}, 0 ) \big] \, ds,
	\end{align*}
the expectation in \eqref{exprpl1} can be rewritten as
\begin{align} \label{defZneps}
\mathbb{E}_{\mu_n} [ | Z_{n}^{\varepsilon} | ] = \mathbb{E}_{\mu_n} [ \max \{ Z_{n}^{\varepsilon},- Z_{n}^{\varepsilon} \} ]. 
\end{align}
From Lemma \ref{entjen}, the display in \eqref{exprpl1} is bounded from above by
\begin{align*}
\varlimsup_{\varepsilon \rightarrow 0^+} \varlimsup_{n \rightarrow \infty}  \Big\{ \frac{C_h}{B} +  \frac{2}{B n^d} + \frac{1}{B n^d} \log \Big( \max \big\{  \mathbb{E}_{\nu_h^n} \big[ \exp  (B n^d Z_{n}^{\varepsilon}), \; \mathbb{E}_{\nu_h^n} \big[ \exp  (-{B} n^d Z_{n}^{\varepsilon})   \big] \Big) \Big\},  
\end{align*}
for every $B >0$. Applying Feynman-Kac's formula (see Lemma A.1 of \cite{baldasso}) and \eqref{defZneps}, the expression in {the} last double limit is bounded from above by
	\begin{equation} \label{suprpl1}
		\begin{split}
			\frac{C_h+2}{B} + T \sup_{f} \Big\{&   \frac{\Theta(n)}{n^{d+1}} \sum_{ | \hat{x}_{\star} | < C n } \; \sum_{x_d={0}}^{\ell -1} \; \sum_{y_d=-  n +1}^{-1}   (x_d-y_d)^{-\gamma}  | \langle \eta(\hat{x}_{\star}, x_d) - \eta(\hat{x}_{\star}, 0)  , f \rangle_{\nu_{h}^n} | \\
			+ & \frac{\Theta(n)}{B n^d} \langle \mcb L_n \sqrt{f}, \sqrt{f} \rangle_{\nu_{h}^n}   \Big\}, 
		\end{split}
	\end{equation}
for every $B > 0$, where the supremum above is carried over all the densities $f$ with respect to $\nu_{h}^n$. From Proposition \ref{bound}, the second term inside the supremum in {the} last display is bounded from above by
\begin{align} \label{rpl1a}
 \frac{M_h}{B} - \frac{\Theta(n)}{8Bn^d} \sum_{ | \hat{x}_{\star} | \leq C n }  \;  \sum_{w=0}^{\ell-1}  I_{( (\hat{x}_{\star}, w+1)), ((\hat{x}_{\star}, w)) }  (\sqrt{f}, \nu_{h}^n ) p( \hat{e}_d ) ,  
\end{align}
where $M_h$ is a positive constant depending only on $h \in \textrm{Ref}$. Observe that
\begin{align*}
  | \langle \eta(\hat{x}_{\star}, x_d) - \eta(\hat{x}_{\star}, 0)  , f \rangle_{\nu_{h}^n} | \leq &  \sum_{w=0}^{x_{{d}}-1} \Big| \int_{\Omega} [ \eta(\hat{x}_{\star},w+1) - \eta(\hat{x}_{\star}, w) ] f (\eta) \, d \nu_{h}^n \Big| \\
   \leq &   \sum_{w=0}^{\ell-1} \Big| \int_{\Omega} [ \eta(\hat{x}_{\star}, w+1) - \eta(\hat{x}_{\star}, w) ] f (\eta) \, d \nu_{h}^n \Big|.
\end{align*}
Above we used {the fact} that $0 \leq x_{{d}} \leq \ell-1$. Then the first term inside the supremum in \eqref{suprpl1} is bounded from above by
	\begin{align} \label{corclaim} 
		\frac{\Theta(n)}{n^{d+1}} \sum_{ | \hat{x}_{\star} | < C n }\;\sum_{x_d={0}}^{\ell -1} \; \sum_{y_d=-  n +1}^{-1}   (x_d-y_d)^{-\gamma} \sum_{w=0}^{\ell-1} \Big| \int_{\Omega} [ \eta(\hat{x}_{\star}, w+1) - \eta(\hat{x}_{\star}, w) ] f (\eta) \, d \nu_{h}^n \Big|. 
	\end{align}
Before proceeding, we claim that for every $K, C>0$ fixed, we have
\begin{align} \label{convkappagamma}
 \lim_{n \rightarrow \infty} \frac{\Theta(n)}{ n^2 } \sum_{r=1}^{ K n } c_{\gamma} r^{1-\gamma}  = d \kappa_{\gamma} = \lim_{n \rightarrow \infty} \frac{\Theta(n)}{ n^2 } \sum_{r=1}^{C n - 1 } c_{\gamma} r^{1-\gamma}.
\end{align}
In particular, we get
\begin{align} \label{thetan}
\frac{\Theta(n)}{n^2}  \sum_{x_d=0}^{K n  -1} \; \sum_{y_d=- K n +1}^{-1}   (x_d-y_d)^{-\gamma}  \leq  \frac{\Theta(n)}{n^2} \sum_{r=1}^{ 2K n} r^{1-\gamma}  \lesssim 1.
\end{align}
Indeed, assume that either $a_n = Kn$ for every $n \geq 1$ or $a_n = Kn-1$ for every $n \geq 1$. If $\gamma > 2$, it holds
 \begin{align*}
\lim_{n \rightarrow \infty} \frac{\Theta(n)}{ n^2 } \sum_{r =1}^{a_n} c_{\gamma} r^{1-\gamma} = \lim_{n \rightarrow \infty} \sum_{r =1}^{a_n} r^2 p(r)  = \sum_{r =1}^{\infty} r^2 p(r) =  \frac{\sigma^2}{2} = d \kappa_{\gamma}.
\end{align*}
In order to analyse the case $\gamma=2$, observe that
\begin{align*}
 \log(a_n+1) = \sum_{r=1}^{a_n} \int_{r}^{r+1} u^{-1} du \leq \sum_{r=1}^{a_n} r^{-1}  \leq 1 + \sum_{r=2}^{a_n} \int_{r-1}^{r} u^{-1} \, du = 1+ \log(a_n).
\end{align*}
Then
\begin{align*}
\forall n \geq 1, \quad c_2 \frac{\log(a_n + 1)}{\log(n)} \leq \frac{\Theta(n)}{n^2} \sum_{r=1}^{ a_n } c_{\gamma} r^{1-\gamma} =  \frac{c_{2}}{\log(n) } \sum_{r=1}^{ a_n }  r^{-1} \leq c_2 \frac{1+ \log(a_n)}{\log(n) }.
\end{align*}
When $n \rightarrow \infty$, the left-hand and right-hand sides of the display above both go to ${c_2=d\kappa_{\gamma}}$, proving \eqref{convkappagamma}, and also \eqref{thetan}. Hence, there exists $M_1 >0$ such that
	\begin{align} \label{defM1}
		\frac{\Theta(n)}{n^{2}} \sum_{x_d={0}}^{\ell -1} \; \sum_{y_d=-  n +1}^{-1}   (x_d-y_d)^{-\gamma} \leq \frac{\Theta(n)}{n^{2}} \sum_{x_d={0}}^{n -1} \; \sum_{y_d=-  n +1}^{-1}   (x_d-y_d)^{-\gamma}  \leq M_1,
	\end{align}
for every $\varepsilon > 0$ and every $n \geq 1$ {such that} ${\ell \leq n}$. {Combining \eqref{defM1} with \eqref{young}, the display in \eqref{corclaim} is} bounded from above by a constant times
\begin{align} \label{rpl1c}
\frac{M_1 E_h }{n^{d-1}} \sum_{ | \hat{x}_{\star} | \leq C n }   \; \sum_{w=0}^{\ell-1}  I_{\big( (\hat{x}_{\star}, w+1)), ((\hat{x}_{\star}, w) \big) }  (\sqrt{f}, \nu_{h}^n ) A + \frac{M_1 E_h }{n^{d-1}} \sum_{ | \hat{x}_{\star} | \leq C n }  \;  \sum_{w=0}^{\ell-1} \Big( \frac{1}{A} + \frac{1}{n} \Big).
\end{align}
Above, $E_h$ is another positive constant depending only on $h \in \textrm{Ref}$. Choosing $A=\Theta(n) p(\hat{e}_d )[8 M_1 E_h  B n]^{-1}$, the sum of the expressions in \eqref{rpl1a} and \eqref{rpl1c} is bounded from above by a constant times
\begin{align*}
 \frac{B \ell n}{\Theta(n)} + \frac{\ell}{n},
\end{align*}
therefore the display in \eqref{suprpl1} is bounded from above by a constant (depending on $h \in \textrm{Ref}$) times
\begin{align*}
\frac{1}{B} + B \varepsilon  + \varepsilon.
\end{align*}
In {the} last line we used the definition of $\ell$ in \eqref{defellmic}. By choosing $B=\varepsilon^{- 1 /2}$, due to \eqref{timescale} {the} last display vanishes by taking first the limit in $n \rightarrow \infty$  and then ${\varepsilon} \rightarrow 0^+$.      
\end{proof}
\begin{proof}[Proof of Lemma \ref{replemma1smallbet}]
Applying Lemma \ref{entjen} and Feynman-Kac's formula in the same way as in the proof of Lemma \ref{replemma2smallbet}, the expectation in \eqref{rlleft10} is bounded from above by
\begin{equation} \label{suprpl10}
\begin{split}
\frac{C_h+2}{B} + T \sup_{f} \Big\{ &  \frac{\Theta(n)}{n^{d+1}} \sum_{ | \hat{x}_{\star} | < C n } \;   \sum_{x_d=0}^{ n -1} \; \sum_{y_d=- n +1}^{-1}  (x_d-y_d)^{-\gamma} \big| \langle\eta(\hat{x}_{\star}, 0) - \eta( \hat{x}_{\star}, -1 )   , f \rangle_{\nu_{h}^n} \big|  \\
 +& \frac{\Theta(n)}{B n^d} \langle \mcb L_n \sqrt{f}, \sqrt{f} \rangle_{\nu_{h}^n}   \Big\}, 
\end{split}
\end{equation}
for every $B>0$, where the supremum above is carried over all the densities $f$ with respect to $\nu_{h}^n$. From Proposition \ref{bound}, the second term inside the supremum in {the} last display is bounded from above by
\begin{align}
 & \frac{M_h}{B} - \frac{\alpha \Theta(n)}{8Bn^{d+\beta}} \sum_{ | \hat{x}_{\star} | \leq C n }   I_{( (\hat{x}_{\star}, 0)), ((\hat{x}_{\star}, -1)) }  (\sqrt{f}, \nu_{h}^n ) p( \hat{e}_d ), \label{rpl0a}
\end{align}
where $M_h$ is a positive constant depending only on $h \in \textrm{Ref}$. Recall the definition of $M_1$ in \eqref{defM1}. From \eqref{defM1} and \eqref{young}, the first term inside the supremum in \eqref{suprpl10} is bounded from above by
\begin{align} \label{rpl10c}
\frac{M_1 E_h }{n^{d-1}} \sum_{ | \hat{x}_{\star} | \leq C n }      I_{\big( (\hat{x}_{\star}, -1)), ((\hat{x}_{\star}, 0) \big) }  (\sqrt{f}, \nu_{h}^n ) A + \frac{M_1 E_h }{n^{d-1}} \sum_{ | \hat{x}_{\star} | \leq C n }    \Big( \frac{1}{A} + \frac{1}{n} \Big),
\end{align}
where $E_h$ is another positive constant depending only on $h \in \textrm{Ref}$. Choosing $A= \alpha \Theta(n) p(\hat{e}_d) [M_1 E_h  B n^{1+\beta}]^{-1}$, the sum of the expressions in \eqref{rpl1a} and \eqref{rpl1c} is bounded from above by a constant times $ \frac{B n^{1+\beta}}{\Theta(n)} + \frac{1}{n}$, therefore the display in \eqref{suprpl10} is bounded from above by a constant (depending on $h \in \textrm{Ref}$) times
\begin{align*}
\frac{1}{B} +  \frac{B n^{1+\beta}}{\Theta(n)} + \frac{1}{n}.
\end{align*}
Choose $B = \log(n)$. Since $\beta \in [0,1)$, from \eqref{timescale}, {the} last display vanishes by taking first the limit in $n \rightarrow \infty$ and then $\epsilon \rightarrow 0^+$ and the proof ends.
\end{proof}
We end this section by observing that Proposition \ref{replemlargebeta} is a direct consequence of Lemma \ref{obe} below, when we choose $F_1 = \nabla G^{+}(0)$, $F_2 = \nabla G^{-}(0)$ and $\lambda= \lambda_0$, where $\lambda_0: \mathbb{Z} \rightarrow \mathbb{R}$ is given by
\begin{equation*} 
\lambda_0(x) :=  \sum_{y=-\infty}^{-1} (y-x)    p(y-x),  \, \, x \leq -1; \quad \lambda_0(x):= \sum_{y=0}^{\infty}  (y-x)    p(y-x), \, \,  x \geq 0.
\end{equation*}
In order to state the result of Lemma \ref{obe}, we assume that $d=1$. Recall the definitions of $\overleftarrow{\eta}^{\varepsilon n }_s(0)$ and $\overrightarrow{\eta}^{\varepsilon n }_s(0)$ in \eqref{medbox1}. Similar results to Lemma \ref{obe} are stated in Lemma 6.2 of \cite{casodif} and Lemma 5.7 of \cite{byrondif}. Since Lemma \ref{obe} can obtained in an analogous way as those two previous results, we leave the proof of Lemma \ref{obe} to the reader.
\begin{lem}  \label{obe}
\textit{
Assume \eqref{1entbound}. Fix $\gamma >2$ and $ t \in [0,T]$. For every $F_1, F_2 \in L^{\infty}([0,T])$ and $\lambda \in L^1(\mathbb{Z})$, it holds
\begin{align*}
& \varlimsup_{\varepsilon \rightarrow 0^+} \varlimsup_{n \rightarrow \infty} \mathbb{E}_{\mu_n} \Big[ \Big| \int_0^t F_1(s)  \sum_{x =0}^{\infty}  \lambda(x) [\eta_{s}^{n}(x) - \overrightarrow{\eta}^{\varepsilon n }_s(0) ] \, ds  \Big|    \Big] = 0, \\
&\varlimsup_{\varepsilon \rightarrow 0^+} \varlimsup_{n \rightarrow \infty} \mathbb{E}_{\mu_n} \Big[  \Big|  \int_0^t F_2(s) \sum_{x =-\infty}^{-1}  \lambda(x) [\eta_{s}^{n}(x) - \overleftarrow{\eta}^{\varepsilon n }_s(0) ] \, ds \Big|   \Big] = 0.
\end{align*}
}
\end{lem}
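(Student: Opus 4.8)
The plan is to prove only the first identity; the second is obtained by the same argument on the negative half-line, replacing $\overrightarrow{\eta}^{\varepsilon n}_s(0)$ by $\overleftarrow{\eta}^{\varepsilon n}_s(0)$ and the box $\{1,\dots,\varepsilon n\}$ by $\{-\varepsilon n,\dots,-1\}$. The whole scheme parallels the proof of Lemma \ref{replemma2smallbet}. First I would truncate the weight $\lambda$: for fixed $M\geq1$, using $0\leq\eta\leq1$ so that $|\eta_s^n(x)-\overrightarrow{\eta}^{\varepsilon n}_s(0)|\leq1$, together with $F_1\in L^\infty([0,T])$, the tail contribution $\sum_{x\geq M}$ is bounded deterministically by $t\,\|F_1\|_{\infty}\sum_{x\geq M}|\lambda(x)|$, uniformly in $n$ and $\varepsilon$; since $\lambda\in L^1(\mathbb{Z})$ this vanishes as $M\to\infty$, so it suffices to handle the finite sum $\sum_{x=0}^{M-1}$ for each fixed $M$.

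Setting $Z_n^{\varepsilon}:=\int_0^t F_1(s)\sum_{x=0}^{M-1}\lambda(x)[\eta_s^n(x)-\overrightarrow{\eta}^{\varepsilon n}_s(0)]\,ds$ and writing $\mathbb{E}_{\mu_n}[|Z_n^{\varepsilon}|]=\mathbb{E}_{\mu_n}[\max\{Z_n^{\varepsilon},-Z_n^{\varepsilon}\}]$, I would apply Lemma \ref{entjen} with $j=2$ and $d=1$, and then the Feynman--Kac inequality (Lemma A.1 of \cite{baldasso}). Because the integrand here carries no factor $\Theta(n)$, for every $B>0$ this bounds $\mathbb{E}_{\mu_n}[|Z_n^{\varepsilon}|]$ by $C_h/B+2/(Bn)$ plus $T\,\|F_1\|_{\infty}$ times
\begin{align*}
\sup_{f}\Big\{\sum_{x=0}^{M-1}|\lambda(x)|\,\big|\langle\eta(x)-\overrightarrow{\eta}^{\varepsilon n}(0),f\rangle_{\nu_h^n}\big|+\frac{\Theta(n)}{Bn}\langle\mcb{L}_n\sqrt{f},\sqrt{f}\rangle_{\nu_h^n}\Big\},
\end{align*}
the supremum running over densities $f$ with respect to $\nu_h^n$.

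The core step controls the scalar products. Writing $\eta(x)-\overrightarrow{\eta}^{\varepsilon n}(0)=(\varepsilon n)^{-1}\sum_{y=1}^{\varepsilon n}[\eta(x)-\eta(y)]$ and telescoping each difference $\eta(x)-\eta(y)$ along nearest-neighbour bonds, I observe that for $x\in\{0,\dots,M-1\}$ and $y\in\{1,\dots,\varepsilon n\}$ the connecting path stays among nonnegative sites, hence uses only fast bonds and never crosses the slow bond $\{-1,0\}$; for $n$ so large that $M\leq\varepsilon n$ this yields, uniformly in such $x,y$,
\begin{align*}
\big|\langle\eta(x)-\overrightarrow{\eta}^{\varepsilon n}(0),f\rangle_{\nu_h^n}\big|\leq\sum_{w=0}^{\varepsilon n-1}\big|\langle\eta(w)-\eta(w+1),f\rangle_{\nu_h^n}\big|.
\end{align*}
Applying \eqref{young} to each term with a common parameter $A>0$, and bounding the Dirichlet form from below through Proposition \ref{bound} (using $D_n\geq D^{\mcb F}$ and keeping only the fast bonds $\{w,w+1\}$, $0\leq w\leq\varepsilon n-1$), the displayed supremum is at most a constant times
\begin{align*}
\Big(\tfrac1A-\tfrac{c\,\Theta(n)}{Bn}\Big)\sum_{w=0}^{\varepsilon n-1}I_{w,w+1}(\sqrt{f},\nu_h^n)+\varepsilon n\Big(A+\tfrac1n\Big)+\frac{M_h}{B},
\end{align*}
for a fixed $c>0$. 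Choosing $A$ proportional to $Bn/\Theta(n)$ makes the bracket nonpositive, so the sum over $w$ is discarded and the leftover is of order $\varepsilon n\,A+\varepsilon=\varepsilon B\,(n^2/\Theta(n))+\varepsilon$.

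Here the hypothesis $\gamma>2$ is decisive: by \eqref{timescale} one then has $\Theta(n)=n^2$, so $n^2/\Theta(n)=1$ and the leftover is of order $\varepsilon B+\varepsilon$ (for $\gamma=2$ the same factor would be $\log n\to\infty$, which is precisely why that case is excluded). Collecting everything,
\begin{align*}
\varlimsup_{\varepsilon\to0^+}\varlimsup_{n\to\infty}\mathbb{E}_{\mu_n}[|Z_n^{\varepsilon}|]\lesssim\frac1B
\end{align*}
for every $B>0$; letting $B\to\infty$ kills the finite part, and finally $M\to\infty$ removes the truncation error $t\|F_1\|_{\infty}\sum_{x\geq M}|\lambda(x)|$, proving the first identity. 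I expect the main difficulty to be not any individual estimate but the careful bookkeeping of the four parameters $(M,n,\varepsilon,B)$ and, above all, the verification that the telescoping path on one side of the origin never meets the slowed bond; it is exactly this one-sided confinement that lets the full (unslowed) fast Dirichlet form absorb the replacement cost and forces the use of $\gamma>2$.
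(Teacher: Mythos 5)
Your argument is correct and is exactly the route the paper intends: the paper omits the proof of Lemma \ref{obe}, referring to the analogous Lemma 6.2 of \cite{casodif} and Lemma 5.7 of \cite{byrondif}, and your scheme (truncation of $\lambda$, Lemma \ref{entjen} with $j=2$, Feynman--Kac, telescoping $\eta(x)-\overrightarrow{\eta}^{\varepsilon n}(0)$ over the one-sided fast bonds, then \eqref{young} against Proposition \ref{bound}) is precisely the strategy of Lemma \ref{replemma2smallbet} adapted to this setting. Your bookkeeping of $(M,n,\varepsilon,B)$, the one-sided confinement of the telescoping path, and the identification of where $\gamma>2$ enters (so that $n^2/\Theta(n)=1$ rather than $\log n$) are all sound.
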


\section{Energy estimates}  \label{secenerest}

In this section, we {assume} ${d}=1$ and $\gamma >2$. Our goal is to prove that $\rho$ satisfies the second condition of weak solutions of \eqref{ehdnlee}, and therefore $\rho_s(0^{+})$ and $\rho_s(0^{-})$ are well-defined for a.e. $s \in [0,T]$. It is sufficient to prove that
\begin{align} \label{eqsob}
\mathbb{Q} \big (  \rho_s \in  \mcb{H}^1(J), \; \text{a.e.} \; s \in [0,T]   \big) = 1
\end{align}
for every bounded open interval $J$ such that $J \subset \mathbb{R}_{-}^{*}$ or $J \subset \mathbb{R}_{+}^{*}$. Without loss of generality, we assume that $J=(0,a_J)$, where $a_J>0$ is fixed. In order to prove \eqref{eqsob}, we apply the argument analogous to the one used in Section 6.1 of \cite{byrondif}. More exactly, we define $N:=C_c^{0,\infty} ( [0, T] \times J)$, $Y:=L^2([0,T] \times J)$ and make use of the (random) auxiliary linear functional $l_{\rho,J}$ on $N \subset Y$ given by
\begin{align} \label{deflrhoJ}
l_{ \rho, J }(G):=&   \int_0^T \int_{J} \nabla G_t(u) \rho_t(u)\, du \, dt.
\end{align}
The following lemma will be useful in order to prove \eqref{eqsob}.
\begin{lem} \label{lemesten}
\textit{
If there exist $K_0, K_1$ such that
\begin{equation} \label{supkfin}
\forall j \geq 1, \forall G_1, \ldots G_j \in N,  \quad \mathbb{E}_{\mathbb{Q}} \big[ \max_{1 \leq i \leq j} \{ l_{ \rho, J }(G_i) - K_0 \| G_i \|_Y^2 \} \big] \leq K_1,
\end{equation}
then \eqref{eqsob} holds. Above, $K_0$ and $K_1$ are positive constants independent of $j$ and $G_1, \ldots G_j$. 
}
\end{lem}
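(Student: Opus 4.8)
The plan is to follow the strategy of Section 6.1 of \cite{byrondif}: use the hypothesis \eqref{supkfin} to show that, $\mathbb{Q}$-almost surely, the linear functional $l_{\rho,J}$ is bounded on $(N,\|\cdot\|_Y)$, then invoke the Riesz representation theorem to produce an $L^2$ representative of the weak spatial derivative of $\rho$, and finally localize in time to get the pointwise-in-$s$ membership in $\mcb H^1(J)$.

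First I would fix once and for all a countable family $(G_i)_{i\geq 1}\subset N$ with $G_1\equiv 0$ which is dense in $N$ for the norm $\|G\|_Y+\|\nabla G\|_Y$; such a family exists because $N=C_c^{0,\infty}([0,T]\times J)$ is separable. Set
\[
\Xi:=\sup_{G\in N}\{\, l_{\rho,J}(G)-K_0\|G\|_Y^2\,\}.
\]
Since $0\leq\rho\leq 1$, for each fixed realization the maps $G\mapsto l_{\rho,J}(G)$ and $G\mapsto\|G\|_Y^2$ are continuous for that norm (indeed $|l_{\rho,J}(G)-l_{\rho,J}(G')|\leq\|\rho\|_{2,J}\,\|\nabla(G-G')\|_Y$ over the relevant time interval), so the supremum over $N$ equals the supremum over $(G_i)_{i\geq 1}$. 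In particular $\Xi$ is measurable, being a countable supremum of the measurable maps $\rho\mapsto l_{\rho,J}(G_i)-K_0\|G_i\|_Y^2$, and $\Xi\geq 0$ thanks to $G_1\equiv 0$. Writing $\Xi_j:=\max_{1\leq i\leq j}\{l_{\rho,J}(G_i)-K_0\|G_i\|_Y^2\}\uparrow\Xi$ and combining the monotone convergence theorem with \eqref{supkfin} yields $\mathbb{E}_{\mathbb{Q}}[\Xi]=\lim_j\mathbb{E}_{\mathbb{Q}}[\Xi_j]\leq K_1$, so $\Xi<\infty$ on a set $\Omega_0$ of full $\mathbb{Q}$-measure.

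Next I would exploit that $N$ is a vector space. For every $G\in N$ and $\lambda>0$ we have $\lambda G\in N$, whence $\lambda\, l_{\rho,J}(G)-K_0\lambda^2\|G\|_Y^2\leq\Xi$; optimizing the left-hand side over $\lambda$ and repeating the bound with $-G$ gives, on $\Omega_0$,
\[
|l_{\rho,J}(G)|\leq 2\sqrt{K_0\,\Xi}\,\|G\|_Y,\qquad G\in N.
\]
Thus on $\Omega_0$ the functional $l_{\rho,J}$ is bounded on the $\|\cdot\|_Y$-dense subspace $N$ of $Y=L^2([0,T]\times J)$, so it extends uniquely to a bounded linear functional on $Y$, and by the Riesz representation theorem there is $\Phi=\Phi^\rho\in L^2([0,T]\times J)$ with
\[
\int_0^T\!\!\int_J \nabla G_t(u)\,\rho_t(u)\,du\,dt=l_{\rho,J}(G)=\int_0^T\!\!\int_J \Phi_t(u)\,G_t(u)\,du\,dt,\qquad G\in N.
\]
Finally I would localize in time by testing against product functions $G(t,u)=\psi(t)\phi(u)$ with $\psi\in C^\infty([0,T])$ and $\phi$ ranging over a countable $C^1$-dense family of $C_c^\infty(J)$: outside a single Lebesgue-null set of times (independent of $\phi$ by density) the identity above gives $\int_J\nabla\phi\,\rho_t=\int_J\Phi_t\,\phi$ for all $\phi\in C_c^\infty(J)$. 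Since $\rho$ is bounded and $J$ is bounded, $\rho_t\in L^2(J)$, and this is precisely the statement that $\rho_t\in\mcb H^1(J)$ with $\partial_u\rho_t=-\Phi_t$ for a.e.\ $t\in[0,T]$; as everything takes place on $\Omega_0$, this establishes \eqref{eqsob}.

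The main obstacle is the measurability/reduction step: because $l_{\rho,J}$ depends on $\nabla G$ rather than on $G$ itself, replacing the supremum over all of $N$ by a countable one requires density in a topology that controls the gradient (not merely the $Y=L^2$ norm), and one must verify that this preserves the value of $\Xi$ so that the monotone convergence argument applies. Once this is in place, the scaling-plus-Riesz step and the time-localization are routine.
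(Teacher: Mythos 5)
Your proposal is correct and follows essentially the same route as the paper's proof: reduce the supremum over $N$ to a countable dense family in a norm controlling $\nabla G$ (your $\|G\|_Y+\|\nabla G\|_Y$ plays the role of the paper's $L^2(0,T;\mcb H^1(J))$ norm), apply the monotone convergence theorem with \eqref{supkfin}, deduce $\mathbb{Q}$-a.s.\ boundedness of $l_{\rho,J}$ on $(N,\|\cdot\|_Y)$, extend by density and apply Riesz representation, and finally localize in time to identify the weak derivative. Your implementation details differ only mildly — the homogeneity/optimization-in-$\lambda$ trick replaces the paper's passage from $K_0$ to $3K_0$ on the unit ball, and product test functions $\psi(t)\phi(u)$ replace time indicators — but these are variants of the same argument and both are sound.
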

\begin{proof}
First, define $X:=L^2 \big(0,T; \mcb H^1(J) \big)$ as the metric space of the functions $G:[0,T] \times J$ such that
\begin{align*}
\| G \|_X^2: = \int_0^T \| G_s \|^2_{\mcb H^1(J)} ds < \infty.
\end{align*}
Next, observe that from the fact that $|\rho| \leq 1$ and H\"older's inequality, we get from \eqref{deflrhoJ} that
\begin{equation} \label{boundlrho}
\forall G \in N, \quad |l_{ \rho, J }(G)|  \leq \Big[  \int_0^T \int_{J} \, du \, dt \Big]^{1/2} \Big[  \int_0^T \int_{J} \nabla G_t(u) \, du \, dt \Big]^{1/2} \leq \sqrt{T a_J} \| G \|_X.
\end{equation}
Since $(H^1(J), \| \cdot \| )$ is a separable metric space, we get from Proposition 23.2 (f) in \cite{zeidler} that $(X, \| \cdot \|_X)$ is a separable metric space. In particular, $(N, \| \cdot \|_X)$ is a separable metric space and we can fix a sequence $(G_j)_{j \geq 1} \subset N$ which is dense in $(N, \| \cdot \|_X)$. In order to apply the Monotone Convergence Theorem (MCT), we define $K_2:=\sqrt{T a_J} \| G_1 \|_X + K_0 \| G_1 \|_Y^2 < \infty$ and for every $j \geq 1$, we define
\begin{align*}
 Z_j:= K_2 + \max_{1 \leq i \leq j} \{ l_{ \rho, J }(G_i) - K_0 \| G_i \|_Y^2 \} .
\end{align*}
In particular, from \eqref{boundlrho} and \eqref{supkfin} we get $Z_{j+1} \geq Z_j \geq Z_1 \geq 0$ and $\mathbb{E}_{\mathbb{Q}}[Z_j] \leq K_1 + K_2$, for every $j \geq 1$. Therefore, an application of the MCT leads to
\begin{align} \label{supkcount}
\mathbb{E}_{\mathbb{Q}} \big[ \sup_{j \geq 1} \{ l_{ \rho, J }(G_j) - K_0 \| G_j \|_Y^2 \} \big] =& - K_2 + \mathbb{E}_{\mathbb{Q}} \big[ \sup_{j \geq 1} \{ Z_j \} \big] = -  K_2 + \lim_{j \rightarrow \infty} \mathbb{E}_{\mathbb{Q}}[Z_j] \leq  K_1.
\end{align}
Next we claim that
\begin{align} \label{boundsupF}
	\mathbb{E}_{\mathbb{Q}} \big[ \sup_{G \in N} \{ \ell_{\rho, J} (G) - 3K_0 \| G \|^{2}_{Y} \} \big] \leq K_3, 
\end{align}
where $K_3:=K_1 + \sqrt{T a_J} +  3 K_0$. Indeed, let $\tilde{G} \in N$. Since $(G_i)_{j \geq 1}$ is dense in $(N, \| \cdot \|_{X} )$, there exists $j_0 \geq 1$ such that $\|  \tilde{G} - G_{j_0}  \|_{X} < 1$. From \eqref{boundlrho} and the fact that $\|G\|_Y \leq \|G\|_X$ for every $G \in N$, we get
\begin{align*}
& l_{ \rho, J }( \tilde{G} - G_{j_0} ) + K_0 \big( |  G_{j_0}   \|^{2}_{Y} - \|  \tilde{G}   \|^{2}_{Y} \big) \\
\leq & |l_{ \rho, J }( \tilde{G} - G_{j_0} )| + K_0  ( |  G_{j_0}   \|_{Y} - \|  \tilde{G}   \|_{Y} ) ( \|  G_{j_0}   \|_{Y} + \|  \tilde{G}   \|_{Y} ) \\
\leq & \big[ \sqrt{T a_J} + K_0  ( |  G_{j_0}   \|_{Y} + \|  \tilde{G}   \|_{Y} )  \big]  \|  \tilde{G} - G_{j_0}  \|_{X} <  \sqrt{T a_J} + K_0  ( \|  G_{j_0}   \|_{Y} + \|  \tilde{G}   \|_{Y} ),
\end{align*}
which leads to
\begin{equation} \label{boundsupF1}
\begin{split}
	l_{ \rho, J }(\tilde{G}) - K_0  \| \tilde{G} \|_Y^2 =& l_{ \rho, J }(G_{j_0}) - K_0 \| G_{j_0} \|_Y^2 + l_{ \rho, J }( \tilde{G} - G_{j_0} ) +  K_0 \big( \|  G_{j_0}   \|^{2}_{Y} - \|  \tilde{G}   \|^{2}_{Y} \big) \\
\leq & \sup_{j \geq 1} \{ l_{ \rho, J }(G_j) - K_0 \| G_j \|_Y^2\} \sqrt{T a_J} + K_0  ( \|  G_{j_0}   \|_{Y} + \|  \tilde{G}   \|_{Y} )  .
\end{split}
\end{equation}
Since $\|  \tilde{G}   \|_{Y} \leq 1 + \|  \tilde{G}   \|_{Y}^2$, we have
\begin{equation} \label{boundsupF2}
 \|  G_{j_0}   \|_{Y} + \|  \tilde{G}   \|_{Y}\leq  \|   G_{j_0} - \tilde{G}   \|_{Y} + 2  \|  \tilde{G}   \|_{Y} \leq 3 + 2  \|  \tilde{G}   \|_{Y}^{2}.	
\end{equation}
Then from \eqref{boundsupF1} and \eqref{boundsupF2} we get
\begin{align*}
l_{ \rho, J }(\tilde{G}) - 3 K_0  \| \tilde{G} \|_Y^2 \leq \sup_{j \geq 1} \{ l_{ \rho, J }(G_j) - K_0 \| G_j \|_Y^2\} + \sqrt{T a_J} +  3 K_0,
\end{align*}
for every $\tilde{G} \in N$. This leads to
\begin{align*}
\sup_{\tilde{G} \in N} \{	l_{ \rho, J }(\tilde{G}) - 3K_0  \| \tilde{G} \|_Y^2 \} \leq \sup_{j \geq 1} \{ l_{ \rho, J }(G_j) - K_0 \| G_j \|_Y^2\} + \sqrt{T a_J} +  3 K_0.
\end{align*}
Taking the expectation in both sides of {the} last inequality, we get
\begin{align*}
&\mathbb{E}_{\mathbb{Q}} \big[ \sup_{\tilde{G} \in N} \{	l_{ \rho, J }(\tilde{G}) - 3K_0  \| \tilde{G} \|_Y^2 \}  \big] \\
\leq & \mathbb{E}_{\mathbb{Q}} \big[\sup_{j \geq 1} \{ l_{ \rho, J }(G_j) - K_0 \| G_j \|_Y^2\} \big] + \sqrt{T a_J} +  3 K_0 \leq  K_1 +  \sqrt{T a_J} +  3 K_0 = K_3,
\end{align*}
leading to \eqref{boundsupF}. In {the} last line we used \eqref{supkcount}. Finally, in order to obtain \eqref{eqsob}, we need to prove that $\mathbb{Q}(E_1)=1$, where $E_1$ is the event in which there exists $g_s^{\rho} \in L^2(J)$ such that
\begin{align} \label{defsob}
\forall \phi \in C_c^{\infty}(J), \quad \int_{J} \partial_1 \phi(u)  \rho_s(u) \,  du =-  \int_{J} \phi(u) g_s^{\rho}(u) \, du,
\end{align}
for a.e. $s$ in $[0,T]$. From \eqref{boundsupF}, we get
\begin{align*}
\mathbb{E}_{\mathbb{Q}} \big[ \sup_{G \in N, \| G \|_Y \leq 1} \{ l_{ \rho, J }(G) \} \big] \leq 3K_0 + K_3 < \infty.
\end{align*}
In particular, $\mathbb{Q}(E_2)=1$, where $E_2$ is the event where $l_{\rho,J}$ is continuous. By density we can extend it from $N$ to the Hilbert space $Y$. Then we can apply Riesz's Representation Theorem to conclude that there exists $\xi^{\rho} \in Y$ such that
\begin{equation} \label{riesz}
l_{ \rho,J }(G)= \int_0^T \int_{J} \nabla G_t(u)  \rho_t(u)\,  du \, dt =  \int_0^T \int_{J} G_t(u) \xi_t^{\rho}(u) \, du \,  dt, 
\end{equation}
for every $G \in Y$. Now given $\phi \in C_c^{\infty}(J)$ and $t \in [0,T]$, define $G_{t,\phi}: [0,T] \times \mathbb{R} \rightarrow \mathbb{R}$ in $L^2 \big( [0,T] \times J \big)$ by 
$G_{t,\phi}(s,u) =\phi(u)\mathbbm{1}_{ (s,u) \in [0,t] \times J}$. Therefore from \eqref{riesz}, for every $t \in [0,T]$ it holds
\begin{align*}
 \int_0^t \Big\{ \int_{J} \nabla \phi(u)  \rho_s(u) \,  du +  \int_{J} \phi(u) \big(-  \xi^{\rho}_t(u) \big) \, du \Big\} \, ds=0.
\end{align*}
As a consequence, for a.e. $s$ in $[0,T]$ we have 
\begin{align*}
\int_{J} \nabla \phi(u)  \rho_s(u) \,  du =-  \int_{J} \phi(u) \big(-  \xi^{\rho}_s(u) \big) \, du.
\end{align*}
Note that  $\xi^{\rho} \in Y$, hence $- \xi^{\rho}(s, \cdot) \in L^2(J)$ for a.e. $s$ in $[0,T]$. Choosing $g^{\rho}_s:=-\xi^{\rho}_s$ for every $s \in [0,T]$, we get \eqref{defsob}, therefore $E_2 \subset E_1$ and $\mathbb{Q}(E_1) \geq \mathbb{Q}(E_2)=1$ as desired.
\end{proof}
In order to be able to apply Lemma \ref{lemesten}, we need the following result.
\begin{prop} \label{estenergsembarlenfor}
\textit{
Assume \eqref{1entbound}. Then, there exist positive constants $K_0$ and $K_1$ satisfying \eqref{supkfin}.
}
\end{prop}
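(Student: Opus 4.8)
The plan is to prove \eqref{supkfin} by pushing the expectation under $\mathbb{Q}$ down to the particle system and then applying the entropy inequality together with the Dirichlet-form estimate of Proposition \ref{bound}. Since $\mathbb{Q}$ is a limit point of $(\mathbb{Q}_n)_{n\geq1}$ and, for $G\in N$, the map $\pi_\cdot\mapsto l_{\rho,J}(G)=\int_0^T\langle\pi_t,\nabla G_t\rangle\,dt$ is a bounded continuous functional on $\mcb D([0,T],\mcb M^+)$ (recall $\nabla G_t$ is continuous in time and compactly supported inside the open interval $J$), the functional $\Phi:=\max_{1\le i\le j}\{l_{\rho,J}(G_i)-K_0\|G_i\|_Y^2\}$ is again bounded and continuous. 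Hence, along the subsequence realizing $\mathbb{Q}$, Portmanteau's theorem gives
\begin{align*}
\mathbb{E}_{\mathbb{Q}}\big[\Phi\big]\le\varlimsup_{n\to\infty}\mathbb{E}_{\mu_n}\Big[\max_{1\le i\le j}\{\ell_n(G_i)-K_0\|G_i\|_Y^2\}\Big],\qquad \ell_n(G):=\int_0^T\frac1n\sum_x\nabla G_t(\tfrac xn)\,\eta_t^n(x)\,dt .
\end{align*}
First I would replace $\nabla G_t(\tfrac xn)$ by the discrete gradient $n[G_t(\tfrac{x+1}{n})-G_t(\tfrac xn)]$; a second-order Taylor expansion bounds the resulting error by $C_i/n$ (using $0\le\eta\le1$), so it disappears after $\varlimsup_n$. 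A summation by parts then rewrites the main part of $\ell_n(G)$ as $-\int_0^T\sum_xG_t(\tfrac xn)[\eta_t^n(x)-\eta_t^n(x-1)]\,dt$, involving only nearest-neighbor differences; crucially, since $J=(0,a_J)\subset(0,\infty)$, all the bonds $\{x-1,x\}$ appearing here belong to $\mcb F$, i.e.\ they are \emph{fast} bonds.

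Next I would apply Lemma \ref{entjen} (with $\tilde Z_{n,i}$ the discrete main term minus $K_0\|G_i\|_Y^2$, and, say, $B=1$) followed by the Feynman--Kac inequality (Lemma A.1 of \cite{baldasso}). Since $K_0\|G_i\|_Y^2$ is deterministic it factors out of the logarithm, and the term $\tfrac{j}{Bn^d}$ vanishes as $n\to\infty$ for fixed $j$. This reduces the bound for each index $i$ to the variational expression
\begin{align*}
-K_0\|G_i\|_Y^2+\int_0^T\sup_{f}\Big\{-\sum_xG_{i,s}(\tfrac xn)\,\langle\eta(x)-\eta(x-1),f\rangle_{\nu_h^n}+\frac{\Theta(n)}{Bn^d}\langle \mcb L_n\sqrt f,\sqrt f\rangle_{\nu_h^n}\Big\}\,ds,
\end{align*}
the supremum being over densities $f$ with respect to $\nu_h^n$. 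By Proposition \ref{bound} the Dirichlet-form term is at most $-\tfrac{\Theta(n)}{4Bn^d}D_n(\sqrt f,\nu_h^n)+\tfrac{M_h}{B}$, and I would retain only the non-negative contribution $\tfrac12\sum_x p(\hat e_d)I_{(\hat x_\star,x-1),(\hat x_\star,x)}(\sqrt f,\nu_h^n)$ of the fast nearest-neighbor bonds in $D_n$. Bounding each $|\langle\eta(x)-\eta(x-1),f\rangle_{\nu_h^n}|$ through \eqref{young} with the free parameter $A_x:=8|G_{i,s}(\tfrac xn)|E_hBn^d/(\Theta(n)p(\hat e_d))$ makes the $I_{x-1,x}/A_x$ contributions exactly absorbed by this Dirichlet form.

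What remains is $\sum_x|G_{i,s}(\tfrac xn)|E_h(A_x+\tfrac1n)+\tfrac{M_h}{B}$. The term $\sum_x|G_{i,s}|E_hA_x=\tfrac{8E_h^2B n^d}{\Theta(n)p(\hat e_d)}\sum_xG_{i,s}(\tfrac xn)^2$, and here the scaling is decisive: with $d=1$ and $\Theta(n)=n^2$ (as $\gamma>2$, see \eqref{timescale}), the Riemann sum $\sum_xG_{i,s}(\tfrac xn)^2\approx n\|G_{i,s}\|_{2,J}^2$ turns this into $\tfrac{8E_h^2B}{p(\hat e_d)}\|G_{i,s}\|_{2,J}^2$, a constant independent of $n$; integrating in $s$ yields precisely $\tfrac{8E_h^2B}{p(\hat e_d)}\|G_i\|_Y^2$. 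The lower-order piece $\sum_x|G_{i,s}|E_h/n\approx E_h\int_J|G_{i,s}|$ is controlled by $\tfrac12\|G_{i,s}\|_{2,J}^2+\tfrac12E_h^2a_J$ via Cauchy--Schwarz. Choosing $K_0:=\tfrac{8E_h^2B}{p(\hat e_d)}+\tfrac12$, the term $-K_0\|G_i\|_Y^2$ cancels these quadratic contributions and leaves only $n$-independent constants; passing to $\varlimsup_n$ yields \eqref{supkfin} with $K_1:=\tfrac{C_h+M_hT}{B}+\tfrac12E_h^2a_JT$ (recall \eqref{defCh}), both $K_0$ and $K_1$ being independent of $j$ and of $G_1,\dots,G_j$.

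The main obstacle is the simultaneous balancing in the third step: the multiplier $A_x$ must be large enough that the Young-inequality Dirichlet term is dominated by the one in Proposition \ref{bound}, yet small enough that the leftover quadratic term stays of order $\|G_i\|_Y^2$ with a constant that does not blow up with $n$. This balance closes only because the time scale $\Theta(n)=n^2$ matches the dimension $d=1$; it is also why one must use the \emph{time-integrated} form of the Feynman--Kac bound, so that the quadratic term integrates to $\|G_i\|_Y^2=\int_0^T\|G_{i,s}\|_{2,J}^2\,ds$ rather than to a supremum in time. A secondary point to check carefully is the continuity and boundedness of $\Phi$ needed for the Portmanteau step, together with the harmlessness of the discrete-gradient replacement error uniformly over the finite family $G_1,\dots,G_j$.
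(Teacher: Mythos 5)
Your proposal is correct and follows essentially the same route as the paper's proof: Portmanteau to pass from $\mathbb{Q}$ to $\mathbb{Q}_n$, Lemma \ref{entjen} with $B=1$ plus Feynman--Kac, a Taylor/summation-by-parts reduction to nearest-neighbor (fast-bond) differences, the estimate \eqref{young} with the tuned parameter $A_x\sim |G(t,\tfrac xn)|/n$ so that the $I_{x,x+1}$ contributions are absorbed by the Dirichlet form from Proposition \ref{bound}, and the observation that $\Theta(n)=n^2$ with $d=1$ turns the leftover into a constant times $\|G\|_Y^2$, which the choice of $K_0$ cancels. The only differences are cosmetic (you sum by parts before invoking Lemma \ref{entjen} rather than inside the variational formula, and your $K_0$ cancels the quadratic terms exactly where the paper's $c$ strictly dominates them), so no further changes are needed.
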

\begin{proof} 
In order to prove this result, we apply the strategy used to prove Proposition 6.1 in \cite{byrondif}. Fix $j \geq 1, G_1, \ldots, G_j \in N$. For every $c>0$, define the random application $\Phi_{j,c}: \mcb D ( [0,T], \mcb M^+) \rightarrow \mathbb{R}$ by
\begin{align*}
\Phi_{j,c}(\pi):=& \max_{1 \leq 1 \leq j} \Big\{ - c \| G_i \|_Y^2 +  \int_0^T \int_{J} \nabla G_i(t,u) \pi_t(du) \, dt  \Big\} \\
=& \max_{1 \leq 1 \leq j} \Big\{ - c \| G_i \|_Y^2 +  \int_0^T \int_{J} \nabla G_i(t,u) \rho(t,u) \, du \, dt  \Big\}  = \max_{1 \leq 1 \leq j} \{ - c \| G_i \|_Y^2 + \ell_{\rho, J }(G_i)  \}.
\end{align*}  
Let $(\mathbb{Q}_{n_k})_{k \geq 1}$ be a subsequence converging weakly to the limit point $\mathbb{Q}$. Since $\Phi_{j,c}$ is a continuous and bounded linear functional in the Skorohod topology of $\mcb D ( [0,T], \mcb M^{+} )$ for every $c>0$, it holds
\begin{align*}
\mathbb{E}_{\mathbb{Q}} \big[ \max_{1 \leq 1 \leq j} \{ - c \| G_i \|_Y^2 + \ell_{\rho, J }(G_i)  \} \big] =& \mathbb{E}_{\mathbb{Q}} [ \Phi_{j,c}(\pi) ] = \lim_{k \rightarrow \infty} \mathbb{E}_{\mathbb{Q}_{n_k}} [ \Phi_{j,c}(\pi) ] \leq \varlimsup_{n \rightarrow \infty} \mathbb{E}_{\mathbb{Q}_{n}} [ \Phi_{j,c}(\pi) ].
\end{align*}
Since $(\mathbb{Q}_n)_{n \geq 1 }$ is induced by $(\mathbb{P}_{\mu_n})_{n  \geq 1}$, which is induced by $(\mu_n)_{n  \geq 1}$, the last expectation above can be rewritten as $\mathbb{E}_{\mu_n}[Z_{j,n}]$, where $Z_{j,n}:= \max_{1 \leq i \leq j} \{ \tilde{Z}_{i,n} \}$ and for $i \in \{1, \ldots, j\}$, $\tilde{Z}_{i,n}: \mcb D ( [0,T], \mcb M^+) \rightarrow \mathbb{R}$ is given by
\begin{align*}
 \tilde{Z}_{i,n}:= - c \| G_i \|_Y^2 +  \int_0^T \frac{1}{n} \sum_{x=2}^{a_J n-1}  \nabla G_t ( \tfrac{x}{n} ) \eta_t^n(x) \, dt. 
\end{align*}
Choosing $B=1$ in Lemma \ref{entjen}, we have that $ \varlimsup_{n \rightarrow \infty}\mathbb{E}_{\mu_n}[Z_{j,n}]$ is bounded from above by
\begin{align*}
& \varlimsup_{n \rightarrow \infty} \Big( C_h + \frac{j}{n} + \max_{1 \leq i \leq j} \Big\{ \frac{1}{n} \log   \Big(  \mathbb{E}_{\nu_h^n} \Big[   \exp \Big\{ \int_0^T \sum_{x=2}^{a_J n-1}  \nabla G_i ( t,\tfrac{x}{n} ) \eta_t^n(x) \, dt - c n \|G_i \|_{Y}^2 \Big\}  \Big]  \Big) \Big\} \Big) \\
=& C_h + \varlimsup_{n \rightarrow \infty} \Big(  \max_{1 \leq i \leq j} \Big\{ \frac{1}{n} \log   \Big(  \mathbb{E}_{\nu_h^n} \Big[   \exp \Big\{ \int_0^T \sum_{x=2}^{a_J n-1}  \nabla G_i ( t,\tfrac{x}{n} ) \eta_t^n(x) \, dt - c n \|G_i \|_{Y}^2 \Big\}  \Big]  \Big) \Big\} \Big).
\end{align*} 
Therefore we are done if we can prove that there exist $c>0$ and $K_1>0$ such that
\begin{align} \label{jensenest}
\forall G \in N, \quad  \varlimsup_{n \rightarrow \infty} \Big\{   \frac{1}{n} \log   \Big(  \mathbb{E}_{\nu_h^n} \Big[   \exp \Big\{ \int_0^T \sum_{x=2}^{a_J n-1}  \nabla G ( t,\tfrac{x}{n} ) \eta_t^n(x) \, dt - c n \|G \|_{Y}^2 \Big\}  \Big]  \Big) \Big\} \leq K_1.
\end{align}
From the Feynman-Kac's formula, the left-hand side in {the} last display is bounded from above by
\begin{align} \label{supestenerg}
   \int_0^T  \sup_f \Big\{   \sum_{x=2}^{a_J n-1} \frac{1}{n} \nabla G_t \left( \tfrac{x}{n} \right)  \langle \eta(x), f\rangle_{\nu_h^n} + \frac{\Theta(n)}{n}  \langle\mcb {L}_{n} \sqrt{f} , \sqrt{f} \rangle_{\nu_h^n}  - c  \|G \|_{Y}^2 \Big\} \, dt,
\end{align}
where the supremum is carried over all the densities $f$ with respect to $\nu_{h}^n$. Since $G \in N$, by a first order Taylor expansion on $G$, we get (neglecting terms of lower order with respect to $n$)
\begin{align*}
 \frac{1}{n} \nabla G_t \left( \tfrac{x}{n} \right)  =  G_t \left( \tfrac{x}{n} \right) - G_t \left( \tfrac{x-1}{n} \right), \quad x \in \{2, \ldots,  a_J n -1\}.
\end{align*}
Since $G$ has compact support, we can assume without loss of generality that $\nabla G_t ( \tfrac{1}{n} ) =0$ for every $t \in [0,T]$. Then the sum inside the supremum in \eqref{supestenerg} is bounded from above by
\begin{align} \label{sumest}
  \Big|  \sum_{x =1}^{a_J n -1} G (t, \tfrac{x}{n} ) \int[ \eta(x) - \eta(x+1) ]  f(\eta) \, d \nu_h^n \Big| \leq  \sum_{x =1}^{a_J n -1} |G (t, \tfrac{x}{n} )| \Big| \int[ \eta(x) - \eta(x+1) ]  f(\eta) \, d \nu_h^n \Big|,
\end{align}
plus terms of lower order with respect to $n$ that vanish as $n \rightarrow \infty$. Recalling the definition of $E_h$ in \eqref{young} and choosing
\begin{align*}
 A_{x,x+1}:= \frac{8 E_h|G \left(t, \tfrac{x}{n} \right)|}{n p(1)},  \quad x \in \{ 1, \ldots, a_J n - 1 \}, 
\end{align*}
we get from \eqref{young} that the right-hand side of \eqref{sumest} is bounded from above by
\begin{align} \label{sumest2}
 \sum_{x =1}^{a_J n -1} \frac{np(1) I_{x,x+1}   (\sqrt{f}, \nu_h^n )}{8} + \frac{8 E_h^2}{p(1)} \frac{1}{n} \sum_{x=1}^{a_J n -1} [G \left(t, \tfrac{x}{n} \right)]^2 + \frac{E_h}{n} \sum_{x=1}^{a_J n -1} |G \left(t, \tfrac{x}{n} \right)|.
\end{align}
Above we used the fact that $p(1)>0$. Next, observe that all the bonds $\{x,x+1\}$ in \eqref{sumest} are \textit{fast} bonds. Due to \eqref{boundlip} and $\gamma >2$, it holds
\begin{align*}
&\frac{\Theta(n)}{n}  \langle\mcb {L}_{n} \sqrt{f} , \sqrt{f} \rangle_{\nu_h^n} \leq -\frac{n}{4}  D^{\mcb F}  (\sqrt{f}, \nu_h^n ) + M_h \leq  -   \sum_{x =1}^{a_J n -1} \frac{np(1) I_{x,x+1}   (\sqrt{f}, \nu_h^n )}{8}  + M_h,
\end{align*}
where $M_h$ is given in \eqref{boundlip}. Combining {the} last display with \eqref{sumest2}, we conclude that the expression inside the supremum in \eqref{supestenerg} is bounded from above by
\begin{align*} 
\frac{8 E_h^2}{p(1)} \frac{1}{n} \sum_{x=1}^{a_J n -1} [G \left(t, \tfrac{x}{n} \right)]^2 + M_h - c  \|G \|_{Y}^2 + E_h \sum_{x=1}^{a_J n -1} \frac{|G \left(t, \tfrac{x}{n} \right)|}{\sqrt{n}} \frac{1}{\sqrt{n}}.
\end{align*}
Since $\frac{|G \left(t, \tfrac{x}{n} \right)|}{\sqrt{n}} \frac{1}{\sqrt{n}} \leq \frac{1}{n} + \frac{[G \left(t, \tfrac{x}{n} \right)]^2}{n}$, {the} last sum is bounded from above by $a_J + n^{-1} \sum_{j=1}^{a_J n -1} [G \left(t, \tfrac{x}{n} \right)]^2$. Then taking the $\varlimsup$ when $n \rightarrow \infty$, the display in \eqref{supestenerg} is bounded from above by
\begin{align*}
 T (M_h + a_J E_h  ) + \big( E_h \big( 8 E_h [p(1)]^{-1} +1 \big) - c \big) \|G \|_{Y}^2.
\end{align*}
Therefore, choosing $c=2 E_h \big( 8 E_h [p(1)]^{-1} +1 \big) >0$ and taking the $\limsup$ when $n \rightarrow \infty$ we get that the left-hand side of \eqref{jensenest} is bounded from above by $K_1:=T (M_h + a_J E_h  )$, ending the proof.
\end{proof}

	\appendix

\section{Uniqueness of weak solutions} \label{secuniq}

In this section we prove Lemma \ref{lemuniq}. We recall that weak solutions of \eqref{ehdnlee} are also weak solutions of \eqref{ehdn}, therefore Lemma \ref{lemuniq} is a direct consequence of Propositions \ref{propuniqdif} and \ref{propuniqneu}.
\begin{prop} \label{propuniqdif}
\textit{
There exists at most one weak solution of \eqref{ehd}.}
\end{prop}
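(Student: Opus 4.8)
The plan is to prove uniqueness by a duality argument against the (forward) heat semigroup. Suppose $\rho^{(1)}$ and $\rho^{(2)}$ are two weak solutions of \eqref{ehd} with the same initial profile $g$, and set $w:=\rho^{(1)}-\rho^{(2)}$. By Remark \ref{remwelldef1} both solutions take values in $[0,1]$, so $\|w_s\|_{\infty}\le 1$ for every $s$; moreover, subtracting the identities $F_{\textrm{Dif}}(t,\rho^{(i)},G,g,\kappa)=0$ and recalling that $\mathbb{L}_{\Delta} G=\Delta G$ for $G\in\mcb S_{\textrm{Dif}}$, we obtain
\begin{equation}\label{planweakdiff}
\int_{\mathbb R^d} w_t(\hat u)\,G_t(\hat u)\,d\hat u=\int_0^t\!\!\int_{\mathbb R^d} w_s(\hat u)\,[\kappa\Delta+\partial_s]G_s(\hat u)\,d\hat u\,ds
\end{equation}
for every $t\in[0,T]$ and every $G\in\mcb S_{\textrm{Dif}}$. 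Fixing $t\in[0,T]$ and $\psi\in C_c^{\infty}(\mathbb R^d)$, it suffices to show $\int_{\mathbb R^d}w_t\psi\,d\hat u=0$; since $\psi$ is arbitrary this forces $w_t=0$ almost everywhere, and since $t$ is arbitrary it yields $\rho^{(1)}=\rho^{(2)}$.

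The idea is to feed into \eqref{planweakdiff} a test function that annihilates the operator $\kappa\Delta+\partial_s$ on $[0,t]$ and equals $\psi$ at time $t$. Writing $\Gamma_{\tau}(\hat u):=(4\pi\kappa\tau)^{-d/2}\exp(-|\hat u|^2/(4\kappa\tau))$ for the heat kernel and $V_s:=\Gamma_{\kappa(t-s)}*\psi$ for $s\in[0,t]$, the function $V$ is smooth, satisfies $V_t=\psi$ and $[\kappa\Delta+\partial_s]V_s=0$. The obstruction is that $V_s$ is \emph{not} compactly supported, so $V\notin\mcb S_{\textrm{Dif}}$. I would remove this by truncation: let $\theta_R\in C_c^{\infty}(\mathbb R^d)$ satisfy $\theta_R\equiv 1$ on $\{|\hat u|\le R\}$, $\mathrm{supp}\,\theta_R\subset\{|\hat u|\le 2R\}$, $\|\nabla\theta_R\|_{\infty}\lesssim R^{-1}$ and $\|\Delta\theta_R\|_{\infty}\lesssim R^{-2}$, and set $G^R_s:=\theta_R\,V_s$ on $[0,t]$, extended smoothly and with compact temporal support beyond $t$ (this extension is irrelevant, since evaluating \eqref{planweakdiff} at time $t$ only uses the values of $G^R$ on $[0,t]$). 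Then $G^R\in\mcb S_{\textrm{Dif}}$ and, using $[\kappa\Delta+\partial_s]V_s=0$,
\begin{equation}\label{planerror}
[\kappa\Delta+\partial_s]G^R_s=\kappa\big(2\nabla\theta_R\cdot\nabla V_s+V_s\,\Delta\theta_R\big).
\end{equation}

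Plugging $G^R$ into \eqref{planweakdiff} at time $t$ and using \eqref{planerror} gives $\int w_t\,\theta_R\psi=\kappa\int_0^t\!\int w_s(2\nabla\theta_R\cdot\nabla V_s+V_s\Delta\theta_R)$. The left side tends to $\int w_t\psi$ by dominated convergence. For the right side, I would use that $\nabla\theta_R$ and $\Delta\theta_R$ are supported in the annulus $\{R\le|\hat u|\le 2R\}$, where, since $\psi$ has compact support, both $V_s$ and $\nabla V_s=\Gamma_{\kappa(t-s)}*\nabla\psi$ enjoy Gaussian decay $\lesssim (t-s)^{-c}\exp(-R^2/(C(t-s)))$ uniformly in $s\in[0,t]$. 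Combining this with $\|w_s\|_{\infty}\le 1$, the factors $R^{-1},R^{-2}$, and the annulus volume $\lesssim R^d$, the $s$-integral is bounded by $R^{d-1}\int_0^t (t-s)^{-c}\exp(-R^2/(C(t-s)))\,ds$, and the exponential decay in $R^2$ dominates every polynomial factor, so the error vanishes as $R\to\infty$. Hence $\int w_t\psi=0$, which completes the argument.

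The main obstacle is precisely that the backward heat flow $V$ is not compactly supported, so it is not an admissible test function; the crucial ingredients that make the truncation harmless are the \emph{boundedness} $\|w\|_{\infty}\le 1$ (guaranteed by Remark \ref{remwelldef1}, and emphasised before Lemma \ref{lemuniq}) together with the Gaussian decay of $V$ and $\nabla V$ away from the support of $\psi$. I expect the only genuinely technical point to be the uniform-in-$s$ control of these Gaussian tails on the annulus as $s\uparrow t$, where the kernel normalisation $(t-s)^{-d/2}$ blows up but is overwhelmed by the decay $\exp(-R^2/(C(t-s)))$.
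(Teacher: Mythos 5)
Your argument is correct, but it follows a genuinely different route from the paper. You prove uniqueness by duality: you test the difference $w=\rho^{(1)}-\rho^{(2)}$ against the truncated backward heat flow $\theta_R\,(\Gamma_{t-s}*\psi)$ and show, using $\|w\|_\infty\le 1$ together with the Gaussian decay of the kernel and its gradient on the annulus $\{R\le|\hat u|\le 2R\}$, that the commutator error produced by the cutoff vanishes as $R\to\infty$; this yields $\int w_t\psi\,d\hat u=0$ for every $\psi\in C_c^\infty(\mathbb{R}^d)$ and every $t$, directly. The paper instead regularizes the \emph{solution} rather than the test function: after reducing to $g\equiv 0$ by linearity, it extends $\rho$ in time, mollifies in time and space (following the scheme of \cite{jara2009hydrodynamic}), checks that the mollified function is a bounded classical solution of the heat equation with zero initial datum, and then invokes the classical uniqueness theorem for bounded solutions of the Cauchy problem from \cite{evans1998partial}, concluding by contradiction. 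Both arguments hinge on the boundedness $0\le\rho\le 1$; yours is self-contained (it in effect reproves the classical uniqueness statement in the weak setting, with the only technical point being the uniform-in-$s$ control of $(t-s)^{-d/2}e^{-cR^2/(t-s)}$, which you correctly identify and which is indeed harmless), while the paper's buys brevity at that point by outsourcing the analytic core to the textbook theorem, at the price of the mollification bookkeeping. Two small remarks: your kernel normalization double-counts $\kappa$ (with $\Gamma_\tau$ as you define it, already containing $\kappa$, you should convolve with $\Gamma_{t-s}$, not $\Gamma_{\kappa(t-s)}$), and you should record explicitly that $V$ is $C^{1,2}$ up to $s=t$ (immediate since $\psi\in C_c^\infty$, so $\partial_sV_s=-\kappa\,\Gamma_{t-s}*\Delta\psi$ converges uniformly), so that the truncated and temporally extended function is indeed an admissible element of $\mcb S_{\textrm{Dif}}$; both are trivial fixes.
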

\begin{proof}
{Fix $\kappa >0$.} By linearity, it is enough to prove the uniqueness of \eqref{ehd} when $g \equiv 0$. Assume that $\rho:[0,T] \times \mathbb{R}^d \rightarrow [0,1]$ is such that for every $t \in [0,T]$ and for every $G \in \mcb S_{\textrm{Dif}}$, it holds
\begin{align*}
 F_{\textrm{Dif}}(t,\rho,G,0,\kappa)=  \int_{\mathbb{R}^d} \rho(t, \hat{u}) G(t, \hat{u} ) \, d \hat{u} -  \int_0^t \int_{\mathbb{R}^d} \rho(s, \hat{u}) [ \kappa \Delta + \partial_s ] G(s, \hat{u}) \, d \hat{u} \, ds=0.
\end{align*}
{In the last line, we used the fact that $\mathbb{L}_{\Delta} G= \Delta G$ for any $G \in \mcb S_{\textrm{Dif}}$.} Next, we will make use of {a regularization argument which is analogous to the one in} Section 8.1 of \cite{jara2009hydrodynamic}. More exactly, {for every mensurable and bounded $H:(0, \infty) \times \mathbb{R}^d \rightarrow \mathbb{R}$,} define the following extension {$\rho^H: \mathbb{R} \times \mathbb{R}^d \rightarrow \mathbb R$} of $\rho$:
\begin{align*}
\rho^H(s, \hat{u})=
\begin{cases}
0, \quad &(s, \hat{u}) \in (-\infty,0) \times \mathbb{R}^d; \\
\rho(s,u), \quad  &(s, \hat{u}) \in [0,T] \times \mathbb{R}^d; \\
H(s-T, \hat{u}), \quad  &(s, \hat{u}) \in (T, \infty) \times \mathbb{R}^d.
\end{cases}
\end{align*}
In particular, $\rho^H$ is bounded. Moreover, for every $t \in [0, T]$ and every $G \in \mcb S_{\textrm{Dif}}$, it holds
\begin{align} \label{eqintH}
  \int_{\mathbb{R}^d} \rho^H(t, \hat{u} ) G(t, \hat{u} ) \, d \hat{u} -  \int_0^t \int_{\mathbb{R}^d} \rho^H(s,\hat{u}) [ \kappa \Delta + \partial_s ] G(s, \hat{u}) \, d \hat{u} \, ds=  F_{\textrm{Dif}}(t,\rho,G,0,\kappa)= 0.
\end{align} 
Next, for every $\theta >0$, define $\rho^H_{\theta}: \mathbb{R} \times \mathbb{R}^d \rightarrow \mathbb{R}$ by $\rho^H_{\theta}(s,\hat{u}) := \rho^H(s-\theta,\hat{u}), (s,\hat{u}) \in \mathbb{R} \times \mathbb{R}^d$. Hence, $\rho^H_{\theta}$ is a temporal translation of $\rho^H$, $\rho^H_{\theta}$ is bounded and $\rho^H_{\theta} \equiv 0$ on $[0, t] \times \mathbb{R}^d$ for $0 \leq t < \theta$. Then
\begin{equation} \label{eqintthet0}
\forall t \in [0, \theta), \;  \forall G \in \mcb S_{\textrm{Dif}}, \quad \int_{\mathbb{R}^d} \rho^H_{\theta}(t, \hat{u}) G(t, \hat{u} ) \, d \hat{u} -  \int_0^t \int_{\mathbb{R}^d} \rho^H_{\theta}(s, \hat{u}) [ \kappa \Delta + \partial_s ] G(s, \hat{u}) \, d \hat{u} \, ds =0.
\end{equation}
On the other hand, if $\theta \leq t \leq T$, for any $G \in \mcb S_{\textrm{Dif}}$, it holds
\begin{align}
& \int_{\mathbb{R}^d} \rho^H_{\theta}(t, \hat{u}) G(t, \hat{u} ) \, d \hat{u} -  \int_0^t \int_{\mathbb{R}^d} \rho^H_{\theta}(s, \hat{u}) [ \kappa \Delta + \partial_s ] G(s, \hat{u}) \, d \hat{u} \, ds \nonumber \\
=& \int_{\mathbb{R}^d} \rho^H(t-\theta, \hat{u}) G(t, \hat{u} ) \, d \hat{u} -  \int_{\theta}^t \int_{\mathbb{R}^d} \rho^H(s-\theta, \hat{u}) [ \kappa \Delta + \partial_s ] G(s, \hat{u}) \, d \hat{u} \, ds \nonumber \\
=& \int_{\mathbb{R}^d} \rho^H(t-\theta, \hat{u}) G(t, \hat{u} ) \, d \hat{u} -  \int_{0}^{t-\theta} \int_{\mathbb{R}^d} \rho^H(r, \hat{u}) [ \kappa \Delta + \partial_r ] G(r+\theta, \hat{u}) \, d \hat{u} \, dr. \label{eqintthet1}
\end{align}
Keeping \eqref{eqintH} and \eqref{eqintthet1} in mind, we observe that for every $G \in \mcb S_{\textrm{Dif}}$, there exists $G_{\theta} \in \mcb S_{\textrm{Dif}}$ such that $G_{\theta}(r, \hat{u})=G(r+\theta, \hat{u})$, for any $(r, \hat{u}) \in [0, \infty) \times \mathbb{R}^d$, since $\theta >0$. Then, the display in \eqref{eqintthet1} can be rewritten as
\begin{align*}
  \int_{\mathbb{R}^d} \rho^H(t - \theta, \hat{u} ) G_{\theta}(t - \theta, \hat{u} ) \, d \hat{u} -  \int_0^{t-\theta} \int_{\mathbb{R}^d} \rho^H(r,\hat{u}) [ \kappa \Delta + \partial_r ] G_{\theta}(r, \hat{u}) \, d \hat{u} \, ds=  F_{\textrm{Dif}}(t-\theta,\rho,G_{\theta},0,\kappa)= 0.
\end{align*}
The equalities in the last line hold due to \eqref{eqintH}. Therefore, combining the last display with \eqref{eqintthet1}, we conclude that for any $\theta \in (0, T)$, we have 
\begin{equation} \label{eqintthet2}
\forall t \in [0, T], \; \forall G \in \mcb S_{\textrm{Dif}}, \quad \int_{\mathbb{R}^d} \rho^H_{\theta}(t, \hat{u}) G(t, \hat{u} ) \, d \hat{u} -  \int_0^t \int_{\mathbb{R}^d} \rho^H_{\theta}(s, \hat{u}) [ \kappa \Delta + \partial_s ] G(s, \hat{u}) \, d \hat{u} \, ds =0.
\end{equation}
Next, let $\phi \in C_c^1(\mathbb{R})$ be such that its compact support is $K \subset (0, T)$ and define $\rho^H_{\phi}: \mathbb{R} \times \mathbb{R}^d \rightarrow \mathbb{R}$ by 
\begin{align*}
\rho^H_{\phi}(s,\hat{u}) := \int_{\mathbb{R}} \rho^H (s - \theta, \hat{u})  \phi(\theta) \, d \theta = \int_{0}^T \rho^H (s - \theta, \hat{u})  \phi(\theta) \, d \theta, \quad  (s,\hat{u}) \in \mathbb{R} \times \mathbb{R}^d.
\end{align*}
In particular, $\rho^H_{\phi}$ is bounded, $\rho^H_{\phi}(0, \cdot) \equiv 0$ on $\mathbb{R}^d$ and for any $t \in [0, T]$ and $G \in \mcb S_{\textrm{Dif}}$, it holds
\begin{align*}
& \int_{\mathbb{R}^d} \rho^H_{\phi}(t, \hat{u}) G(t, \hat{u} ) \, d \hat{u} -  \int_0^t \int_{\mathbb{R}^d} \rho^H_{\phi}(s, \hat{u}) [ \kappa \Delta + \partial_s ] G(s, \hat{u}) \, d \hat{u} \, ds \\
= & \int_{\mathbb{R}^d} \Big\{\int_{0}^T \rho^H (t - \theta, \hat{u})  \phi(\theta) \, d \theta \Big\} G(t, \hat{u} ) \, d \hat{u} -  \int_0^t \int_{\mathbb{R}^d} \Big\{\int_{0}^T \rho^H (s - \theta, \hat{u})  \phi(\theta) \, d \theta \Big\} [ \kappa \Delta + \partial_s ] G(s, \hat{u}) \, d \hat{u} \, ds \\
=&\int_{0}^T \phi(\theta) \Big\{ \int_{\mathbb{R}^d} \rho^H_{\theta}(t, \hat{u}) G(t, \hat{u} ) \, d \hat{u} -  \int_0^t \int_{\mathbb{R}^d} \rho^H_{\theta}(s, \hat{u}) [ \kappa \Delta + \partial_s ] G(s, \hat{u}) \, d \hat{u} \, ds \Big\} d \theta  =0.
\end{align*}
In the last line, we applied Fubini's Theorem and \eqref{eqintthet2}. In particular, for any $v \in \mathbb{R}^d$ fixed and any $t \in [0, T]$, we have 
\begin{equation} \label{spattrans}
\begin{split}
\forall G \in \mcb S_{\textrm{Dif}},  \quad & \int_{\mathbb{R}^d} \rho^H_{\phi}(t, \hat{u}-\hat{v}) G(t, \hat{u} ) \, d \hat{u} -  \int_0^t \int_{\mathbb{R}^d} \rho^H_{\phi}(s, \hat{u}-\hat{v}) [ \kappa \Delta + \partial_s ] G(s, \hat{u}) \, d \hat{u} \, ds \\
=& \int_{\mathbb{R}^d} \rho^H_{\phi}(t, \hat{w}) G_{\hat{v}}(t, \hat{w})  \, d \hat{w} -  \int_0^t \int_{\mathbb{R}^d} \rho^H_{\phi}(s, \hat{w}) [ \kappa \Delta + \partial_s ] G_{\hat{v}}(s, \hat{w}) \, d \hat{w} \, ds =0.
\end{split}
\end{equation}
In the last display, $G_{\hat{v}} \in \mcb S_{\textrm{Dif}}$ is given by $G_{\hat{v}}(s, \hat{u}):=G(s, \hat{u}+\hat{v}),  (s,\hat{u}) \in [0, \infty) \times \mathbb{R}^d$, for any $\hat{v} \in \mathbb{R}^d$ and any $G \in \mcb S_{\textrm{Dif}}$. Next, for every $\psi \in C_c^2(\mathbb{R}^d)$, define
\begin{align*}
 \rho^H_{\phi,\psi}(s,\hat{u}) := \int_{\mathbb{R}^d} \rho^H_{\phi} (s , \hat{u} - \hat{v})  \psi(\hat{v}) \, d \hat{v} \quad  (s,\hat{u}) \in \mathbb{R} \times \mathbb{R}^d.
\end{align*}
In particular, $\rho^H_{\phi, \psi}$ is bounded, $\rho^H_{\phi, \psi}(0, \cdot) \equiv 0$ on $\mathbb{R}^d$ and for any $t \in [0, T]$ and $G \in \mcb S_{\textrm{Dif}}$, it holds
\begin{align*}
& \int_{\mathbb{R}^d} \rho^H_{\phi, \psi}(t, \hat{u}) G(t, \hat{u} ) \, d \hat{u} -  \int_0^t \int_{\mathbb{R}^d} \rho^H_{\phi, \psi}(s, \hat{u}) [ \kappa \Delta + \partial_s ] G(s, \hat{u}) \, d \hat{u} \, ds \\
= & \int_{\mathbb{R}^d} \Big\{ \int_{\mathbb{R}^d} \rho^H_{\phi} (t , \hat{u} - \hat{v})  \psi(\hat{v}) \, d \hat{v} \Big\} G(t, \hat{u} ) \, d \hat{u} -  \int_0^t \int_{\mathbb{R}^d} \Big\{\int_{\mathbb{R}^d} \rho^H_{\phi} (s , \hat{u} - \hat{v})  \psi(\hat{v}) \, d \hat{v} \Big\} [ \kappa \Delta + \partial_s ] G(s, \hat{u}) \, d \hat{u} \, ds \\
=& \int_{\mathbb{R}^d} \psi(\hat{v}) \Big\{ \int_{\mathbb{R}^d} \rho^H_{\phi}(t, \hat{w}) G_{\hat{v}}(t, \hat{w})  \, d \hat{w} -  \int_0^t \int_{\mathbb{R}^d} \rho^H_{\phi}(s, \hat{w}) [ \kappa \Delta + \partial_s ] G_{\hat{v}}(s, \hat{w}) \, d \hat{w} \, ds \Big\}  d \hat{v} = 0.
\end{align*}
In the last line, we applied Fubini's Theorem and \eqref{spattrans}. Now we observe that $\rho^H_{\phi, \psi} \in C^{1,2}([0, \infty) \times \mathbb{R}^d)$. Therefore, after performing some integrations by parts, we get $[ \kappa \Delta  - \partial_s ] \rho^H_{\phi,\psi} \equiv 0$ a.e. in $(0, T) \times \mathbb{R}^d$.

The proof follows by contradiction. In what follows, given $f: [0,T] \times \mathbb{R}^d \rightarrow \mathbb{R}$, we write $f \not\equiv 0$ a.e. in $[0,T] \times \mathbb{R}^d$ to denote that it is false that $f \equiv 0$ a.e. in $[0,T] \times \mathbb{R}^d$ (but we do not exclude the possibility that $f \equiv 0$ in a very large subset $[0,T] \times \mathbb{R}^d$). 

Assume $\rho \not\equiv 0$ a.e. in $[0,T] \times \mathbb{R}^d$. {In this case, we can choose $H, \phi, \psi$ appropriately such that $\rho^H_{\phi,\psi} \not\equiv 0$ a.e. in $[0,T] \times \mathbb{R}^d$. }This leads to ${\tilde{\rho}} \not\equiv 0$ a.e. in $[0,T] \times \mathbb{R}^d$, where ${ \tilde{\rho} }:[0,T] \times \mathbb{R}^d \rightarrow \mathbb{R}$ is given by 
\begin{align*}
{\tilde{\rho}}(s,\hat{u}) := {\rho^H_{\phi, \psi}} (s, \kappa^{-\frac{1}{2}} \hat{u} ),  \quad (s,\hat{u}) \in [0,T] \times \mathbb{R}^d.
\end{align*}
{Since $\rho, H$ are bounded, we get that $\tilde{\rho}$ is also bounded. Moreover, $[  \Delta  - \partial_s ] \tilde{\rho} \equiv 0$ a.e. in $(0, T) \times \mathbb{R}^d$, $\tilde{\rho} \in C^{1,2} [0,T ] \times \mathbb{R}^d )$ and $\tilde{\rho} \equiv 0$ on $\{ 0 \} \times \mathbb{R}^d$. } Therefore, from Theorem 2.6  in \cite{evans1998partial}, $ {\tilde{\rho}} \equiv 0 $ in $(0, T) \times \mathbb{R}^d$. Since this leads to a contradiction, we get $\rho \equiv 0$ a.e. in $[0,T] \times \mathbb{R}^d$ and the proof ends.
\end{proof}
\begin{prop} \label{propuniqneu}
\textit{
There exists at most one weak solution of \eqref{ehdn}.
}
\end{prop}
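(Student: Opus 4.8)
The plan is to reduce \eqref{ehdn} to the already-established uniqueness for \eqref{ehd} in Proposition \ref{propuniqdif}, by exploiting the decoupling across $\mathcal H$ together with an even reflection. First, by linearity I would reduce to the case $g\equiv 0$ and assume that $\rho:[0,T]\times\mathbb R^d\to[0,1]$ satisfies $F_{\textrm{Dif}}(t,\rho,G,0,\kappa)=0$ for every $t\in[0,T]$ and every $G\in\mcb S_{\textrm{Neu}}$. The key structural observation is that this family of identities decouples into two independent half-space problems: an element of $\mcb S_{\textrm{Neu}}$ is determined by an arbitrary pair $G^-,G^+\in\mcb S_{\textrm{Dif}}$ with $\partial_{\hat e_d}G^\pm_s(\hat u_\star,0)=0$; the first term of $F_{\textrm{Dif}}$ splits as $\int_{\mathbb R_-^{d*}}\rho_t G_t^-+\int_{\mathbb R_+^{d*}}\rho_t G_t^+$, and $\mathbb L_\Delta G$ coincides with $\Delta G^-$ on $\mathbb R_-^{d*}$ and with $\Delta G^+$ on $\mathbb R_+^{d*}$. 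Choosing $G^-\equiv 0$ (resp. $G^+\equiv 0$) isolates the $+$-side (resp. $-$-side) functional, so that, by the reflection symmetry $u_d\mapsto -u_d$, it suffices to prove $\rho\equiv 0$ a.e. on $[0,T]\times\mathbb R_+^{d*}$, i.e. that the $+$-side identity
\[
\int_{\mathbb R_+^{d*}}\rho_t\,G_t^+\,d\hat u-\int_0^t\!\!\int_{\mathbb R_+^{d*}}\rho_s\,[\kappa\Delta+\partial_s]G_s^+\,d\hat u\,ds=0,\qquad \partial_{\hat e_d}G_s^+(\hat u_\star,0)=0,
\]
forces $\rho$ to vanish there.

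Next I would symmetrize. Define the bounded even extension $\bar\rho(s,\hat u_\star,u_d):=\rho(s,\hat u_\star,|u_d|)$ on $[0,T]\times\mathbb R^d$ and claim that it is a weak solution of \eqref{ehd} with $g\equiv 0$, i.e. $F_{\textrm{Dif}}(t,\bar\rho,\Phi,0,\kappa)=0$ for every $\Phi\in\mcb S_{\textrm{Dif}}$. Given such $\Phi$, I would split it into its even and odd parts in $u_d$, $\Phi=\Phi_e+\Phi_o$. The even part $\Phi_e$ is again $C^{1,2}_c$ (its first $u_d$-derivative is odd and continuous and its second $u_d$-derivative is even and continuous, so no regularity is lost across $u_d=0$) and satisfies $\partial_{\hat e_d}\Phi_e(s,\hat u_\star,0)=0$; hence $\Phi_e\in\mcb S_{\textrm{Neu}}$ with $\Phi_e^-=\Phi_e^+=\Phi_e$. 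Since $\bar\rho_s$ is even in $u_d$ while $\Phi_o$ is odd, and since $\Delta$ and $\partial_s$ commute with the reflection, every integral of $\bar\rho_s$ against $\Phi_o$ or against $[\kappa\Delta+\partial_s]\Phi_o$ vanishes, so $\Phi$ may be replaced by $\Phi_e$ throughout; evenness then rewrites the $\mathbb R^d$-integrals as twice the corresponding $\mathbb R_+^{d*}$-integrals. The $+$-side identity applied to the admissible test function $\Phi_e$ gives exactly $F_{\textrm{Dif}}(t,\bar\rho,\Phi,0,\kappa)=0$.

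Finally, Proposition \ref{propuniqdif} applies to $\bar\rho$, which is bounded with $0\le\bar\rho\le 1$ (Remark \ref{remwelldef1}); it yields $\bar\rho\equiv 0$ a.e. on $[0,T]\times\mathbb R^d$, hence $\rho\equiv 0$ a.e. on $[0,T]\times\mathbb R_+^{d*}$, and the symmetric argument on the $-$ side completes the proof.

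The main obstacle, I expect, is that the test functions in $\mcb S_{\textrm{Neu}}$ are too rigid to allow the time--space mollification used directly in Proposition \ref{propuniqdif}: mollifying in the $u_d$-direction would destroy both the Neumann condition and the admissibility near $\mathcal H$. The reflection trick is precisely what circumvents this, at the cost of two points that must be handled with care — first, the rigorous justification of the decoupling (independence of $G^-$ and $G^+$, and the identification of $\mathbb L_\Delta G$ with $\Delta G^\pm$ on each half-space up to the Lebesgue-null hyperplane), and second, the verification that the even part $\Phi_e$ of an arbitrary $\Phi\in\mcb S_{\textrm{Dif}}$ is a genuine element of $\mcb S_{\textrm{Neu}}$ with the correct $C^2$ regularity across $\mathcal H$. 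Everything else reduces to Proposition \ref{propuniqdif}.
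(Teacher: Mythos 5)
Your proposal is correct and follows essentially the same route as the paper: you reflect $\rho$ evenly across $\mathcal{H}$, show the reflected function solves \eqref{ehd} with zero initial data by symmetrizing the test function (your even part $\Phi_e$ is, up to a factor $2$, the paper's $G^{*+}_s(\hat{u}_{\star},u_d)=G_s(\hat{u}_{\star},u_d)+G_s(\hat{u}_{\star},-u_d)$ used with $G^{*-}\equiv 0$), and then invoke Proposition \ref{propuniqdif} on each half-space. The only presentational difference is that you make the decoupling into two one-sided identities explicit and discard the odd part of $\Phi$ by parity, whereas the paper performs the same cancellation directly via a change of variables; both are sound.
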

\begin{proof}
We begin by observing that for every $A \subset \mathbb{R}^d$ with Lebesgue measure on $\mathbb{R}^d$ equal to zero, it holds
\begin{equation} \label{L1negl}
\forall H \in L^1( \mathbb{R}^d ), \quad \int_A H(\hat{u}) \, d \hat{u} \  =0.
\end{equation}

Assume that $\rho:[0,T] \times \mathbb{R}^d \rightarrow [0,1]$ is such that for every $t \in [0,T]$ and for every $G \in \mcb{S}_{\textrm {Neu}}$, it holds
\begin{align} \label{uniqneu}
 \int_{\mathbb{R}^d} \rho_t( { \hat{u} }) G_t( \hat{u} ) \, d \hat{u} -  \int_0^t \int_{\mathbb{R}^d} \rho_s(\hat{u}) [ \kappa { \mathbb{L}_{\Delta} } + \partial_s ] G_s( \hat{u}) \, d \hat{u} \, ds=0.
\end{align}
This means that $\rho$ is a weak solution of \eqref{ehdn} for $g \equiv 0$. By linearity, the proof ends if we can prove that $\rho \equiv 0$ a.e. in $[0,T] \times \mathbb{R}^d$. Next, define $\rho^{+}:[0,T] \times \mathbb{R}^{ { d}} \rightarrow [0,1]$  by
\begin{align} \label{defrho+}
\rho^{+}_s( \hat{u}_{\star}, u_d ):=
\begin{cases}
\rho_s( \hat{u}_{\star}, -u_d ), \quad & (s, \hat{u}_{\star}, u_d ) \in [0,T] \times \mathbb{R}^{d-1} \times (-\infty,0), \\
\rho_s( \hat{u}_{\star}, u_d ), \quad &  (s, \hat{u}_{\star}, u_d ) \in [0,T] \times \mathbb{R}^{d-1} \times [0, \infty).
\end{cases}
\end{align} 
{For any $G \in \mcb S_{\textrm{Dif}}$, define} $G^{*-}, G^{*+} \in \mcb S_{\textrm{Dif}}$ by {$G^{*-} \equiv 0$ on $[0,\infty) \times \mathbb{R}^{d}$ and} 
\begin{align} \label{defGstarplus}
  G^{*+}_s( \hat{u}_{\star}, u_d ):=G_s( \hat{u}_{\star}, u_d )+G_s( \hat{u}_{\star}, - u_d ), \quad (s, \hat{u}_{\star}, u_d ) \in [0,\infty) \times \mathbb{R}^{d-1} \times \mathbb{R}.
\end{align}
{Observe that $\partial_{\hat{e}_d} G^{*+}_s( \hat{u}_{\star}, u_d ) = \partial_{\hat{e}_d} G_s( \hat{u}_{\star}, u_d ) -  \partial_{\hat{e}_d} G_s( \hat{u}_{\star}, -u_d )$, for any $(s, \hat{u}_{\star}, u_d ) \in [0,\infty) \times \mathbb{R}^{d-1} \times \mathbb{R}$.} In particular, $\partial_{\hat{e}_d} G^{*-}_s( \hat{u}_{\star}, 0 ) = \partial_{\hat{e}_d} G^{*+}_s( \hat{u}_{\star}, 0 ) = 0$, for every $s\in [0,T]$ and every $\hat{u}_{\star} \in \mathbb{R}^{d-1}$. Next, {define $G^{*}:=\mathbbm{1}_{\{ [0, \infty)  \times \mathbb{R}_{-}^{d*} \} } G^{*-} + \mathbbm{1}_{ \{[0, \infty) \times \mathbb{R}_{+}^{d*} \}} G^{*+}$.} Then $G^{*} \in \mcb{S}_{\textrm {Neu}}$. Finally, we observe that ${F_{\textrm{Dif}}(t,\rho^{+},G,0,\kappa)}$ can be rewritten as
\begin{align*}
& \int_{\mathbb{R}^{d-1} \times \mathbb{R}^{*} } \rho^{+}_t( \hat{u}_{\star}, u_d ) G_t( \hat{u}_{\star}, u_d ) \, d \hat{u} -  \int_0^t \int_{\mathbb{R}^{d-1} \times \mathbb{R}^{*}} \rho^{+}_s( \hat{u}_{\star}, u_d ) [ \kappa \mathbb{L}_{\Delta}  + \partial_s ] G_s( \hat{u}_{\star}, u_d ) \, d \hat{u} \, ds.
\end{align*}
In the last line, we combined Remark \ref{remL1disc} and the fact that $\rho$ is bounded with \eqref{L1negl}. From \eqref{defrho+}, the last display is equivalent to
\begin{align*}
& \int_{\mathbb{R}^{d-1} \times (-\infty,0)} \rho_t( \hat{u}_{\star}, -u_d ) G_t( \hat{u}_{\star}, u_d ) \, d \hat{u} -  \int_0^t \int_{\mathbb{R}^{d-1} \times (-\infty,0)} \rho_s( \hat{u}_{\star}, -u_d ) [ \kappa \mathbb{L}_{\Delta}  + \partial_s ] G_s( \hat{u}_{\star}, u_d ) \, d \hat{u} \, ds \\
+ & \int_{\mathbb{R}^{d-1} \times (0, \infty)} \rho_t( \hat{u}_{\star}, u_d ) G_t( \hat{u}_{\star}, u_d ) \, d \hat{u} -  \int_0^t \int_{\mathbb{R}^{d-1} \times (0, \infty)} \rho_s( \hat{u}_{\star}, u_d ) [ \kappa \mathbb{L}_{\Delta}  + \partial_s ] G_s( \hat{u}_{\star}, u_d ) \, d \hat{u} \, ds.
\end{align*}
Applying the change of variables $( \hat{u}_{\star}, -u_d ) \mapsto ( \hat{v}_{\star}, v_d )$ in the first line of the last display, we can rewrite $F_{\textrm{Dif}}(t,\rho^{+},G,0,\kappa)$ as
\begin{align*}
& \int_{\mathbb{R}^{d-1} \times (0, \infty)} \rho_t( \hat{v}_{\star}, v_d ) G_t( \hat{v}_{\star}, - v_d ) \, d \hat{v} -  \int_0^t \int_{\mathbb{R}^{d-1} \times  (0, \infty)} \rho_s( \hat{v}_{\star}, v_d ) [ \kappa \mathbb{L}_{\Delta}  + \partial_s ] G_s( \hat{v}_{\star}, - v_d ) \, d \hat{v} \, ds \\
+ & \int_{\mathbb{R}^{d-1} \times (0, \infty)} \rho_t( \hat{u}_{\star}, u_d ) G_t( \hat{u}_{\star}, u_d ) \, d \hat{u} -  \int_0^t \int_{\mathbb{R}^{d-1} \times (0, \infty)} \rho_s( \hat{u}_{\star}, u_d ) [ \kappa \mathbb{L}_{\Delta}  + \partial_s ] G_s( \hat{u}_{\star}, u_d ) \, d \hat{u} \, ds
\end{align*}
From \eqref{defGstarplus}, the last display can be rewritten as
\begin{align*}
& \int_{\mathbb{R}^{d-1} \times (0, \infty)} \rho_t( \hat{u}_{\star}, u_d ) G^{*+}_t( \hat{u}_{\star}, u_d ) \, d \hat{u} -  \int_0^t \int_{\mathbb{R}^{d-1} \times (0, \infty)} \rho_s( \hat{u}_{\star}, u_d ) [ \kappa \mathbb{L}_{\Delta}  + \partial_s ] G^{*+}_s( \hat{u}_{\star}, u_d ) \, d \hat{u} \, ds \\
=& \int_{\mathbb{R}^{d-1} \times [0, \infty)} \rho_t( \hat{u}_{\star}, u_d ) G^{*+}_t( \hat{u}_{\star}, u_d ) \, d \hat{u} -  \int_0^t \int_{\mathbb{R}^{d-1} \times [0, \infty)} \rho_s( \hat{u}_{\star}, u_d ) [ \kappa \mathbb{L}_{\Delta}  + \partial_s ] G^{*+}_s( \hat{u}_{\star}, u_d ) \, d \hat{u} \, ds \\
=& \int_{\mathbb{R}^{d} } \rho_t( \hat{u}_{\star}, u_d ) G^{*}_t( \hat{u}_{\star}, u_d ) \, d \hat{u} -  \int_0^t \int_{\mathbb{R}^{d}} \rho_s( \hat{u}_{\star}, u_d ) [ \kappa \mathbb{L}_{\Delta}  + \partial_s ] G^{*}_s( \hat{u}_{\star}, u_d ) \, d \hat{u} \, ds=0,
\end{align*}
for every $t \in [0,T]$. {The first and second equalities in the last display hold due to \eqref{L1negl} and to the definition of $G^{*}$.} In {the} last equality above we used \eqref{uniqneu}, since $G^{*} \in \mcb{S}_{\textrm {Neu}}$. Since $G \in \mcb S_{\textrm{Dif}}$ is arbitrary, we conclude that $\rho^{+}$ is the unique weak solution of \eqref{ehd} with initial condition equal to zero. In particular, due to the arguments in the proof of Proposition \ref{propuniqdif}, $\rho^{+} \equiv 0$ a.e. in $[0,T] \times  \mathbb{R}^d$, which leads to $\rho \equiv 0$ a.e. in $[0,T] \times \mathbb{R}^{d-1} \times [0, \infty)$. 

Reasoning in an analogous way, we get $\rho \equiv 0$ a.e. in $[0,T] \times \mathbb{R}^{d-1} \times  (- \infty,0]$, ending the proof.
\end{proof}

	\section{Discrete convergences} \label{secdiscconv}

In this section we state and prove Lemmas \ref{lemconvprinc} and \ref{lemconvprincdisc}, which are useful to obtain Propositions \ref{convprincdif} and \ref{convprincdisc}. Recall from \eqref{deftilknj} the definition of $\mcb{K}_{n, j}$ and the fact that $\mcb{K}_{n, \mcb B} = \sum_{j=1}^d \mcb{K}_{n, j}$. Due to $\Delta = \sum_{j=1}^d \partial_{\hat{e}_d \hat{e}_d}$, Proposition \ref{convprincdif} is a direct consequence of next result. 

\begin{lem} \label{lemconvprinc}
\textit{	
	Assume that either} $G \in \mcb S_{\textrm{Dif}}$ \textit{and $j \in \{1, \ldots, d\}$; or} $G \in \mcb{S}_{\textrm {Disc}}$ \textit{and $j \in \{1, \ldots, d-1\}$. Then 
\begin{align*} 
\lim_{n \rightarrow \infty} \frac{1}{n^d} \sum_{ \hat{x} } \sup_{s \in [0,T]} \big| \Theta(n)  \mcb{K}_{n, j} G_s (\tfrac{\hat{x}}{n} )  - \kappa_{\gamma}  \partial_{\hat{e}_j \hat{e}_j} G_s  (\tfrac{\hat{x}}{n} ) \big| =0. 
\end{align*}
}
\end{lem}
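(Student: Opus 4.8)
The plan is to exploit the symmetry $p(r\hat e_j)=p(-r\hat e_j)$ coming from \eqref{prob} to recast $\mcb K_{n,j}$ as a sum of symmetric second differences, and then to compare these with $\partial_{\hat e_j\hat e_j}G_s$ by means of a second-order Taylor expansion carried out \emph{along the direction $\hat e_j$ only}. Writing $\Phi_s:=\partial_{\hat e_j\hat e_j}G_s$ and pairing $r$ with $-r$ in \eqref{deftilknj}, I would use the integral form of Taylor's theorem to obtain, for every $\hat x$ and every $r\ge 1$,
\[
G_s(\tfrac{\hat x_{\star,j}}{n},\tfrac{x_j+r}{n})+G_s(\tfrac{\hat x_{\star,j}}{n},\tfrac{x_j-r}{n})-2G_s(\tfrac{\hat x}{n})=\int_0^{r/n}\Big(\tfrac rn-t\Big)\big[\Phi_s(\tfrac{\hat x_{\star,j}}{n},\tfrac{x_j}{n}+t)+\Phi_s(\tfrac{\hat x_{\star,j}}{n},\tfrac{x_j}{n}-t)\big]\,dt .
\]
Since $\int_0^{r/n}(\tfrac rn-t)\,dt=\tfrac{r^2}{2n^2}$, the left-hand side equals $\tfrac{r^2}{n^2}\Phi_s(\tfrac{\hat x}{n})$ up to an error bounded by $\tfrac{r^2}{n^2}\,\omega(r/n)$, where $\omega$ is the spatial modulus of continuity of $\Phi$ along $\hat e_j$, uniform in $s$; this exists because $G\in C_c^{1,2}$ makes $\Phi$ (resp. $\partial_{\hat e_j\hat e_j}G^\pm$) continuous with compact support, hence uniformly continuous. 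In the case $G\in\mcb S_{\textrm{Disc}}$ with $j\le d-1$, the expansion moves only the $j$-th coordinate while keeping $u_d$ fixed, so every point appearing in the remainder lies on the same side of $\mathcal H$, where $\Phi_s$ coincides with $\partial_{\hat e_j\hat e_j}G_s^-$ or $\partial_{\hat e_j\hat e_j}G_s^+$ (Remark \ref{remdisc}); taking $\omega$ to be the larger of the two uniform moduli, the bound goes through verbatim. Finally, compact support forces only $\hat x$ with $|\hat x_{\star,j}|<b_G n$ (see \eqref{defbg}) to contribute, so the outer average $\tfrac1{n^d}\sum_{\hat x}$ acts like a bounded Riemann integral.

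I would then split the sum over $r$ at the scale $\lfloor\varepsilon n\rfloor$. For the short jumps $1\le r\le\lfloor\varepsilon n\rfloor$, the leading term produces $\big(\tfrac{\Theta(n)}{n^2}\sum_{r=1}^{\lfloor\varepsilon n\rfloor}r^2 p(r\hat e_j)\big)\Phi_s(\hat x/n)$; because $r^2 p(r\hat e_j)=\tfrac{c_\gamma}{d}\,r^{1-\gamma}$, the scalar coefficient converges to $\kappa_\gamma$ by \eqref{convkappagamma}, uniformly in $\hat x$ and $s$, which after averaging yields exactly $\kappa_\gamma\Phi_s$ in the limit. The corresponding Taylor error is bounded, after the average, by $\omega(\varepsilon)$ times this same (bounded) coefficient, hence by $C\,\omega(\varepsilon)$ for all large $n$, since $r/n\le\varepsilon$ there.

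For the long jumps $r>\lfloor\varepsilon n\rfloor$ I would bound the second difference crudely by $|G_s(\tfrac{\hat x_{\star,j}}{n},\tfrac{x_j+r}{n})|+|G_s(\tfrac{\hat x_{\star,j}}{n},\tfrac{x_j-r}{n})|+2|G_s(\tfrac{\hat x}{n})|$ and use the tail estimate $\Theta(n)\sum_{r>\varepsilon n}p(r\hat e_j)\lesssim \Theta(n)(\varepsilon n)^{-\gamma}$, which is $O(\varepsilon^{-\gamma}n^{2-\gamma})$ when $\gamma>2$ and $O(\varepsilon^{-2}/\log n)$ when $\gamma=2$ by \eqref{timescale}; both vanish as $n\to\infty$ for fixed $\varepsilon$. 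After a change of variables $x_j\mapsto x_j-r$ (resp. $x_j+r$) the averaged long-jump contribution is $\lesssim \Theta(n)\sum_{r>\varepsilon n}p(r\hat e_j)\to 0$. Combining the three pieces gives $\varlimsup_{n}\tfrac1{n^d}\sum_{\hat x}\sup_s|\Theta(n)\mcb K_{n,j}G_s(\hat x/n)-\kappa_\gamma\Phi_s(\hat x/n)|\le C\,\omega(\varepsilon)$, and letting $\varepsilon\to0^+$ and invoking uniform continuity finishes the proof.

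The delicate point is the case $\gamma=2$, where the discrete variance $\sum_r r^2 p(r\hat e_j)$ diverges: it must be regularized by the $\log n$ in $\Theta(n)$ together with the truncation at scale $\varepsilon n$, so that the coefficient still tends to $\kappa_\gamma=c_2/d$ through \eqref{convkappagamma}. This is in tension with the Taylor remainder, which is controlled only when $r\ll n$. The resolution, and the structural heart of the argument, is the order of limits: send $n\to\infty$ first (so the long jumps and the $n$-dependent remainder both disappear, leaving a bound of size $\omega(\varepsilon)$), and only then let $\varepsilon\to0^+$.
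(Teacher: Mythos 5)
Your proof is correct and follows essentially the same route as the paper's: split the jumps at scale $\varepsilon n$, kill the long jumps with the tail bound $\Theta(n)(\varepsilon n)^{-\gamma}$, use the symmetry of $p(\cdot)$ plus a second-order Taylor expansion along $\hat{e}_j$ for the short jumps, identify the limiting coefficient via \eqref{convkappagamma}, and absorb the remainder into the uniform modulus of continuity of $\partial_{\hat{e}_j \hat{e}_j} G^{\pm}$, taking $n \rightarrow \infty$ before $\varepsilon \rightarrow 0^{+}$. The differences are only cosmetic (integral-form Taylor on symmetric second differences instead of the Lagrange form with intermediate points $\chi^n_{s,x_j,r}$, and localization of the $x_j$-sum by a change of variables rather than the paper's separate treatment of the region $|x_j| > 2 b_G n$).
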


\begin{proof}
{	
First, we observe that from the assumptions of Lemma \ref{lemconvprinc}, $G$ is of class $C^{2}$ along the direction determined by $\hat{e}_j$ and $\partial_{\hat{e}_j \hat{e}_j} G_s( \hat{u} )$ is well-defined for every $\hat{u} \in \mathbb{R}^d$.} The expression inside the limit in {the} last display can be rewritten as
\begin{align*}
\frac{1}{n^d} \sum_{ |\hat{x}_{\star,j}| < b_G n } \; \sum_{x_j \in \mathbb{Z}} \; \sup_{s \in [0,T]} \big| \Theta(n) \mcb{K}_{n, j} G_s ( \tfrac{ \hat{x}_{\star, j}}{n} , \tfrac{x_j}{n}) - \kappa_{\gamma}  \partial_{\hat{e}_j \hat{e}_j} G_s  ( \tfrac{ \hat{x}_{\star, j}}{n} , \tfrac{x_j}{n})\big|,
\end{align*}
due to the fact that $G_s ( \tfrac{ \hat{x}_{\star, j}}{n} , \tfrac{y_j}{n} ) = G_s ( \tfrac{ \hat{x}_{\star, j}}{n} , \tfrac{x_j}{n} ) = \mcb{K}_{n, j} G_s ( \tfrac{ \hat{x}_{\star, j}}{n} , \tfrac{x_j}{n}) = \partial_{\hat{e}_j \hat{e}_j} G_s  ( \tfrac{ \hat{x}_{\star, j}}{n} , \tfrac{x_j}{n}) =0$ when $|\hat{x}_{\star,j}| \geq b_G n$ and $s \in [0,T]$, where $b_G>0$ comes from \eqref{defbg}. {The last} display is bounded from above by
\begin{align}
& \frac{1}{n^d} \sum_{ |\hat{x}_{\star,j}| < b_G n } \; \sum_{|x_j| > 2 b_G n} \; \sup_{s \in [0,T]} \big| \Theta(n) \mcb{K}_{n, j} G_s ( \tfrac{ \hat{x}_{\star, j}}{n} , \tfrac{x_j}{n}) - \kappa_{\gamma}  \partial_{\hat{e}_j \hat{e}_j} G_s  ( \tfrac{ \hat{x}_{\star, j}}{n} , \tfrac{x_j}{n})\big| \label{convprincfar0} \\
+& \frac{1}{n^d} \sum_{ |\hat{x}_{\star,j}| < b_G n } \; \sum_{|x_j| \leq 2 b_G n} \; \sup_{s \in [0,T]} \big| \Theta(n) \mcb{K}_{n, j} G_s ( \tfrac{ \hat{x}_{\star, j}}{n} , \tfrac{x_j}{n}) - \kappa_{\gamma}  \partial_{\hat{e}_j \hat{e}_j} G_s  ( \tfrac{ \hat{x}_{\star, j}}{n} , \tfrac{x_j}{n}) \big|. \label{convprincnear0}
\end{align}
From \eqref{deftilknj} and the definition of $b_G$, the sum in \eqref{convprincfar0} as can be rewritten as
	\begin{align*}
		& \frac{1}{n^d} \sum_{ |\hat{x}_{\star,j}| < b_G n } \; \sum_{|x_j| > 2 b_G n} \; \sup_{s \in [0,T]}  \Theta(n) \sum_{|y_j| \leq b_G n }  \Big| G_s ( \tfrac{ \hat{x}_{\star, j}}{n} , \tfrac{y_j}{n} )  p\big((y_j-x_j) \hat{e}_j \big) \Big| \\
		& \lesssim \| G \|_{\infty} { \frac{\Theta(n)}{n^{\gamma}} }  \sum_{|x_j| > 2b_Gn} \; \sum_{|y_j| \leq b_Gn} \frac{1}{n^2} \Big( \frac{|y_j - x_j|}{n} \Big)^{-\gamma-1} \lesssim   \frac{\Theta(n)}{n^{\gamma}}, 
	\end{align*}
which vanishes as $n \rightarrow \infty$, due to \eqref{timescale}. Above we used the fact that the double sum in the second line is the Riemann sum of a finite double integral, since $\gamma \geq 2$. Therefore we are done if we can prove that the expression in \eqref{convprincnear0} also goes to zero when $n \rightarrow \infty$. In order to do so, we observe from \eqref{deftilknj} that for every $\varepsilon >0$, $\mcb{K}_{n, j} G_s  ( \tfrac{ \hat{x}_{\star, j}}{n} , \tfrac{x_j}{n})$ can be rewritten as
\begin{align*}
\sum_{|r| \geq \varepsilon n } [ G_s ( \tfrac{ \hat{x}_{\star, j}}{n} , \tfrac{x_j+r}{n} ) -G_s ( \tfrac{ \hat{x}_{\star, j}}{n} , \tfrac{x_j}{n}) ] p (r \hat{e}_j ) + \sum_{|r| < \varepsilon n } [ G_s ( \tfrac{ \hat{x}_{\star, j}}{n} , \tfrac{x_j+r}{n} ) -G_s ( \tfrac{ \hat{x}_{\star, j}}{n} , \tfrac{x_j}{n}) ] p( r \hat{e}_j ).
\end{align*}
Now we proceed in the same way as we did to deal with \eqref{convprincfar0}. Note that
\begin{align*}
&  \frac{1}{n^d} \sum_{ |\hat{x}_{\star,j}| < b_G n } \; \sum_{|x_j| \leq 2 b_G n} \; \sup_{s \in [0,T]} \Big| \Theta(n) \sum_{|r| \geq \varepsilon n } [ G_s ( \tfrac{ \hat{x}_{\star, j}}{n} , \tfrac{x_j+r}{n} ) -G_s ( \tfrac{ \hat{x}_{\star, j}}{n} , \tfrac{x_j}{n}) ] p( r \hat{e}_j )  \Big| \\
 \leq & \frac{2 c_{\gamma} \|  G \|_{\infty} \Theta(n)}{n^d d}  \sum_{ |\hat{x}_{\star,j}| < b_G n }  \;\sum_{|x_j| \leq 2 b_G n} \; \sum_{|r| \geq  \varepsilon n} |r|^{-\gamma-1}   \lesssim   \frac{\Theta(n)}{n^{\gamma}}  \frac{1}{n}  \sum_{r = \varepsilon n}^{\infty} \Big( \frac{r}{n} \Big)^{-\gamma-1}   \lesssim \frac{\Theta(n)}{ n^{\gamma}} \varepsilon^{-\gamma}, 
\end{align*}
which vanishes as $n \rightarrow \infty$, for every $\varepsilon>0$, again due to \eqref{timescale}. Therefore, the desired result follows if we can prove that {the display in \eqref{convprinc3} below}
	\begin{align} \label{convprinc3}
		\frac{1}{n^d} \sum_{ |\hat{x}_{\star,j}| < {b_G} n } \; \sum_{|x_j| \leq 2 b_G n} \; \sup_{s \in [0,T]} \Big| \Theta(n) \sum_{|r| < \varepsilon n } [ G_s ( \tfrac{ \hat{x}_{\star, j}}{n} , \tfrac{x_j+r}{n} ) -G_s ( \tfrac{ \hat{x}_{\star, j}}{n} , \tfrac{x_j}{n} ) ] p( r \hat{e}_j )  - \kappa_{\gamma} \partial_{\hat{e}_j \hat{e}_j} G_s  ( \tfrac{ \hat{x}}{n} )  \Big| 
	\end{align}
{goes to zero when $n \rightarrow \infty$ and afterwards, $\varepsilon \rightarrow 0^+$.}
Since $p(\cdot)$ is symmetric, $ \sum_{|r| < \varepsilon n } r p( r \hat{e}_j ) =0$ for every $\varepsilon >0$. By performing a second order Taylor expansion on $G_s(\tfrac{ \hat{x}_{\star, j}}{n} , \cdot)$ around $\tfrac{x_j}{n}$, the term inside the absolute value in \eqref{convprinc3} can be rewritten as
\begin{align*}
 &    \kappa_{\gamma}   \partial_{\hat{e}_j \hat{e}_j} G_s (  \tfrac{ \hat{x}_{\star, j}}{n}, \tfrac{x_j}{n} ) \Big( - 1 + \frac{\Theta(n)}{2 n^2 d \kappa_{\gamma}} \sum_{|r| < \varepsilon n} c_{\gamma} r^{1-\gamma} \mathbbm{1}_{\{r \neq 0\}} \Big) \\
 +& \frac{\Theta(n)}{2 n^2 d} \sum_{|r| < \varepsilon n} c_{\gamma} r^{1-\gamma} \mathbbm{1}_{\{r \neq 0\}}   \big[\partial_{\hat{e}_j \hat{e}_j} G_s ( \chi^n_{s,x_j,r} ) -  \partial_{\hat{e}_j \hat{e}_j} G_s (\tfrac{ \hat{x}_{\star, j}}{n}, \tfrac{x_j}{n} ) \big]  ,
\end{align*}
for some $\chi^n_{s, x_j, r}$ between $\tfrac{x_j}{n}$ and $\tfrac{x_j+r}{n}$. Then \eqref{convprinc3} is bounded from above by
	\begin{align}
		&   \frac{1}{n^d} \sum_{| \hat{x}_{\star,j} | < {b_G} n} \;  \sum_{|x_j| \leq 2 b_G n} \; \sup_{s \in [0,T]} \Big|     \frac{\kappa_{\gamma}}{d} \partial_{\hat{e}_j \hat{e}_j} G_s (\tfrac{ \hat{x}_{\star, j}}{n}, \tfrac{x_j}{n} )  \Big( - 1 + \frac{\Theta(n)}{2 n^2 d \kappa_{\gamma}} \sum_{|r| < \varepsilon n} c_{\gamma} r^{1-\gamma} \mathbbm{1}_{\{r \neq 0\}} \Big)  \Big| \label{convprinc4dif} \\
		+ &   \frac{c_{\gamma}}{n^d} \sum_{| \hat{x}_{\star,j} | < {b_G} n} \; \sum_{|x_j| \leq 2 b_G n} \; \sup_{s \in [0,T]} \Big|  \frac{\Theta(n)}{2 n^2 d} \sum_{|r| < \varepsilon n}  r^{1-\gamma} \mathbbm{1}_{\{r \neq 0\}}   [  \partial_{\hat{e}_j \hat{e}_j} G_s (\tfrac{ \hat{x}_{\star, j}}{n}, \chi^n_{s,x_j,r} ) -  \partial_{\hat{e}_j \hat{e}_j} G_s (\tfrac{ \hat{x}_{\star, j}}{n}, \tfrac{x_j}{n} ) ]  \Big|. \label{convprinc5dif}
	\end{align}
The term in \eqref{convprinc4dif} is bounded from above by a constant times
\begin{align*}
&   \| \partial_{\hat{e}_j \hat{e}_j} G \|_{\infty}  \Big| - 1 + \frac{\Theta(n)}{2 n^2 d \kappa_{\gamma}} \sum_{|r| < \varepsilon n} c_{\gamma} r^{1-\gamma} \mathbbm{1}_{\{r \neq 0\}} \Big|,
\end{align*} {which goes to zero as $n \rightarrow \infty$ for every $\varepsilon >0$ fixed}, due to \eqref{convkappagamma}. Next we bound \eqref{convprinc5dif} from above by
	\begin{align*}
		&  \frac{1}{n^d} \sum_{| \hat{x}_{\star,j} | < {b_G} n} \; \sum_{|x_j| \leq 2 b_G n} \frac{\Theta(n)}{2 n^2 {d} } \sum_{|r| < \varepsilon n} c_{\gamma} r^{1-\gamma} \mathbbm{1}_{\{r \neq 0\}} \sup_{s \in [0,T], \hat{u}, \hat{v}  \in \mathbb{R}^d: |\hat{v}- \hat{u}| \leq \varepsilon} |\partial_{\hat{e}_j \hat{e}_j} G_s ( \hat{v} ) -  \partial_{\hat{e}_j \hat{e}_j} G_s ( \hat{u} ) | \\
		\lesssim &  \sup_{s \in [0,T], \hat{u}, \hat{v}  \in \mathbb{R}^d: |\hat{v}- \hat{u}| \leq \varepsilon} |\partial_{\hat{e}_j \hat{e}_j} G_s ( \hat{v} ) -  \partial_{\hat{e}_j \hat{e}_j} G_s ( \hat{u} ) |,
	\end{align*}
which vanishes as $\varepsilon \rightarrow 0^+$, since each of the functions $\partial_{\hat{e}_j \hat{e}_j} G$ is uniformly continuous. The bound in second line of {the} last display holds due to \eqref{convkappagamma}. This proves \eqref{convprinc3} , ending the proof.
\end{proof}
Recall the {definitions} of $\tilde{\mcb{K}}_{n,d}$ in \eqref{op_Knbd} and of {$\mathbb{L}_{d}$ in \eqref{defLd}}. Proposition \ref{convprincdisc} follows by combining  {Lemma \ref{lemconvprinc} with the next result.}  
\begin{lem} \label{lemconvprincdisc}	
\textit{	
	Assume that either $\gamma =2$ and} $G \in \mcb{S}_{\textrm {Neu}}$\textit{; or $\gamma > 2$ and} $G \in \mcb{S}_{\textrm {Disc}}$\textit{. Then
	\begin{equation*} 
		\lim_{n \rightarrow \infty} \frac{1}{n^d} \sum_{| \hat{x}_{\star} | {<} b_G {n} } \; \sum_{x_d \in \mathbb{Z}} \; \sup_{s \in [0,T]} \big| \Theta(n) \tilde{\mcb{K}}_{n,d} G_s (\tfrac{\hat{x}_{\star}}{n}, \tfrac{x_d}{n} ) - \kappa_{\gamma} { \mathbb{L}_{d}} G_s(\tfrac{\hat{x}_{\star}}{n}, \tfrac{x_d}{n} ) \big| =0.
	\end{equation*}
}
\end{lem}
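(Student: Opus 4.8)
The plan is to reduce the statement to Lemma \ref{lemconvprinc} applied to the smooth extensions $G^{+},G^{-}\in\mcb S_{\textrm{Dif}}$, and then to control the correction produced by truncating the sum at the hyperplane together with the remainder $\mcb R_{n,d}$. Treat $x_d\geq 0$ (the range $x_d\leq -1$ being symmetric, with $G^{-}$ replacing $G^{+}$). Recall from \eqref{op_Knbd} that $\tilde{\mcb K}_{n,d} G_s=-\mcb R_{n,d}G_s+\sum_{y_d=0}^{\infty}A^{n,\hat{x}_{\star}}_{x_d,y_d}(G_s^{+})$, whereas the full operator $\mcb K_{n,d}G^{+}_s=\sum_{y_d\in\mathbb{Z}}A^{n,\hat{x}_{\star}}_{x_d,y_d}(G^{+}_s)$ is of the type covered by Lemma \ref{lemconvprinc} with $j=d$, since $G^{+}$ is of class $C^2$ along $\hat{e}_d$. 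Because $\mathbb{L}_d G_s=\partial_{\hat{e}_d\hat{e}_d}G^{+}_s$ on $\{x_d\geq 0\}$ and each summand is nonnegative, restricting the conclusion of Lemma \ref{lemconvprinc} to $x_d\geq 0$ gives
\[
\lim_{n\to\infty}\frac{1}{n^d}\sum_{|\hat{x}_{\star}|<b_G n}\sum_{x_d\geq 0}\sup_{s}\big|\Theta(n)\mcb K_{n,d}G^{+}_s(\tfrac{\hat{x}_{\star}}{n},\tfrac{x_d}{n})-\kappa_{\gamma}\mathbb{L}_d G_s(\tfrac{\hat{x}_{\star}}{n},\tfrac{x_d}{n})\big|=0.
\]
By the triangle inequality it then suffices to show that the defect $\Theta(n)[\tilde{\mcb K}_{n,d}G_s-\mcb K_{n,d}G^{+}_s]=-\Theta(n)\big[\mcb R_{n,d}G_s+\sum_{y_d=-\infty}^{-1}A^{n,\hat{x}_{\star}}_{x_d,y_d}(G^{+}_s)\big]$ vanishes in the same $L^1$ sense.

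First I would dispose of the far region $x_d>2b_G n$ exactly as in the treatment of \eqref{convprincfar0}: there $G^{+}_s(\tfrac{x_d}{n})=0$, the surviving sums over $y_d$ meet the support only for $|y_d|\leq b_G n$, and they carry a factor $p((y_d-x_d)\hat{e}_d)\lesssim|y_d-x_d|^{-\gamma-1}\lesssim x_d^{-\gamma-1}$, so the total far contribution is bounded by a constant times $\Theta(n)n^{-\gamma}$, which vanishes by \eqref{timescale}. In the near region $|x_d|\leq 2b_G n$ the decisive point is a cancellation. Performing a first-order Taylor expansion of $G^{+}_s(\tfrac{\hat{x}_{\star}}{n},\cdot)$ around $0$ in the $\hat{e}_d$ direction, the first-order part of $\sum_{y_d\leq -1}A^{n,\hat{x}_{\star}}_{x_d,y_d}(G^{+}_s)$ equals $\tfrac1n\partial_{\hat{e}_d}G^{+}_s(\tfrac{\hat{x}_{\star}}{n},0)\sum_{y_d\leq -1}(y_d-x_d)p((y_d-x_d)\hat{e}_d)$; adding $\mcb R_{n,d}G_s$ from \eqref{op_Rn}, whose sum runs over $y_d\geq 0$, completes this to the full first moment $\sum_{y_d\in\mathbb{Z}}(y_d-x_d)p((y_d-x_d)\hat{e}_d)=\sum_{r\in\mathbb{Z}}r\,p(r\hat{e}_d)$, which is $0$ by the symmetry of $p(\cdot)$. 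Hence only the second-order Taylor remainder survives.

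Bounding that remainder by $\|\partial_{\hat{e}_d\hat{e}_d}G\|_{\infty}(x_d^2+y_d^2)/(2n^2)$ and using $x_d^2+y_d^2\leq(x_d+|y_d|)^2$ together with $p((y_d-x_d)\hat{e}_d)\lesssim(x_d+|y_d|)^{-\gamma-1}$, the defect is controlled by $n^{-2}\sum_{m>x_d}m^{1-\gamma}\lesssim n^{-2}(x_d+1)^{2-\gamma}$ when $\gamma>2$. Summing over $x_d\leq 2b_G n$ and over $\hat{x}_{\star}$ (a factor $n^{d-1}$) and multiplying by $\Theta(n)/n^{d}$ yields a bound of the form $\Theta(n)n^{-3}\sum_{x_d=0}^{2b_G n}(x_d+1)^{2-\gamma}$, which tends to $0$ in each subrange $\gamma>3$, $\gamma=3$ and $2<\gamma<3$ (the last two producing $\log n$ and $n^{3-\gamma}$ factors, both killed by $\Theta(n)=n^2$). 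The borderline $\gamma=2$ is where the hypothesis $G\in\mcb S_{\textrm{Neu}}$ enters: then $\mcb R_{n,d}\equiv 0$ and $\partial_{\hat{e}_d}G^{\pm}_s(\cdot,0)=0$, so the first-order term is absent from the outset, and the second-order remainder yields the logarithmic bound $n^{-2}\log\big(1+b_G n/(x_d+1)\big)$; summing over $x_d$ replaces $\sum_{x_d}\log\big(1+b_G n/(x_d+1)\big)$ by $O(n)$, via the convergent Riemann integral $\int_0^{b_G}\log(1+b_G/t)\,dt$, and multiplication by $\Theta(n)/n^2=1/\log(n)$ produces a vanishing bound. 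I expect the main obstacle to be precisely this $\gamma=2$ estimate, where the logarithmic rescaling $\Theta(n)=n^2/\log(n)$ must be matched exactly against the logarithmic growth of the truncated second moment; the bookkeeping of the first-moment cancellation through the symmetry of $p(\cdot)$, though conceptually clean, is the other place requiring care.
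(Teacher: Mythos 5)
Your argument is correct in substance, but it follows a genuinely different route from the paper. The paper does not invoke Lemma \ref{lemconvprinc}: it expands $\Theta(n)\tilde{\mcb{K}}_{n,d}G_s-\kappa_{\gamma}\mathbb{L}_{d}G_s$ directly into four blocks (the site $x_d=0$; the $y_d=0$ contribution for $x_d\geq1$; the bulk $x_d,y_d\geq1$; and the mirror block for $x_d\leq-1$), keeps the linear correction $\tfrac{y_d-x_d}{n}\,\partial_{\hat{e}_d}G^{\pm}_s(\cdot,0)$ explicit inside each block, and estimates everything via Taylor expansions together with an auxiliary scale $\varepsilon n$ and the double limit $n\to\infty$ then $\varepsilon\to0^{+}$; the boundary term \eqref{eqconvneuposc} is killed by the Neumann condition when $\gamma=2$ and by the decay $(\varepsilon n)^{2-\gamma}$ when $\gamma>2$, while the bulk term \eqref{eqconvneuposd3} repeats the argument used for \eqref{convprinc3}. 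You instead write the half-line operator as the full-line operator acting on the extension $G^{\pm}$ plus a defect, quote Lemma \ref{lemconvprinc} for the full-line part, and observe that $\mcb{R}_{n,d}$ in \eqref{op_Rn} is exactly the missing half of the first moment of $p(\cdot)$, so that after the cancellation only a second-order Taylor remainder survives. This is more modular (no extra $\varepsilon$-scale is needed for the defect, and the purpose of $\mcb{R}_{n,d}$ becomes transparent), at the price of relying on the smooth extensions $G^{\pm}$ away from their ``own'' half-spaces, whereas the paper's computation never leaves the half-lines.

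Two steps should be spelled out to make your proof complete, though neither requires a new idea. First, in the far region $x_d>2b_Gn$ you only bound the cross-barrier sum $\sum_{y_d\leq-1}A^{n,\hat{x}_{\star}}_{x_d,y_d}(G^{+}_s)$; for $\gamma>2$ the term $\Theta(n)\mcb{R}_{n,d}G_s$ does not vanish there and must be estimated as well, which is immediate from $|\mcb{R}_{n,d}G_s(\tfrac{\hat{x}_{\star}}{n},\tfrac{x_d}{n})|\lesssim n^{-1}x_d^{1-\gamma}$, yielding again a contribution of order $\Theta(n)n^{-\gamma}$. Second, for $\gamma=2$ the per-site bound $n^{-2}\log\big(1+b_Gn/(x_d+1)\big)$ cannot come from applying the second-order remainder to all $y_d\leq-1$ (that sum diverges); you need to truncate the $y_d$-sum at the support scale of $G^{+}$ and control the tail with the sup-norm bound $2\|G^{+}\|_{\infty}\,p\big((y_d-x_d)\hat{e}_d\big)$, which contributes $O(n^{-2})$ per $x_d$ and is then negligible after multiplying by $\Theta(n)/n^{2}=1/\log(n)$. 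Finally, a cosmetic point: the radius entering the negative-$y_d$ sums is the support radius of the extension $G^{+}$, which need not equal $b_G$; replacing $b_G$ by $\max\{b_G,b_{G^{+}},b_{G^{-}}\}$ fixes this.
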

\begin{proof}
{
	Combining \eqref{op_Knbd} and \eqref{defLd},} the expression inside the limit in {the} last display {is bounded from above by}
	\begin{align*}
		&  \frac{1}{n^d} \sum_{ |\hat{x}_{\star}| < b_G n } \; \sum_{i=1}^4 \mathcal{B}^G_{n,i}(\hat{x}_{\star}), 
	\end{align*}
where for every $\hat{x}_{\star} \in \mathbb{Z}^{d-1} $ and $n \geq 1$,
\begin{align*}
 \mathcal{B}^G_{n,1}(\hat{x}_{\star}):= \sup_{s \in [0,T]}\Big| \Theta(n) & \sum_{r=0 }^{\infty} \Big[  [G^{+}_s( \tfrac{\hat{x}_{\star}}{n} , \tfrac{r}{n}) - G^{+}_s( \tfrac{\hat{x}_{\star}}{n} , \tfrac{0}{n})  ]    - \frac{{r-0}}{n}   \partial_{\hat{e}_d} G^{+}_s( \tfrac{\hat{x}_{\star}}{n} ,0)   \Big] p(r \hat{e}_d) - \kappa_{\gamma} \partial_{\hat{e}_d \hat{e}_d} G^{+}_s( \tfrac{\hat{x}_{\star}}{n} ,0 )     \Big|, \\
 \mathcal{B}^G_{n,2}(\hat{x}_{\star}):= \sum_{x_d=1}^{\infty} { \sup_{s \in [0,T]} } \Big|& \Theta(n)  \Big[  [G^{+}_s( \tfrac{\hat{x}_{\star}}{n} ,\tfrac{0}{n}) - G^{+}_s( \tfrac{\hat{x}_{\star}}{n} , \tfrac{x_d}{n})  ] - \frac{ { 0-x_d } }{n} \partial_{\hat{e}_d} G^{+}_s( \tfrac{\hat{x}_{\star}}{n} , 0)  \Big]   p(-x_d \hat{e}_d )    \Big|, \\
 \mathcal{B}^G_{n,3}(\hat{x}_{\star}):= \sum_{x_d=1}^{\infty} { \sup_{s \in [0,T]} } \Big| & \Theta(n) \sum_{y_d=1 }^{\infty} \Big[ \big[  G^{+}_s( \tfrac{\hat{x}_{\star}}{n} ,\tfrac{y_d}{n}) - G^{+}_s( \tfrac{\hat{x}_{\star}}{n} , \tfrac{x_d}{n})    \big]  - \frac{ { y_d-x_d }}{n} \partial_{\hat{e}_d} G^{+}_s( \tfrac{\hat{x}_{\star}}{n} ,0)  \Big]  p\big((y_d-x_d)\hat{e}_d \big) \\
 - & \kappa_{\gamma} \partial_{\hat{e}_d \hat{e}_d} G^{+}_s( \tfrac{\hat{x}_{\star}}{n} ,\tfrac{x_d}{n} )     \Big|, \\
\mathcal{B}^G_{n,4}(\hat{x}_{\star}):=  \sum_{x_d=-\infty}^{-1} { \sup_{s \in [0,T]} } \Big| &  \Theta(n) \sum_{y_d=- \infty }^{-1} \big[  G^{-}_s( \tfrac{\hat{x}_{\star}}{n} ,\tfrac{y_d}{n}) -G^{-}_s( \tfrac{\hat{x}_{\star}}{n} , \tfrac{x_d}{n})    \big]   - \frac{ { y_d-x_d } }{n} \partial_{\hat{e}_d} G^{-}_s( \tfrac{\hat{x}_{\star}}{n} ,0)  \Big] p\big((y_d-x_d)\hat{e}_d \big) \\
 - & \kappa_{\gamma} \partial_{\hat{e}_d \hat{e}_d} G^{-}_s( \tfrac{\hat{x}_{\star}}{n} ,\tfrac{x_d}{n} )     \Big|.
\end{align*}
In {the} last display we applied \eqref{op_Knbd}. Therefore we are done if we can prove that for every $i \in \{1, 2, 3, 4\}$,
\begin{equation} \label{claimlemA2}
\lim_{n \rightarrow \infty}  \sum_{ |\hat{x}_{\star}| < b_G n } \frac{1}{n^{d}} \mathcal{B}^G_{n,i}(\hat{x}_{\star})=0.
\end{equation}
We begin by obtaining \eqref{claimlemA2} for $i=1$. In this case, the sum in \eqref{claimlemA2} is bounded from above by
\begin{align}
&   \frac{\kappa_{\gamma}}{n^{d}} \sum_{ |\hat{x}_{\star}| < b_G n }  \|  \partial_{\hat{e}_d \hat{e}_d} G^{+} \|_{\infty} \nonumber   \\
+&     \frac{1}{n^{d}} \sum_{ |\hat{x}_{\star}| < b_G n } \;  { \Theta(n) \sum_{r=0 }^{\infty} \sup_{s \in [0,T]} \Big|  [G^{+}_s( \tfrac{\hat{x}_{\star}}{n} ,\tfrac{r}{n}) - G^{+}_s( \tfrac{\hat{x}_{\star}}{n} ,0 )  ]     - \frac{r}{n} \partial_{\hat{e}_d} G^{+}_s( \tfrac{\hat{x}_{\star}}{n} , 0)   \Big| p(r \hat{e}_d )      .} \label{eqconvneu0a1}
\end{align}
The expression inside the first line of {the} last display is {of} order $n^{-1}$ and vanishes as $n \rightarrow \infty$. Next we treat \eqref{eqconvneu0a1}. {If $\gamma=2$ and $G \in \mcb{S}_{\textrm {Neu}}$, then} $\partial_{\hat{e}_d} G^{+}_s( \tfrac{\hat{x}_{\star}}{n} ,0)=0$ for every $\hat{x}_{\star} \in \mathbb{Z}^{d-1}$ and every $s \in [0,T]$. Hence, by performing a first order Taylor expansion on $G_s^{+}(\tfrac{\hat{x}_{\star}}{n} , \cdot)$ around $0$, \eqref{eqconvneu0a1} is bounded from above by
\begin{align*}
  \frac{\Theta(n)}{n^{d+1}} \sum_{ |\hat{x}_{\star}| < b_G n } \; \sum_{r=1 }^{\infty} r p(r \hat{e}_d ) \| \partial_{\hat{e}_d} G^{+} \|_{\infty} \lesssim    \frac{1}{\log(n)} \sum_{r=1 }^{\infty} r^{-2} \lesssim    \frac{1}{\log(n)},
\end{align*}
which goes to zero as $n \rightarrow \infty$. Next, we treat the case $\gamma > 2$, where $\Theta(n)=n^2$. By performing a second order Taylor expansion on $G_s^{+}( \tfrac{\hat{x}_{\star}}{n} , \cdot)$ around $0$, \eqref{eqconvneu0a1} is bounded from above by
\begin{align*}
 \frac{\Theta(n)}{n^{d}} \sum_{ |\hat{x}_{\star}| < b_G n } \frac{\Theta(n)}{2n^2} \sum_{r=1 }^{\infty} r^2 p(r \hat{e}_d ) \| \partial_{\hat{e}_d \hat{e}_d} G^{+} \|_{\infty} \lesssim    \frac{1}{n} \sum_{r=1 }^{\infty} r^{1-\gamma}  \lesssim    \frac{1}{n},
\end{align*}
which vanishes as $n \rightarrow \infty$. {Then, we conclude that \eqref{claimlemA2} holds when $i=1$. Furthermore, we observe that for $i=2$, the sum in \eqref{claimlemA2} is bounded from above by the display in \eqref{eqconvneu0a1}, which goes to zero as $n \rightarrow \infty$, as we proved above. It remains to treat the cases $i=3$ and $i=4$.}
We observe that the proof of \eqref{claimlemA2} for $i=4$ is analogous to the proof for $i=3$, then we omit it for $i=4$. Now for $i=3$, the expression inside the limit in \eqref{claimlemA2} is bounded from above by
\begin{align}
&  \frac{1}{n^{d}} \sum_{ |\hat{x}_{\star}| < b_G n } \; \sum_{x_d=1}^{\varepsilon n - 1} \; \sup_{s \in [0,T]} \Big| \kappa_{\gamma} \partial_{\hat{e}_d \hat{e}_d} G^{+}_s( \tfrac{\hat{x}_{\star}}{n} ,\tfrac{x_d}{n} )     \Big| \label{eqconvneuposa} \\
+& \frac{\Theta(n)}{n^{d}} \sum_{ |\hat{x}_{\star}| < b_G n } \; \sum_{x_d=1}^{\varepsilon n - 1} \; \sup_{s \in [0,T]} \Big|  \sum_{r>-x_d } \Big[  [G^{+}_s( \tfrac{\hat{x}_{\star}}{n} ,\tfrac{x_d+r}{n}) - G^{+}_s( \tfrac{\hat{x}_{\star}}{n} ,\tfrac{x_d}{n})  ] - \frac{r}{n} \partial_{\hat{e}_d} G^{+}_s( \tfrac{\hat{x}_{\star}}{n} , 0)  \Big]   p\big(r\hat{e}_d \big)      \Big| \label{eqconvneuposb} \\
+&  \frac{1}{n^{d}} \sum_{ |\hat{x}_{\star}| < b_G n } \; \sum_{x_d = \varepsilon n}^{\infty} \frac{\Theta(n)}{n} \sup_{s \in [0,T]} \Big| \partial_{\hat{e}_d} G^{+}_s( \tfrac{\hat{x}_{\star}}{n} ,0) \sum_{r=1 - x_d }^{\infty}    r    p\big(r\hat{e}_d \big)    \Big| \label{eqconvneuposc} \\
+&  \frac{1}{n^{d}} \sum_{ |\hat{x}_{\star}| < b_G n } \; \sum_{x_d = \varepsilon n }^{\infty} \; \sup_{s \in [0,T]} \Big| \Theta(n) \sum_{r > - x_d }   [G^{+}_s( \tfrac{\hat{x}_{\star}}{n} ,\tfrac{x_d+r}{n}) - G^{+}_s( \tfrac{\hat{x}_{\star}}{n} , \tfrac{x_d}{n})  ]   p\big( r\hat{e}_d \big) - \kappa_{\gamma} \partial_{\hat{e}_d \hat{e}_d} G^{+}_s( \tfrac{\hat{x}_{\star}}{n} ,\tfrac{x_d}{n} )     \Big|, \label{eqconvneuposd}
\end{align}
for any $\varepsilon >0$. Therefore, we are done if we can prove that \eqref{eqconvneuposa}, \eqref{eqconvneuposb}, \eqref{eqconvneuposc} and \eqref{eqconvneuposd} go to zero when first $n \rightarrow \infty$ and afterwards, $\varepsilon \rightarrow 0^+$.   
We begin by treating \eqref{eqconvneuposa}. We bound it from above by
\begin{align*}
 \frac{1}{n^{d-1}} \sum_{ |\hat{x}_{\star}| < b_G n } \; \sum_{x_d=1}^{\varepsilon n-1}  \frac{\kappa_{\gamma}}{n} \| \partial_{\hat{e}_d \hat{e}_d} G^{+}  \|_{\infty}    \leq (2 b_G)^{d-1} \kappa_{\gamma} \| \partial_{\hat{e}_d \hat{e}_d} G^{+}  \|_{\infty}  \varepsilon \lesssim \varepsilon,
\end{align*}
which vanishes when $\varepsilon \rightarrow 0^+$.
Next we bound \eqref{eqconvneuposb} from above by 
 \begin{align}
&   \frac{\Theta(n)}{n^{d}} \sum_{ |\hat{x}_{\star}| < b_G n } \; \sum_{x_d=1}^{\varepsilon n - 1 } \; \sup_{s \in [0,T]} \Big|  \sum_{r=1-x_d}^{x_d-1} \Big[  [G^{+}_s( \tfrac{\hat{x}_{\star}}{n} , \tfrac{x_d+r}{n}) - G^{+}_s( \tfrac{\hat{x}_{\star}}{n} , \tfrac{x_d}{n}) ] - \frac{r}{n} \partial_{\hat{e}_d} G^{+}_s( \tfrac{\hat{x}_{\star}}{n} ,0) \Big]   p(r \hat{e}_d )  \Big| \label{lemneu1posa} \\
+&  \frac{\Theta(n)}{n^{d}} \sum_{ |\hat{x}_{\star}| < b_G n } \; \sum_{x_d=1}^{\varepsilon n - 1 } \; \sup_{s \in [0,T]} \Big| \sum_{r=x_d}^{\varepsilon n -1} \Big[  [G^{+}_s( \tfrac{\hat{x}_{\star}}{n} ,\tfrac{x_d+r}{n}) - G^{+}_s( \tfrac{\hat{x}_{\star}}{n} ,\tfrac{x_d}{n}) ]    - \frac{r}{n} \partial_{\hat{e}_d} G^{+}_s( \tfrac{\hat{x}_{\star}}{n} ,0) \Big] p(r \hat{e}_d )  \Big| \label{lemneu1posb} \\
+ &  \frac{\Theta(n)}{n^{d}} \sum_{ |\hat{x}_{\star}| < b_G n } \; \sum_{x_d=1}^{\varepsilon n - 1 } \; \sup_{s \in [0,T]}  \ \Big| \sum_{r= \varepsilon n}^{\infty} \Big[ [G^{+}_s( \tfrac{\hat{x}_{\star}}{n} , \tfrac{x_d+r}{n}) - G^{+}_s( \tfrac{\hat{x}_{\star}}{n} ,\tfrac{x_d}{n}) ]    - \frac{r}{n} \partial_{\hat{e}_d} G^{+}_s( \tfrac{\hat{x}_{\star}}{n} ,0) \Big] p(r \hat{e}_d )  \Big|. \label{lemneu1posc}
\end{align}
Due to the symmetry of $p(\cdot)$, by performing a second order Taylor expansion on $G_s^{+}(\tfrac{\hat{x}_{\star}}{n}, \cdot)$ around $\tfrac{x_d}{n}$, the display in \eqref{lemneu1posa} is bounded from above by
\begin{align*}
&  \frac{1}{n^{d}} \sum_{ |\hat{x}_{\star}| < b_G n } \frac{\Theta(n) \| \partial_{\hat{e}_d \hat{e}_d} G^{+} \|_{\infty}}{2 n^2 }  \sum_{x_d=1}^{\varepsilon n - 1} \; \sum_{r=1-x_d}^{x_d-1} r^2    p(r \hat{e}_d ) \lesssim   \frac{\Theta(n)}{ n^{3}} \sum_{x_d=1}^{\varepsilon n } \; \sum_{r=1}^{\varepsilon n} r^{1-\gamma} \lesssim  \varepsilon \frac{\Theta(n)}{ n^{2}} \sum_{r=1}^{\varepsilon n} r^{1-\gamma} \lesssim  \varepsilon,
\end{align*}
which goes to zero as $\varepsilon \rightarrow 0^+$. Above we applied \eqref{thetan}. From a first order Taylor expansion {of first order} on $G^{+}_s(\tfrac{\hat{x}_{\star}}{n},\cdot)$ around $\tfrac{x_d}{n}$, the double limit in \eqref{lemneu1posb} is bounded from above by
\begin{align*}
&  \frac{1}{n^{d}} \sum_{ |\hat{x}_{\star}| < b_G n }  \frac{\Theta(n)}{  n}  \sum_{x_d=1}^{\varepsilon n - 1} \; \sum_{r=x_d}^{\varepsilon n} r    p(r \hat{e}_d ) \sup_{s \in [0,T], \, \hat{u}, \hat{v} \in \mathbb{R}^d: |\hat{u} - \hat{v} | \leq  2 \varepsilon  } | \partial_{\hat{e}_d} G^{+}_s( \hat{u}  ) - \partial_{\hat{e}_d} G^{+}_s( \hat{v})   | \\
  \lesssim &  \frac{1}{n^{d-1}} \sum_{ |\hat{x}_{\star}| < b_G n }  \frac{\Theta(n)}{  n^2} \sum_{r=1}^{\varepsilon n }   r^{1-\gamma} \sup_{s \in [0,T], \, \hat{u}, \hat{v} \in \mathbb{R}^d: |\hat{u} - \hat{v} | \leq  2 \varepsilon  } | \partial_{\hat{e}_d} G^{+}_s( \hat{u}  ) - \partial_{\hat{e}_d} G^{+}_s( \hat{v})   |\\
  \lesssim &   \sup_{s \in [0,T], \, \hat{u}, \hat{v} \in \mathbb{R}^d: |\hat{u} - \hat{v} | \leq  2 \varepsilon  } | \partial_{\hat{e}_d} G^{+}_s( \hat{u}  ) - \partial_{\hat{e}_d} G^{+}_s( \hat{v})   |,
\end{align*}
which vanishes when $\varepsilon \rightarrow 0^+$, due to the uniform continuity of $\partial_{\hat{e}_d} G^{+}$. Above we used \eqref{thetan}. Next we treat \eqref{lemneu1posc}, which is bounded from above by a constant times
\begin{align*}
&  \frac{\Theta(n)}{n^{d}} \sum_{ |\hat{x}_{\star}| < b_G n } \sum_{x_d=1}^{\varepsilon n - 1} \Big[ 2 \| G \|_{\infty}    \sum_{r= \varepsilon n}^{\infty}   r^{-\gamma-1} +  \frac{1}{n} \sum_{r= \varepsilon n}^{\infty} r^{-\gamma} \| \partial_{\hat{e}_d} G^{+} \|_{\infty}   \Big]   \\
\lesssim &  \varepsilon   \frac{\Theta(n)}{n^{\gamma}}  \Big[  \frac{1}{n}  \sum_{z_d= \varepsilon n}^{\infty}   \Big( \frac{z_d}{n} \Big)^{-\gamma-1} +     \frac{1}{n} \sum_{z_d= \varepsilon n}^{\infty}   \Big( \frac{z_d}{n} \Big)^{-\gamma} \Big] \lesssim    \frac{\Theta(n)}{n^{\gamma}}  \varepsilon^{1-\gamma} \big( 1 +   \varepsilon \big),
\end{align*}
which goes to zero when $n \rightarrow \infty$ for every $\varepsilon > 0$, due to the fact that $\lim_{n \rightarrow \infty}  \Theta(n)/n^{\gamma}=0$ (this comes from \eqref{timescale}). Thus, we conclude that \eqref{eqconvneuposb} vanishes when first $n \rightarrow \infty$ and then $\varepsilon \rightarrow 0^+$.
Next we analyse \eqref{eqconvneuposc}, and we note that it is equal to zero for {$\gamma=2$ and $G \in \mcb{S}_{\textrm {Neu}}$, since in this case} we have $\partial_{\hat{e}_d} G^{+}_s( \tfrac{\hat{x}_{\star}}{n} ,0)=0$ for every $\hat{x}_{\star} \in \mathbb{Z}^{d-1}$ and every $s \in [0,T]$. If $\gamma > 2$, from the symmetry of $p(\cdot)$, we rewrite \eqref{eqconvneuposc} as
\begin{align*}
 \frac{1}{n^{d-1}} \sum_{ |\hat{x}_{\star}| < b_G n } \; \sum_{x_d=\varepsilon n}^{\infty}  \Big|  \sum_{r=x_d }^{\infty}   \partial_{\hat{e}_d} G^{+}_s( \tfrac{\hat{x}_{\star}}{n} ,0) r    p(r \hat{e}_d )    \Big| \lesssim   \sum_{x_d=\varepsilon n}^{\infty} \; \sum_{r=x_d }^{\infty} r^{-\gamma} \lesssim \sum_{x_d=\varepsilon n}^{\infty} (x_d)^{1-\gamma} \lesssim (\varepsilon n)^{2 - \gamma},
\end{align*}
which again goes to zero when $n \rightarrow \infty$ for every $\varepsilon > 0$, since we are in the case $\gamma >2$. It remains to treat \eqref{eqconvneuposd}, which is bounded from above by the sum
\begin{align}
&  \frac{1}{n^{d}} \sum_{ |\hat{x}_{\star}| < b_G n } \; \sum_{x_d = 2 b_G n+1}^{\infty} \; \sup_{s \in [0,T]} \Big| \Theta(n) \sum_{r >  - x_d }^{\infty}   G^{+}_s( \tfrac{\hat{x}_{\star}}{n} ,\tfrac{x_d+r}{n})    p(r\hat{e}_d )   \Big| \label{eqconvneuposd1} \\
+& \frac{1}{n^{d}} \sum_{ |\hat{x}_{\star}| < b_G n } \; \sum_{x_d=\varepsilon n }^{2 b_G n} \; \sup_{s \in [0,T]} \Big| \Theta(n) \sum_{r > - x_d, |r| \geq \varepsilon n}^{\infty}   [G^{+}_s( \tfrac{\hat{x}_{\star}}{n} ,\tfrac{x_d+r}{n}) - G^{+}_s( \tfrac{\hat{x}_{\star}}{n} , \tfrac{x_d}{n})  ]    p (r\hat{e}_d )     \Big| \label{eqconvneuposd2} \\
+& \frac{1}{n^{d}} \sum_{ |\hat{x}_{\star}| < b_G n } \; \sum_{x_d=\varepsilon n }^{2 b_G n}  \; \sup_{s \in [0,T]} \Big| \Theta(n) \sum_{|r| < \varepsilon n }   [G^{+}_s( \tfrac{\hat{x}_{\star}}{n} ,\tfrac{x_d+r}{n}) - G^{+}_s( \tfrac{\hat{x}_{\star}}{n} , \tfrac{x_d}{n})  ]    p(r \hat{e}_d ) - \kappa_{\gamma} \partial_{\hat{e}_d \hat{e}_d} G^{+}_s( \tfrac{\hat{x}_{\star}}{n} , \tfrac{x_d}{n} )     \Big|.  \label{eqconvneuposd3}
\end{align}
In \eqref{eqconvneuposd1} we used the fact that $\partial_{\hat{e}_d \hat{e}_d} G^{+}_s( \tfrac{\hat{x}_{\star}}{n} ,\tfrac{x_d}{n} )=G^{+}_s( \tfrac{\hat{x}_{\star}}{n} , \tfrac{x_d}{n})=0$ when $x_d > 2 b_G$, due to the definition of $b_G$ in \eqref{defbg}. Thus, \eqref{eqconvneuposd1} is bounded from above by 
\begin{align*}
&   \frac{\Theta(n)}{n^{d}} \sum_{ |\hat{x}_{\star}| < b_G n } \; \sum_{x_d > 2 b_G n} \;   \sum_{y_d=1 }^{b_G n} \|G\|_{\infty} \frac{c_{\gamma} (x_d-y_d)^{-1-\gamma}}{d} \lesssim  \frac{\Theta(n)}{n^{\gamma}} \frac{1}{n^2} \sum_{x_d=2 b_G n }^{\infty} \; \sum_{y_d=1 }^{b_G n} \Big( \frac{x_d-y_d}{n} \Big)^{-1-\gamma} \lesssim \frac{\Theta(n)}{n^{\gamma}},
\end{align*}
which vanishes as $n \rightarrow \infty$, again due to \eqref{timescale}. Next we bound the display \eqref{eqconvneuposd2} from above by
\begin{align*}
 \frac{\Theta(n)}{n^{d}} \sum_{ |\hat{x}_{\star}| < b_G n } \; \sum_{x=1}^{2 b_G n} 4 c_{\gamma} \| G \|_{\infty}  \sum_{r = \varepsilon n }^{\infty} r^{-\gamma-1}  \lesssim   \frac{\Theta(n)}{n^{\gamma}}  \frac{1}{n} \sum_{r = \varepsilon n }^{\infty}   \Big( \frac{r}{n} \Big)^{-\gamma} \lesssim  \frac{\Theta(n)}{n^{\gamma}} \varepsilon^{-\gamma},  
\end{align*}
which goes to zero as $n \rightarrow \infty$, for every $\varepsilon > 0$ {fixed}. Applying an argument analogous to the one used to treat \eqref{convprinc3}, we have that \eqref{eqconvneuposd3} also vanishes as $n \rightarrow \infty$ and then $\varepsilon \rightarrow 0^+$, ending the proof.
\end{proof}

\section{Estimates on Dirichlet Forms} \label{estdirfor}

In this section we present some lemmas that  are useful to prove Proposition \ref{bound}. Recall the definition of a element of $\textrm{Ref}$ in Definition \ref{defrefprof}, and also the definitions of $a_h$, $b_h$, $K_h$ and $L_h$ in Definition \ref{defrefprof}. 
\begin{lem} \label{lemaux0}
\textit{
Let} $h \in \textrm{Ref}$\textit{. Then there exist $M_{1,h}>0$ depending only on $h$ such that
\begin{align} \label{boundprof}
\forall n \geq 1, \quad \frac{\Theta(n)}{n^d}  \sum_{ \hat{x} , \hat{y}} p(\hat{y}-\hat{x}) [ h( \tfrac{\hat{y}}{n} ) - h( \tfrac{\hat{x}}{n} )]^2 \leq M_{1,h}. 
\end{align}
}
\end{lem}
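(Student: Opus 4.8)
The plan is to exploit the three defining properties of $h \in \textrm{Ref}$ (Definition \ref{defrefprof}) together with the axis-aligned structure of $p(\cdot)$ in \eqref{prob}. Since $p(\hat{y}-\hat{x})$ is nonzero only when $\hat{y}-\hat{x} = (y_j - x_j)\hat{e}_j$ for some $j \in \{1,\ldots,d\}$, I would first decompose the double sum by direction, writing $\hat{x} = (\hat{x}_{\star,j}, x_j)$ and $\hat{y} = (\hat{x}_{\star,j}, y_j)$, so that the perpendicular coordinates coincide. Using \eqref{prob}, the left-hand side of \eqref{boundprof} becomes
\begin{align*}
\frac{c_\gamma \Theta(n)}{d\, n^d} \sum_{j=1}^d \sum_{\hat{x}_{\star,j} \in \mathbb{Z}^{d-1}} \sum_{x_j \in \mathbb{Z}} \sum_{y_j \neq x_j} |y_j - x_j|^{-\gamma - 1} \big[ h(\tfrac{\hat{x}_{\star,j}}{n}, \tfrac{y_j}{n}) - h(\tfrac{\hat{x}_{\star,j}}{n}, \tfrac{x_j}{n}) \big]^2,
\end{align*}
exactly as in the reduction leading to \eqref{boundtight2}.

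Next, property (3) (constancy far from the origin) combined with the maximum norm forces the summand to vanish unless $|\hat{x}_{\star,j}| < K_h n$: if $|\hat{x}_{\star,j}| \geq K_h n$ then both $|\hat{x}| \geq K_h n$ and $|\hat{y}| \geq K_h n$, so both values of $h$ equal $A_h$. This restricts the perpendicular sum to $\lesssim (K_h n)^{d-1}$ terms. For each fixed $\hat{x}_{\star,j}$ with $|\hat{x}_{\star,j}| < K_h n$, I would then estimate the inner double sum
\begin{align*}
S := \sum_{x_j \in \mathbb{Z}} \sum_{r \neq 0} |r|^{-\gamma - 1} \big[ \phi(x_j + r) - \phi(x_j) \big]^2, \qquad \phi(x_j) := h(\tfrac{\hat{x}_{\star,j}}{n}, \tfrac{x_j}{n}),
\end{align*}
where $r = y_j - x_j$. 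The crucial observation is that $\phi$ is constant ($=A_h$) for $|x_j| \geq K_h n$, so for each fixed $r$ the set of $x_j$ with $\phi(x_j+r) \neq \phi(x_j)$ is contained in the union of two intervals of length $2K_h n$; hence the number of such $x_j$ is $\lesssim K_h n$ \emph{uniformly in} $r$.

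I would split $S$ according to whether $|r| \leq n$ or $|r| > n$. For $|r| \leq n$ I use the Lipschitz bound (property (2)), $[\phi(x_j+r)-\phi(x_j)]^2 \leq (L_h |r|/n)^2$; for $|r| > n$ I use boundedness (property (1)), $[\phi(x_j+r)-\phi(x_j)]^2 \leq (b_h - a_h)^2$. Combined with the uniform count $\lesssim n$, this yields
\begin{align*}
S \lesssim \frac{1}{n}\sum_{1 \leq |r| \leq n} |r|^{1-\gamma} + n \sum_{|r| > n} |r|^{-\gamma - 1} \lesssim \frac{1}{n}\sum_{1 \leq |r| \leq n} |r|^{1-\gamma} + n^{1-\gamma},
\end{align*}
so that $S \lesssim n^{-1}$ when $\gamma > 2$ and $S \lesssim n^{-1}\log n$ when $\gamma = 2$. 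Inserting these bounds and the count $\lesssim n^{d-1}$ for the perpendicular sum, the whole expression is $\lesssim \frac{\Theta(n)}{n}\, S$, which equals $\lesssim 1$ in both cases thanks to the precise choice of $\Theta(n)$ in \eqref{timescale}: for $\gamma > 2$ one has $\frac{\Theta(n)}{n}\cdot n^{-1} = 1$, while for $\gamma = 2$ one has $\frac{\Theta(n)}{n}\cdot n^{-1}\log n = 1$. The resulting constant depends only on $c_\gamma, d, L_h, a_h, b_h$ and $K_h$, i.e. only on $h$, giving $M_{1,h}$.

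The main obstacle is the borderline case $\gamma = 2$: there the Lipschitz part of $S$ diverges logarithmically in $n$, and this divergence must be matched \emph{exactly} by the $\log n$ appearing in the diffusive time scale $\Theta(n) = n^2/\log n$. Keeping track of the two competing estimates (Lipschitz versus bounded), and in particular securing the $r$-uniform count of non-constant sites so that the large-$|r|$ tail stays summable, is the delicate bookkeeping to carry out carefully; the remainder is the same routine reduction already used for \eqref{boundtight2}.
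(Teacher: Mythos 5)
Your proof is correct and follows essentially the same route as the paper's: decompose by direction, use property (3) to restrict the perpendicular sum to $O((K_h n)^{d-1})$ sites and the longitudinal sum to $O(K_h n)$ relevant sites, then combine the Lipschitz estimate with crude boundedness and match the result against $\Theta(n)$, including the $\log n$ cancellation at $\gamma = 2$. The only cosmetic difference is that you split by jump length ($|r| \le n$ versus $|r| > n$) with an $r$-uniform count of the sites where the increment is nonzero, whereas the paper splits according to whether both endpoints lie within a fixed multiple of $K_h n$; both organizations produce the same two contributions, of order $\Theta(n) n^{-2} \sum_{r \le Cn} r^{1-\gamma}$ and $\Theta(n) n^{-\gamma}$, hence the same uniform bound.
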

\begin{proof}
From \eqref{prob}, the left-hand side of {the} last display can be rewritten as
\begin{align*}
\frac{\Theta(n) c_{\gamma} }{dn^d} \sum_{j=1}^d \; \sum_{ |\hat{x}_{\star,j}| < K_h n } \; \sum_{x_j, y_j \in \mathbb{Z}} [ h( \tfrac{\hat{x}_{\star,j}}{n}, \tfrac{y_j}{n} ) -h( \tfrac{\hat{x}_{\star,j}}{n}, \tfrac{x_j}{n} )]^2 |y_j - x_j |^{-1-\gamma} \mathbbm{1}_{ \{ x_j \neq y_j  \} }, 
\end{align*}
since $ h( \tfrac{\hat{x}_{\star,j}}{n}, \tfrac{y_j}{n} ) -h( \tfrac{\hat{x}_{\star,j}}{n}, \tfrac{x_j}{n} ) = 0$ for $|\hat{x}_{\star,j}|  \geq  K_h n$.  {The last} display is bounded from above by
\begin{align}
& \frac{\Theta(n) c_{\gamma} }{dn^d} \sum_{j=1}^d \; \sum_{ |\hat{x}_{\star,j}| < K_h n } \Big[ \sum_{|y_j| \leq  K_h n}  \;\sum_{|x_j| \geq 2 K_h n}  |y_j-x_j|^{-\gamma-1}    +     \sum_{|x_j| \leq 2 K_h n}  \;\sum_{|y_j| \geq 3 K_h n}  |y_j-x_j| ^{-\gamma-1}  \Big]  \label{profbound1b1} \\
+ & \frac{\Theta(n) c_{\gamma} }{dn^{d+2} } \sum_{j=1}^d  \; \sum_{ |\hat{x}_{\star,j}| < K_h n } \;  \sum_{x_j = -2 K_h n+1}^{2 K_h n-1} \; \sum_{ y_j = - 3 K_h n }^{ x_j-1} L_h (x_j-y_j)^{-\gamma+1}   \label{profbound1b2} \\
+ & \frac{\Theta(n) c_{\gamma} }{dn^{d+2} } \sum_{j=1}^d \; \sum_{ |\hat{x}_{\star,j}| < K_h n }  \;  \sum_{x_j = -2 K_h n+1}^{2 K_h n-1} \; \sum_{ y-j = x_j+1 }^{ 3 K_h n}  L_h (y_j-x_j)^{-\gamma+1}, \label{profbound1b3}
\end{align}
since $ h( \tfrac{\hat{x}_{\star,j}}{n}, \tfrac{y_j}{n} ) -h( \tfrac{\hat{x}_{\star,j}}{n}, \tfrac{x_j}{n} ) = 0$ when $\min \{ |x_j|, |y_j| \} \geq K_h n$, $h$  being uniformly bounded by $1$ and $| h( \tfrac{ \hat{y} }{n} ) - h( \tfrac{\hat{x}}{n} )| \leq L_h n^{-1} | \hat{y} - \hat{x}|$. The expression in \eqref{profbound1b1} is bounded from above by
\begin{align*}
& \frac{ \Theta(n)}{dn^{\gamma}} \sum_{j=1}^d \frac{1}{n^{d-1}} \sum_{ |\hat{x}_{\star,j}| < K_h n }  \frac{2 c_{\gamma}}{n^2} \Big[ \sum_{|y_j| \leq  K_h n} \; \sum_{x_j \geq 2 K_h n}  \Big( \frac{x_j-y_j}{n} \Big)^{-\gamma-1} + \sum_{|x_j| \leq 2 K_h n} \; \sum_{y_j \geq 3 K_h n}  \Big( \frac{y_j-x_j}{n} \Big)^{-\gamma-1} \Big]  \lesssim \frac{ \Theta(n)}{n^{\gamma}},
\end{align*}
that goes to zero as $n \rightarrow \infty$, due to \eqref{timescale}. It remains to treat \eqref{profbound1b2} and \eqref{profbound1b3}. From $|x_j-y_j| \leq 5 K_h n$ in the double sums over $x_j$ and $y_j$, the sum of \eqref{profbound1b2} and \eqref{profbound1b3} is bounded from above by a constant times
\begin{align*}
   (K_h)^{d-1} L_h \sum_{x = -2 K_h n+1}^{2 K_h n-1}  \sum_{ z=1 }^{ 5 K_h n}  \frac{\Theta(n)}{n^3}  z^{-\gamma+1} \leq 4 K_h^d L_h  \frac{\Theta(n)}{n^2} \sum_{ z=1 }^{ 5 K_h n} z^{-\gamma+1}.
\end{align*}
From \eqref{timescale}, {the} last display can be rewritten as
\begin{align*}
\begin{dcases}
\frac{4 K_h^d L_h}{\log(n)} \sum_{ z=1 }^{ 5 K_h n}  z^{-1}  \leq \frac{4 K_h^d L_h}{\log(n)} [1 + \log( 5 K_h n)] \leq \frac{4 K_h^d L_h}{\log(n)} + 4 K_h^d L_h  \frac{\log(5 K_h)}{\log(n)} , & \quad \gamma =2, \\
4 K_h^d L_h \sum_{ z=1 }^{ 5 K_h n}   z^{-\gamma+1} \leq 4 K_h^d L_h\sum_{ z=1 }^{ \infty}  z^{-\gamma+1}, & \quad \gamma >2,
\end{dcases}
\end{align*}
hence the sum of \eqref{profbound1b1} and \eqref{profbound1b2} is bounded from above by a positive constant depending only {on $h$.} 
\end{proof}
Next we present a result which is very useful to estimate Radon-Nikodym derivatives of the form $d \nu_h^n ( \eta ) / d \nu_h^n ( \eta^{\hat{x}, \hat{y}} ) $. {The next} result is a direct consequence of the definition of $\nu_h^n$ in Definition \ref{defberprod}, therefore we omit its proof.
\begin{lem} \label{lemaux1}
\textit{
Let} $h \in \textrm{Ref}$\textit{, $\eta \in \Omega$, $x \neq y \in \mathbb{Z}$. Then \begin{align*}
 \frac{\nu_h^n ( \eta )}{\nu_h^n ( \eta^{\hat{x},\hat{y}} )} \leq \Big( \frac{b_h}{a_h} \Big)^2  \quad \text{and} \quad \Big| \frac{ \nu_h^n (\eta^{\hat{x},\hat{y}})}{ \nu_h^n (\eta)} -1 \Big| \leq \frac{| h( \tfrac{\hat{y}}{n} ) - h( \tfrac{\hat{x}}{n} )|}{(a_h )^2 }.
\end{align*}
In particular, if $f$ is a density with respect to $\nu_h^n$, we have
\begin{align*} 
\int_{\Omega} f(\eta) \, d \nu_h^n + \int_{\Omega} f(\eta^{\hat{x}, \hat{y}}) \, d \nu_h^n \leq 1 + \Big( \frac{b_h}{a_h} \Big)^2.
\end{align*}
}
\end{lem}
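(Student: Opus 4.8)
The plan is to exploit the product structure of $\nu_h^n$ from Definition \ref{defberprod}. Writing
\[
\nu_h^n(\eta) = \prod_{\hat{z}} h(\hat{z}/n)^{\eta(\hat{z})}\,\big(1 - h(\hat{z}/n)\big)^{1 - \eta(\hat{z})},
\]
and observing that $\eta$ and $\eta^{\hat{x},\hat{y}}$ agree at every site except $\hat{x}$ and $\hat{y}$, all factors indexed by $\hat{z} \notin \{\hat{x},\hat{y}\}$ cancel in the ratio $\nu_h^n(\eta)/\nu_h^n(\eta^{\hat{x},\hat{y}})$. Hence the Radon--Nikodym ratio reduces to a product of the two single-site factors at $\hat{x}$ and $\hat{y}$; I set $a := h(\hat{x}/n)$ and $b := h(\hat{y}/n)$, both lying in $[a_h,b_h]$ by Definition \ref{defrefprof}(1).

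First I would dispose of the trivial case $\eta(\hat{x}) = \eta(\hat{y})$: then $\eta^{\hat{x},\hat{y}} = \eta$, both ratios equal $1$, and both asserted inequalities hold at once (the second because its right-hand side is nonnegative). In the remaining case $\eta(\hat{x}) \neq \eta(\hat{y})$ a direct computation gives, depending on which of $\eta(\hat{x}),\eta(\hat{y})$ equals $1$,
\[
\frac{\nu_h^n(\eta^{\hat{x},\hat{y}})}{\nu_h^n(\eta)} \in \Big\{ \frac{(1-a)b}{a(1-b)},\ \frac{a(1-b)}{(1-a)b} \Big\}.
\]
For the first inequality I would group the ratio as $\tfrac{a}{b}\cdot\tfrac{1-b}{1-a}$ (or its reciprocal) and bound each factor by the corresponding ratio of extreme marginal values, using $a,b \in [a_h,b_h]$ and $1-a,1-b \in [1-b_h,1-a_h]$; collecting the two factors yields $(b_h/a_h)^2$. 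For the second inequality I would instead compute the difference explicitly,
\[
\frac{(1-a)b}{a(1-b)} - 1 = \frac{b-a}{a(1-b)}, \qquad \frac{a(1-b)}{(1-a)b} - 1 = \frac{a-b}{(1-a)b},
\]
so that $\big|\nu_h^n(\eta^{\hat{x},\hat{y}})/\nu_h^n(\eta) - 1\big|$ equals $|a-b|$ divided by $a(1-b)$ or $(1-a)b$; bounding that denominator below by $a_h^2$ gives the claim, since $|a-b| = |h(\hat{y}/n) - h(\hat{x}/n)|$.

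Finally, the ``in particular'' part follows by a change of variables. Since $\eta \mapsto \eta^{\hat{x},\hat{y}}$ is an involution of $\Omega$, I would write $\int_\Omega f(\eta^{\hat{x},\hat{y}})\,d\nu_h^n = \sum_{\zeta} f(\zeta)\,\nu_h^n(\zeta^{\hat{x},\hat{y}})$, bound $\nu_h^n(\zeta^{\hat{x},\hat{y}}) \le (b_h/a_h)^2\,\nu_h^n(\zeta)$ using the first inequality (which holds for every configuration, hence with $\zeta$ in place of $\eta$), and use that $f$ is a density, i.e.\ $\int_\Omega f\,d\nu_h^n = 1$. This gives $\int_\Omega f(\eta^{\hat{x},\hat{y}})\,d\nu_h^n \le (b_h/a_h)^2$, and adding $\int_\Omega f(\eta)\,d\nu_h^n = 1$ yields the bound $1 + (b_h/a_h)^2$.

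The computation itself is entirely mechanical; the only point requiring care is extracting exactly the stated constants $a_h^2$ and $(b_h/a_h)^2$. This is precisely where condition (1) of Definition \ref{defrefprof} is essential: being bounded away from both endpoints of $[0,1]$ is what lets one control each active marginal factor \emph{and} its complement $1-h$ from below by a fixed positive constant. I expect the main thing to verify is that the complementary factors $1-a,1-b$ are dominated by the same constant as $a,b$, so that the exponents in $(b_h/a_h)^2$ and $a_h^2$ are not degraded.
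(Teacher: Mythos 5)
Your overall route is exactly the one the paper intends (it omits the proof as a ``direct consequence'' of Definition \ref{defberprod}): use the product structure so that the ratio reduces to the two single-site factors at $\hat{x}$ and $\hat{y}$, compute $\frac{(1-a)b}{a(1-b)}-1=\frac{b-a}{a(1-b)}$, and obtain the ``in particular'' part from the involution $\eta\mapsto\eta^{\hat{x},\hat{y}}$. However, the step you flag at the end as ``the main thing to verify'' is a genuine gap, not a routine check. With $a=h(\tfrac{\hat{x}}{n})$, $b=h(\tfrac{\hat{y}}{n})\in[a_h,b_h]$, bounding the two factors separately gives $\frac{a}{b}\cdot\frac{1-b}{1-a}\le\frac{b_h}{a_h}\cdot\frac{1-a_h}{1-b_h}$, and this is \emph{not} $\le (b_h/a_h)^2$ in general; likewise the denominators $a(1-b)$, $(1-a)b$ are only $\ge a_h(1-b_h)$, which is $\ge a_h^2$ only if $1-b_h\ge a_h$. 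Both reductions are equivalent to the condition $a_h+b_h\le 1$, which Definition \ref{defrefprof} does not impose. Concretely, if $(a_h,b_h)=(\tfrac12,\tfrac{9}{10})$ is the admissible pair and $h$ takes the values $a=\tfrac{9}{10}$ at $\tfrac{\hat{x}}{n}$ and $b=\tfrac12$ at $\tfrac{\hat{y}}{n}$, then $\frac{a(1-b)}{(1-a)b}=9>(b_h/a_h)^2=\tfrac{81}{25}$, and for the corresponding occupancies $\frac{|a-b|}{(1-a)b}=8>\frac{|a-b|}{a_h^2}=\tfrac{8}{5}$. So, as written, ``collecting the two factors yields $(b_h/a_h)^2$'' and ``bounding that denominator below by $a_h^2$'' are false for some admissible choices of $(a_h,b_h)$.

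The missing idea is a harmless normalization of the bounding interval: since $(a_h,b_h)$ in Definition \ref{defrefprof} is not canonical, replace it by $\tilde{a}_h:=\min\{a_h,1-b_h\}$ and $\tilde{b}_h:=1-\tilde{a}_h$, which is again admissible and is symmetric about $\tfrac12$, so that $a,b,1-a,1-b\in[\tilde{a}_h,\tilde{b}_h]$. With this choice your computation closes at once: the numerator $a(1-b)\le \tilde{b}_h^{\,2}$ and the denominator $(1-a)b\ge \tilde{a}_h^{\,2}$ give the first bound, and the same lower bound on the denominators gives the second; all later uses of the lemma (the constants $E_h$, $M_h$, etc.) only require constants depending on $h$, so this renormalization costs nothing. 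The remainder of your argument --- the trivial case $\eta(\hat{x})=\eta(\hat{y})$, the explicit difference formulas, and the change-of-variables step yielding $\int_{\Omega}f(\eta^{\hat{x},\hat{y}})\,d\nu_h^n\le (b_h/a_h)^2$ and hence the bound $1+(b_h/a_h)^2$ --- is correct.
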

{The next} result is an adaptation of Lemma 5.1 in \cite{byrondif} to our model.
\begin{lem} \label{lemaux2}
\textit{
For every} $h \in \textrm{Ref}$ \textit{there exists $M_{2,h}>0$ depending only on $h$ such that
\begin{equation} \label{eqlemaux2lip}
 \int_{\Omega}  \big[ \sqrt{f(\eta^{\hat{x},\hat{y}})} - \sqrt{f(\eta)} \big] \sqrt{f(\eta)} \, d \nu_h^n \leq  - \frac{1}{4}  I_{ \hat{x} , \hat{y}} (\sqrt{f}, \nu_h^n)     +M_{2,h} [ h( \tfrac{\hat{y}}{n} ) - h( \tfrac{\hat{x}}{n} )]^2, 
\end{equation}
for every $f$ density with respect to $\nu_h^n$.
}
\end{lem}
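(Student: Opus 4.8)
The plan is to reduce the inequality to a single exchange bond by a change-of-variables symmetrization, exploiting that the swap map $\eta \mapsto \eta^{\hat{x},\hat{y}}$ is an involution of $\Omega$. Write $g:=\sqrt{f}$ and let $J$ denote the left-hand side of \eqref{eqlemaux2lip}, i.e. $J:=\int_{\Omega}[g(\eta^{\hat{x},\hat{y}})-g(\eta)]\,g(\eta)\,d\nu_h^n$. Performing the change of variables $\eta \mapsto \eta^{\hat{x},\hat{y}}$ and recording the Radon--Nikodym factor $R:=\nu_h^n(\eta^{\hat{x},\hat{y}})/\nu_h^n(\eta)$ produces a second representation, $J=-\int_{\Omega}[g(\eta^{\hat{x},\hat{y}})-g(\eta)]\,g(\eta^{\hat{x},\hat{y}})\,R\,d\nu_h^n$. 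Averaging the two expressions for $J$ gives
\begin{align*}
2J=\int_{\Omega}[g(\eta^{\hat{x},\hat{y}})-g(\eta)]\big[g(\eta)-R\,g(\eta^{\hat{x},\hat{y}})\big]\,d\nu_h^n.
\end{align*}

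Next I would use the splitting $g(\eta)-R\,g(\eta^{\hat{x},\hat{y}})=[g(\eta)-g(\eta^{\hat{x},\hat{y}})]+(1-R)\,g(\eta^{\hat{x},\hat{y}})$. The first piece contributes exactly $-\int_{\Omega}[g(\eta^{\hat{x},\hat{y}})-g(\eta)]^2\,d\nu_h^n=-I_{\hat{x},\hat{y}}(\sqrt{f},\nu_h^n)$, so that
\begin{align*}
2J=-I_{\hat{x},\hat{y}}(\sqrt{f},\nu_h^n)+\int_{\Omega}[g(\eta^{\hat{x},\hat{y}})-g(\eta)]\,(1-R)\,g(\eta^{\hat{x},\hat{y}})\,d\nu_h^n.
\end{align*}
It then remains to control the cross term, for which I would apply Young's inequality $|AB|\le\tfrac12 A^2+\tfrac12 B^2$ with $A=g(\eta^{\hat{x},\hat{y}})-g(\eta)$ and $B=(1-R)\,g(\eta^{\hat{x},\hat{y}})$; the choice of unit Young weight is what ultimately yields the constant $\tfrac14$ in the statement. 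This bounds the cross term by $\tfrac12 I_{\hat{x},\hat{y}}(\sqrt{f},\nu_h^n)+\tfrac12\int_{\Omega}(1-R)^2 f(\eta^{\hat{x},\hat{y}})\,d\nu_h^n$.

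For the last integral I would invoke Lemma \ref{lemaux1} twice. First, the bound $|1-R|\le|h(\tfrac{\hat{y}}{n})-h(\tfrac{\hat{x}}{n})|/a_h^2$ gives a deterministic (i.e. $\eta$-independent) control of the factor $(1-R)^2$; second, the density inequality $\int_{\Omega}f(\eta^{\hat{x},\hat{y}})\,d\nu_h^n\le(b_h/a_h)^2$ from that same lemma controls the mass. Combining these, the remainder is at most $\tfrac{b_h^2}{2a_h^6}[h(\tfrac{\hat{y}}{n})-h(\tfrac{\hat{x}}{n})]^2$. Plugging back, $2J\le-\tfrac12 I_{\hat{x},\hat{y}}(\sqrt{f},\nu_h^n)+\tfrac{b_h^2}{2a_h^6}[h(\tfrac{\hat{y}}{n})-h(\tfrac{\hat{x}}{n})]^2$, and dividing by $2$ yields \eqref{eqlemaux2lip} with $M_{2,h}=b_h^2/(4a_h^6)$, a constant depending only on $h$ through $a_h,b_h$.

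The step I expect to be the genuine difficulty---and the reason the symmetrization is indispensable---is obtaining the \emph{square} $[h(\tfrac{\hat{y}}{n})-h(\tfrac{\hat{x}}{n})]^2$ rather than the first power $|h(\tfrac{\hat{y}}{n})-h(\tfrac{\hat{x}}{n})|$. A naive estimate of $J$ through $\tfrac12\int_{\Omega}\big(f(\eta^{\hat{x},\hat{y}})-f(\eta)\big)\,d\nu_h^n$ only delivers a first-order factor via $1-R$, which would be too weak to be summable against $p(\hat{y}-\hat{x})$ once multiplied by $\Theta(n)/n^d$ in Proposition \ref{bound}. The averaging trick is precisely what cancels the first-order contribution, leaving the term carrying the extra factor $(1-R)$ that, after Young's inequality, upgrades the estimate to second order; one only has to check that $R$ stays uniformly bounded (again from Lemma \ref{lemaux1}) so that the Young weight can be chosen independently of $\eta$.
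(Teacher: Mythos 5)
Your proposal is correct and follows essentially the same route as the paper's proof: a symmetrization via the change of variables $\eta \mapsto \eta^{\hat{x},\hat{y}}$ to isolate the term $-\tfrac12 I_{\hat{x},\hat{y}}(\sqrt{f},\nu_h^n)$ plus a cross term carrying the factor $1-\nu_h^n(\eta^{\hat{x},\hat{y}})/\nu_h^n(\eta)$, then Young's inequality and the two bounds of Lemma \ref{lemaux1} to upgrade that cross term to the second-order quantity $[h(\tfrac{\hat{y}}{n})-h(\tfrac{\hat{x}}{n})]^2$. The only differences are cosmetic (you average the two representations of the full integral rather than symmetrizing the $f(\eta^{\hat{x},\hat{y}})-f(\eta)$ piece, and your Young weights give a slightly different explicit constant $M_{2,h}$), so the argument is sound as written.
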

\begin{proof}
Here, we follow the proof of Lemma 5.1 in \cite{byrondif}. The left-hand side of \eqref{eqlemaux2lip} can be rewritten as 
\begin{align*} 
& -  \frac{1}{2}  \int_{\Omega}   [  \sqrt{f(\eta^{\hat{x}, \hat{y}})} - \sqrt{f(\eta)} ]^2 \,   d \nu_h^n + \frac{1}{2} \int_{\Omega} [  \sqrt{f(\eta^{\hat{x}, \hat{y}})}]^2  \Big[ 1 - \frac{\nu_h^n ( \eta^{\hat{x},\hat{y}} ) }{\nu_h^n (\eta)} \Big] \, d \nu_h^n  \\
=& -  \frac{1}{2}  I_{ \hat{x} , \hat{y}} (\sqrt{f}, \nu_h^n)+ \frac{1}{4} \int_{\Omega}  [  \sqrt{f(\eta^{\hat{x},\hat{y}})} - \sqrt{f(\eta)} ]  [  \sqrt{f(\eta^{\hat{x},\hat{y}})} + \sqrt{f(\eta)} ] \Big[ 1 - \frac{\nu_h^n ( \eta^{\hat{x},\hat{y}} ) }{\nu_h^n (\eta)} \Big] \, d \nu_h^n. 
\end{align*}
Above we performed the change of variables $\eta \rightarrow \eta^{\hat{x},\hat{y}}$. Now we prove \eqref{eqlemaux2lip}. Applying Young's inequality, {the} last display is bounded from above by 
\begin{align*}
-  \frac{1}{2}  I_{ \hat{x} , \hat{y}} (\sqrt{f}, \nu_h^n) + \frac{1}{4}  I_{ \hat{x} , \hat{y}} (\sqrt{f}, \nu_h^n) + \frac{1}{16} \int_{\Omega}  [  \sqrt{f(\eta^{\hat{x}, \hat{y}})} + \sqrt{f(\eta)} ]^2   \Big[ 1 - \frac{\nu_h^n ( \eta^{\hat{x}, \hat{y}} ) }{\nu_h^n (\eta)} \Big]^2 \, d\nu_h^n(\eta).
\end{align*}
Combining Lemma \ref{lemaux1} with the fact that $(u+v)^2 \leq 2(u^2+v^2)$, the rightmost term in {the} last display is bounded from above by
\begin{align*}
 \frac{[h ( \tfrac{\hat{y}}{n} ) - h ( \tfrac{\hat{x}}{n} ) ]^2}{16 (a_h)^4}  \int_{\Omega}  2 [  \sqrt{f(\eta^{\hat{x},\hat{y}})} +  \sqrt{f(\eta)} ]^2 \, d \nu_h^n (\eta)  \leq  \frac{[h ( \tfrac{\hat{y}}{n} ) - h ( \tfrac{\hat{x}}{n} ) ]^2}{8 (a_h)^4} \Big[ 1  +  \Big( \frac{b_h}{a_h} \Big)^2  \Big],
\end{align*}
leading to \eqref{eqlemaux2lip} and ending the proof.
\end{proof} 
Finally we are ready to show Proposition \ref{bound}.
\begin{proof}[Proof of Proposition \ref{bound}]
From \eqref{eqlemaux2lip}, $\langle \mcb {L}_{n} \sqrt{f} , \sqrt{f} \rangle_{\nu_h^n}$ is bounded from above by
\begin{align*}
 & \frac{1}{2} \sum_{\{ \hat{x}, \hat{y} \} \in \mcb F} p(\hat{y} - \hat{x}) \Big( - \frac{1}{4}  I_{\hat{x}, \hat{y}} (\sqrt{f}, \nu_h^n)   + M_{2,h} [h ( \tfrac{\hat{y}}{n} ) - h ( \tfrac{\hat{x}}{n} ) ]^2  \Big) \\
+ & \frac{\alpha}{2n^{\beta}} \sum_{\{ \hat{x}, \hat{y} \} \in \mcb S} p(\hat{y}-\hat{x}) \Big( - \frac{1}{4}  I_{\hat{x},\hat{y}} (\sqrt{f}, \nu_h^n)   + M_{2,h} [h ( \tfrac{\hat{y}}{n} ) - h ( \tfrac{\hat{x}}{n} ) ]^2 \Big) \\
\leq & - \frac{1}{4}  D_n (\sqrt{f}, \nu_h^n) +\frac{\alpha+1}{2} M_{2,h}  \sum_{\hat{x}, \hat{y}} p(\hat{y}-\hat{x}) [ h( \tfrac{\hat{y}}{n} ) - h( \tfrac{\hat{x}}{n} )]^2.
\end{align*} 
Multiplying {the} last inequality by $\Theta(n) / n^d$, we get from Lemma \ref{lemaux0} that
\begin{align*}
\frac{\Theta(n)}{n^d} \langle \mcb{L}_{n} \sqrt{f} , \sqrt{f} \rangle_{\nu_h^n} \leq - \frac{\Theta(n)}{4n^d}  D_n (\sqrt{f}, \nu_h^n ) +\frac{\alpha+1}{2} M_{1,h} M_{2,h},
\end{align*}
and \eqref{boundlip} follows, ending the proof.
\end{proof}

\section{Proof of Lemma \ref{lemtight} } \label{prooftight} 

In this section we prove Lemma \ref{lemtight}. We begin by observing that the space $\mcb {M}^{+}$ defined in Definition \ref{radon} is a Polish space, due to Theorem 31.5 of \cite{bauer}. This means that we are done if we can prove that $(\mathbb{Q}_n)_{n \geq 1}$ satisfies both conditions of Theorem 1.3 in Chapter 4 of \cite{kipnis1998scaling}. 

We begin with the first condition. Keeping this in mind, define $H \subset \mcb {M}^{+}$ as
\begin{align*}
H:= \Big \{ \pi \in  \mcb {M}^{+}: \forall G \in C_c(\mathbb{R}^d), \quad \Big| \int G \, d \pi \Big| \leq 3 (b_G)^d \|G \|_{\infty} \Big\}.
\end{align*} 
From Definition 31.1 and Theorem 31.2 of \cite{bauer}, $H$ is vaguely relatively compact, therefore its closure $\bar{H}$ is vaguely compact. Now from \eqref{intmedemp} and \eqref{exc1part}, we have that $| \langle \pi_t^n, G \rangle | \leq 3 (b_G)^d \|G \|_{\infty}$, where $b_G$ is given in \eqref{defbg}. This leads to 
\begin{align*}
\forall n \geq 1, \forall t \in [0,T], \quad \mathbb{Q}_n \big( \pi_t^n \in \bar{H} \big) =1.
\end{align*} 
Therefore, choosing $K(t,\varepsilon)=H$ in the first condition of Theorem 1.3 in Chapter 4 of \cite{kipnis1998scaling}, we have 
\begin{equation} \label{1condlemtight}
 \forall t \in [0,T], \forall \varepsilon >0, \quad \sup_{n \geq 1} \mathbb{Q}_n \big( \pi_t^n \notin \bar{H} \big) = 0 < \varepsilon.
\end{equation}
In order to obtain the second condition of Theorem 1.3 in Chapter 4 of \cite{kipnis1998scaling}, we adapt the strategy used in the proof of Proposition 1.7 in Chapter 4 of \cite{kipnis1998scaling} to our context. First, we need to introduce a clever metric in $\mcb {M}^{+}$. Following the arguments in the proof of Theorem 31.5 in \cite{bauer}, there exists a countable set $D \subset C_c^0(\mathbb{R}^d)$ such that the mapping $\tilde{\delta}: \mcb {M}^{+} \times \mcb {M}^{+} \rightarrow [0, \infty)$ given by
\begin{equation} \label{metr1}
\tilde{\delta}(\pi, \hat{\pi}):= \sum_{k=1}^{\infty} \frac{1}{2^k} \min \Big\{ 1, \Big| \int f_k \, d \pi - \int f_k \, d \tilde{\pi} \Big| \Big\} 
\end{equation} 
determines a topology that is exactly the vague topology in $\mcb {M}^{+}$. In {the} last display, $\{f_1, f_2, \ldots\}$ is any fixed enumeration of the elements of $D$. Now, motivated by equation (1.1) in Chapter 4 of \cite{kipnis1998scaling}, we define the mapping $\delta: \mcb {M}^{+} \times \mcb {M}^{+} \rightarrow [0, \infty)$ as
\begin{equation} \label{metr2}
\delta(\pi, \hat{\pi}):= \sum_{k=1}^{\infty} \frac{1}{2^k} \frac{\Big| \int f_k \, d \pi - \int f_k \, d \tilde{\pi} \Big|}{1 + \Big| \int f_k \, d \pi - \int f_k \, d \tilde{\pi} \Big|}.
\end{equation} 
From \eqref{metr1} and \eqref{metr2}, we have that $\delta(\pi, \hat{\pi}) \leq \tilde{\delta}(\pi, \hat{\pi}) \leq 2 \delta(\pi, \hat{\pi})$, for every $\pi$, $\tilde{\pi}$ in $\mcb {M}^{+}$. In particular, $\delta$ determines a topology in $\mcb {M}^{+}$ which also coincides with the vague topology.

In the same way as it is done in Chapter 4 of \cite{kipnis1998scaling}, given $\pi \in \mcb {M}^{+}$, we define
\begin{align*}
\forall \lambda >0, \quad & \omega_{\pi}(\lambda):= \inf_{\{t_j\}_{0 \leq j \leq r }} \; \; \max_{0 \leq j < r} \; \; \sup_{t_j \leq s < t < t_{j+1}} \delta(\pi_s, \pi_t ), \\
\forall f \in C_c^0(\mathbb{R}^d), \; \forall \lambda >0, \quad & \omega_{\langle \pi, f \rangle}(\lambda):= \inf_{\{t_j\}_{0 \leq j \leq r }} \; \; \max_{0 \leq j < r} \; \; \sup_{t_j \leq s < t < t_{j+1}} \Big| \int f \, d \pi_s - \int f \, d \pi_t \Big|.
\end{align*}  
Above, the infima are taken over all the partitions $\{t_0, t_1, \ldots t_r\}$ of $[0,T]$ such that $0 = t_0 < t_1 < \ldots < t_r = T$ and $t_j - t_{j-1} > \epsilon$, for any $j \in \{1, \ldots, r\}$. Therefore, following the second condition of Theorem 1.3 in Chapter 4 of \cite{kipnis1998scaling}, we are done if we can prove that
\begin{equation} \label{2condlemtight}
  \forall \varepsilon >0, \quad \lim_{\lambda \rightarrow 0^+} \varlimsup_{n \rightarrow \infty} \mathbb{Q}_n \big(  \pi^n:  \omega_{\pi^n}(\lambda) > \varepsilon \big) = 0.
\end{equation}
Above, $\pi^n:=(\pi_t^n)_{0 \leq t \leq T}$. In order to be able to apply \eqref{T1sdif} (which is stated for functions in $C_c^2(\mathbb{R}^d)$ instead of $C_c^0(\mathbb{R}^d)$, such as the elements of $D$), we observe that for every $f \in C_c^0(\mathbb{R}^d) \subsetneq L^1(\mathbb{R}^d)$, there exists some $(G_i^f)_{i \geq 1}\subset C_c^{\infty}(\mathbb{R}^d) \subsetneq C_c^{2}(\mathbb{R}^d)$ such that $(G_i^f)_{i \geq 1}$ converges to $f$ in $L^1(\mathbb{R}^d)$. Plugging this with \eqref{intmedemp} and \eqref{exc1part}, we get
\begin{align*}
\forall \varepsilon >0, \quad \lim_{i \rightarrow \infty} \mathbb{Q}^n \big( | \langle \pi^n_s, f - G_i^f \rangle - \langle \pi^n_t, f - G_i^f \rangle | > \varepsilon )=0, 
\end{align*}
for any $s, t \in [0,T]$. Combining {the} last display with \eqref{T1sdif}, we get
\begin{equation} \label{T2sdif}
\displaystyle \lim _{\lambda \rightarrow 0^+} \varlimsup_{n \rightarrow\infty} \sup_{\tau  \in \mathcal{T}_{T}, \;  { 0 \leq } t \leq \lambda} {\mathbb{Q}}_{n}\big( \left\vert \langle\pi^{n}_{{T \wedge( \tau+ t)}},f\rangle-\langle\pi^{n}_{\tau}, f\rangle\right\vert > \ve \big)  =0, 
\end{equation}
for any $f \in C_c^0(\mathbb{R}^d)$. Above, $\mathcal{T}_T$ is {given in Definition \ref{deftaut}. Finally,} in order to obtain \eqref{2condlemtight}, we reproduce the arguments of the second part of the proof of Proposition 1.7 in Chapter 4 of \cite{kipnis1998scaling}. More exactly, fix $\varepsilon >0$. For every $f_k$ element of $D$, we have that $\langle \pi^n, f_k \rangle  \in \mcb D([0,T], \mathbb{R})$. Thus, combining \eqref{T2sdif} with Proposition 1.7 in Chapter 4 of \cite{kipnis1998scaling}, we get that
\begin{equation} \label{limrealpro}
\forall k \geq 1, \quad \lim_{\lambda \rightarrow 0^+} \varlimsup_{n \rightarrow \infty} \mathbb{Q}_n \Big( \omega_{\langle \pi^n, f_k \rangle } (\lambda) > \frac{\varepsilon}{2}  \Big) =0.
\end{equation} 
Now fix $k_{\varepsilon} \geq 1$ such that $2^{1-k_{\varepsilon}} < \varepsilon$. Thus, from \eqref{metr2}, we have
\begin{align} \label{boundmod}
\forall \lambda >0, \forall n \geq 1,  \quad \omega_{\pi^n}(\lambda) \leq \frac{\varepsilon}{2} + \sum_{k=1}^{k_{\varepsilon}} \frac{1}{2^k} \omega_{\langle \pi^n, f_k \rangle } (\lambda).
\end{align}
Next, fix $\beta >0$. From \eqref{limrealpro}, there exists $\lambda_0 >0$ such that
\begin{align*}
\forall \lambda \in (0, \lambda_0], \; \forall k \in \{1, \ldots, k_{\varepsilon}\}, \quad \varlimsup_{n \rightarrow \infty} \mathbb{Q}^n \Big( \omega_{ \langle \pi^n, f_k \rangle } > \frac{\varepsilon}{2} \Big) \leq \frac{\beta}{2^k}.
\end{align*}
This leads to
\begin{align*}
\forall \lambda \in (0, \lambda_0], \; \forall k \in \{1, \ldots, k_{\varepsilon}\}, \quad \varlimsup_{n \rightarrow \infty} \mathbb{Q}^n \Big( \sum_{k=1}^{k_{\varepsilon}} \frac{1}{2^k} \omega_{ \langle \pi^n, f_k \rangle } > \frac{\varepsilon}{2} \Big) \leq \beta.
\end{align*}
Combining {the} last display with \eqref{boundmod}, we get
\begin{align*}
\forall \lambda \in (0, \lambda_0], \quad \varlimsup_{n \rightarrow \infty} \mathbb{Q}^n \Big(  \omega_{  \pi^n } \geq \varepsilon  \Big) \leq \beta.
\end{align*} 
Since $\beta >0$ is arbitrary, we conclude that \eqref{2condlemtight} holds, ending the proof. 

\section{Proof of Proposition \ref{dynkform}} \label{miscdynk}

In this section we present the proof of Proposition \ref{dynkform}. Most of the arguments are exactly the same as the one used to obtain Lemma 5.1 in Appendix 1.5 of \cite{kipnis1998scaling}. The only difference is that, in this reference, the assumption $G$ being  of class $C^2$ regarding the time variable was required \textit{only} to prove that
\begin{equation} \label{claimdynk}
\lim_{h \rightarrow 0^+} \frac{1}{h} \int_t^{t+h} dr E_x \Big[F_r'(X_{t-s+h}) - F_t'(X_{t-s+h}) \Big]=0, 
\end{equation}
i.e., the term in the first line of page 331 in \cite{kipnis1998scaling} converges to zero as $h \rightarrow 0^{+}$. In \eqref{claimdynk}, we used the notation described in Lemma 5.1 of Appendix 1.5 in \cite{kipnis1998scaling}. In order to obtain an equivalent result in our setting, we will make use of the following lemma.
\begin{lem} \label{lemdynk}
\textit{
Let $n \geq 1$, $\gamma \geq 2$, $d \geq 1$,} $G \in \mcb{S}_{\textrm {Disc}}$ \textit{and $T >0$. Then for every $t \in [0, \; T]$, it holds
\begin{align} \label{limlemdynk}
\lim_{h \rightarrow 0^+} \frac{1}{h} \int_t^{t+h} \Bigg\{ \frac{1}{n^d}    \sum_{\hat{x}}  \big| \partial_s G(r, \tfrac{\hat{x}}{n}  ) -  \partial_s G(t, \tfrac{\hat{x}}{n}  ) \big| \Bigg\} dr = 0.
\end{align}
}
\end{lem}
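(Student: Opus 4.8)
The plan is to exploit the fact that, although $G\in\mcb{S}_{\textrm {Disc}}$ may be discontinuous across $\mathcal{H}$ as a function of space, its temporal derivative at any \emph{fixed} lattice site is perfectly well-behaved. First I would fix $\hat{x}\in\mathbb{Z}^d$ and recall that $G=\mathbbm{1}_{\{[0,\infty)\times\mathbb{R}_{-}^{d*}\}}G^{-}+\mathbbm{1}_{\{[0,\infty)\times\mathbb{R}_{+}^{d*}\}}G^{+}$ with $G^{-},G^{+}\in\mcb S_{\textrm{Dif}}=C_c^{1,2}([0,\infty)\times\mathbb{R}^d)$. Since the indicator functions depend only on the sign of the last spatial coordinate $x_d/n$, which does not vary with the time variable, the map $r\mapsto G(r,\tfrac{\hat{x}}{n})$ coincides with $r\mapsto G^{+}(r,\tfrac{\hat{x}}{n})$ when $x_d\geq 0$ and with $r\mapsto G^{-}(r,\tfrac{\hat{x}}{n})$ when $x_d\leq -1$. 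In either case it is of class $C^1$ in $r$, so $\partial_s G(\cdot,\tfrac{\hat{x}}{n})$ is well-defined and continuous on $[0,T]$ for every fixed $\hat{x}$.

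Next I would reduce the (a priori infinite) sum to a finite one. By the compact support assumption there is $b_G>0$, as in \eqref{defbg}, such that $\partial_s G^{\pm}(r,\hat{u})=0$ whenever $|\hat{u}|\geq b_G$; consequently $\partial_s G(r,\tfrac{\hat{x}}{n})=0$ for all $r\in[0,T]$ once $|\hat{x}|\geq b_G n$. Hence, for the fixed $n$ of the statement, only the finitely many sites with $|\hat{x}|<b_G n$ contribute, and the inner bracket
\[
\Phi_n(r):=\frac{1}{n^d}\sum_{\hat{x}}\big|\partial_s G(r,\tfrac{\hat{x}}{n})-\partial_s G(t,\tfrac{\hat{x}}{n})\big|
\]
is a finite sum of functions that are continuous in $r$ by the previous paragraph. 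Therefore $\Phi_n$ is continuous on $[0,T]$ and satisfies $\Phi_n(t)=0$.

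Finally, since $\Phi_n\geq 0$ is continuous with $\Phi_n(t)=0$, the mean value theorem for integrals (equivalently, a direct $\varepsilon$–$\delta$ argument using that $\partial_s G^{-}$ and $\partial_s G^{+}$ are uniformly continuous on $[0,T]\times\mathbb{R}^d$, being continuous with compact support) yields
\[
\lim_{h\to 0^+}\frac{1}{h}\int_t^{t+h}\Phi_n(r)\,dr=\Phi_n(t)=0,
\]
which is exactly \eqref{limlemdynk}. I do not expect a genuine obstacle here: the only point requiring care is the bookkeeping at $\mathcal{H}$, namely recognising that the spatial jump of $G$ across the hyperplane is irrelevant to differentiation in the time variable at a fixed site, so that the integrand $\Phi_n$ is continuous even though $G\notin\mcb S_{\textrm{Dif}}$ in general. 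Once this structural observation is in place, the statement follows from elementary properties of continuous functions, and no uniformity in $n$ is needed since $n$ is fixed throughout.
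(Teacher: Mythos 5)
Your proposal is correct and follows essentially the same route as the paper's proof: reduce the sum to the finitely many sites with $|\hat{x}|\lesssim b n$ via the compact support of $G^{\pm}$, observe that at a fixed site the time derivative of $G$ is just that of $G^{+}$ or $G^{-}$, and conclude by continuity of the time derivative (the paper phrases the last step through uniform continuity of $\partial_s G^{\pm}$, you through continuity of the resulting finite sum $\Phi_n$ and the averaging property, which is an equivalent elementary step). The only nitpick is that the single constant $b_G$ from \eqref{defbg} controls $G$ itself rather than both $G^{-}$ and $G^{+}$ separately; using $b:=\max\{b_{G^{-}},b_{G^{+}}\}$ as in the paper, or noting that only the relevant branch matters at each site, removes this harmless imprecision.
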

\begin{proof}
Let $G^{-}, G^{+} \in \mcb S_{\textrm{Dif}}$ be such that $G=\mathbbm{1}_{\{ [0, \infty)  \times \mathbb{R}_{-}^{d*} \} } G^{-} + \mathbbm{1}_{ \{[0, \infty) \times \mathbb{R}_{+}^{d*} \}} G^{+}$ and define $b:=\max \{b_{G^{-}}, \; b_{G^{+}} \}$, where $b_{G^{-}}, b_{G^{+}}$ are given by \eqref{defbg}. Then, the limit in \eqref{limlemdynk} is bounded from above by
\begin{align*}
& \lim_{h \rightarrow 0^+} \frac{1}{h} \int_t^{t+h} \Bigg\{ \frac{1}{n^d}    \sum_{|\hat{x}| \leq b n} \big[  \big| \partial_s G^{-}(\tilde{t}, \tfrac{\hat{x}}{n}  ) -  \partial_s G^{-}(t, \tfrac{\hat{x}}{n}  ) \big| ] + \big[  \big| \partial_s G^{+}(\tilde{t}, \tfrac{\hat{x}}{n}  ) -  \partial_s G^{+}(t, \tfrac{\hat{x}}{n}  ) \big| \big]  \Bigg\} d \tilde{t} \\
\leq & \lim_{h \rightarrow 0^+} \frac{1}{h} \int_t^{t+h} \Bigg\{ \frac{1}{n^d}    \sum_{|\hat{x}| \leq b n}  \sup_{\hat{u} \in \mathbb{R}^d, \; t, r \geq 0: \; t \leq  r  \leq t+h} \big\{ |\partial_{s} G^{-}(r, \hat{u} ) -  \partial_{s} G^{-} (t, \hat{u} ) |  + |\partial_{s} G^{+}(r, \hat{u} ) -  \partial_{s} G^{+} (t, \hat{u} ) | \big\} \Bigg\} d \tilde{t} \\
\lesssim & \lim_{h \rightarrow 0^+} \sup_{\hat{u} \in \mathbb{R}^d, \; t, r \geq 0: \; t \leq  r  \leq t+h} \big\{ |\partial_{s} G^{-}(r, \hat{u} ) -  \partial_{s} G^{-} (t, \hat{u} ) |  + |\partial_{s} G^{+}(r, \hat{u} ) -  \partial_{s} G^{+} (t, \hat{u} ) | \big\} =0.
\end{align*}
In the last line, we used the fact that $\partial_{s} G^{-}$ and $\partial_{s} G^{+}$ are both uniformly continuous.
\end{proof}
Since Proposition \ref{dynkform} is a direct consequence of \eqref{exc1part} and Lemma \ref{lemdynk}, the proof ends!

\quad

\thanks{ {\bf{Acknowledgements: }}
P.C. was funded by the Deutsche Forschungsgemeinschaft (DFG, German Research Foundation) under Germany’s Excellence Strategy – EXC-2047/1 – 390685813. P.G. thanks  FCT/Portugal for financial support through the projects UIDB/04459/2020 and UIDP/04459/2020. B.J.O. thanks  Universidad Nacional de Costa Rica  for sponsoring the participation in  this article. {The authors also thank the referees for their careful work and great feedback.} This project has received funding from the European Research Council (ERC) under  the European Union's Horizon 2020 research and innovative programme (grant agreement   n. 715734).} 


\begin{thebibliography}{10}

\bibitem{B12}
C.~Bahadoran.
\newblock Hydrodynamics and Hydrostatics for a Class of Asymmetric Particle Systems with Open Boundaries.
\newblock {\em Communications in Mathematical Physics}, 310(1):1--24, 2012.


\bibitem{baldasso}
R.~Baldasso, O.~Menezes, A.~Neumann, and R.~Souza.
\newblock Exclusion process with slow boundary.
\newblock {\em J. Stat. Phys.}, 167(5):1112--1142, 2017.

\bibitem{bauer}
Heinz Bauer.
\newblock {\em Measure and integration theory}, volume~26 of {\em De Gruyter
  Studies in Mathematics}.
\newblock Walter de Gruyter \& Co., Berlin, 2001.
\newblock Translated from the German by Robert B. Burckel.

\bibitem{stefano}
C.~Bernardin, P.~Cardoso, P.~Gon{\c{c}}alves, and S.~Scotta.
\newblock Hydrodynamic limit for a boundary driven super-diffusive symmetric
  exclusion.
\newblock {\em arXiv preprint arXiv:2007.01621}, 2021.

\bibitem{byrondif}
C.~Bernardin, P.~Gon\c{c}alves, and B.~Jim\'{e}nez-Oviedo.
\newblock Slow to fast infinitely extended reservoirs for the symmetric
  exclusion process with long jumps.
\newblock {\em Markov Process. Related Fields}, 25(2):217--274, 2019.

\bibitem{byronsdif}
C.~Bernardin, P.~Gon\c{c}alves, and B.~Jim\'{e}nez-Oviedo.
\newblock A microscopic model for a one parameter class of fractional
  {L}aplacians with {D}irichlet boundary conditions.
\newblock {\em Arch. Ration. Mech. Anal.}, 239(1):1--48, 2021.

\bibitem{brezis2010functional}
H.~Brezis.
\newblock {\em Functional analysis, Sobolev spaces and partial differential
  equations}.
\newblock Springer Science \& Business Media, 2010.

\bibitem{casodif}
P.~Cardoso, P.~Gon{\c{c}}alves, and B.~Jim\'{e}nez-Oviedo.
\newblock Hydrodynamic behavior of long-range symmetric exclusion with a slow
  barrier: diffusive regime.
\newblock {\em to appear in Annales de l'IHP Probab. Stat.}, 2023+.

\bibitem{superdif}
P.~Cardoso, P.~Gon{\c{c}}alves, and B.~Jim\'{e}nez-Oviedo.
\newblock Hydrodynamic behavior of long-range symmetric exclusion with a slow
  barrier: superdiffusive regime.
\newblock {\em to appear in Annali della Scuola Normale Superiore di Pisa,
  Classe di Scienze}, 2023+.

\bibitem{evans1998partial}
L.~C. Evans.
\newblock Partial differential equations.
\newblock {\em Graduate studies in mathematics}, 19(2), 1998.

\bibitem{franco2015phase}
T.~Franco, P.~Gon{\c{c}}alves, and A.~Neumann.
\newblock Phase transition of a heat equation with robin’s boundary
  conditions and exclusion process.
\newblock {\em Transactions of the American Mathematical Society},
  367(9):6131--6158, 2015.

\bibitem{tertuaihp}
T.~Franco, P.~Gon\c{c}alves, and A.~Neumann.
\newblock Hydrodynamical behavior of symmetric exclusion with slow bonds.
\newblock {\em Ann. Inst. Henri Poincar\'{e} Probab. Stat.}, 49(2):402--427,
  2013.

\bibitem{tertumariana}
T.~Franco and M.~Tavares.
\newblock Hydrodynamic limit for the {SSEP} with a slow membrane.
\newblock {\em J. Stat. Phys.}, 175(2):233--268, 2019.

\bibitem{stefano_patricia}
P.~Gon{\c{c}}alves and S.~Scotta.
\newblock Diffusive to super-diffusive behavior in boundary driven exclusion.
\newblock {\em Markov Process. Related Fields}, 28(1):149--178, 2022.

\bibitem{GPV}
M.~Z. Guo, G.~C. Papanicolaou, and S.~R.~S. Varadhan.
\newblock Nonlinear diffusion limit for a system with nearest neighbor
  interactions.
\newblock {\em Comm. Math. Phys.}, 118(1):31--59, 1988.

\bibitem{jara2009hydrodynamic}
M.~Jara.
\newblock Hydrodynamic limit of particle systems with long jumps.
\newblock {\em arXiv preprint arXiv:0805.1326}, 2008.

\bibitem{kipnis1998scaling}
C.~Kipnis and C.~Landim.
\newblock {\em Scaling limits of interacting particle systems}, volume 320.
\newblock Springer Science \& Business Media, 1998.

\bibitem{MSV}
M.~Mourragui, E.~Saada, and S.~Velasco.
\newblock Hydrodynamic and hydrostatic limit for a generalized contact process
  with mixed boundary conditions.
\newblock {\em arXiv preprint arXiv:2212.07762}, 2022.

\bibitem{SS18}
S.~Sethuraman and D.~Shahar.
\newblock Hydrodynamic limits for long-range asymmetric interacting particle systems
\newblock {\em Electronic Journal of Probability}, 23:1--54, 2018.


\bibitem{spitzer}
F.~Spitzer.
\newblock Interaction of {M}arkov processes.
\newblock {\em Advances in Math.}, 5:246--290 (1970), 1970.

\bibitem{X22a}
L.~Xu.
\newblock Hydrodynamic limit for asymmetric simple exclusion with accelerated boundaries.
\newblock {\em arXiv preprint arXiv:2108.09345}, 2022.

\bibitem{X22b}
L.~Xu.
\newblock Hydrodynamics for One-Dimensional {ASEP} in Contact with a Class of Reservoirs.
\newblock {\em J. Stat. Phys.}, 189(1):1--26, 2022.

\bibitem{zeidler}
E.~Zeidler.
\newblock {\em Nonlinear functional analysis and its applications. {II}/{A}}.
\newblock Springer-Verlag, New York, 1990.
\newblock Linear monotone operators, Translated from the German by the author
  and Leo F. Boron.




\end{thebibliography}

\vspace{1 cm}

\Addresses

\end{document}